\newcommand\hrefdefaultfont{\ttfamily}
\xpatchcmd\href{\setkeys{href}{#1}}{\setkeys{href}{font=\hrefdefaultfont,#1}}{}{\fail}
\renewcommand*{\backref}[1]{}
\renewcommand*{\backrefalt}[4]{
  \ifcase #1 
  [No citations.]
  \or [#2]
  \else [#2]
  \fi }
\newcommand{\citecomment}[2][]{\citenum{#2}#1\citevar}
\newcommand{\citeone}[1]{\citecomment{#1}}
\newcommand{\citetwo}[2][]{\citecomment[,~#1]{#2}}
\newcommand{\citevar}{\@ifnextchar\bgroup{;~\citeone}{\@ifnextchar[{;~\citetwo}{]}}}
\newcommand{\citefirst}{\@ifnextchar\bgroup{\citeone}{\@ifnextchar[{\citetwo}{]}}}
\newcommand{\cites}{[\citefirst}
\let\originalleft\left
\let\originalright\right
\renewcommand{\left}{\mathopen{}\mathclose\bgroup\originalleft}
\renewcommand{\right}{\aftergroup\egroup\originalright}
\newcommand{\thsup}{{\rm th}}
\newcommand{\calF}{\mathcal{F}}
\newcommand{\calS}{\mathcal{S}}
\newcommand{\calT}{\mathcal{T}}
\newcommand{\calU}{\mathcal{U}}
\newcommand{\calV}{\mathcal{V}}
\newcommand{\RR}{\mathbb{R}}
\newcommand{\ZZ}{\mathbb{Z}}
\newcommand{\from}{\colon} % As in ``f maps _from_ X _to_ Y''.
\newcommand{\cross}{\times}
\newcommand{\cover}[1]{{\widetilde{#1}}}
\newcommand{\closure}[1]{{\overline{#1}}}
\newcommand{\bdy}{\partial} % Boundary
\theoremstyle{plain}
\newtheorem{XXXtheoremQED}[equation]{Theorem} % \newtheorem establishes the object heading
\popQED\end{XXXtheoremQED}}
\newcommand{\fakeenv}{} %%% prints the emptystring
\newenvironment{restate}[2]  %%% restate takes two arguments 
{ 
 \renewcommand{\fakeenv}{#2} %%% So now \fakeenv prints #2
 \theoremstyle{plain} 
 \newtheorem*{\fakeenv}{#1~\ref{#2}} %%% so now #2 is the name of a
                                     %%% theorem-like environment.   
 \begin{\fakeenv}
}
{
 \end{\fakeenv}
}
\newenvironment{restated}[2]  
{ 
 \renewcommand{\fakeenv}{#2} 
 \theoremstyle{definition} 
 \newtheorem*{\fakeenv}{#1~\ref{#2}} %%% so now #2 is the name of a
                                     %%% definition-like environment.   
 \begin{\fakeenv}
}
{
 \end{\fakeenv}
}
\newcommand{\bigon}{ \scalebox{0.6}{\rotatebox{90}{$\llparenthesis$} } }
\newcommand{\crimp}{{\succ\!\prec}}
\newcommand{\bigonx}[1]{\stackon[1pt]{#1}{\kern0.32em\bigon}}
\newsavebox{\FigEightSnappy}
\newsavebox{\FigEightVeer}
\newsavebox{\FigEightSisSnappy}
\newsavebox{\FigEightSisVeer}
\newsavebox{\FourTetExSnappy}
\newsavebox{\FourTetExVeer}
\newsavebox{\NonFiberedSnappy}
\newsavebox{\NonFiberedVeer}
\newsavebox{\BigExSnappy}
\newsavebox{\BigExVeer}
\newcommand{\rlt}{\rotatebox{90}{$<$}}
\title[To dynamic pairs]{From veering triangulations \\ to dynamic pairs}
\author[Schleimer and Segerman]{Saul Schleimer and Henry Segerman}
\date{\today}
\begin{document}
%\linenumbers

\begin{abstract}
From a transverse veering triangulation (not necessarily finite) we produce a canonically associated dynamic pair of branched surfaces.
As a key idea in the proof, we introduce the shearing decomposition of a veering triangulation.
\end{abstract}

%%% MSC codes:
%%% 37C10 Dynamics induced by flows and semiflows
%%% 37C86 Foliations generated by dynamical systems
%%% 37D40 Dynamical systems of geometric origin and hyperbolicity (geodesic and horocycle flows, etc.)
%%% 57K35 Other geometric structures on 3-manifolds

%%% Keywords: branched surfaces, train-tracks, dynamic pair, veering triangulation, pseudo-Anosov flow

%%% Comments: 76 pages, 89 figures and subfigures.

\maketitle

\section{Introduction}

Mosher, inspired by work of (and with) Christy~\cite[page~5]{Mosher96}
%%% Mosher 1992 i and ii - norm on homology - on page i.468 he writes
%%% ``We remark that there is a construction which uses a Markov
%%% partition for b to produce a transverse pair of essential branched
%%% surfaces B_0,B_1 carrying... This construction is carried out in
%%% [CM].'' Unfortunately [CM] never appeared.
and Gabai~\cite[page~4]{Mosher96}, introduced the idea of a \emph{dynamic pair of branched surfaces}.
These give a combinatorial method for describing and working with pseudo-Anosov flows in three-manifolds.
Very briefly, suppose that $\Phi$ is such a flow.
Then $\Phi$ admits a transverse pair of foliations $F^\Phi$ and $F_\Phi$, called weak stable and weak unstable, respectively.
Carefully splitting both to obtain laminations, and then carefully collapsing, gives a \emph{dynamic pair} of branched surfaces $B^\Phi$ and $B_\Phi$.
These again intersect transversely and have other combinatorial properties that allow us to reconstruct $\Phi$ (up to orbit equivalence).

Agol, while investigating the combinatorial complexity of mapping tori, introduced the idea of a \emph{veering triangulation}~\cite[Main~construction]{Agol11}.
For any pseudo-Anosov monodromy $\phi$ he provides a canonical periodic splitting sequence of stable train-tracks $(\tau^\phi_i)$.
This gives a branched surface $B^\phi$ in the mapping torus $M(\phi)$.
Equally well, the splitting sequence of unstable tracks $(\tau^i_\phi)$ gives rise to the branched surface $B_\phi$.

More generally, even when not layered~\cite[Section~4]{HRST11}, a veering triangulation $\calV$ admits \emph{upper} and \emph{lower branched surfaces} $B^\calV$ and $B_\calV$, obtained by gluing together standard pieces within each tetrahedron (\refsec{BranchedSurfaces}).
Our main result is that these may be isotoped into \emph{draped position} and they then form a dynamic pair.

\begin{restate}{Theorem}{Thm:DynamicPair}
Suppose that $\calV$ is a transverse veering triangulation.
In draped position, the upper and lower branched surfaces $B^\calV$ and $B_\calV$ form a dynamic pair;
this position is canonical.
Furthermore, if $\calV$ is finite then draped position is produced algorithmically in polynomial time.
Finally, the dynamic train-track $B^\calV \cap B_\calV$ has at most a quadratic number of edges.
\end{restate}

Suppose that $S$ is a surface.
We say that train-tracks $\tau^*$ and $\tau_*$ on $S$ are \emph{dual} if they are transverse and no component of $S - (\tau^* \cup \tau_*)$ is a bigon.
Here is a consquence of \refthm{DynamicPair}.

\begin{restate}{Corollary}{Cor:Dual}
There is an algorithm that, given a surface $S$ and a pseudo-Anosov homeomorphism $f \from S \to S$, produces a (canonical) splitting/folding sequence of dual train tracks in $S$ that realise $f$.
\end{restate}

\begin{remark}
The resulting sequence of dual tracks is canonical and does not require \emph{bigon tracks}~\cite[page~191]{PennerHarer92}.
Thus \refcor{Dual} improves upon the analysis given
in~\cite[pages~207--208]{PennerHarer92}.
\end{remark}

Before giving an outline of the proof of \refthm{DynamicPair}, we highlight the main difficulty.

\begin{remark}
\label{Rem:Difficulty}
Suppose that $B^\calV$ and $B_\calV$ are in \emph{normal position} within each tetrahedron.
This is locally determined, and any other locally determined position can be obtained from normal position by local moves.
In normal position, the branched surfaces may coincide on large regions, spanning many tetrahedra; see \refsec{BranchedSurfaces}.
Such a region may contain a vertical M\"obius band.
If so, then any small isotopy making $B^\calV$ and $B_\calV$ transverse produces ``bad'' components of $M - (B^\calV \cup B_\calV)$.
We give more details in \refsec{PushOff} and an example in \reffig{Fail}.
\end{remark}

A more global procedure is thus required.
To guide this, in \refsec{NewCombinatorics} we define the \emph{shearing decomposition} associated to $\calV$.
This decomposes $M$ into solid tori (and possibly solid cylinders in the non-compact case).

\begin{restate}{Theorem}{Thm:ShearingDecomposition}
Suppose that $\calV$ is a veering triangulation (not necessarily transverse or finite).
Then there is a canonical shearing decomposition of $M$ associated to $\calV$.
\end{restate}

Here is a consequence.

\begin{restate}{Corollary}{Cor:PADecomposition}
Suppose that $\Phi$ is a pseudo-Anosov flow on $N$ without perfect fits.
Suppose that $M = N^\circ$ is the result of drilling out the singular orbits of $\Phi$.
Let $\Phi^\circ = \Phi|M$ be the restriction of $\Phi$ to $M$.
Let $\calV$ be the veering triangulation associated to $\Phi^\circ$.
Then the canonical shearing decomposition of $M$ (associated to $\calV$) factors $\Phi^\circ$ as a product of fractional Dehn twists.
\end{restate}

\begin{remark}
The shearing decomposition (\refthm{ShearingDecomposition}) is also used by Tsang in~\cite[Corollary~1.2]{Tsang22a}.
He proves that a transitive pseudo-Anosov flow on a closed three-manifold admits a Birkhoff section with at most two boundary components on orbits of the flow.
%%% He says ``exactly two'' in the statement, and he achieves this,
%%% but he points out in the proof of Theorem~6.4 that if you are less
%%% careful then the boundaries might ``cancel'' and then you get a
%%% actual section.
\end{remark}

With \refthm{ShearingDecomposition} in hand, we give a sequence of coordinatisations inside of the shearing regions.
In particular each shearing region is foliated by \emph{horizontal cross-sections};
see \refdef{CrossSection}.
In Sections~\ref{Sec:StraighteningShrinking}, \ref{Sec:Parting}, and~\ref{Sec:Draping} we give a sequence of pairs of isotopies to improve the positioning of $B^\calV$ and $B_\calV$ relative to each other and relative to the horizontal cross-sections.
In each cross-section these isotopies appear to be movements of a train-track.
We ``split'' track-cusps forward and then ``graphically'' isotope branches.
These happen both in space and in time.

\begin{remark}
\label{Rem:SemiLocal}
Our construction is ``semi-local'' in the following sense.
Suppose that $\calV$ and $\calV'$ are veering triangulations of manifolds $M$ and $M'$.
Suppose that $U$ and $U'$ are isomorphic red components (maximal connected unions of crimped red shearing regions).
Then the isomorphism carries the dynamic pair for $\calV$ to that of $\calV'$ (as intersected with $U$ and $U'$).
%%% This should also be handy for improving the census.
\end{remark}

Finally, in \refsec{DynamicPair} we verify that $B^\calV$ and $B_\calV$, in their final \emph{draped position} form a dynamic pair.

\subsection{Other work}

After Mosher's monograph~\cite{Mosher96}, other appearances of dynamic pairs in the literature include the following.
Fenley~\cite[Section~8]{Fenley99a} gives an exposition of various examples due to Mosher and proves that leaves of the resulting weak stable and unstable foliations have the continuous extension property.
Given a uniform one-cochain, Coskunuzer~\cite[Main~Theorem]{Coskunuzer06} follows Calegari~\cite[Theorem~6.2]{Calegari01} in producing various laminations, which are collapsed to give a dynamic pair.
%%% Note that Coskunuzer's construction is unclear - it doesn't say
%%% how to obtain transversality, or how to get the correct
%%% complementary components.
Calegari~\cite[Sections~6.5 and~6.6]{Calegari07} gives a useful exposition of dynamic pairs and their relation to pseudo-Anosov flows.
In particular see his version of examples of Mosher~\cite[Example~6.49]{Calegari07}.

Closely related to our overall program is recent work of Agol and Tsang~\cite[Theorem~5.1]{AgolTsang22}.
Starting from a veering triangulation (with appropriate framing), they construct a pseudo-Anosov flow on the filled manifold.
They do not use dynamic pairs;
instead they apply a different construction of Mosher~\cite[Proposition~2.6.2]{Mosher96}.
%%% May not be polynomial since the Markov section might be bad
They identify and remove \emph{infinitesimal cycles}, which are similar in spirit to the vertical M\"obius bands mentioned above.
Their construction relies on making certain choices, so it is not canonical.
Also, it is not clear if the resulting pseudo-Anosov flow recovers the original veering triangulation.
%%% Even with the work of BFrM, this is not yet clear. (I mean, it follows from _our_ work...)

A very recent and very dramatic result concerning dynamic pairs appears in the work of Landry and Tsang~\cite{LandryTsang23}.
In addition to their other results, they carry out the base case of the construction promised by (but not given in) Mosher's monograph~\cite[Section~II]{Mosher96}: that is, they produce ``proper'' dynamic pairs inside of the compactified mapping torus of any given endperiodic map (if the mapping torus is atoroidal).
In fact, Landry and Tsang mainly work with just one (unstable) branched surface.
They then use it and the flow graph to produce the other (stable) branched surface.

\subsection{Future work}

This is the fourth paper in a series of five~\cite{SchleimerSegerman20, SchleimerSegerman21, FrankelSchleimerSegerman22} providing a dictionary between veering triangulations (framed with appropriate surgery coefficients) and pseudo-Anosov flows without perfect fits.
\refthm{DynamicPair} together with Mosher's work~\cite[Theorem~3.4.1]{Mosher96} gives one direction of the dictionary.
In the fifth paper we will prove that the two ``translation directions'' of the dictionary are in fact inverses.
To prepare for this, in \refapp{Rectangles} we use \refthm{DynamicPair} to show that the ``leaf space'' of the resulting pseudo-Anosov flow has maximal rectangles corresponding to (via the construction given in~\cite[Section~5.8]{SchleimerSegerman21}) the original veering tetrahedra.
This will imply that the map from a veering triangulation to a flow and back again, is the identity.

\subsection*{Acknowledgements}

We thank Lee Mosher for enlightening conversations regarding dynamic pairs.
We thank Chi Cheuk Tsang for his many helpful comments on several early drafts.
Henry Segerman was supported in part by National Science Foundation grants DMS-1708239 and DMS-2203993.

\section{Triangulations, train-tracks, and branched surfaces}
\label{Sec:Triangulations}

\subsection{Ideal triangulations}

Suppose that $M$ is a connected three-manifold without boundary.
Suppose that $\calT$ is a triangulation: a collection of oriented model tetrahedra and a collection of face pairings.
(We do not assume here that $\calT$ is finite, nor do we assume that the face pairings respect the orientations of the tetrahedra.)
We say that $\calT$ is an \emph{ideal triangulation} of $M$ if the quotient $|\calT|$, minus its zero-skeleton, is homeomorphic to $M$~\cite[Section~4.2]{Thurston78}.
In this case, the degree of each edge of $\calT$ is necessarily finite.
See \reffig{VeerFigEight} for an example.

\begin{figure}[htbp]
\includegraphics[width=0.7\textwidth]{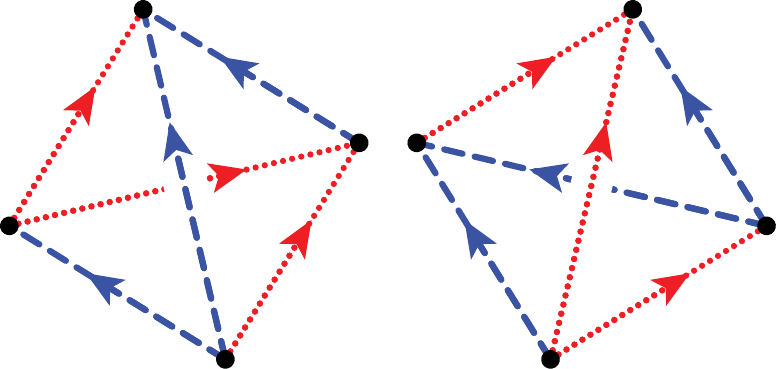}
\caption{An ideal triangulation of the complement of the figure-eight knot in the three-sphere.
Each edge is equipped with a colour -- red (dotted) or blue (dashed) -- and an orientation.
These determine the face pairings.
The flattening (into the plane) makes the triangulation taut and transverse.
Note that the taut structure and the orientation determine the veering structure and thus the colours.}
\label{Fig:VeerFigEight}
\end{figure}

A model tetrahedron $t$ is \emph{taut} if every model edge is equipped with a dihedral angle of zero or $\pi$, subject to the requirement that the sum of the three dihedral angles at any model vertex is $\pi$.
It follows that there are exactly two model edges in $t$ with angle $\pi$;
these do not share any vertex of $t$.
The remaining four model edges, with angle zero, are called \emph{equatorial}.
A taut tetrahedron can be flattened into the plane with its equatorial edges forming its boundary;
see \reffig{VeerFigEight}.
A taut tetrahedron $t$ contains an \emph{equatorial square}:
a disk properly embedded in $t$ whose boundary is the four equatorial edges.
An ideal triangulation $\calT$ of $M$ is a \emph{taut triangulation} if the model tetrahedra are taut and, for every edge $e$ in $|\calT|$, the sum of the dihedral angles of the models of $e$ is $2\pi$~\cite[Definition~1.1]{HRST11}.

A taut model tetrahedron $t$ is \emph{transverse} if every model face is equipped with a co-orientation (in or out of $t$), subject to the requirement that co-orientations agree across model edges of dihedral angle $\pi$ and disagree across model edges of dihedral angle zero.
%%% Need both requirements.
See \reffig{TransverseTet}.
A taut triangulation $\calT$ of $M$ is a \emph{transverse taut triangulation} if every model tetrahedron is transverse taut and, for every face $f$ in $|\calT|$, the associated face pairing preserves the co-orientations of the two model faces~\cites[Definition~1.2]{HRST11}[page~370]{Lackenby00}.

\begin{figure}[htbp]
\centering
\subfloat[Co-orientations and angles in a transverse taut tetrahedron.]{
\labellist
\small\hair 2pt
\pinlabel $0$ at 20 130
\pinlabel $0$ at 240 120
\pinlabel $0$ at 135 27
\pinlabel $0$ at 135 217
\pinlabel $\pi$ at 125 140
\pinlabel $\pi$ at 125 87
\endlabellist
\includegraphics[width=0.35\textwidth]{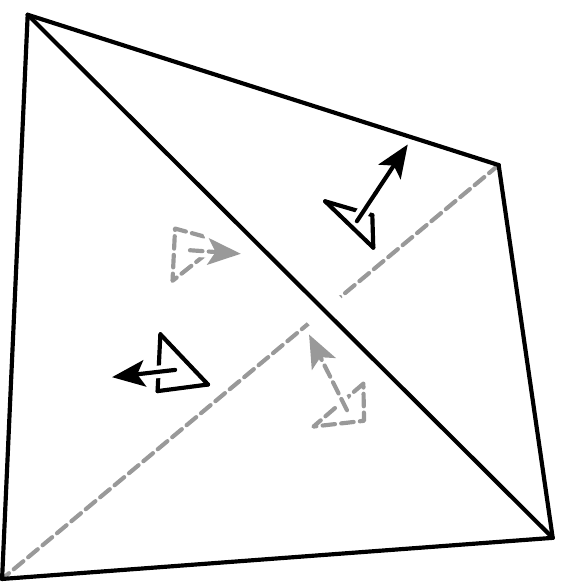}
\label{Fig:TransverseTet}
}
\qquad \qquad
\subfloat[Co-orientations around edges can be deduced from the co-orientations on the faces of the model tetrahedra.]{
\includegraphics[width=0.4\textwidth]{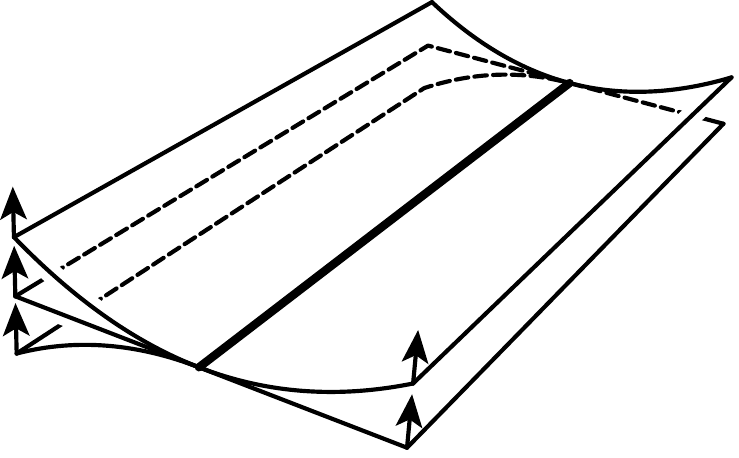}
\label{Fig:TransverseEdge}
}
\caption{}
\label{Fig:Transverse}
\end{figure}

Recall that the model tetrahedra are oriented.
A taut model tetrahedron $t$ is \emph{veering} if every model edge is equipped with a colour, red or blue, subject to the following.
\begin{itemize}
\item
The colours on the equatorial edges alternate between red and blue.
\item
Viewing any model face (from the outside of the tetrahedron) the non-equatorial edge is followed, in anticlockwise order, by a red equatorial edge.
\end{itemize}
Suppose that $t$ is a veering tetrahedron.
If the two non-equatorial edges of $t$ are both red (blue) then we call $t$ a red (blue) \emph{fan tetrahedron}.
If the two non-equatorial edges	of $t$ have different colours then we call $t$ a \emph{toggle tetrahedron}.
See \reffig{UpperGluingAutomaton} for all four of the possible veering model tetrahedra.
Note that the taut structure and the orientation of $t$ determine the colouring of its equatorial edges.

Suppose now that $\calT$ is a transverse taut triangulation of $M$.
Then $\calT$ is a \emph{transverse veering} triangulation if there is a colouring of the edges of $|\calT|$ making all of the model tetrahedra veering~\cites[Main~construction]{Agol11}[Definition~1.3]{HRST11}.
By the previous paragraph, when such a colouring exists it is unique.
Also, if the colouring exists then the orientations of the model tetrahedra of $\calT$ induce an orientation on $M$.
For an example of a transverse veering triangulation, see \reffig{VeerFigEight}.
The possible gluings between the various kinds of veering tetrahedra are recorded in \reffig{UpperGluingAutomaton}.

\begin{figure}[htb]
\centering
\subfloat[Upper tracks.]{
\includegraphics[width=0.47\textwidth]{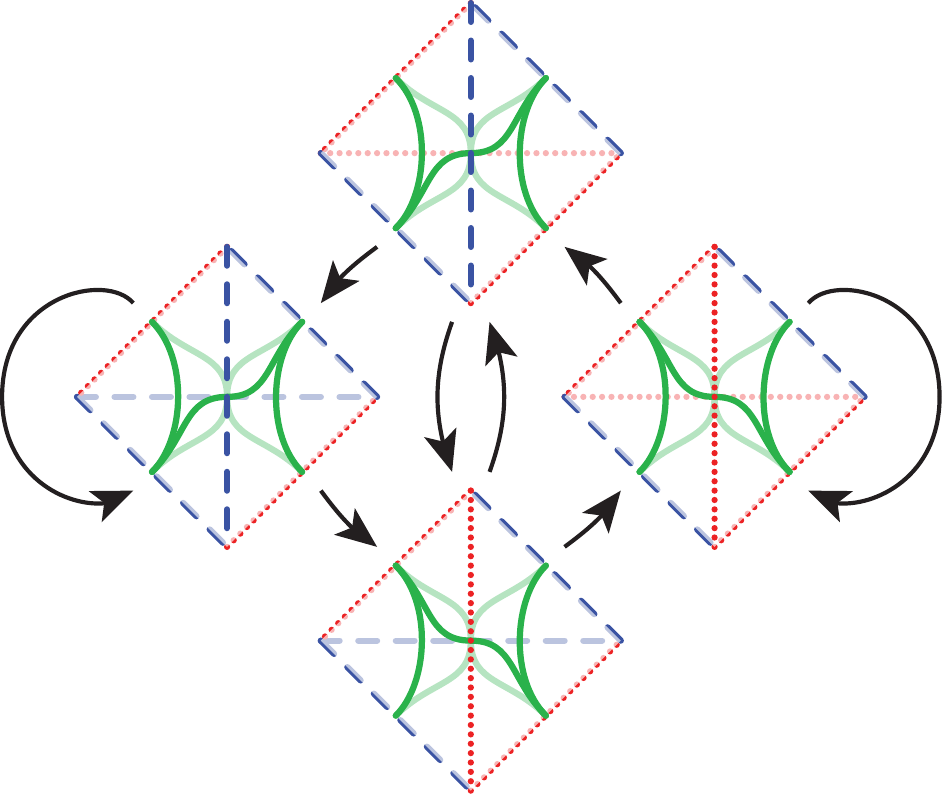}
\label{Fig:UpperGluingAutomaton}
}
\thinspace
\subfloat[Lower tracks.]{
\includegraphics[width=0.47\textwidth]{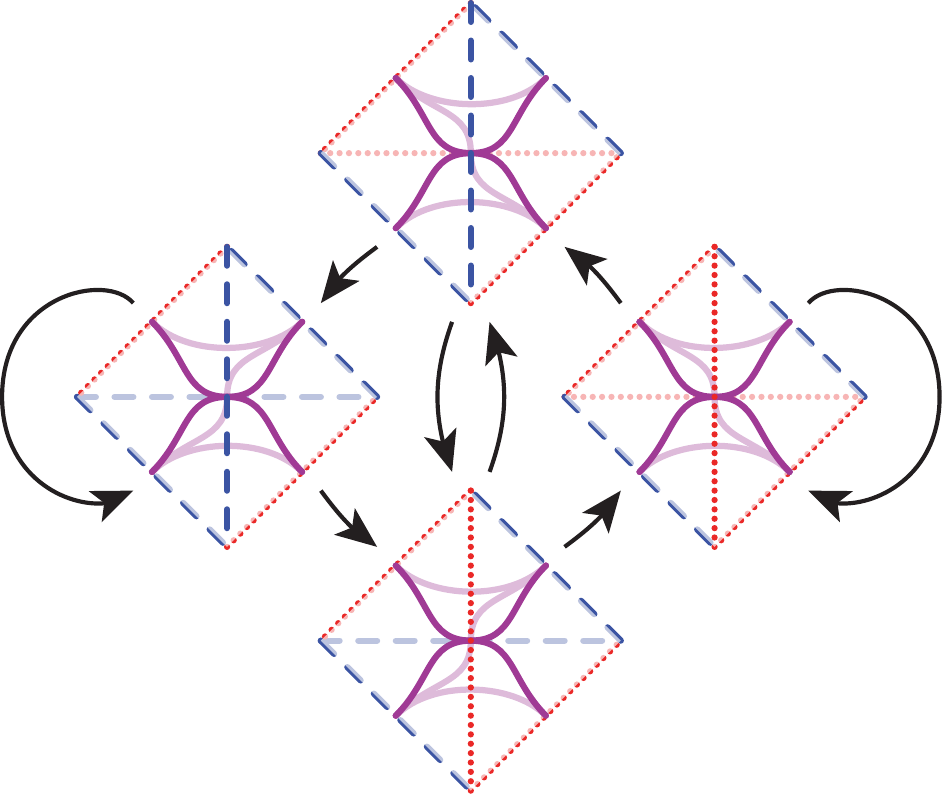}
\label{Fig:LowerGluingAutomaton}
}
\caption{
In both subfigures, above and below we have toggle tetrahedra while left and right we have, respectively, blue and red fan tetrahedra.
A black arrow indicates a possible gluing from an upper face of the initial tetrahedron to a lower face of the terminal.
Note that fan tetrahedra of different colours never share a face.
Finally, inside each tetrahedron $t$ on the left (right) we draw the branched surface $B^t$ ($B_t$).
}
\label{Fig:GluingAutomaton}
\end{figure}

\subsection{Train-tracks}
\label{Sec:TrainTracks}

For background on train-tracks generally we refer to~\cite{PennerHarer92} as well as~\cite[Chapter~8]{Thurston78}.
Suppose that $\calV$ is a transverse veering triangulation.
Suppose that $f$ is a face of $\calV$.
Let $t$ and $t'$ be the tetrahedra above and below $f$, respectively.
We now define the \emph{upper} and \emph{lower train-tracks} $\tau^f$ and $\tau_f$ in $f$.
The upper track $\tau^f$ consists of one switch at each edge midpoint and two branches perpendicular to the edges~\cite[Figure~11]{Agol11}.
The two branches meet only at the switch on the non-equatorial edge of $t$ (the tetrahedron \emph{above} $f$).
The lower track $\tau_f$ is defined similarly, except the two branches now meet at the switch on the non-equatorial edge
of $t'$ (the tetrahedron \emph{below} $f$).
We call the region immediately between the two branches, adjacent to the shared switch, a \emph{track-cusp}.
See \reffig{UpperLowerTracks}.
Starting in \refsec{StraighteningShrinking} we also discuss slightly more general train-tracks in slightly more general surfaces.
%%% in annuli, in larger disks, and in strips.

\begin{figure}[htbp]
\subfloat[The two taut tetrahedra adjacent to a face $f$ lie above and below $f$.]{
\labellist
\small
\hair 2pt
\pinlabel $f$ at 88 71
\endlabellist
\includegraphics[height=1in]{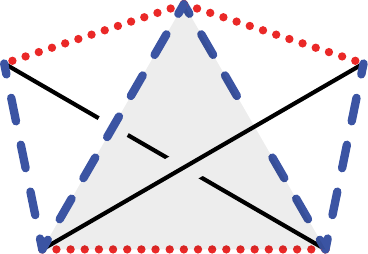}
\label{Fig:TwoTautTetrahedra}
}
\quad
\subfloat[The upper train-track $\tau^f$.]{
\labellist
\small
\hair 2pt
\pinlabel $\tau^f$ [tr] at 65 42
\endlabellist
\includegraphics[height=1in]{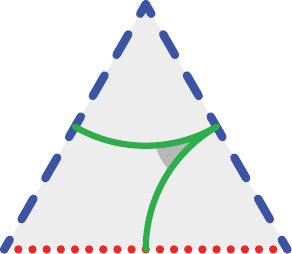}
\label{Fig:UpperTrack}
}
\quad
\subfloat[The lower train-track $\tau_f$.]{
\labellist
\small
\hair 2pt
\pinlabel $\tau_f$ [tl] at 77 37
\endlabellist
\includegraphics[height=1in]{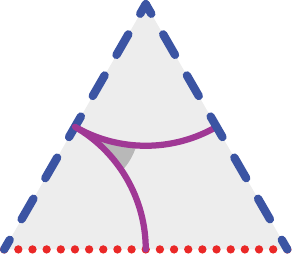}
\label{Fig:LowerTrack}
}
\caption{}
\label{Fig:UpperLowerTracks}
\end{figure}

\subsection{Branched surfaces}
\label{Sec:BranchedSurfaces}

For background on branched surfaces generally we refer to~\cite[Section~6.3]{Calegari07}.

Suppose that $M$ is an oriented three-manifold equipped with a transverse veering triangulation $\calV$.
Suppose that $t$ is a model tetrahedron of $\calV$.
The four faces $(f_i)$ of $t$ contain their upper tracks $\tau^i$.
These form a graph in $\bdy t$, transverse to the edges of $t$.
This graph bounds a normal quadrilateral and also a pair of normal triangles~\cite[page~4]{Gordon01}.
We arrange matters so that the three normal disks meet only along the lower faces of $t$, so that they are transverse to the equatorial square of $t$, and so that the union of the normal disks is a branched surface, denoted $B^t$.
We call $B^t$ the \emph{upper branched surface} in $t$.
We define $B_t$, the \emph{lower branched surface} in $t$ similarly, using the lower tracks $\tau_i$ instead of the upper.
We finally define $B^\calV = \cup_t B^t$ and $B_\calV = \cup_t B_t$ to be the \emph{upper} and \emph{lower branched surfaces} for $\calV$ in \emph{normal position}.
See \reffig{NormalUpperBranchedSurface}.

We define the \emph{horizontal branched surface} $B(\calV)$ to be the union of the faces of $\calV$.
Here we isotope the faces of $\calV$, near their boundaries, to meet the one-skeleton of $\calV$ as shown in \reffig{TransverseEdge}.
The horizontal branched surface $B(\calV)$ is \emph{taut}~\cite[page~374]{Lackenby00};
this explains the name \emph{taut ideal triangulation}.

The \emph{branch locus} $\Sigma = \Sigma(B)$ of a branched surface $B$ is the subset of non-manifold points.
Each component of $B - \Sigma$ is a \emph{sector} of $B$.
For $B^\calV$ (and $B_\calV$) a generic point of its branch locus is locally adjacent to exactly three sectors.
The \emph{vertices} of $B^\calV$ (and $B_\calV$) are the points of the branch locus locally meeting six sectors.
Note that, since we have removed the zero-skeleton from $|\calV|$, the horizontal branched surface $B(\calV)$ has no vertices~\cite[page~371]{Lackenby00}.

We may move $B^\calV$ into \emph{dual position} by applying a small upward isotopy of $B^\calV$.
See \reffig{DualUpperBranchedSurface}.
This done, every tetrahedron $t$ of $\calV$ contains exactly one vertex of $B^\calV$ and every face of $\calV$ contains exactly one point of the branch locus.
We arrange matters so that the vertex of $B^\calV$ in $t$ is halfway between the lower edge and the equatorial square of $t$.
Applying a small downward isotopy to $B_\calV$ produces its dual position.
We again arrange matters so that the vertex of $B_\calV$ in $t$ is halfway between the upper edge (of $t$) and the equatorial square.

\begin{remark}
\label{Rem:Dual}
In dual position, both $B^\calV$ and $B_\calV$ are isotopic to the dual two-skeleton of $\calV$.
See~\cite[Remark~6.4]{FrankelSchleimerSegerman22}.
\end{remark}

\begin{figure}[htb]
\centering
\subfloat[Normal position.]{
\label{Fig:NormalUpperBranchedSurface}
\includegraphics[width=0.47\textwidth]{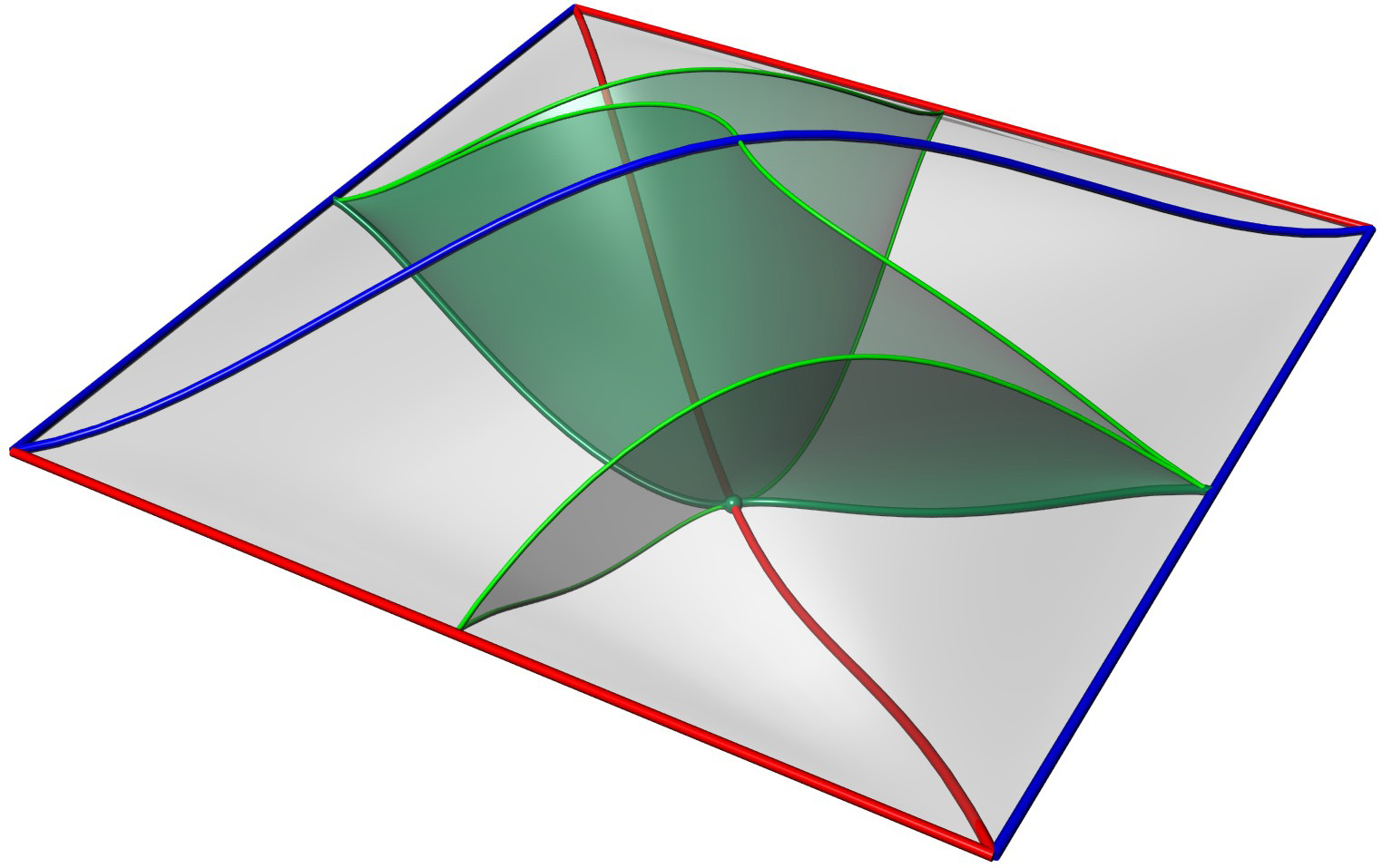}
}
\subfloat[Dual position.]{
\label{Fig:DualUpperBranchedSurface}
\includegraphics[width=0.47\textwidth]{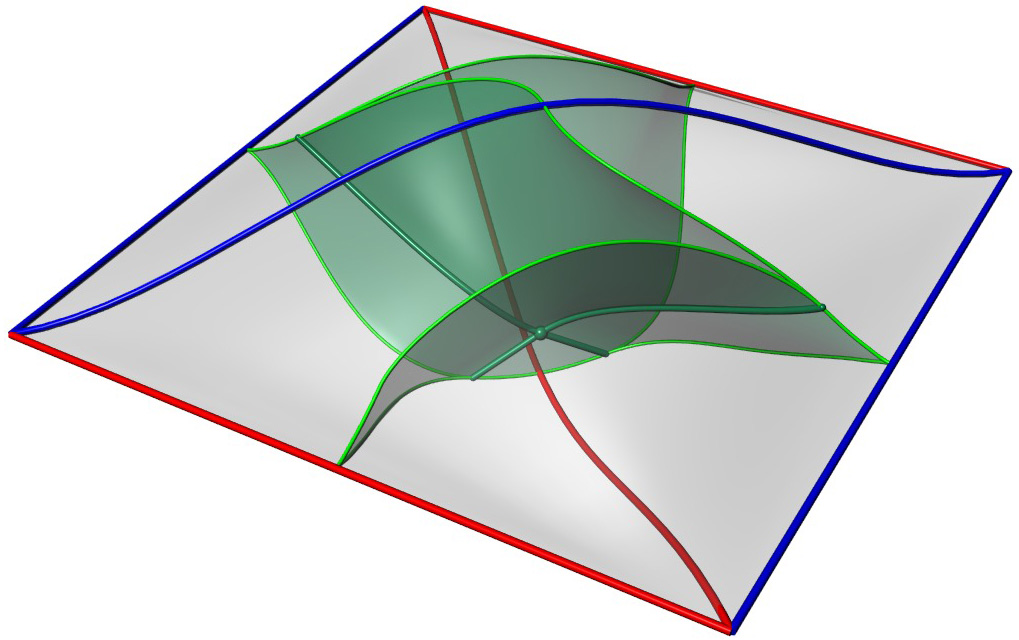}
}
\caption{Two positions of the upper branched surface in a tetrahedron.}
\label{Fig:UpperBranchedSurface}
\end{figure}

Suppose that $M$ be an oriented three-manifold equipped with a transverse veering triangulation $\calV$.
Suppose that $\cover{M}$ is the universal cover of $M$.
Suppose that $\cover{B}^\calV$ and $\cover{B}_\calV$ are the preimages of $B^\calV$ and $B_\calV$ in $\cover{M}$.
We now restate~\cite[Corollary~6.12]{FrankelSchleimerSegerman22}.

\begin{lemma}
\label{Lem:DualMeetsToggles}
In the universal cover $\cover{M}$, with $\cover{B}^\calV$ and $\cover{B}_\calV$ in dual position, every subray of every branch line of $\cover{B}^\calV$ and of $\cover{B}_\calV$ meets toggle tetrahedra.  \qed
\end{lemma}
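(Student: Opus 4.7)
The plan is to analyze the local behaviour of a branch line as it passes through each of the four possible types of veering tetrahedron, using the gluing automaton recorded in \reffig{UpperGluingAutomaton}. Fix a branch line $\ell$ of $\cover{B}^\calV$ in dual position. Since each tetrahedron of $\cover{\calV}$ contains exactly one vertex of the branched surface and each face contains exactly one point of the branch locus, $\ell$ determines an infinite sequence of tetrahedra $(t_i)$ meeting across consecutive faces $(f_i)$. My first step is to record, for each of the four model tetrahedron types, exactly which of the two ``upper'' faces of $t_i$ is paired with which face of exit through the combinatorics of $B^{t_i}$. This turns the question into a purely combinatorial one about infinite paths in an automaton whose states are (coloured) tetrahedron types.

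Second, I would show the key local statement: if $t_i$ is a fan tetrahedron (say red), and $t_{i+1}$ is also a fan, then $t_{i+1}$ is also red and shares with $t_i$ the common non-equatorial red edge $e$ — the ``fan edge'' around which the fans accumulate. This uses two things: the picture of $B^t$ for a red fan in \reffig{UpperGluingAutomaton} (which shows that the two sectors meeting along the branch locus in $t$ abut $e$), and the fact, recorded in the automaton, that red fans and blue fans never share a face. Thus, if $\ell$ avoids toggle tetrahedra on some subray $\ell_0$, every tetrahedron along $\ell_0$ is a fan of the same colour sharing one fixed edge $e$ of $\cover{\calV}$.

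Third, I would invoke the fact that the edge degrees of $\cover{\calV}$ equal those of $\calV$ (the universal covering map is a local isomorphism of ideal triangulations), and these are finite by the setup of \refsec{Triangulations}. Therefore only finitely many tetrahedra of $\cover{\calV}$ meet any given edge $e$, contradicting the infinitude of $\ell_0$. Hence every subray of $\ell$ must meet a toggle tetrahedron. The statement for branch lines of $\cover{B}_\calV$ is symmetric, using \reffig{LowerGluingAutomaton} in place of \reffig{UpperGluingAutomaton}.

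The main obstacle will be step two: one must carefully keep track of which upper faces of a fan tetrahedron are used by the two branches of the branch locus meeting at its vertex, and verify from the automaton that the ``next'' fan along $\ell$ really is forced to share the fan edge rather than some other edge incident to $t_i$. Once this local case analysis is pinned down, the global argument is immediate from finite edge degrees.
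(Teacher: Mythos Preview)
The paper does not actually prove this lemma: it is restated, with a \qed, from \cite[Corollary~6.12]{FrankelSchleimerSegerman22}. So there is no in-paper argument to compare your proposal against. That said, your proposal is the standard argument and is correct in outline: reduce to a combinatorial walk in the gluing automaton, observe that a subray avoiding toggles is trapped among fan tetrahedra of a single colour, show these fans all wind around a single edge of $\cover{\calV}$, and then contradict finite edge degree.

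One point of care in your step two: the phrase ``shares with $t_i$ the common non-equatorial red edge $e$'' is not quite the invariant you want. The $\pi$--edges of consecutive red fans along the branch line need not coincide as $\pi$--edges; what persists is that the branch interval, as it passes from $t_i$ to $t_{i+1}$, stays adjacent to a single edge $e$ of $\cover{\calV}$ which may appear in some $t_j$ as a $\pi$--edge and in others as an equatorial edge. The clean way to see this is to track the sector of $B^\calV$ on the cusp side of the branch line: by \refrem{Dual} sectors of $B^\calV$ are dual to edges of $\calV$, and the observation (visible in \reffig{UpperGluingAutomaton}) is that in a fan tetrahedron the cusp-side sector does not change as the branch line passes through, whereas in a toggle it does. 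This gives the fixed edge $e$ directly, and finite degree finishes the argument exactly as you say. You have correctly flagged this local analysis as the crux.
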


\section{Dynamics}
\label{Sec:Dynamics}

Suppose that $M$ is a connected oriented three-manifold equipped with a riemannian metric.
We loosely follow Mosher~\cite[page~36]{Mosher96} for the next two definitions.
See also~\cite[Figure~1.3]{Christy93}.

\begin{definition}
\label{Def:DynamicVectorField}
A \emph{dynamic vector field} $X$ on $M$ is a smooth non-vanishing vector field.
If $M$ has boundary then we require $X$ to be tangent to the boundary of $M$.
\end{definition}

The dynamic vector field $X$ gives us a local notion of \emph{upwards} (the direction of $X$).
Note that in our setting $X$ is smooth while in Mosher's it is necessarily at best continuous.

\begin{definition}
\label{Def:DynamicBranchedSurface}
Suppose that $M$ is a three-manifold and $X$ is a dynamic vector field.
Suppose that $B^* \subset M$ is a properly embedded branched surface.
%%% properly embedded: that is, transverse to boundary M
We say that $B^*$ is a \emph{stable dynamic branched surface} with respect to $X$ if it has the following properties.
\begin{itemize}
\item
For any point $p$ of any sector of $B^*$, there is a tangent to the sector, at $p$, which makes a positive dot product with $X$.
Choosing the largest such gives a vector field $X^*$ on $B^*$.
Integrating $X^*$ gives the \emph{upwards semi-flow}.
\item
The semi-flow $X^*$ is transverse to the branch locus of $B^*$ and points from the side with fewer sheets to the side with more.
%%% So track-cusps cannot go backwards as we move up in horizontal cross-sections.
\item
The semi-flow $X^*$ is never orthogonal to the branch locus.
%%% So track-cusps exist in horizontal cross-sections.
\end{itemize}
The only change needed to define an \emph{unstable dynamic branched surface} $B_*$ is that $X_*$ points from the side with more sheets to the side with fewer.
\end{definition}

Note that Mosher requires his original vector field $X$ be tangent to $B^*$.
However, we wish to use just one vector field with respect to which both branched surfaces $B^\calV$ and $B_\calV$ are dynamic (but do not yet form a \emph{dynamic pair}).

\begin{remark}
The terms stable and unstable come from the fact that any pseudo-Anosov flow $\Phi$ leads to a pair of two-dimensional foliations~\cites[page~226]{Calegari07}[Section~3.1]{Mosher96}.
These are the \emph{weak stable} foliation $F^\Phi$ and the \emph{weak unstable} foliation $F_\Phi$.
If $L$ is a leaf of $F^\Phi$ then any two flow lines $\ell$ and $\ell'$ in $L$ are asymptotic in forward time.
Finally, the stable branched surface $B^\Phi$ carries $F^\Phi$.
\end{remark}

Suppose that $t$ is one of the four model transverse veering tetrahedra (shown in \reffig{GluingAutomaton}).
Let $X_t$ be a non-vanishing vector field in $t$ with the following properties.
\begin{itemize}
\item The vector field $X_t$ is orthogonal to each face of $t$.
%%% and there agrees with the given co-orientation.
\item Each orbit of $X_t$ connects a lower face of $t$ with an upper face.
%%% faces are closed
\item The branched surfaces $B^t$ and $B_t$ (in dual position) are stable and unstable with respect to $X_t$.
\end{itemize}

Now suppose that $\calV$ is a transverse taut veering triangulation.
We define $X_\calV$ by gluing together the vector fields $X_t$.

\begin{corollary}
\label{Cor:DualDynamic}
The upper and lower branched surfaces $B^\calV$ and $B_\calV$ (in dual position) are, with respect to $X_\calV$, stable and unstable dynamic branched surfaces.  \qed
\end{corollary}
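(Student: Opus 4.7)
The plan is to verify the three conditions of \refdef{DynamicBranchedSurface} for $B^\calV$ with respect to $X_\calV$, and then for $B_\calV$ by an identical argument with reversed roles. All three conditions are local, so the strategy is to check them inside each tetrahedron using the third defining property of $X_t$, and then check compatibility across shared faces using the first two defining properties of $X_t$.

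Inside the interior of any tetrahedron $t$ of $\calV$, the vector field $X_\calV$ agrees with $X_t$, and the pieces $B^t \cap \interior(t)$ and $B_t \cap \interior(t)$ are (in dual position) stable and unstable dynamic with respect to $X_t$; this is exactly the third bullet in the definition of $X_t$. So the three bullets of \refdef{DynamicBranchedSurface} hold at every point of $B^\calV$ (respectively $B_\calV$) lying in the interior of a tetrahedron. It remains to check points of $B^\calV$ and $B_\calV$ that lie in the faces of $\calV$.

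Next I would verify that the glued field $X_\calV$ makes sense at faces and that the dynamic conditions survive the gluing. Along any face $f$ shared by tetrahedra $t$ and $t'$, each of $X_t$ and $X_{t'}$ is orthogonal to $f$ (first bullet of the definition of $X_t$) and each orbit travels from a lower face to an upper face (second bullet). Because $\calV$ is transverse taut, the face pairing for $f$ preserves co-orientation, so $X_t$ and $X_{t'}$ point in the same direction across $f$. Thus $X_\calV$ is at least continuous on $M$, and is smooth transverse to each face in a collar; a standard bump-function rescaling along a collar of each face (preserving orthogonality to $f$ and never vanishing) then makes $X_\calV$ smooth without affecting any of the dynamic conditions, since orthogonality to $f$ is all one needs to preserve the sign of $X_\calV \cdot v$ for any tangent vector $v$ to a sector of $B^\calV$ or $B_\calV$ that meets $f$. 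Tangency of $X_\calV$ to $\bdy M$ (if any) is automatic: in an ideal triangulation the faces accumulating near a cusp are parallel to the cusp cross section in the natural sense, and the tangency along model faces of $t$ gives tangency at the boundary in the usual compactification.

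Finally, I would confirm the three bullets of \refdef{DynamicBranchedSurface} at points of $B^\calV \cup B_\calV$ lying on faces of $\calV$. In dual position each face $f$ meets $B^\calV$ in a single point of branch locus and meets $B_\calV$ in a single point of branch locus (see \refrem{Dual}), and near such a point the sector germs are unions of sector germs from $t$ and from $t'$. The positive dot product of $X_t$ and $X_{t'}$ with each of these sector germs was checked inside each tetrahedron, and the chosen upward tangent vectors $X^*$ and $X_*$ match across $f$ because both are obtained by projecting the common field $X_\calV$ (orthogonal to $f$) onto the tangent plane to the sector, and the tangent planes to matched sectors agree across $f$. The branch locus condition (that $X^*$ and $X_*$ point from the side with fewer to more sheets, respectively the reverse) is built into the construction of $B^t$ and $B_t$, and the non-orthogonality condition is preserved because $X_\calV$ is never tangent to the face, hence never orthogonal to the transverse branch arc on the face. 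The main thing requiring care is the smoothing across faces, and this is essentially a routine application of bump functions that respect orthogonality to $f$; once set up correctly, no further estimate is needed.
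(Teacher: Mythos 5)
Your proposal is correct and follows the same route the paper intends: the paper leaves this corollary as immediate from the construction, since the three bullets of \refdef{DynamicBranchedSurface} are local and are built into the third defining property of each $X_t$, while the transverse taut structure guarantees the fields $X_t$ glue consistently across faces. Your extra care about smoothing $X_\calV$ along collars of the faces and the (vacuous, since $M$ has no boundary here) tangency condition is harmless elaboration rather than a different argument.
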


\section{Dynamic pairs}
\label{Sec:DynamicPairs}

In this section, loosely following Mosher~\cite[page~52]{Mosher96}, we give our definition of a \emph{dynamic pair} of branched surfaces.
Morally, these mimic the stable and unstable foliations of a pseudo-Anosov flow.
The transversality of the foliations implies that the branched surfaces should be transverse and should not have various kinds of ``bigon regions''.

We make this precise and then discuss the main difficulties in proving \refthm{DynamicPair}.

\subsection{Complementary components}

Suppose that $M$ is a connected oriented three-manifold equipped with a riemannian metric.
Suppose that $X$ is a dynamic vector field on $M$, as in \refdef{DynamicVectorField}.
Suppose that $B^*$ and $B_*$ are stable and unstable dynamic surfaces with respect to $X$.
Suppose further that $B^*$ and $B_*$ meet transversely.

\begin{definition}
\label{Def:PinchedTetrahedron}
Suppose that $C$ is a component of $M - (B^* \cup B_*)$.
We call $C$ a \emph{pinched tetrahedron} if $\closure{C}$ (the closure taken in the induced path metric) has the following properties.
%%% \closure{C} is the ``model''
\begin{itemize}
\item
$\closure{C}$ is a three-ball.
\item
$\bdy \closure{C}$ consists of four triangles, called the \emph{faces} of $C$.
\item
Each pair of faces meets in a simple arc;
these six arcs form the one-skeleton of a tetrahedron.
\item
When mapped to $M$, two faces are sent to $B^* - B_*$ and two are sent to $B_* - B^*$.
\item
The two faces sent to $B^* - B_*$ meet in a single arc of (the preimage of) the branch locus of $B^*$;
a similar property holds for the two faces sent to $B_* - B^*$. \qedhere
\end{itemize}
\end{definition}
%%% Pinched tetrahedra need not embed, see the Figure 8 sibling with
%%% dynamic pair from draped position in this paper.

See \reffig{PinchedTet} for a picture of an embedded pinched tetrahedron.

\begin{figure}[htbp]
\centering
\subfloat[A pinched tetrahedron. ]{
\includegraphics[height=5in]{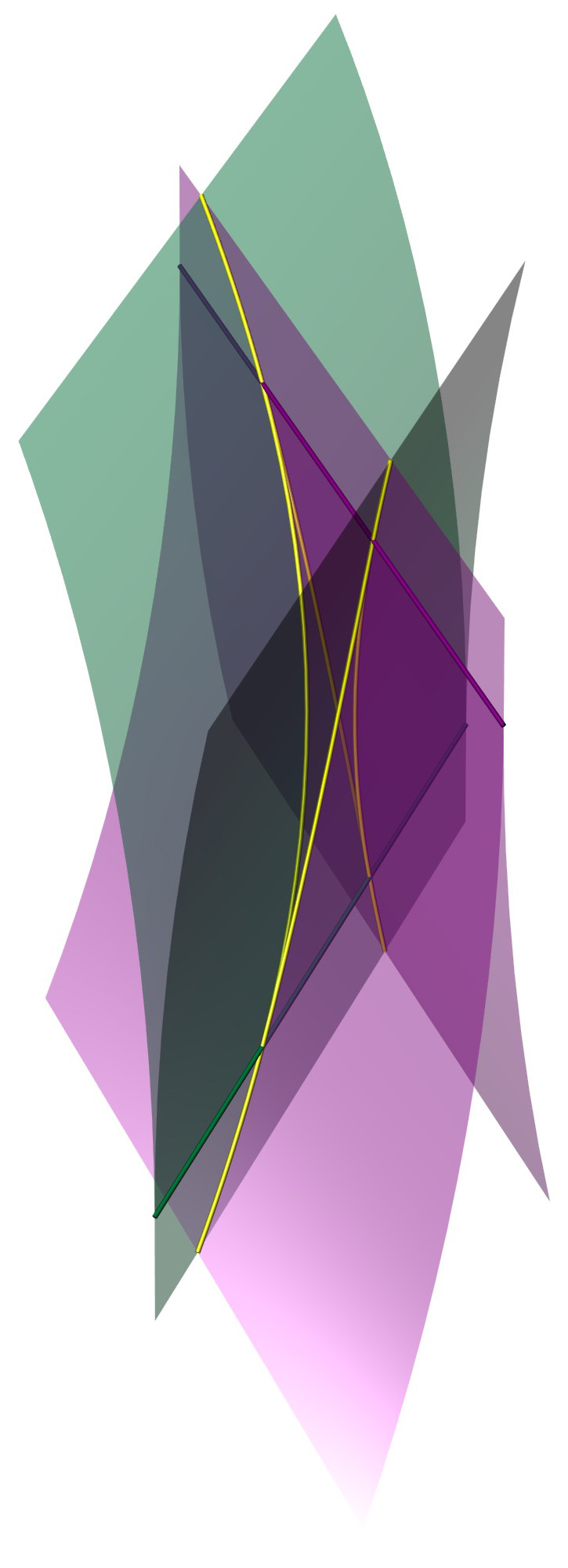}
\label{Fig:PinchedTet}
}
\qquad
\subfloat[Birth, life, and death. ]{
\includegraphics[height=5in]{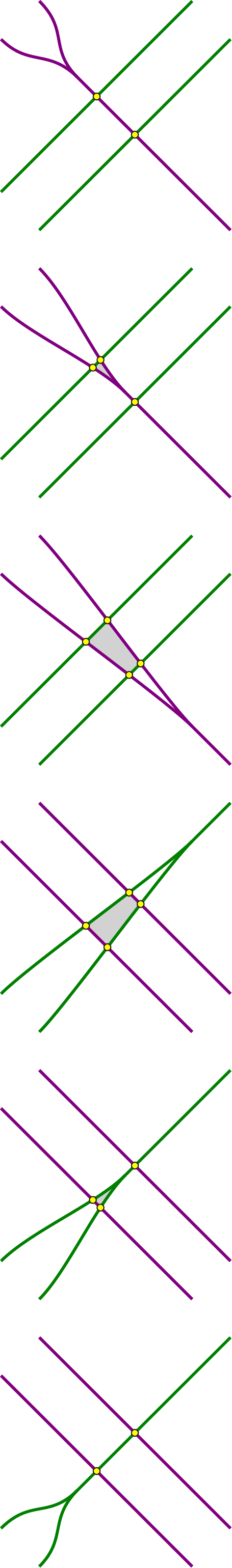}
\label{Fig:LifeAndDeath}
}
\caption{The right shows horizontal slices through the left. See also Figures~2.2,~2.3 and~2.6 of~\cite{Mosher96}.}
%%% In the centre is a pinched tetrahedron. There are 8 regions around
%%% it. Four are definitely parts of pinched tetrahedra because they
%%% have the two sectors on the sharp end of a branch line - hence
%%% pinching. The remaining four could be torus shells or not. Note
%%% that at most two are torus shells since torus shells cannot glue
%%% to each other.
\label{Fig:PinchedTetBoth}
\end{figure}

\begin{definition}
We call a foliation of (a three-dimensional region of) $M$ \emph{horizontal} if it is everywhere transverse to $X$, to $B^*$, to $B_*$, and to $B^* \cap B_*$.
\end{definition}

The birth, life, and death of a pinched tetrahedron play out on the two-dimensional leaves of such a horizontal foliation.
%%% The seven ages of a pinched tetrahedron.

\begin{definition}
\label{Def:LifeAndDeath}
Suppose that $C$ is a pinched tetrahedron for $B^*$ and $B_*$.
Since $C$ is simply connected for the purposes of this definition we may assume that $M$ is simply connected.
Suppose that  $(H_s)_{s\in\RR}$ is a horizontal foliation of a ball in $M$ containing $C$.
As $s$ increases, we move upwards, in the direction of $X$.
Let $\tau^s = H_s \cap B^*$ and $\tau_s = H_s \cap B_*$ be the \emph{upper} and \emph{lower tracks} in $H_s$ respectively.
Let $C_s = C \cap H_s$.
There are four special times $a < b < c < d$ as follows.
\begin{itemize}
\item
At time $a$, the pinched tetrahedron $C$ is born as a track-cusp of $\tau^a$ crosses an arc of $\tau_a$, moving forwards.
\item
For $s \in (a, b)$, the disk $C_s$ is a \emph{green trigon}.
It has two sides and a track-cusp in $\tau^s$.
The remaining side is in $\tau_s$.
\item
At time $b$, the track-cusp of $\tau^b$ (on the same branch line) crosses another arc of  $\tau_b$, still moving forward.
\item
For $s \in (b, c)$, the disk $C_s$ is a \emph{quadragon}.
Its four sides alternate between $\tau^s$ and $\tau_s$.
\item
At time $c$, a track-cusp of $\tau_c$ crosses an arc of $\tau^c$, moving backwards.
\item
For $s \in (c, d)$, the disk $C_s$ is a \emph{purple trigon}.
It has two sides and a track-cusp in $\tau_s$.
The remaining side is in $\tau^s$.
\item
At time $d$, the pinched tetrahedron $C$ dies as the track-cusp of $\tau_d$ (on the same branch line) crosses an arc of $\tau^d$, still moving backwards. \qedhere
\end{itemize}
\end{definition}
\reffig{LifeAndDeath} shows $\tau^s \cup \tau_s$ for six representative generic heights.
%%% The two middle heights are between $b$ and $c$: combinatorial
%%% changes elsewhere likely need to occur between these two
%%% configurations.

\begin{definition}
\label{Def:DynamicShell}
Suppose that $C$ is a component of $M - (B^* \cup B_*)$.
We call $C$ a \emph{dynamic torus shell} if it is homeomorphic to $T^2 \cross (0,1)$.
We require that for any $\epsilon$ the image of $T^2 \cross (0, \epsilon)$ in $C$ is an end of $M$.
The other end of $C$ must have closure (in the path metric) homeomorphic to $T^2 \cross (1/2, 1]$.
The boundary of this must meet, in alternating fashion, annuli from $B^* - B_*$ and from $B_* - B^*$.  The annuli from $B^* - B_*$ are the \emph{stable annuli} of $C$ while the annuli from $B_* - B^*$ are the \emph{unstable annuli} of $C$.  See \reffig{TorusShell}.

\begin{figure}[htb]
\centering
\includegraphics[width=0.5\textwidth]{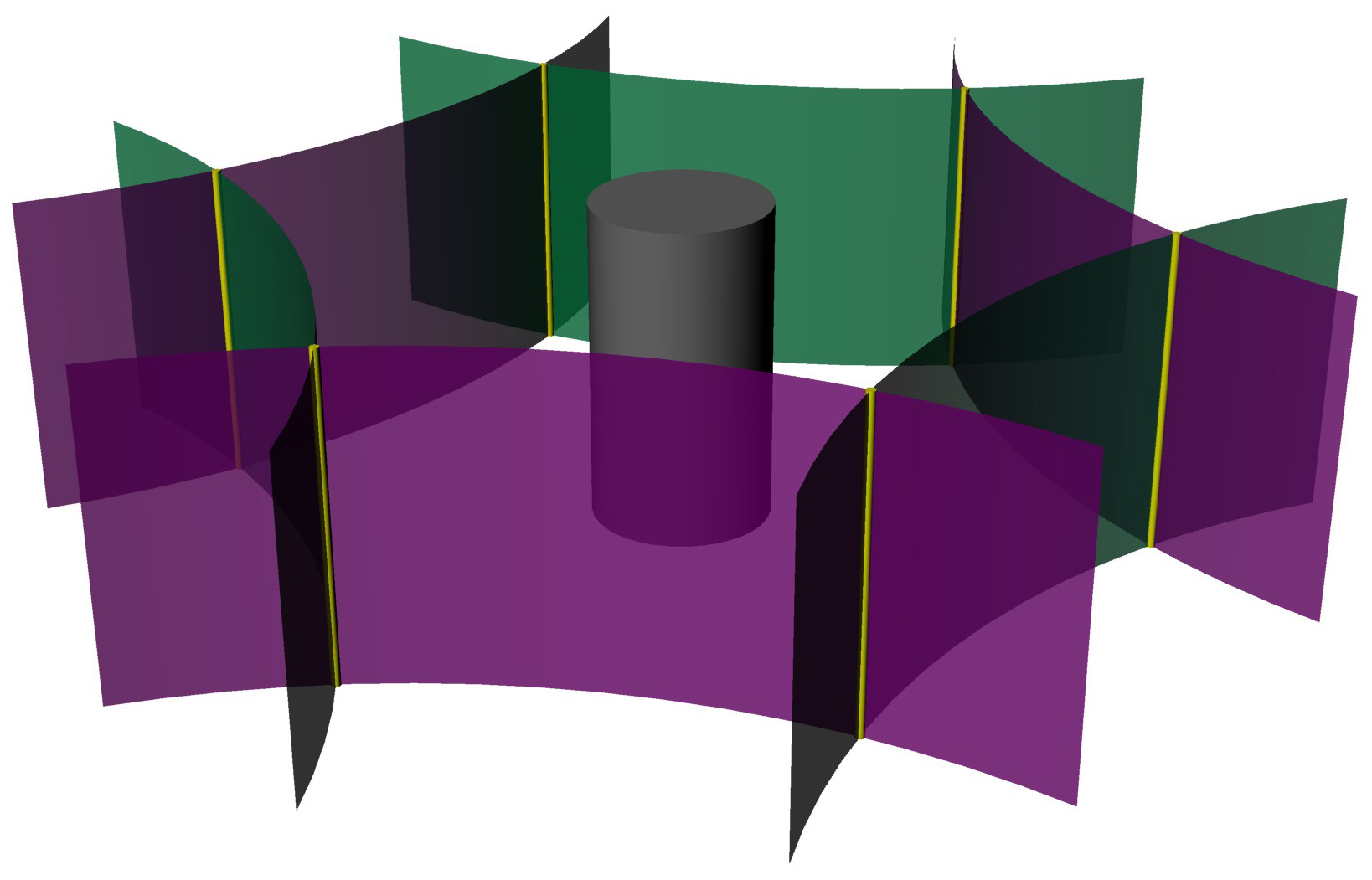}
\caption{A section of an annulus or torus shell.
The central grey cylinder represents an end of $M$.}
\label{Fig:TorusShell}
\end{figure}

Taking infinite degree covers of a dynamic torus shell yields (periodic)
\emph{dynamic annulus shells} and \emph{dynamic plane shells}.
More generally, such shells need not be periodic.
This occurs only when neither $B^*$ nor $B_*$ is compact.
There are two types of dynamic annulus shell.
In one, the frontier is a bi-infinite alternating union of stable and unstable annuli.
In the other, the frontier is a finite alternating union of stable and unstable \emph{strips} of the form $[0,1] \cross \RR$.
There is only one type of dynamic plane shell.
Here the frontier is a bi-infinite alternating union of stable and unstable strips.
Thus for any dynamic shell $C$, the components of the frontier (after cutting along $B^* \cap B_*$) are stable and unstable annuli or strips.
These annuli or strips are the \emph{faces} of the dynamic shell $C$.
\end{definition}

\begin{definition}
Suppose that $C$ is a complementary region.
Suppose that $F$ is an unstable face of $\closure{C}$.
%%% again with closure taken in the induced path metric.
The components of $F - B^{(1)}_*$ are called the \emph{subfaces} of $F$.
The subfaces of a stable face are defined similarly.
\end{definition}

\begin{definition}
\label{Def:Upwards}
A smooth path $\alpha$ in $B^*$ is \emph{upwards} if it always crosses the branch locus of $B^*$ from the side with fewer sheets to the side with more.
We make a similar definition for \emph{downwards} paths in $B_*$.
\end{definition}

We are now equipped to give our definition of a dynamic pair.

\begin{definition}
\label{Def:DynamicPair}
We say that $B^*$ and $B_*$ form a \emph{dynamic pair} if they satisfy the following.
\begin{enumerate}
\item
\label{Itm:Transversality}
(Transversality): The branched surfaces $B^*$ and $B_*$ intersect transversely.
\item
\label{Itm:Components}
(Components): Every component of $M - (B^* \cup B_*)$ is either a pinched tetrahedron or a dynamic shell.
\item
\label{Itm:Transience}
(Transience): For every component $F$ of $B_* - B^*$ there is an unstable face $F' \subset F$ of some dynamic shell so that $F'$ is a sink for all upwards rays in $F$.
The corresponding statement holds for downwards paths in $B^* - B_*$.
\item
\label{Itm:Separation}
(Separation):
No distinct pair of subfaces of dynamic shells are glued in $M$.  \qedhere
\end{enumerate}
\end{definition}

\begin{definition}
\label{Def:DynamicTrainTrack}
Suppose that $B^*$ and $B_*$ form a dynamic pair.
Then their \emph{dynamic train-track} is the intersection $B^\calV \cap B_\calV$.
\end{definition}

\begin{remark}
Dynamic shells (and pinched tetrahedra) may meet each other or themselves along intervals of the dynamic train-track.
For an example, see \reffig{FinalPositionFig8Sibling}.
\end{remark}

Our \refdef{DynamicTrainTrack} is taken directly from~\cite[page~54]{Mosher96}.
Note that our \refdef{DynamicPair} is more restrictive than Mosher's~\cite[page~52]{Mosher96}.
Mosher allows dynamic shells to meet along subfaces while we do not.
%%% These two definitions give the same result in the compact case because we can just flow downwards and loop around the annulus. But they do not seem to give the same result in the non-compact case.
He also allows solid torus pieces.
We do not require (or allow) solid torus pieces in the cusped case.
In the closed case they are necessary;
we deal with this as follows.
%%% One cannot have a dynamic pair where all complementary regions are
%%% pinched tetrahedra: transience requires some dynamic shells or
%%% dynamic solid tori.

\begin{remark}
Suppose that $\gamma$ is a curve in $T$, a torus boundary component of $M$.
Suppose that $C$ is a dynamic torus shell containing $T$.
Suppose that $\gamma$ meets the dynamic train-track (projected from $C$ to $T$) at least four times.
Then Dehn filling $M$ along $\gamma$ converts $C$ into a solid torus piece $C(\gamma)$.
So, after filling all dynamic torus shells we arrive at the closed case.
\end{remark}

\subsection{The naive push-off}
\label{Sec:PushOff}

As noted in \refrem{Difficulty}, in normal position the branched surfaces $B^\calV$ and $B_\calV$ coincide in (at least) all normal quadrilaterals in all fan tetrahedra.
To try and fix this, we choose orientations on the edges of $\calV^{(1)}$.
We then push $B_\calV$ slightly in the directions of the edge orientations and pull $B^\calV$ slightly against them.
We call this pair of isotopies the \emph{naive push-off}.
In Examples~\ref{Exa:Win} and \ref{Exa:Fail} we see that this sometimes works and sometimes does not.
The way in which the naive push-off fails is instructive;
as noted in \refrem{Difficulty} the obstructions are non-local.

\begin{example}
\label{Exa:Win}
In \reffig{Win} we draw an exploded view of the veering triangulation on the figure-eight knot complement, as previously introduced in \reffig{VeerFigEight}.
The upper and lower train-tracks are the result of intersecting $B^\calV$ and $B_\calV$ with the faces and equatorial squares of the veering tetrahedra.
The naive push-off keeps the dynamic branched surfaces dual to the horizontal branched surface $B = B(\calV)$ and makes them transverse to each other.
Note that no pair of train-tracks in any horizontal cross-section form a bigon.

In fact, the push-off makes $B^\calV$ and $B_\calV$ into a dynamic pair.
Parts~\refitm{Transversality} and~\refitm{Separation} of \refdef{DynamicPair} can be checked cross-section by cross-section.
For part~\refitm{Components}, we have labelled cross-sections through the four pinched tetrahedra~\textsc{a}$_i$ through~\textsc{d}$_i$, with subscripts indicating the vertical order.
One must check that as we move vertically through the manifold, the sections through the regions assemble to form pinched tetrahedra (see \reffig{LifeAndDeath}) and dynamic torus shells.
Note that in \reffig{Win}, as we move downwards from the middle section to the bottom of the two tetrahedra, regions~\textsc{c}$_1$ and~\textsc{d}$_1$ go from being quadragons to being green trigons (and then disappear), but the trigonal stage is not shown.
Part~\refitm{Transience} must be checked by hand.
\end{example}

\begin{example}
\label{Exa:Fail}
Consider the veering triangulation on the figure-eight knot sibling, shown in \reffig{Fail}.
Again we push $B_\calV$ in the direction of the orientations of the edges; this time bigons appear in several of the horizontal cross-sections.
In fact there is \emph{no} orientation of the edges that leads to a dynamic pair via the naive push-off.
This is because the \emph{mid-surface} (\refdef{MidSurface}) for the figure-eight knot sibling is not transversely orientable.
Further details are given in \refrem{Fail}.
\end{example}

\begin{landscape}
\begin{figure}[htbp]
\subfloat[The figure-eight knot complement with the veering triangulation \usebox{\FigEightVeer}.]{
\labellist
\footnotesize\hair 2pt
\pinlabel \textsc{b}$_4$ at 104 407
\pinlabel \textsc{a}$_6$ at 51 402
\pinlabel \textsc{d}$_2$ at 104 384
\pinlabel \textsc{a}$_2$ at 83 373
\pinlabel \textsc{b}$_4$ at 48 358
\pinlabel \textsc{a}$_6$ at 103 354

\pinlabel \textsc{b}$_3$ at 104 254
\pinlabel \textsc{a}$_5$ at 67 238
\pinlabel \textsc{b}$_7$ at 112 230
\pinlabel \textsc{d}$_1$ at 103 213
\pinlabel \textsc{b}$_3$ at 48 201
\pinlabel \textsc{a}$_5$ at 103 197
\pinlabel \textsc{a}$_1$ at 85 184
\pinlabel \textsc{c}$_3$ at 70 184

\pinlabel \textsc{b}$_2$ at 99 104
\pinlabel \textsc{a}$_4$ at 52 100
\pinlabel \textsc{b}$_6$ at 82 87
\pinlabel \textsc{b}$_2$ at 48 48
\pinlabel \textsc{a}$_4$ at 99 42
\pinlabel \textsc{c}$_2$ at 70 30

%%% Second column
\pinlabel \textsc{a}$_4$ at 205 407
\pinlabel \textsc{b}$_6$ at 257 402
\pinlabel \textsc{c}$_2$ at 205 384
\pinlabel \textsc{b}$_2$ at 227 373
\pinlabel \textsc{a}$_4$ at 261 358
\pinlabel \textsc{b}$_6$ at 206 354

\pinlabel \textsc{a}$_3$ at 205 254
\pinlabel \textsc{b}$_5$ at 242 238
\pinlabel \textsc{a}$_7$ at 201 230
\pinlabel \textsc{c}$_1$ at 208 213
\pinlabel \textsc{a}$_3$ at 261 201
\pinlabel \textsc{b}$_5$ at 206 197
\pinlabel \textsc{b}$_1$ at 224 183
\pinlabel \textsc{d}$_3$ at 239 186

\pinlabel \textsc{a}$_2$ at 210 104
\pinlabel \textsc{b}$_4$ at 257 100
\pinlabel \textsc{a}$_6$ at 227 87
\pinlabel \textsc{a}$_2$ at 261 48
\pinlabel \textsc{b}$_4$ at 210 42
\pinlabel \textsc{d}$_2$ at 239 32
\endlabellist
\includegraphics[width = 0.65\textwidth]{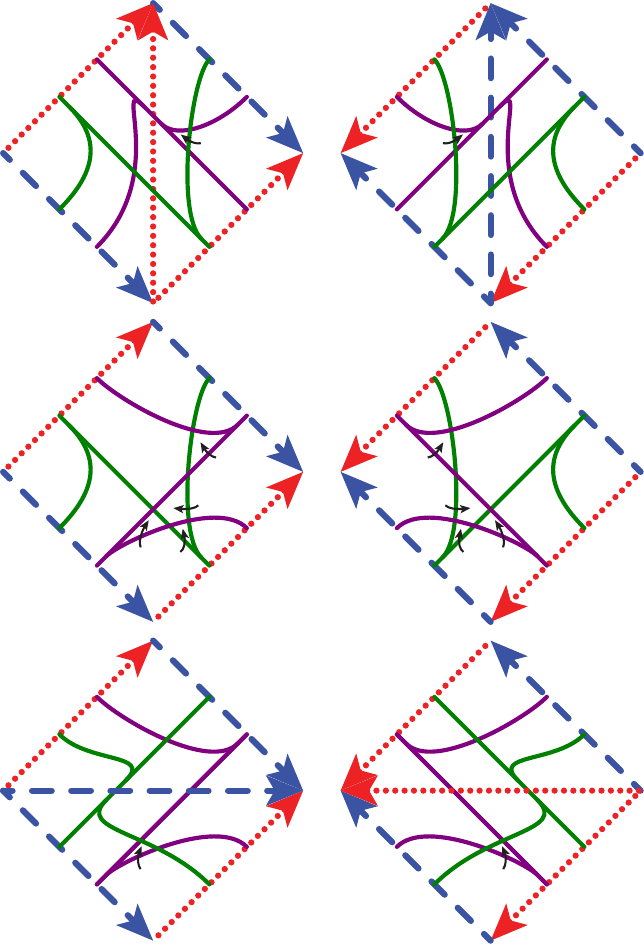}
\label{Fig:Win}
}
\qquad
\subfloat[The figure-eight knot sibling with the veering triangulation \usebox{\FigEightSisVeer}.]{
\includegraphics[width = 0.65\textwidth]{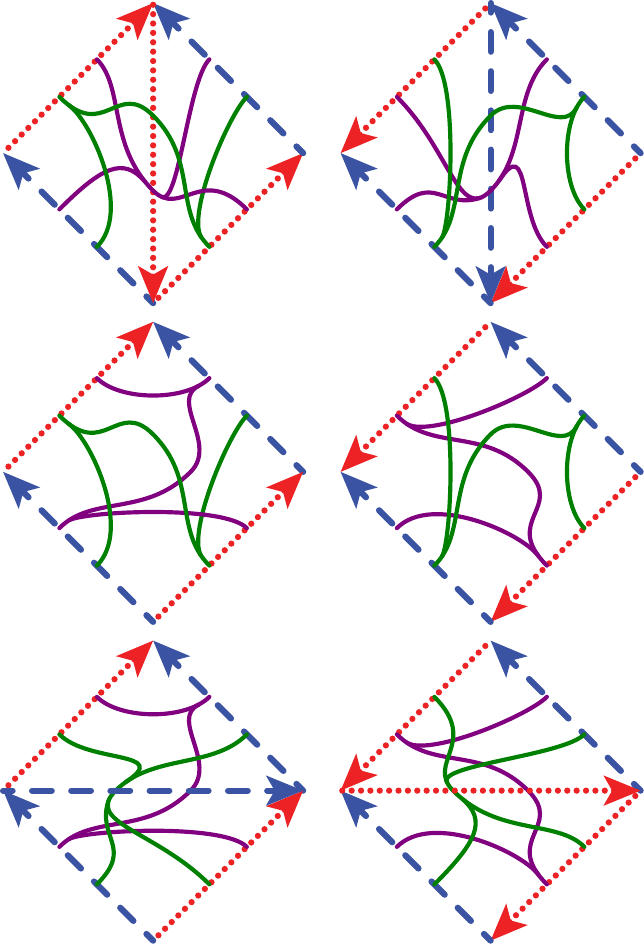}
\label{Fig:Fail}
}
\caption{Each column shows three slices:
the upper and lower faces of, and an equatorial square through, one of the tetrahedra.
In the figure-eight knot complement, $B^\calV$ (green) and $B_\calV$ (purple) have been naively pushed off each other to produce a dynamic pair.
In the sibling, this does not work.}
\label{Fig:WinFail}
\end{figure}
\end{landscape}

Even when it works, the naive push-off requires making a choice.
Thus the resulting dynamic pair is not canonically associated to the initial veering triangulation.
%%% A concrete example - consider the naive push-off for the figure
%%% eight shown in Fig:WinFail.  We call the tetrahedra A and B
%%% ``large'' because they meet themselves along a 2D face.  (Note
%%% that this does not happen in the canonical case shown in
%%% Fig:FinalPositionFig8Sibling!)  Now, reverse just the blue edge
%%% and do the push-off.  This makes the tetrahedra A and C large.
%%% [Note that the names are not canonical, but the cyclic order is -
%%% there is a vertical edge (an intersection of green and purple)
%%% that meets all four tets.]  Anyway, the upshot is that the various
%%% push-offs have different combinatorics from each other and also
%%% from the canonical dynamic pair.

Instead of isotoping the branched surfaces horizontally, we will ``split'' them closer to the stable and unstable foliations of the hypothesised pseudo-Anosov flow.
To define these isotopies, we define various decompositions of $M$ (in Sections~\ref{Sec:NewCombinatorics} and~\ref{Sec:BigonCoords}).
We then describe a sequence of isotopies, of each of $B^\calV$ and $B_\calV$, through the new decompositions (in Sections~\ref{Sec:StraighteningShrinking},~\ref{Sec:Parting}, and~\ref{Sec:Draping}).

\section{Shearing regions, mid-bands, and the mid-surface}
\label{Sec:NewCombinatorics}

Here we give a decomposition of a veering triangulation into a canonical collection of \emph{shearing regions}.
Each of these is either a solid torus or a solid cylinder.
We use these to define the \emph{mid-bands} and the \emph{mid-surface}.

\subsection{Shearing regions}
\label{Sec:ShearingDecomposition}

\begin{definition}
\label{Def:IdealSolid}
An \emph{ideal solid torus} $U$ is a solid torus $D^2 \cross S^1$, together with a non-empty discrete subset of $(\bdy D^2) \cross S^1$, called the \emph{ideal points} of $U$.
%%% We used to just remove some ideal vertices, but Saul wanted
%%% everything to be compact... As usual, we draw everything in the
%%% cusped model.
We define an \emph{ideal solid cylinder} in similar fashion, replacing $S^1$ by $\RR$.
\end{definition}

\begin{definition}
\label{Def:TautSolid}
A \emph{taut solid torus (cylinder)} $U$ is an ideal solid torus (cylinder) decorated with a \emph{paring locus} $\gamma$ containing all of the ideal points of $U$.
The paring locus is a multi-curve $\gamma = \gamma(U)$ meeting every meridional disk exactly twice.
There is at least one ideal point on every component of $\gamma$.
A taut solid torus $U$ has a \emph{mid-band} $B$; this is either an annulus or a M\"obius band, properly embedded in $U$ and disjoint from $\gamma$.
The mid-band of a taut solid cylinder is instead a strip, $[0,1] \cross \RR$.
In all cases, the mid-band intersects every meridional disk in a single arc and every boundary compression of the mid-band intersects the pairing locus.
\end{definition}

\begin{definition}
\label{Def:TransverseSolid}
A \emph{transverse taut solid torus (cylinder)} $U$ is a taut solid torus (cylinder) where $\bdy U - \gamma$ has two components, called the \emph{upper} and \emph{lower boundaries} $\bdy^+ U$ and $\bdy^- U$.
These are equipped with transverse orientations that point out of and into $U$, respectively.
Note that all taut solid cylinders can be equipped with such an orientation.
\end{definition}

In a transverse taut solid torus the mid-band is necessarily an annulus.
In a taut solid cylinder it is necessarily a strip.

\begin{definition}
\label{Def:ShearingRegion}
A \emph{shearing region} $U$ is a taut solid torus or cylinder, together with a \emph{colour} (red or blue) and a squaring of $\bdy U - \gamma$, with vertices at the ideal points.
All edges contained in the paring locus $\gamma$ are the opposite colour to $U$ and are called \emph{longitudinal}.
All edges not in $\gamma$ are the same colour as $U$ and are called \emph{helical}.
The helical edges form a helix that spirals right or left (as $U$ is red or blue);
the helix meets every meridional disk exactly once, transversely.
We give the mid-band $B \subset U$ the same colour as $U$ itself.
\end{definition}

See \reffig{Continuing} for the local model of a red shearing region.

\begin{figure}[htbp]
\subfloat[Start with veering tetrahedra.]{
\includegraphics[width=0.48\textwidth]{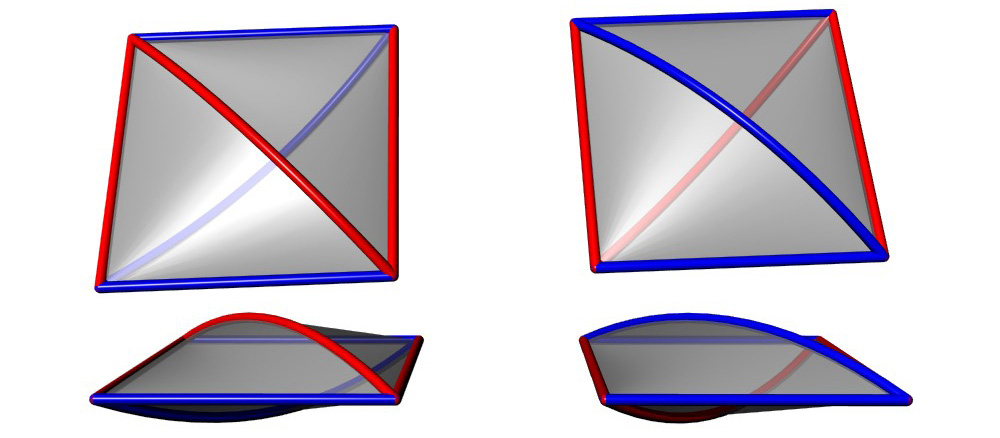}
}
\subfloat[Cut into half-tetrahedra, select red half-tetrahedra.]{
\includegraphics[width=0.48\textwidth]{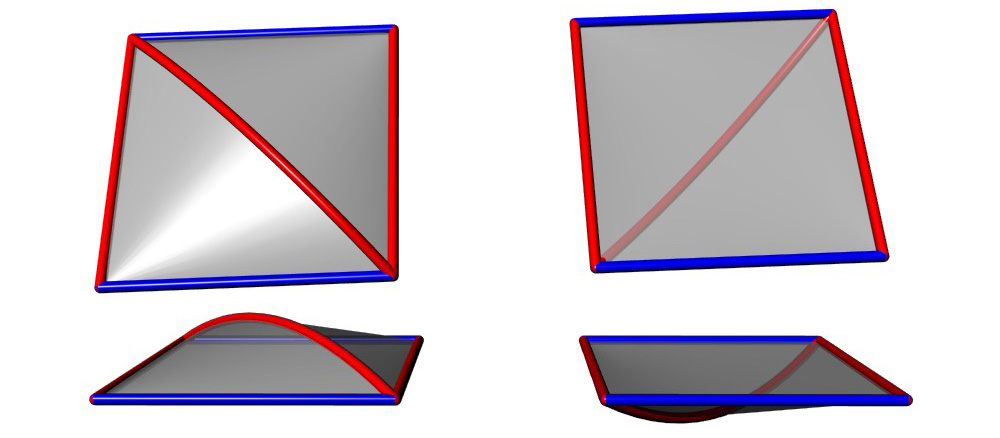}
\label{Fig:CutVeeringTet}
}
%%% Don't mess with the blank space...

\subfloat[Shear.]{
\includegraphics[width=0.48\textwidth]{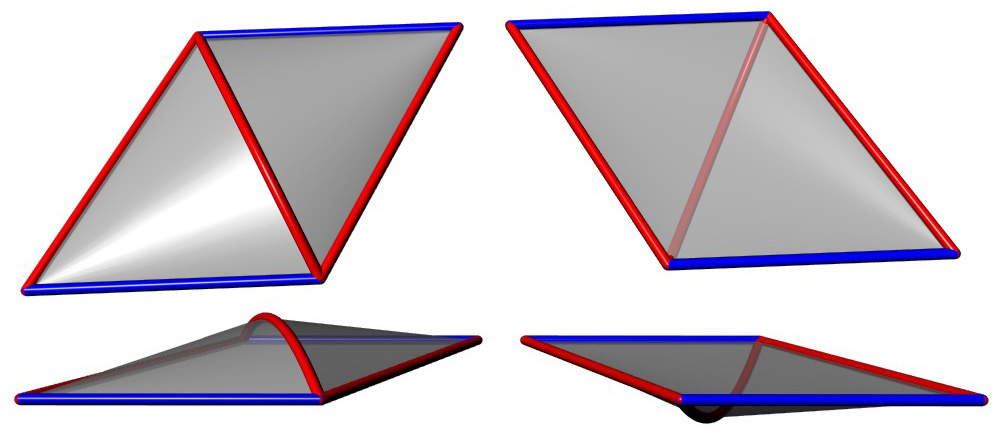}
\label{Fig:ShearHalfTets}
}
\subfloat[Bend.]{
\includegraphics[width=0.48\textwidth]{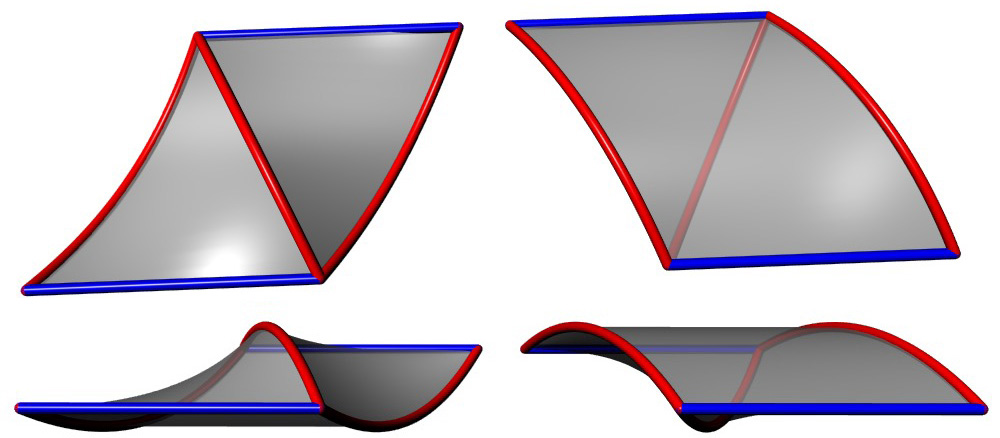}
\label{Fig:PartHalfTets}
}

\subfloat[Glue half-tetrahedra together.]{
\includegraphics[width=0.48\textwidth]{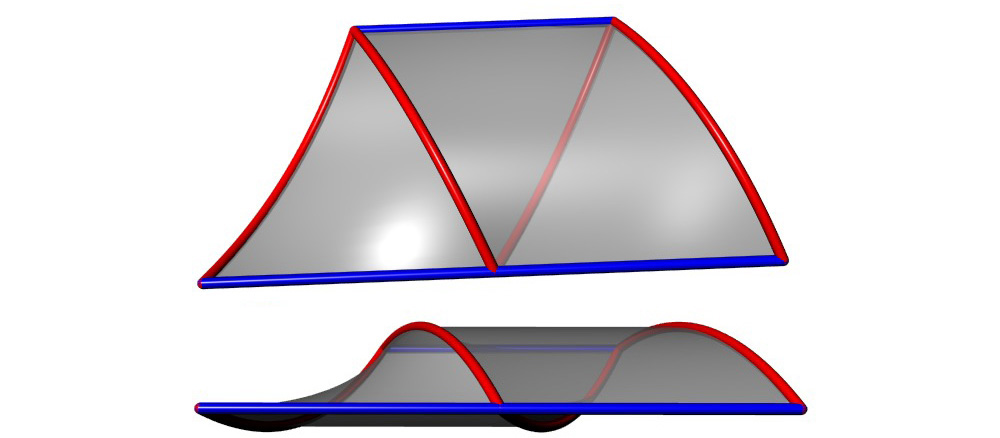}
\label{Fig:GlueHalfTets}
}
\subfloat[Continue gluing.]{
\includegraphics[width=0.48\textwidth]{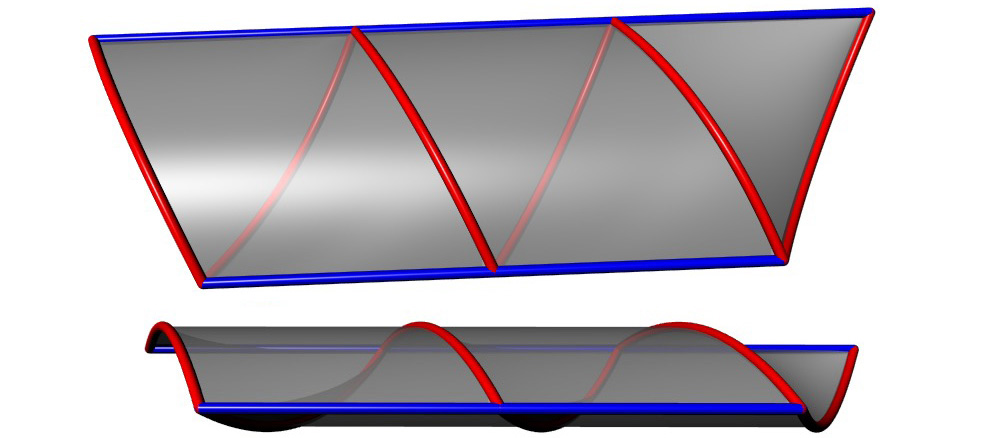}
\label{Fig:Continuing}
}
\caption{Top and side views of the construction of a red shearing region. }
\label{Fig:SolidTorusConstruction}
\end{figure}

\begin{definition}
\label{Def:ShearingDecomposition}
Suppose that $\calU$ is a collection of model shearing regions.
Let $\calU^{(0)}$ be the set of ideal points.
Suppose furthermore that the shearing regions are glued along all of their squares, respecting the colours of edges and so that every edge has exactly two helical models.
%%% Total dihedral angle is 2\pi
We call $\calU$ a \emph{shearing decomposition} of $|\,\calU - \calU^{(0)}|$.
The decomposition is called \emph{transverse} if all of the shearing regions in $\calU$ are transverse and the gluings respect the transverse orientations on the squares.
\end{definition}

Suppose that $\calV$ is a veering triangulation (not necessarily transverse or finite).
Recall from \refsec{Triangulations} that there are blue and red fan tetrahedra as well as toggle tetrahedra.
Cutting a veering tetrahedron along its equatorial square results in a pair of \emph{half-tetrahedra};
see \reffig{CutVeeringTet}.
In every half-tetrahedra there is a unique (up to isotopy) \emph{half-diamond}:
this is a triangle, properly embedded in the half-tetrahedron, meeting only the edges of the colour of the $\pi$--edge, and those only exactly once at each midpoint.
We give a half-diamond the colour of the edges it meets.
See \reffig{HalfDiamonds}.
We arrange matters so that the two half-diamonds in a fan tetrahedron meet along their bases, and so form a full diamond.
The two half-diamonds in a toggle tetrahedron $t$ meet in exactly one point:
the centre of the equatorial square of $t$.
%%% They are different colours, so do not meet elsewhere.
For each half-diamond in a toggle tetrahedron, the central half
of its intersection with the equatorial square is the \emph{boundary arc} of the half-diamond.
(In \refdef{MidSurface}, the union of the boundary arcs will give the boundary of the \emph{mid-surface}.)
Again, see \reffig{HalfDiamonds}.

\begin{figure}[htbp]
\includegraphics[width=0.97\textwidth]{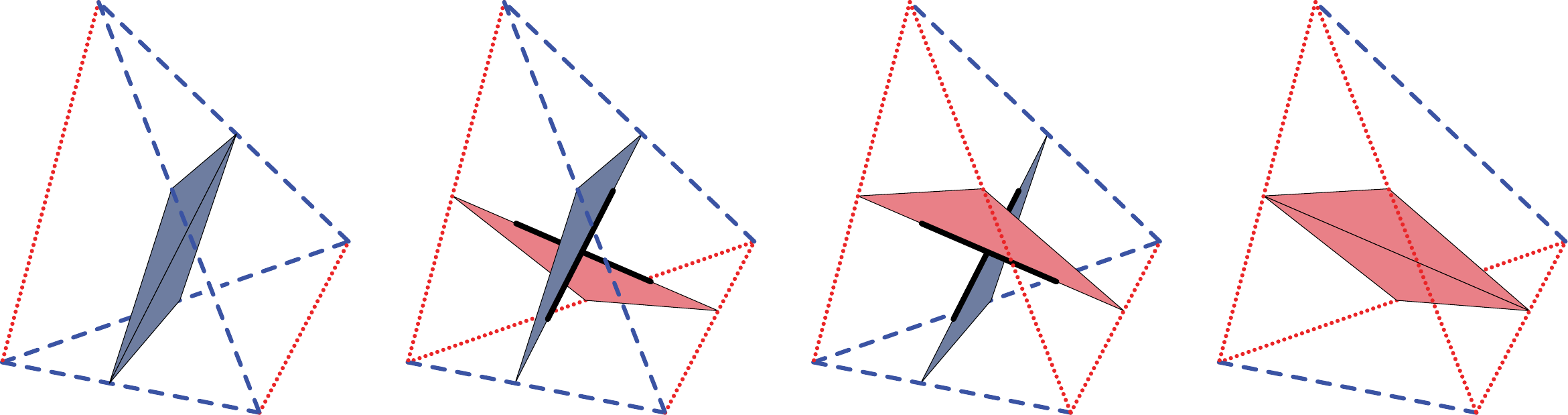}
\caption{Diamonds and half-diamonds.
Each half-diamond in a toggle tetrahedron has a boundary arc, shown here in black.}
\label{Fig:HalfDiamonds}
\end{figure}

\begin{figure}[htbp]
\subfloat[]
{
\includegraphics[width = 6 cm]{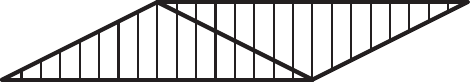}
\label{Fig:LineFieldGood}
}
\qquad
\subfloat[]
{
\includegraphics[width = 4.2 cm]{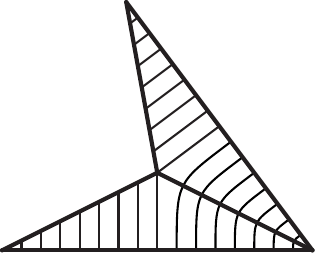}
\label{Fig:LineFieldBad}
}
\caption{In \reffig{LineFieldGood} we see adjacent half-diamonds in a veering triangulation.
In \reffig{LineFieldBad} we see an unpleasant possibility for adjacent half-diamonds in a taut triangulation.}
\label{Fig:LineField}
\end{figure}

\begin{theorem}
\label{Thm:ShearingDecomposition}
Suppose that $\calV$ is a veering triangulation (not necessarily transverse or finite).
Then there is a canonical shearing decomposition of $M$ associated to $\calV$.
\end{theorem}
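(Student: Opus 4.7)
The plan is to construct the decomposition explicitly from the half-tetrahedra and half-diamonds of $\calV$, and then verify that the pieces have the structure of taut solid tori or cylinders. Each tetrahedron of $\calV$ is already cut along its equatorial square into two half-tetrahedra, each containing one half-diamond. A half-tetrahedron inherits the colour of its half-diamond, so both halves of a red (blue) fan tetrahedron are red (blue), while a toggle tetrahedron contributes one red half and one blue half. We form the \emph{shearing regions of colour $c$} as the connected components of the complex obtained from all $c$-coloured half-tetrahedra, after making two gluings: within each fan tetrahedron of colour $c$, we reglue the two $c$-halves along the equatorial square; across each face of $\calV$ whose two adjacent halves both have colour $c$, we reglue using the ambient face identification. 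The candidate mid-band of a region is the union of its (half-)diamonds, and the candidate paring locus consists of the opposite-colour edges on the frontier.

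The main work is then to check that each connected component $U$ has the structure required by \refdef{ShearingRegion} and that the ambient identifications between oppositely coloured regions satisfy \refdef{ShearingDecomposition}. The key combinatorial observation is that, around each $c$-coloured edge $e$ of $\calV$, the link of $e$ is a cyclic (or linear) sequence of tetrahedra whose types and colourings are tightly constrained by the veering condition. One shows that the $c$-coloured half-diamonds incident to $e$ assemble into a single disk transverse to $e$, which serves as a meridional disk of the region containing $e$. Sweeping these meridians across faces of $\calV$ between same-colour halves, and across equatorial squares of $c$-coloured fan tetrahedra, fills out $U$ as a solid torus (or, in the non-finite case, a solid cylinder), with core a sequence of $c$-edges of $\calV$ glued end to end. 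The frontier of $U$ decomposes as squares: the helical same-colour edges come from the spiralling prescribed by the veering gluing automaton (\reffig{GluingAutomaton}), the longitudinal opposite-colour edges lie in faces of $\calV$ that separate $U$ from its neighbours of opposite colour, and the squares themselves come from whole faces of $\calV$ (between opposite-colour halves) and half-equatorial squares of toggle tetrahedra. Colour-compatibility across each glued square, and the fact that each edge of $\calV$ acquires exactly two helical models (one on each of its two sides of its own colour), are then direct consequences of the local models.

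Canonicity requires no additional argument: the colour of each half-diamond, the cutting along equatorial squares, and the face identifications are all intrinsic features of $\calV$, so no choices are made. The main obstacle is the verification that the combinatorial gluing genuinely yields a solid torus (or cylinder), rather than some higher-genus piece: a priori the $c$-coloured halves could fit together along a surface of positive genus, or produce cross-sections that are non-disks. What saves us is the rigid rotational structure of veering triangulations around each edge, which forces the $c$-coloured half-diamonds at a $c$-edge to assemble into a single meridian and forces the boundary squaring to close up in the prescribed helical pattern. When $\calV$ is non-transverse, the same analysis allows the mid-band to close up as a M\"obius band (matching the transverse non-orientability flagged in \refexa{Fail}); when $\calV$ is not finite, the edge-path core may fail to close up, giving a solid cylinder instead of a solid torus. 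Both cases are uniformly handled by this local analysis around each $c$-edge.
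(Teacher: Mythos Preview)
Your construction does not give the shearing decomposition: by including the gluing ``within each fan tetrahedron of colour $c$, we reglue the two $c$-halves along the equatorial square,'' you obtain the monochromatic components of \refdef{Monochromatic}, which are handlebodies and not solid tori in general. In \usebox{\NonFiberedVeer} (\reffig{PicturesNonFibered}) the red mid-surface is a once-holed Klein bottle, so the corresponding red component is a genus-two handlebody. Your internal picture is also inverted: in a $c$-coloured shearing region the $c$-edges are \emph{helical} on the boundary (\refdef{ShearingRegion}), not a core, and the half-diamonds assemble into the mid-band---an annulus or M\"obius band running parallel to the core---not into meridional disks. Your boundary description fails for a related reason: the two half-tetrahedra across any face of $\calV$ always share the majority colour of that face, so there are no ``faces between opposite-colour halves.''

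The paper's argument is simpler than the route you attempt. Drop the fan-square gluings: a half-tetrahedron has exactly two triangular faces, so gluing along those alone yields a one-dimensional chain of half-tetrahedra, automatically a solid torus or cylinder with the equatorial squares forming its boundary. The only thing left is to see that the chain is a genuine product, and that follows from the single observation that the $\pi$-edges on the two sides of any shared face are distinct, which lets the vertical line field on the half-diamonds extend coherently across each gluing.
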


\begin{proof}
Suppose that $t$ is a half-tetrahedron and $d$ is its half-diamond.
Fix a vertical line field on $d$ as shown in the left-most half-diamond of \reffig{LineFieldGood}.
Let $f$ and $f'$ be the triangular faces of $t$.
The colour of $d$ is the majority colour of the edges of $t$.
Thus the colour of $t$ and $d$ matches the majority colour of both $f$ and $f'$.
Suppose that $t$ is glued to another half-tetrahedron, $t'$, across $f'$.
Let $d'$ be the half-diamond of $t'$.
Thus $d'$ and $d$ have the same colour.

%%% If t' = t then we are in the Mobius case.

Note that the $\pi$--edges of $t$ and $t'$ are distinct edges of the model face $f'$.
%%% In other words, we do _not_ see \reffig{LineFieldBad}
(This follows from the definition of a veering triangulation: see \reffig{TwoTautTetrahedra}.)
Thus, as shown in \reffig{LineFieldGood}, we can locally extend the vertical line field on $d$, through $f'$, to $d'$.
See \reffig{GlueHalfTets}.
Let $f''$ be the other triangular face of $t'$.
Continuing in this fashion in both directions, we obtain a shearing region.
The union of the half-diamonds is the mid-band.
See \reffig{SolidTorus}.
\end{proof}

\begin{figure}[htbp]
\subfloat[Three-quarters view.]{
\includegraphics[width=0.97\textwidth]{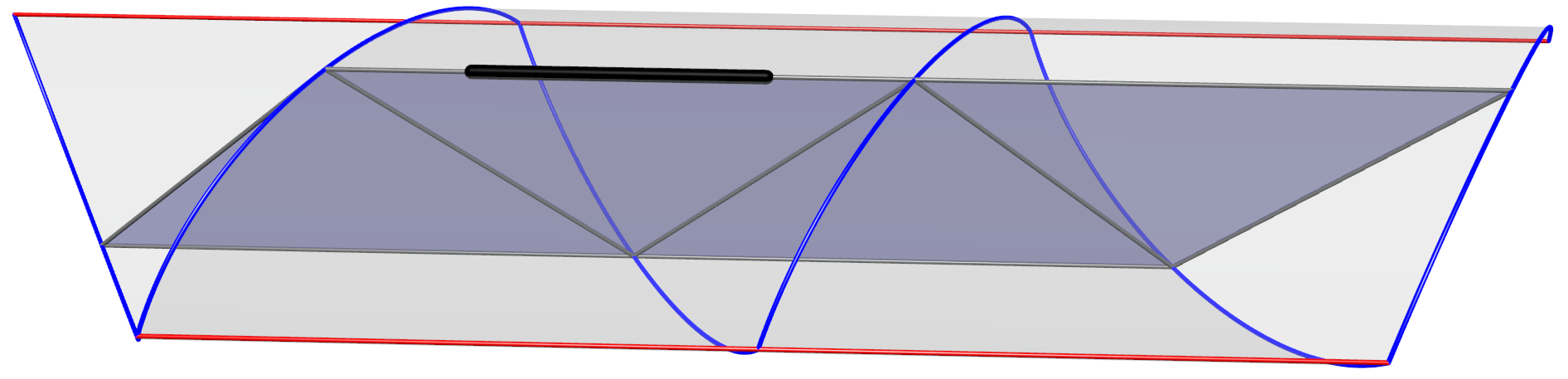}
}

\subfloat[View from above.]{
\includegraphics[width=0.97\textwidth]{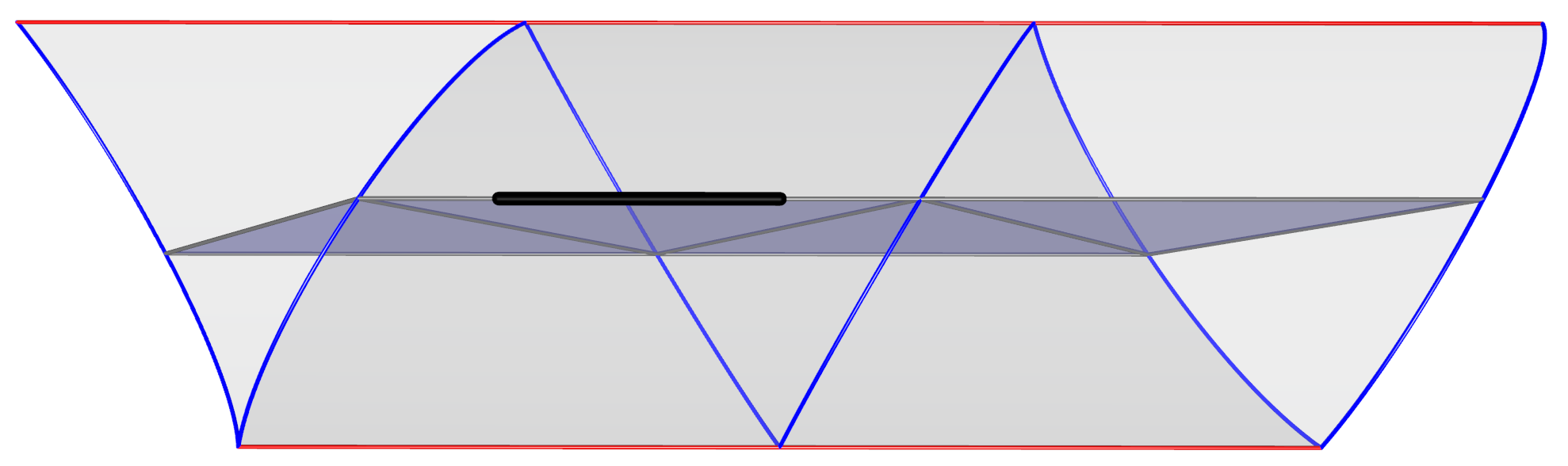}
}
\caption{A red shearing region, with embedded mid-band.
The boundary arc of the half-diamond (contained in a half-tetrahedron contained in a toggle tetrahedron) is drawn in black.}
\label{Fig:SolidTorus}
\end{figure}

We give examples of mid-bands in Figures~\ref{Fig:FigEightPictures}, \ref{Fig:m203Pictures}, \ref{Fig:PicturesNonFibered}, and~\ref{Fig:fLLQccecddehqrwjj}.
These are taken from the veering census~\cite{GSS19}.
For each example we draw, in one column per tetrahedron, its upper and lower faces.
On the faces we indicate their intersections with $B^\calV$ and $B^\calV$ after the straightening isotopy.
We also draw the mid-annuli.
See Figures~\ref{Fig:UpperHalfTet},~\ref{Fig:LowerHalfFan}, and~\ref{Fig:LowerHalfToggle}.

\begin{figure}[htbp]
\centering
\subfloat[Tetrahedra.]{
\includegraphics[height=5cm]{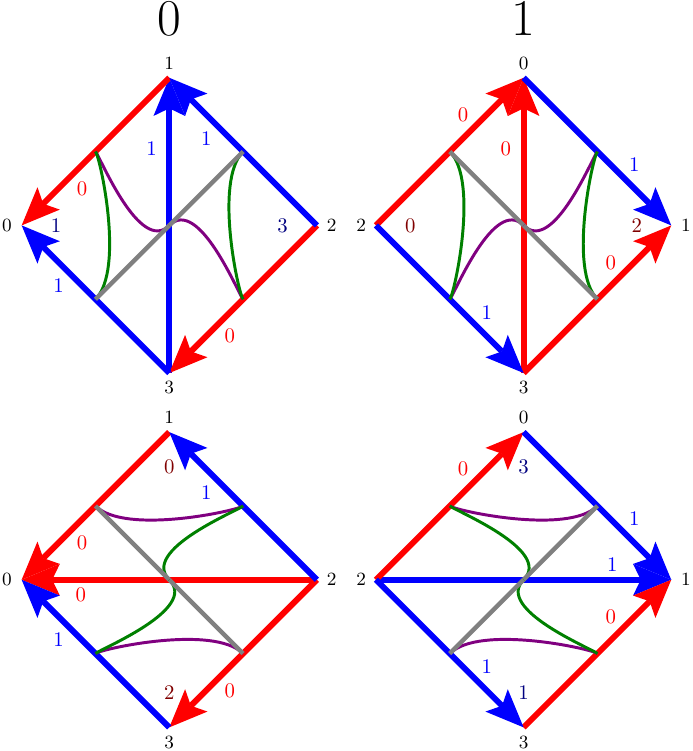}
\label{Fig:cPcbbbiht_tetrahedra}
}
\subfloat[Mid-annuli.]{
\includegraphics[height=5cm]{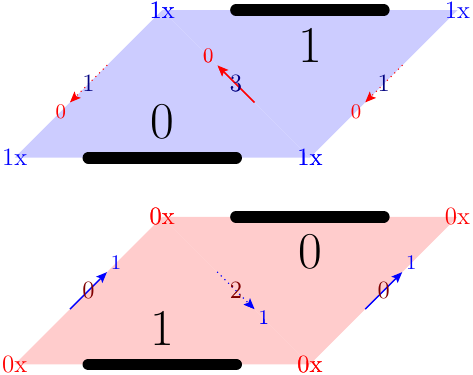}
\label{Fig:cPcbbbiht_mid-annuli}
}
\caption{A veering triangulation for \usebox{\FigEightSnappy} from the SnapPea census~\cite{snappy}.
This is \usebox{\FigEightVeer} in the census of transverse veering triangulations~\cite{GSS19}.
}
\label{Fig:FigEightPictures}
\end{figure}

\begin{figure}[htbp]
\centering
\subfloat[Tetrahedra.]{
\includegraphics[height=5cm]{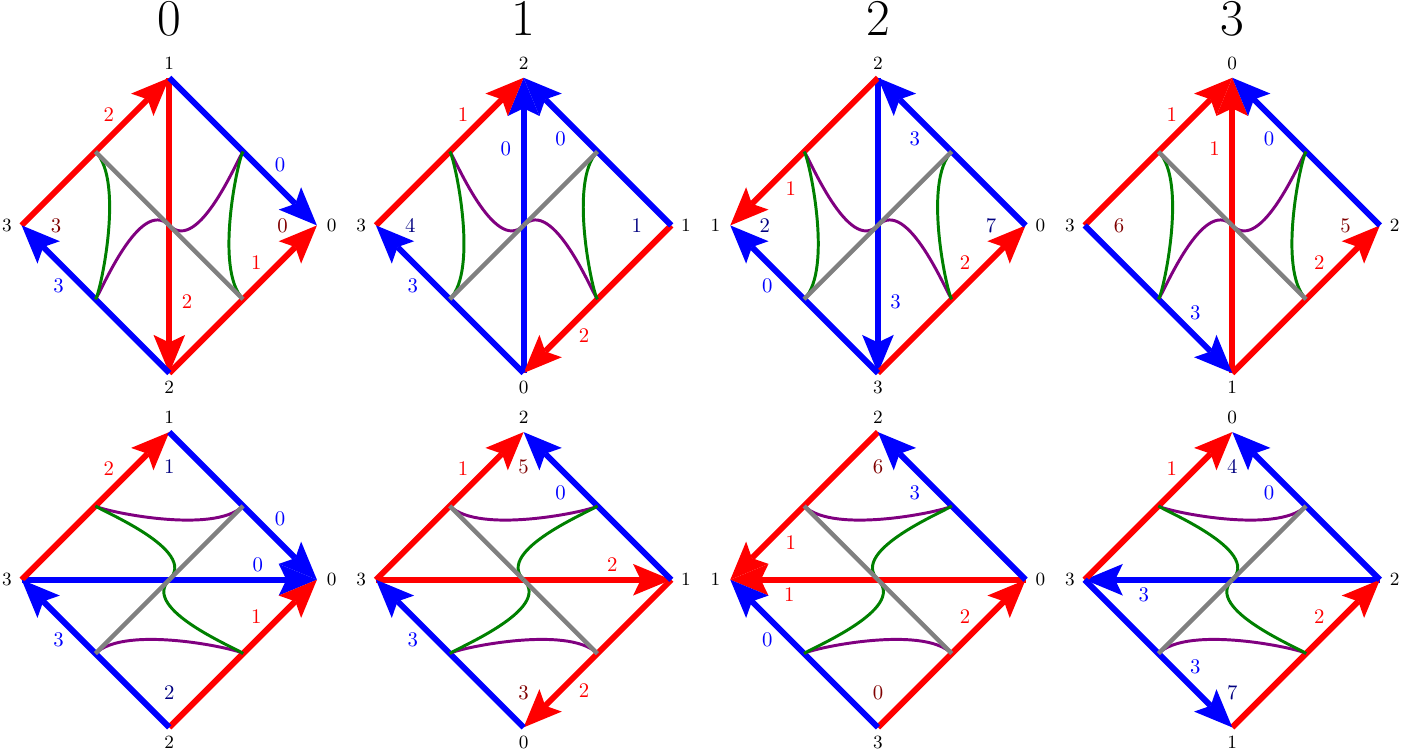}
\label{Fig:eLMkbcddddedde_tetrahedra}
}

\subfloat[Mid-annuli.]{
\includegraphics[height=5cm]{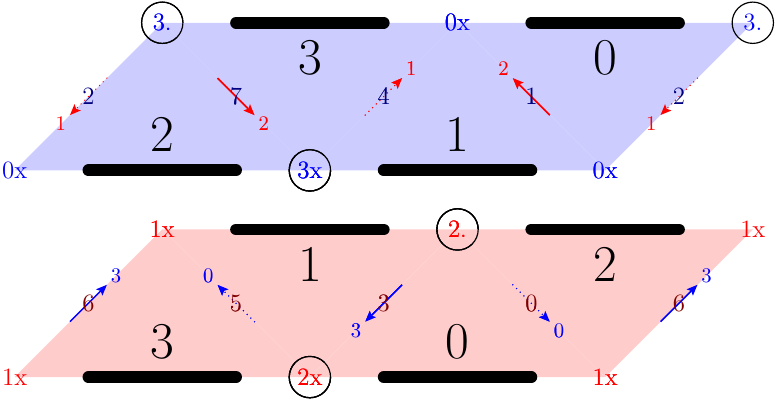}
\label{Fig:eLMkbcddddedde_mid-annuli}
}
\caption{ \usebox{\FourTetExSnappy}, \usebox{\FourTetExVeer}.}
\label{Fig:m203Pictures}
\end{figure}

\begin{figure}[htbp]
\centering
\subfloat[Tetrahedra.]{
\includegraphics[width = 0.97\textwidth]{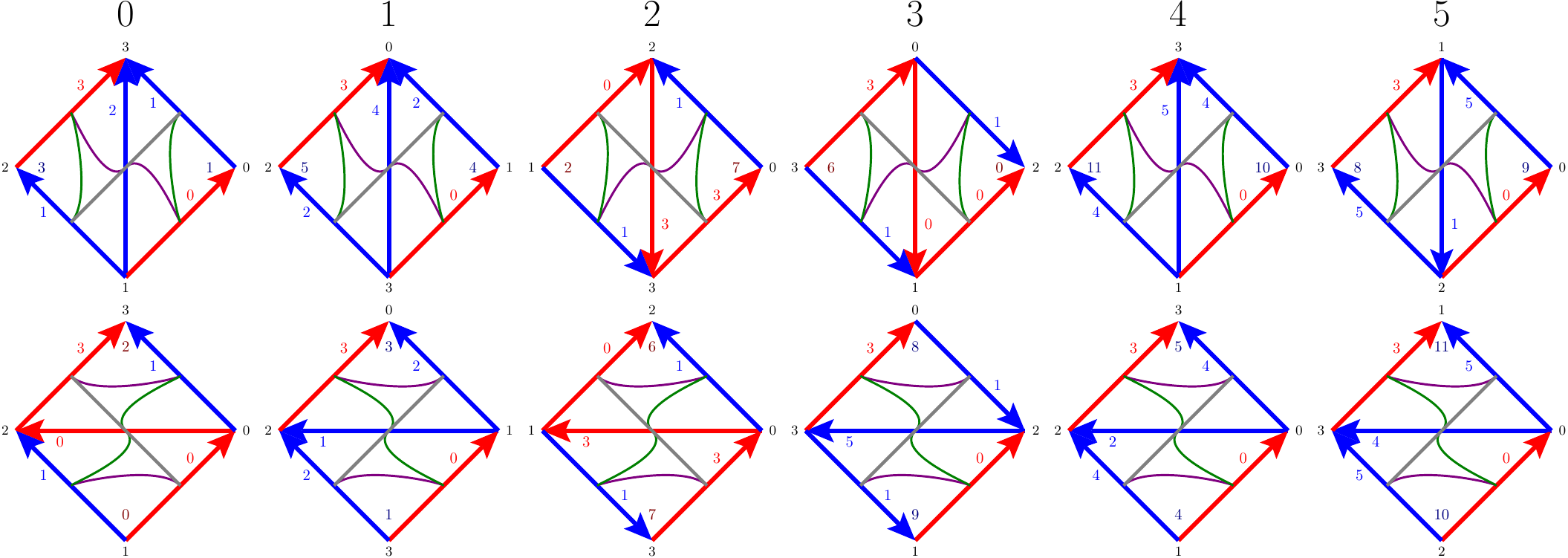}
\label{Fig:gLLAQbecdfffhhnkqnc_tetrahedra}
}

\subfloat[Mid-annuli.]{
\includegraphics[height = 8.4cm]{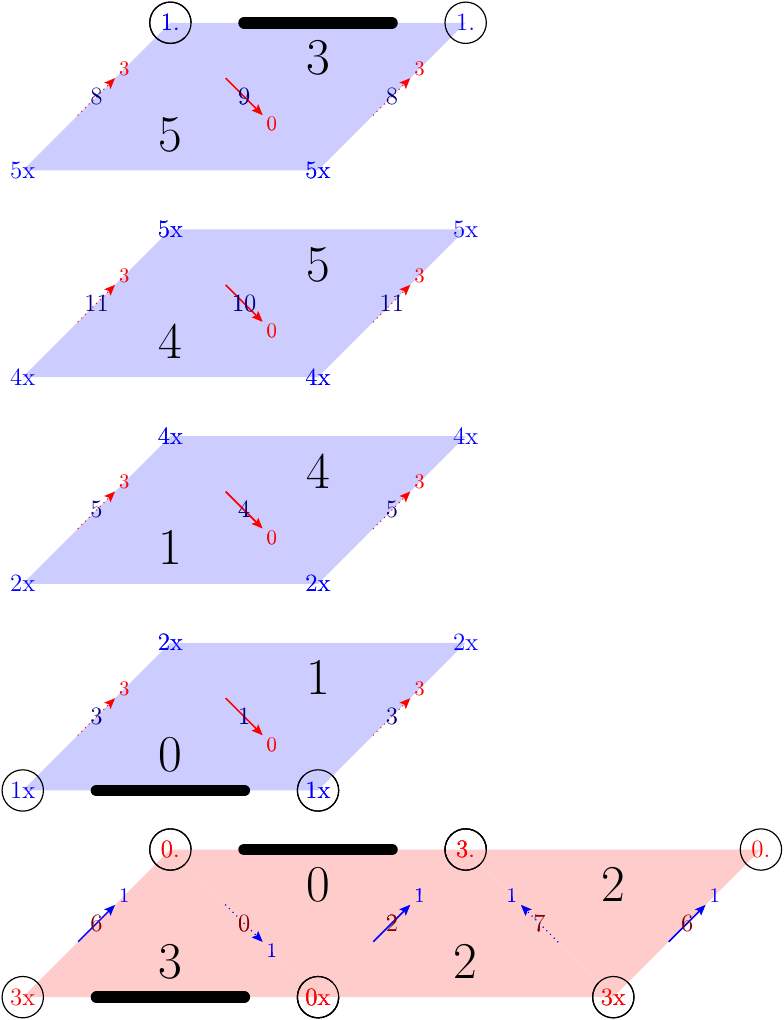}
\label{Fig:gLLAQbecdfffhhnkqnc_mid-annuli}
}
\caption{\usebox{\NonFiberedSnappy}, \usebox{\NonFiberedVeer}.}
\label{Fig:PicturesNonFibered}
\end{figure}

\begin{figure}[htbp]
\centering
\subfloat[Tetrahedra.]{
\includegraphics[width=0.97\textwidth]{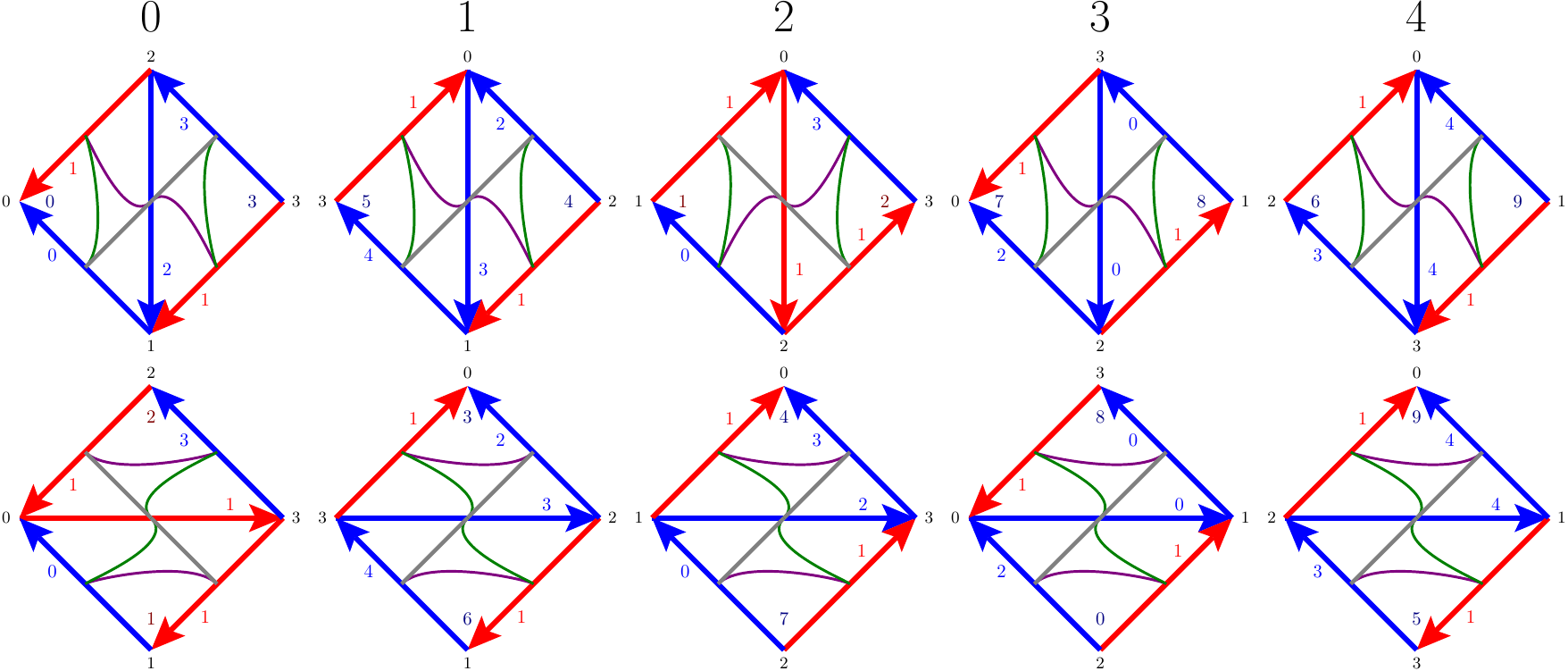}
\label{Fig:fLLQccecddehqrwjj_tetrahedra}
}

\subfloat[Mid-annuli]{
\includegraphics[width=0.97\textwidth]{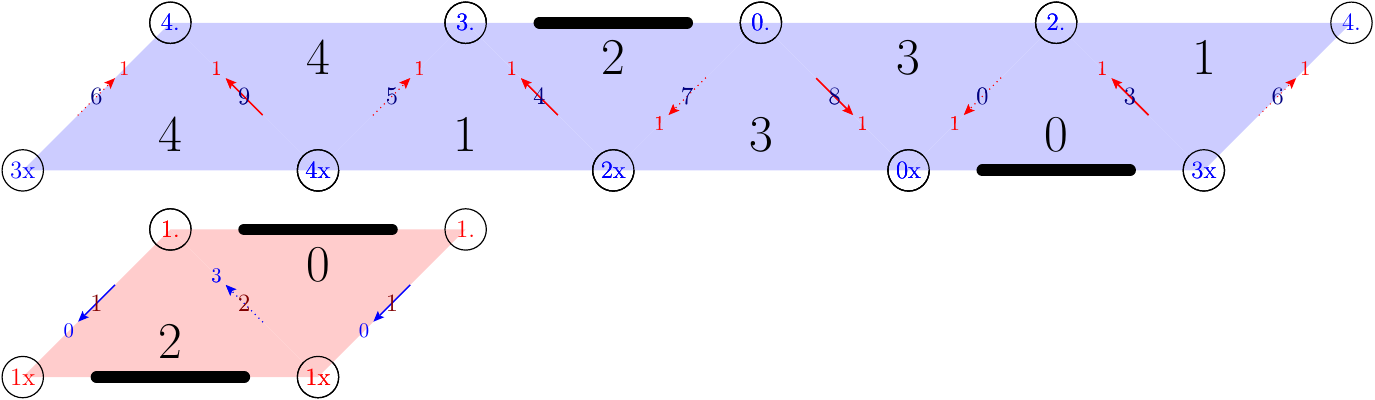}
\label{Fig:fLLQccecddehqrwjj_mid-annuli}
}
\caption{\usebox{\BigExSnappy}, \usebox{\BigExVeer}.
}
\label{Fig:fLLQccecddehqrwjj}
\end{figure}

\begin{remark}
\label{Rem:Alternate}
If the veering triangulation $\calV$ is transverse then the half-tetrahedra in each shearing region alternate between being the upper and lower halves of tetrahedra.
Thus the transverse structure on $\calV$ induces a transverse structure on the associated shearing decomposition.
\end{remark}

%%% To get the decomposition of M into taut solid tori, all we really
%%% need is the following: no pair of half-tetrahedra share their
%%% pi-edge.  If you just use this hypothesis, you do get that all
%%% edge degrees are at least three.  This allows an edge which is
%%% left veering on one side and right veering on the other. The
%%% simplest edge neighbourhood is a nice octahedron.  But the ``upper
%%% branched surface'' is not in fact a branched surface... which is
%%% not very nice!  Also, it seems that the ideal solid tori do not
%%% cross in nice ways -- they can have a sort of bigon.

%%% Question: can this local example occur in a manifold?

%%% Question: How can we move in the space of taut triangulations
%%% towards veering triangulations? Note that the upper landscapes of
%%% veering triangulations of a ball are super special (there are only
%%% linearly many possible in the number of tetrahedra) - so this is
%%% tricky.

We now give a consequence of \refthm{ShearingDecomposition}.

\begin{corollary}
\label{Cor:PADecomposition}
Suppose that $\Phi$ is a pseudo-Anosov flow on $M$ without perfect fits.
Suppose that $M^\circ$ is the result of drilling out the singular orbits of $\Phi$.
Let $\Phi^\circ = \Phi|M^\circ$ be the restriction of $\Phi$ to $M^\circ$.
Let $\calV$ be the veering triangulation associated to $\Phi^\circ$.
Then the canonical shearing decomposition of $M^\circ$ (associated to $\calV$) factors $\Phi^\circ$ as a product of fractional Dehn twists.
\end{corollary}

\begin{proof}
Consulting \reffig{SolidTorusConstruction} we find that each shearing solid torus (of the shearing decomposition) gives a fractional Dehn twist. 
This gives the desired (canonical) factorisation of $\Phi^\circ$.
\end{proof}

\begin{remark}
\label{Rem:NotDecomposition}
Despite this (and despite \refcor{Dual}), we cannot conclude that every pseudo-Anosov \emph{homeomorphism} decomposes as a product of fractional Dehn twists.
For example, consider the manifold \usebox{\BigExSnappy} from the SnapPy census (see \reffig{fLLQccecddehqrwjj}).
This admits a layered veering triangulation (with veering isomorphism signature \usebox{\BigExVeer}~\cite{GSS19}).
Since \usebox{\BigExSnappy} has first homology of rank one, the resulting fibration over the circle is the unique such on this manifold.
A bit of linear algebra then shows that the fibres have non-zero algebraic intersection number with the cores of the ideal solid tori of the shearing decomposition.
%%% See fLLQccecddehqrwjj_20102_fiber.pdf
%%% For another example see
%%% m016_canonical_022_tetrahedra_fully_carried_intersect_core_curve_of_solid_torus.pdf
\end{remark}

\begin{question}
\label{Que:CoreCurves}
Let $\gamma(U)$ be a core curve for the shearing region $U$.
Performing certain Dehn fillings along $\gamma(U)$ produces new veering triangulations; see~\cite{veering_dehn_surgery} and~\cite[Definition~4.1]{Tsang22b}.
Let $\gamma(\calV)$ be the union of the curves $\gamma(U)$.

Suppose that $U$ and $V$ are a pair of regions.
Suppose that the upper boundary of $U$ equals the lower boundary of $V$.
That is, suppose that $\bdy^+ U = \bdy^- V$.
Then $\gamma(V)$ is parallel to $\gamma(U)$;
accordingly we delete $\gamma(V)$ from $\gamma(\calV)$.

Now $\gamma(\calV)$ is a link canonically associated to $M$ and $\calV$.
What are the geometric properties of $M - \gamma(\calV)$?
\end{question}

\subsection{Crimping}
\label{Sec:Crimping}

%%% We crimp in order to insulate the blue edges from the red shearing
%%% regions (and conversely).

%%% We could do all of the isotopies without crimping.  However, we
%%% would then need even more ``architecture'': we would still need to
%%% insulate each longitudinal edge from its shearing region when
%%% performing the splitting isotopy.  This is because we want the
%%% branch lines to stay away from the one-skeleton.  So we would have
%%% neighbourhoods of the longitudinal edges in which the branched
%%% surfaces are vertical.  To control these we would have to resort
%%% to more complicated decompositions.  Instead we crimp the
%%% ``problems'' away.

%%% Crimping _also_ allows for a nicer definition of the mid-surfaces.
%%% Note that the mid-surfaces are in the paper in order to explain
%%% the failure of the naive push-off.  They are not sufficiently
%%% important to, by themselves, justify the crimping.

Here we define the \emph{crimped shearing decomposition} of $M$.
This ensures that the union of the shearing regions of a fixed colour is a manifold (with various inward and outward paring loci) containing all of the veering edges of that colour.
Crimping also improves the way that the mid-bands meet.
Their union becomes the \emph{mid-surface}.
Crimping is similar to folding, in a train-track, all switches with both in- and out-degree bigger than one.

Suppose that $\calV$ is a veering triangulation.
The associated crimped shearing decomposition is obtained as follows.

\begin{definition}
The \emph{equatorial} branched surface $E(\calV)$ is the union of the equatorial squares of all veering tetrahedra.
\end{definition}

\begin{figure}[htbp]
\includegraphics[height = 3.5 cm]{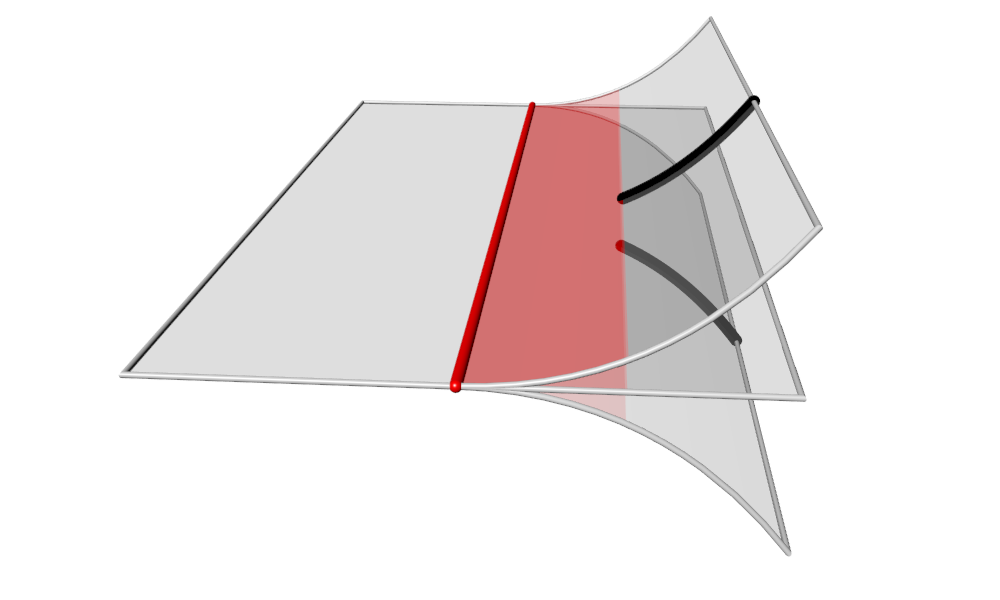}
\quad
\includegraphics[height = 3.5 cm]{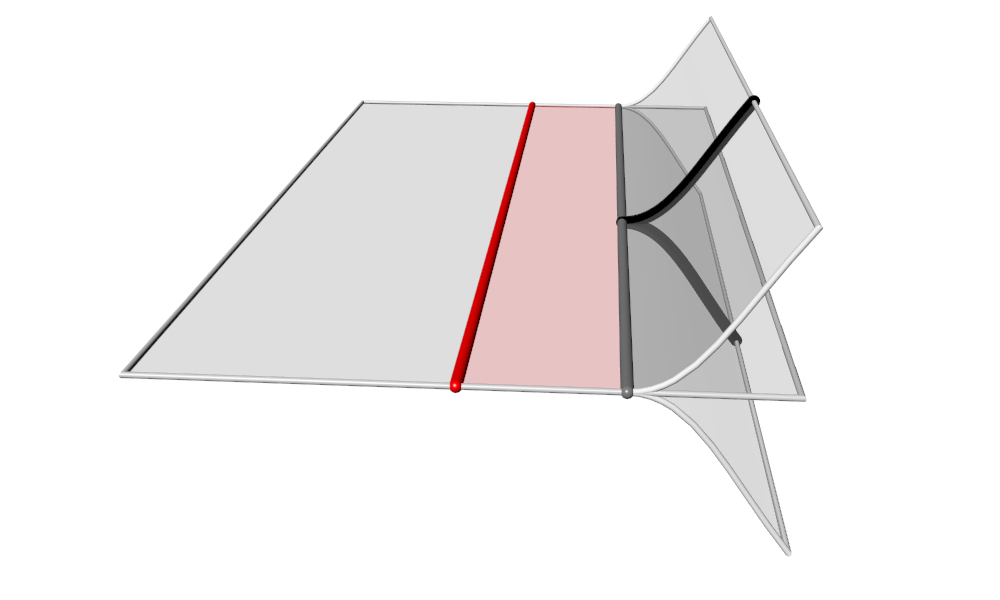} \\
\includegraphics[height = 3.5 cm]{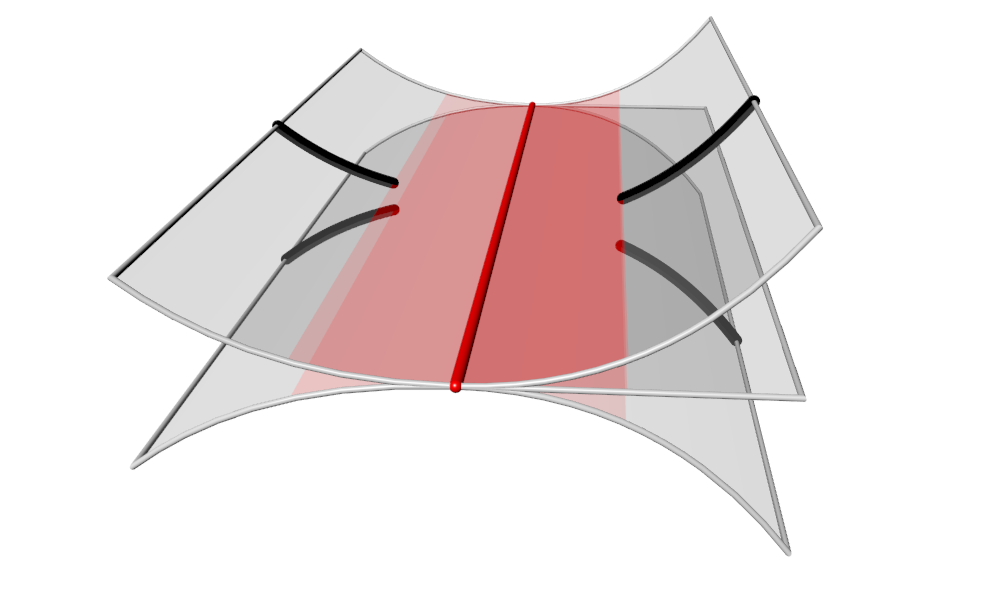}
\quad
\includegraphics[height = 3.5 cm]{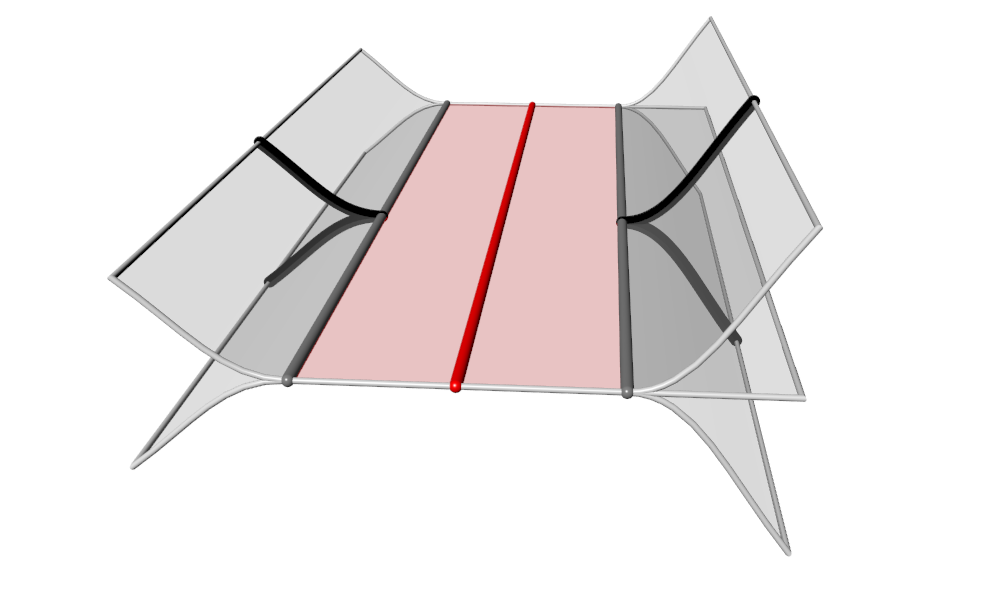}
\caption{Top row:
An edge $e \in \calV^{(1)}$ before and after crimping on the right.
No crimping is required on the left.
Bottom row:
Both sides are crimped.
The veering edges are drawn in red, the crimped edges are drawn in grey, and the boundary arcs are drawn in black.
The neighbourhoods $N_\prec(e)$ and $N_\succ(e)$, and the crimped rectangles, are shaded red.
}
\label{Fig:Crimping}
\end{figure}

Note that an edge $e \in \calV^{(1)}$ lies in the branch locus of $E(\calV)$ if and only if the degree of $e$ (in $E(\calV)$) is at least three.
%%% The degree of an edge in E(\calV) is two less than the degree in
%%% \calV^{(2)}
Suppose that there are at least two squares to the right of $e$.
Let $N_\prec(e)$ be a collar neighbourhood to the right side of $e$, taken inside of $E(\calV)$.
(We choose the size of the collar neighbourhood so that it meets each boundary arc of each adjacent half-diamond in a single point.)

So $N_\prec(e)$ contains $e$ and a rectangle for every equatorial square to its right.
See \reffig{Crimping} (upper left) for a picture of one possible $N_\prec(e)$.
We define $N_\succ(e)$ similarly, again when there are at least two squares to the left of $e$.
See \reffig{Crimping} (lower left).

\begin{definition}
\label{Def:CrimpedEquatorial}
We obtain the \emph{crimped equatorial} branched surface $E_\crimp(\calV)$ from the equatorial branched surface $E(\calV)$ by \emph{crimping} edges, as follows.
For every veering edge $e$, fold together all rectangles in $N_\prec(e)$ to obtain a single rectangle;
do the same to the left collar $N_\succ(e)$.
\end{definition}

After crimping the sides of all veering edges (having at least two squares), the veering edges are disjoint from the branch locus of $E_\crimp(\calV)$.
Also, there are no vertices in $E_\crimp(\calV)$.
Thus we call the components of $E_\crimp^{(1)}(\calV)$ \emph{crimped edges}.
The midpoint of each crimped edge equals the endpoints of two boundary arcs.
See \reffig{Crimping} (right) for pictures of possibilities for $E_\crimp(\calV)$.

Suppose that we had to crimp the right side of $e$.
So, before crimping, $N_\prec(e)$ contained two or more rectangles.
Then, after crimping, there is a single \emph{crimped rectangle} between $e$ and the crimped edge immediately to the right of $e$.
Note that two closed crimped rectangles are either disjoint or meet along their common veering edge.
In our figures we colour the crimped edges as a dashed grey.

\begin{definition}
\label{Def:CrimpedBigon}
Since we draw pictures in the cusped manifold, we will refer to the crimped rectangle as a \emph{crimped bigon}.
\end{definition}

In Figures~\ref{Fig:Crimping},~\ref{Fig:CrimpedShearingRegion}, and~\ref{Fig:UpperHalfTet}
%,~\ref{Fig:LowerHalfFan},~\ref{Fig:LowerHalfToggle}, and
through~\ref{Fig:StraightenedCrimpedShearingRegion} we shade crimped bigons the colour of their veering edge.

Crimping moves the equatorial square of a toggle tetrahedron into $E_\crimp(\calV)$.
There it is subdivided, by the crimped edges, into four crimped bigons and one \emph{toggle square}.
In our figures we shade the toggle squares in grey.
Since crimped bigons are disjoint, every toggle square has four cusps that reach out to the ideal points of the three-manifold.
See \reffig{ToggleSquare}.
(The widths of these cusps are set in \refsec{Junctions}.)

The two boundary arcs (of the mid-surface) in the toggle tetrahedron lie inside of the toggle square.
They end at the midpoints of the crimped edges and divide the toggle square into four symmetric regions.
See \reffig{ToggleSquare}.
The veering hypothesis implies that a crimped bigon meets, along its crimped edge, exactly two toggle squares:
one at the top and one at the bottom of a stack of fan tetrahedra.
%%% We will need this when defining parting.
Similarly, the equatorial square of a fan tetrahedron is subdivided into two crimped bigons and one \emph{fan square}.
See \reffig{FanSquare}.

\begin{definition}
\label{Def:Station}
For every cusp $c$ of every toggle square $S$ we choose a short arc $\alpha_c$ properly embedded in $S$ which separates the cusp $c$ from the body of $S$.
The arc $\alpha_c$ meets exactly two crimped edges on the boundary of $S$.

Suppose that $e$ is a crimped edge on the boundary of $S$.
Note that $e$ is adjacent to exactly two toggle squares: $S$ is one and suppose that $S'$ is the other.
%%% S and S' are different toggle squares: one is red on top, the
%%% other is red on bottom.
Let $\alpha'_c$ be the chosen short arc cutting the cusp $c$ off of $S'$.
We arrange matters so that the end points of $\alpha_c$ and $\alpha'_c$ on $e$ coincide.

Thus the union of all of the chosen arcs gives an embedded collection of loops and lines in the three-manifold;
the components of the union are isotopic into the ideal points of the three-manifold.
%%% These loops (or lines) run along the ladder slopes.
We take a small tubular neighbourhood of this union.
(The radius of the tubular neighbourhood varies;
the details are given in \refsec{Junctions}.)
We call a connected component of the result a \emph{station}.

Suppose that $\Sigma$ is a station.
Suppose that $S$ is a toggle square meeting $\Sigma$.
Let $E$ be the equatorial square containing $S$.
Looking from above, the cusp of $S$ meeting $\Sigma$ lies between two veering edges of $E$, one red and one blue.
If these are ordered red then blue as we walk anticlockwise around $\bdy E$ then we say that $\Sigma$ is an \emph{upper station}.
Otherwise $\Sigma$ is a \emph{lower station}.
\end{definition}

%%% It is an exercise to show that this is independent of the choice of $S$.
The naming scheme for stations is explained in \refsec{Parting}.
There upper track-cusps will pass through upper stations, and similarly for lower track-cusps and lower stations.

In \reffig{FanToggleSquares}, the intersection of the stations with the squares is shown with dots coloured green (for upper stations) or purple (for lower stations).

\begin{figure}[htbp]
\subfloat[Toggle square.]{
\includegraphics[height = 0.31\textwidth]{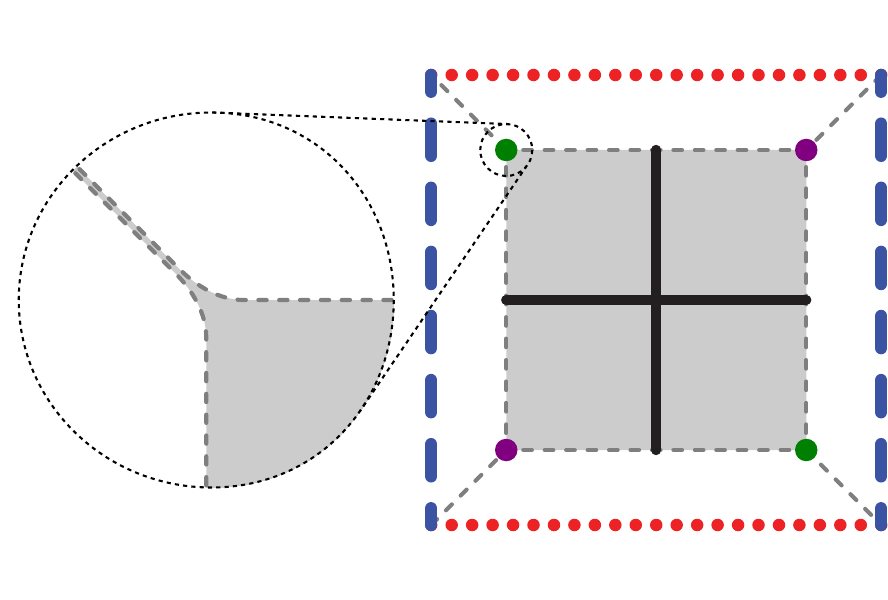}
\label{Fig:ToggleSquare}
}
\qquad
\subfloat[Fan square.]{
\includegraphics[height = 0.31\textwidth]{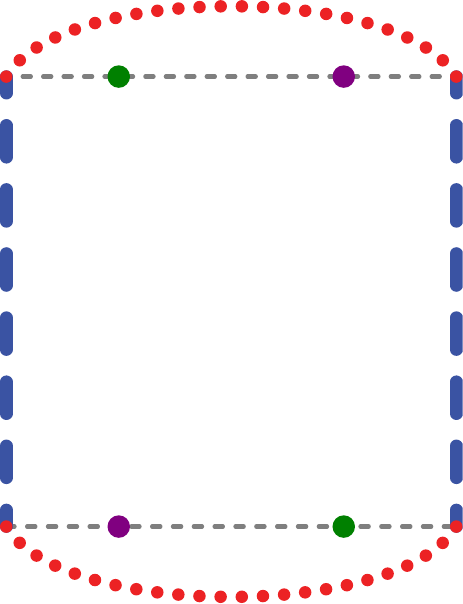}
\label{Fig:FanSquare}
}
\caption{The toggle square has four adjacent crimped bigons, the fan square has two.
Here we draw the boundary arcs (of the half-diamonds immediately above and below) on the toggle square in black.
The crimped edges are drawn in dashed grey.
The upper and lower stations are here represented as small green and purple disks following the convention given in \refdef{Station}.
}
\label{Fig:FanToggleSquares}
\end{figure}

\begin{figure}[htbp]
\subfloat[View from the side.]{
\includegraphics[width =0.97\textwidth]{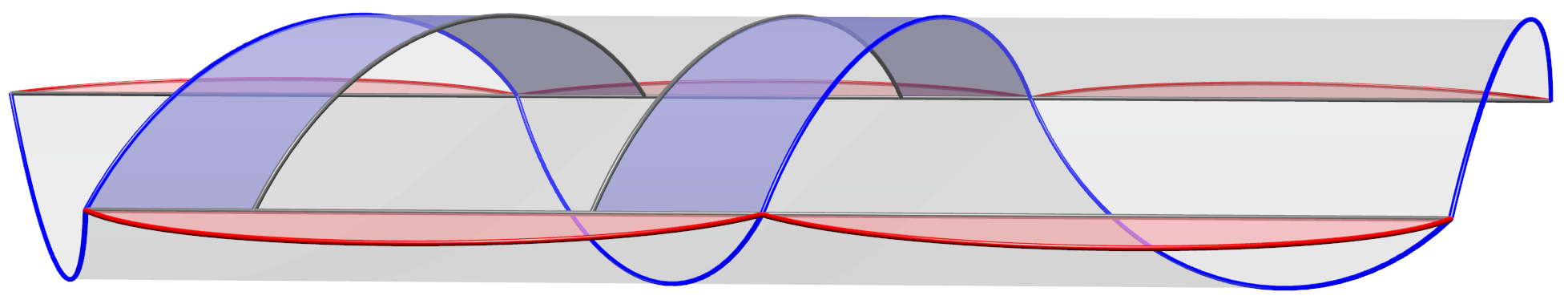}
\label{Fig:CrimpedShearingRegionSide}
}

\subfloat[View from above, with the mid-band and station (after intersecting with the crimped shearing region).]{
\includegraphics[width =0.97\textwidth]{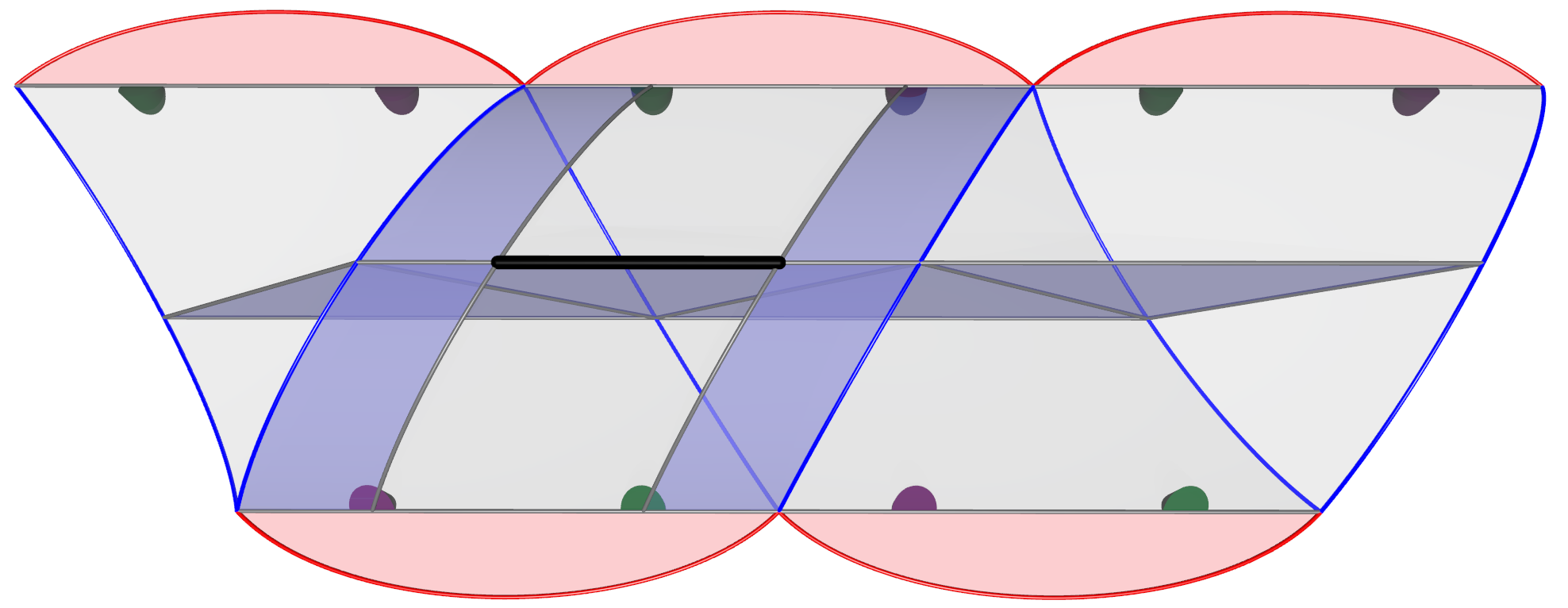}
\label{Fig:CrimpedShearingRegionAboveMidAnnulus}
}
\caption{A crimped blue solid torus, and incident red crimped bigons.
The crimped edges are drawn in grey and meet the boundary arc in its endpoints.
Wherever the red and blue crimped bigons appear to meet, they are in fact separated by a cusp of the adjacent toggle square.
See \reffig{ToggleSquare}.}
\label{Fig:CrimpedShearingRegion}
\end{figure}

\begin{definition}
\label{Def:CrimpedShearingRegion}
We define the (closures taken in the path metric of) components of $M - E_\crimp(\calV)$ as \emph{crimped shearing regions}.
See \reffig{CrimpedShearingRegion}.
Let $U$ be a model crimped shearing region.
As before, we write $\bdy^+ U$ and $\bdy^- U$ for the upper and lower boundaries of $U$.
Suppose that $e$ and $e'$ bound a crimped bigon $B$ with $e \in \calV^{(1)}$ and $e'$ a crimped edge.
If $B$ lies in either $\bdy^+ U$ or $\bdy^- U$ then we say that $e$ and $e'$ are \emph{helical} for $U$.
If $B \cap U = e'$ then we say that $e$ and $e'$ are \emph{longitudinal} for $U$.
\end{definition}

\noindent
Note that $\bdy^+ U \cap \bdy^- U$ is the collection of longitudinal crimped edges for $U$.

Note that the stations meet $U$ in a collection of three-balls.
Each such three-ball meets $\bdy U$ in a disk.
This disk is cut into exactly two pieces by the longitudinal crimped edge which it meets.

As before, we assign $U$ the colour of its helical edges.
This colour is opposite to that of each edge of $\calV^{(1)}$ that is parallel, across a crimped bigon, to the longitudinal crimped edges of $U$.

\begin{definition}
\label{Def:CrimpedTriangle}
Within $U$, we replace each triangle of the original triangulation with a corresponding \emph{crimped triangle}.
The sides of each crimped triangle consist of two helical edges, one on $\bdy^+ U$ and one on $\bdy^- U$, and a single longitudinal crimped edge.
\end{definition}

\begin{definition}
\label{Def:CrimpedShearingDecomposition}
The union of the crimped shearing regions is again homeomorphic to $M$;
together they form the \emph{crimped shearing decomposition} of $M$.
\end{definition}

\begin{definition}
\label{Def:Monochromatic}
The union of the red crimped shearing regions is the \emph{red submanifold} of the crimped shearing decomposition.
A connected component of the red submanifold is a \emph{red component}.
We define the \emph{blue submanifold} and \emph{blue components} similarly.
These form the components of the \emph{monochromatic decomposition}.
\end{definition}
%%% gLLPQccdfeffhggaagb_201022 is an example where there are multiple
%%% red components.

Each red component is a handlebody with inward and outward paring loci.
%%% As Ian Agol points out, the red submanifold deformation retracts to the
%%% red mid-surface.
The red submanifold of the monochromatic decomposition contains all of the red edges of $\calV^{(1)}$.
Furthermore, its material boundary is the union of the toggle squares.
Analogous statements are true for blue components and the blue submanifold.

\subsection{The mid-surface}
\label{Sec:MidSurface}

The mid-bands sit within the crimped shearing regions just as they sat within the original shearing regions.
See \reffig{CrimpedShearingRegionAboveMidAnnulus}.
We may now glue the mid-bands to each other along their boundaries obtain a surface.

\begin{definition}
\label{Def:MidSurface}
The union of the red mid-bands in the red submanifold gives the \emph{red mid-surface} $\calS_R$.
We build the blue mid-surface $\calS_B$ in a similar fashion.
We define the \emph{mid-surface} to be $\calS = \calS_R \cup \calS_B$.
\end{definition}

Note that each component of $\calS_R$ sits inside, and is a deformation retract of, a red component of the crimped shearing decomposition.
Thus $\calS_R$ meets all red edges but no blue edges.
A similar statement holds for $\calS_B$.
Each boundary arc of $\calS_R$ meets precisely one boundary arc of $\calS_B$;
these intersect in a single point at the centre of the corresponding toggle square.

Note that $\calS_R$ and $\calS_B$ receive cell-structures from the (images under crimping of the) half-diamonds.
Taking a horizontal union of half-diamonds yields a mid-band.
Taking a diagonal union of half-diamonds (stopping at toggle squares, if any) yields a \emph{diagonal strip}.
\reflem{DualMeetsToggles} implies the following.

\begin{corollary}
Every diagonal strip starts and ends at toggle squares.
Thus every component of $\calS_R$ and of $\calS_B$ has at least one boundary component. \qed
\end{corollary}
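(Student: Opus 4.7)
The plan is to prove the first statement (every diagonal strip starts and ends at toggle squares) by contradiction using \reflem{DualMeetsToggles}, and then to deduce the second statement as an immediate corollary.

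For the first statement, fix a diagonal strip $\sigma$ in $\calS_R$; the argument for $\calS_B$ is identical, swapping colours and using $\cover{B}_\calV$ in place of $\cover{B}^\calV$. Within a fan tetrahedron, the two same-coloured half-diamonds are glued along their shared base into a full diamond, so $\sigma$ continues freely across a fan tetrahedron. Within a toggle tetrahedron, however, the red and blue half-diamonds meet only at the centre of the equatorial square, and the red half-diamond's base lies in the toggle square as a boundary arc; so $\sigma$ must terminate the moment it enters a red toggle tetrahedron. Suppose, for contradiction, that in some direction $\sigma$ fails to terminate. Then $\sigma$ passes only through red fan tetrahedra in that direction; lifting to the universal cover $\cover{M}$ (which also handles the case where $\sigma$ closes up into a loop), we obtain a one-ended or bi-infinite chain $\cover{\sigma}$ of red half-diamonds sitting in red fan tetrahedra $\cover{t}_1, \cover{t}_2, \ldots$, with consecutive $\cover{t}_i$ and $\cover{t}_{i+1}$ glued across a shared face $\cover{f}_i$.

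The key step is to convert $\cover{\sigma}$ into a subray of a branch line of $\cover{B}^\calV$ in dual position. In dual position, $\cover{B}^\calV$ has exactly one vertex $v(\cover{t}_i)$ per tetrahedron, and the branch locus meets each face in a single arc; the arc dual to $\cover{f}_i$ connects $v(\cover{t}_i)$ to $v(\cover{t}_{i+1})$. I concatenate these dual arcs and claim the result is a genuine branch line, that is, at each vertex $v(\cover{t}_i)$ the incoming and outgoing arcs are the canonical straight continuation determined by the branched-surface structure. This follows from the veering-compatibility of the diagonal direction through a fan tetrahedron: the upwards semi-flow $X_\calV$ of $\cover{B}^\calV$ selects the branch-line continuation at $v(\cover{t}_i)$, and the face $\cover{f}_i$ through which $\cover{\sigma}$ exits is exactly the face through which this continuation leaves. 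The resulting branch-line subray never meets a toggle tetrahedron, contradicting \reflem{DualMeetsToggles}.

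The second statement is then immediate. Let $C$ be a component of $\calS_R$ and pick any half-diamond $d$ of $C$. Let $\sigma_d$ be the diagonal strip through $d$. By the first statement $\sigma_d$ terminates at toggle squares on both ends, contributing boundary arcs of $\calS_R$; these arcs lie in $C$ since $\sigma_d$ is connected and meets $C$ at $d$. Hence $C$ has non-empty boundary. The argument for $\calS_B$ is identical.

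The main obstacle is the branch-line identification in the second paragraph: checking that the concatenated path in the branch locus is truly a branch line of $\cover{B}^\calV$, and not merely a piecewise path that bends at each dual vertex. This requires a careful local analysis of the six sectors of $\cover{B}^\calV$ meeting at $v(\cover{t}_i)$, verifying that the two diagonal-strip transit directions across $\cover{t}_i$ correspond to the two ends of a single smooth branch line through that vertex.
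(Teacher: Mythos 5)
Your overall strategy is the paper's own: the corollary is stated there with no more justification than ``\reflem{DualMeetsToggles} implies the following'', and the implication you are fleshing out --- that an unbounded diagonal strip would force a subray of a branch line of $\cover{B}^\calV$ or $\cover{B}_\calV$ to stay in fan tetrahedra of a single colour, contradicting that lemma --- is exactly the intended one. The parts of the reduction you do carry out are fine: termination at half-diamonds of toggle tetrahedra (whose bases contain boundary arcs), passage to the universal cover to handle strips that close up, and extraction of an infinite ascending chain of monochromatic fan tetrahedra glued along the faces crossed by the strip.

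The gap is the one you flag yourself, and it is genuine: you never verify that the concatenation of the dual arcs from $v(\cover{t}_i)$ to $v(\cover{t}_{i+1})$ is a single smooth branch line rather than a path that jumps from one branch line to the other at each vertex. The justification you offer --- that the upwards semi-flow ``selects the branch-line continuation at $v(\cover{t}_i)$'' --- cannot work: two branch lines cross at each vertex of $\cover{B}^\calV$, each entering through a lower face of $\cover{t}_i$ and leaving through an upper face, and both are equally compatible with the semi-flow, so the flow does not distinguish the correct exit face from the incorrect one. What actually determines the continuation is the smooth structure of the branch locus in $B^{t_i}$ (equivalently, which normal triangle contributes which branch arc), and the content of the corollary is precisely that this pairing of lower with upper faces agrees with the pairing induced by the diagonal strip. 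That is a finite check in the fan-tetrahedron models which you have not performed; until it is done the argument is a correct reduction to an unproved local claim. Two further cautions: each half-diamond lies on two diagonal strips (one per diagonal direction), and you should confirm that both directions are realised by branch lines --- this is presumably why \reflem{DualMeetsToggles} is stated for both branched surfaces; and your assignment of $\cover{B}^\calV$ to $\calS_R$ and $\cover{B}_\calV$ to $\calS_B$ conflates the red/blue (veering) dichotomy with the upper/lower (transverse) one, since both branched surfaces pass through fan tetrahedra of both colours, so ``swap colours and swap branched surfaces'' is not automatically the right symmetry.
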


\begin{example}
In \reffig{fLLQccecddehqrwjj} the red mid-surface has two diagonal paths, both traversing two half-diamonds.
The blue mid-surface also has two diagonal paths, one traversing six half-diamonds and the other traversing ten.
\end{example}

Every boundary component of the mid-surface runs alternatingly along boundary arcs contained in the upper and lower boundaries of crimped shearing regions.
In Figures~\ref{Fig:cPcbbbiht_mid-annuli},
\ref{Fig:eLMkbcddddedde_mid-annuli},
\ref{Fig:gLLAQbecdfffhhnkqnc_mid-annuli},
and~\ref{Fig:fLLQccecddehqrwjj_mid-annuli}
we give several examples;
the boundary arcs are indicated by thick black lines.
In \reffig{cPcbbbiht_mid-annuli} both mid-surfaces are once-holed tori;
each boundary component of each mid-surface consists of two boundary arcs.
In \reffig{eLMkbcddddedde_mid-annuli} both mid-surfaces are copies of $N_{3,1}$:
the non-orientable surface with one boundary component and three cross-caps.
In \reffig{gLLAQbecdfffhhnkqnc_mid-annuli} both mid-surfaces are copies of $N_{2, 1}$:
the once-holed Klein bottle.
(This last was the first example of a non-fibered veering triangulation;
see~\cite[Section~4]{HRST11}.)
Finally, in \reffig{fLLQccecddehqrwjj_mid-annuli} the mid-surfaces are a pair of once-holed Klein bottles, with one having greater area than the other.

\begin{remark}
Mid-surfaces also allow one to see the \emph{walls} of a veering decomposition, as defined by Agol and Tsang~\cite[Definition~3.3]{AgolTsang22}.
For example, in \reffig{fLLQccecddehqrwjj} there is a wall of width three consisting of the tetrahedra 4 and 1.
\end{remark}

\subsection{Labelling the mid-surface}
\label{Sec:Labelling}

We now describe the labelling scheme for the mid-surfaces used in the census~\cite{GSS19}.
This is useful when drawing pictures and discussing examples.
Suppose that $\calV$ is a finite transverse veering triangulation.
We number the tetrahedra, the faces, the edges, and the vertices of the tetrahedra using the conventions from Regina~\cite{regina}.
Regina also provides us with orientations for the edges of $\calV^{(1)}$;
we will alter these to make them agree, as much as possible, with transverse orientations of mid-annuli.

We give four examples in Figures~\ref{Fig:FigEightPictures}, \ref{Fig:m203Pictures}, \ref{Fig:PicturesNonFibered}, and~\ref{Fig:fLLQccecddehqrwjj}.
For each example, we draw its mid-annuli and, in one column per tetrahedron, the upper and lower faces for each tetrahedron (viewed from above).
On each face we draw the upper (green) and lower (purple) train-tracks.
(Where these intersect, the intersection is coloured grey.)

In order to draw a mid-band $A = A(U)$ we choose a transverse orientation for it;
this then induces a transverse orientation on each half-diamond $d$ of $A$.

In the examples of Figures~\ref{Fig:FigEightPictures}, \ref{Fig:m203Pictures}, \ref{Fig:PicturesNonFibered}, and~\ref{Fig:fLLQccecddehqrwjj} the mid-bands are all annuli and the transverse orientation points into the page.

We label the vertices, edges, and face of the half-diamond $d$ as follows.
\begin{itemize}
\item
Suppose that $v$ is a vertex of $d$.
We label $v$ with the number of the edge $e$ in $\calV^{(1)}$ which contains $v$.
Note that $e$ is helical for $U$.
We append this number with one of the symbols from $\{\cdot, \text{x}\}$.
The x means that the orientation of $e$ agrees with the transverse orientation on $d$; the dot means the opposite.
(The x represents the fletching of an arrow, while the dot represents the arrowhead.)
\item
Suppose that $\epsilon$ is a diagonal edge of $d$.
We label $\epsilon$ with the number of the face $f$ in $\calV^{(2)}$ which contains $\epsilon$;
we place the label at the midpoint of $\epsilon$.
The vertices of $\epsilon$ are already labelled with the numbers of two of the three edges of $f$.
Let $e$ be the third edge of $f$.
Note that $e$ is longitudinal for $U$.
We draw a small copy of $e$ on top of $\epsilon$ and label the copy with the number of $e$ (in the other colour and using a smaller font).
Note that $\epsilon$ and $e$ cobound a rectangle in $f$;
we use this rectangle to transport the orientation of $e$ to $\epsilon$.
Finally, we draw the arrow dotted or solid as the transverse orientation on $d$ points towards or away from $e$.
(That is, as drawn in Figures~\ref{Fig:FigEightPictures}, \ref{Fig:m203Pictures}, \ref{Fig:PicturesNonFibered}, and~\ref{Fig:fLLQccecddehqrwjj}, the edge $e$ is behind or in front of $A$.)
\item
Suppose that $\epsilon$ is the base of a half-diamond $d$.
If $d$ lies in a toggle tetrahedron then we draw a thick black line on $\epsilon$, to indicate the boundary arc on $d$.
\item
Finally, we label $d$ itself with the number of the tetrahedron that contains $d$.
\end{itemize}

Suppose that $A$ and $B$ are mid-annuli.
Let $\bdy^- A$ be the lower boundary of $A$, \emph{minus} the open boundary arcs.
Thus $\bdy^- A$ is either a single line, a single circle, or a collection of intervals and at most two rays.
We define $\bdy^+ B$ similarly.
Suppose that $A$ and $B$ are glued to each other, say with a component $\gamma$ of $\bdy^- A$ meeting a component of $\bdy^+ B$.
(It is also possible for $A$, say, to be glued to itself.)
We call the gluing  $\gamma$ \emph{untwisted} or \emph{twisted} exactly as it does or does not faithfully transport the chosen transverse orientation on $A$ to the one on $B$.

In Figures~\ref{Fig:FigEightPictures}, \ref{Fig:m203Pictures}, \ref{Fig:PicturesNonFibered}, and~\ref{Fig:fLLQccecddehqrwjj} we indicate a twisted gluing by drawing a small black circle about all vertices of the affected boundary circle or sub-arc.
We have chosen the transverse orientations of the mid-annuli to minimise the number of half-twists required.

\begin{remark}
\label{Rem:Fail}
If all gluings are untwisted then the mid-surface is transversely orientable and thus orientable.
Conversely, if the mid-surface is orientable then there is a choice of transverse orientations for the mid-bands that ensures that all gluings are untwisted.
The naive push-off discussed in \refsec{PushOff} should produce a dynamic pair when and only when the mid-surface is orientable.

Thus, if one is willing to pass to a double cover, then there should be edge orientations making the naive push-off work.
However this push-off will not be invariant under the deck transformation.
\end{remark}
%%% Note that edge orientablility is slightly stronger than mid-surfaces
%%% orientable.

\section{Bigon coordinates}
\label{Sec:BigonCoords}

In this section we place a coordinate system on the crimped shearing regions (introduced in \refsec{Crimping}).
We also give a refinement of the crimped shearing decomposition of $M$ and introduce the horizontal cross-sections.

Let $B$ be a \emph{coordinate bigon}:
a oriented disk with two marked points $x$ and $y$ in its boundary.
The points $x$ and $y$ are the \emph{corners} of $B$.
We equip $\bdy B$ with the induced orientation.
The two arcs of $\bdy B - \{x, y\}$ are denoted by $\bdy^+ B$ and $\bdy^- B$ respectively.
We arrange matters so that $\bdy^+ B$ is the arc running from $y$ to $x$.

We equip $B$ with a pair of transverse foliations:
the \emph{horizontal arcs} all meet both corners while the \emph{vertical arcs} all meet $\bdy^+ B$ and $\bdy^- B$.
We orient the former from $x$ to $y$ and the latter from $\bdy^- B$ to $\bdy^+ B$.
See \reffig{BiFoliatedBigon}.

We subdivide $B$ into a pair of sub-bigons called $\theta^B$ (upper) and $\theta_B$ (lower).
These are shown in \reffig{BigonRegions}.

\begin{figure}[htbp]
\subfloat[Model bi-foliated coordinate bigon.]{
\labellist
\small\hair 2pt
\pinlabel {$x$} [r] at 0 175
\pinlabel {$y$} [l] at 575 175
\pinlabel {$\bdy^+ B$} [br] at 120 300
\pinlabel {$\bdy^- B$} [tr] at 120 50
\endlabellist
\includegraphics[width = 0.45\textwidth]{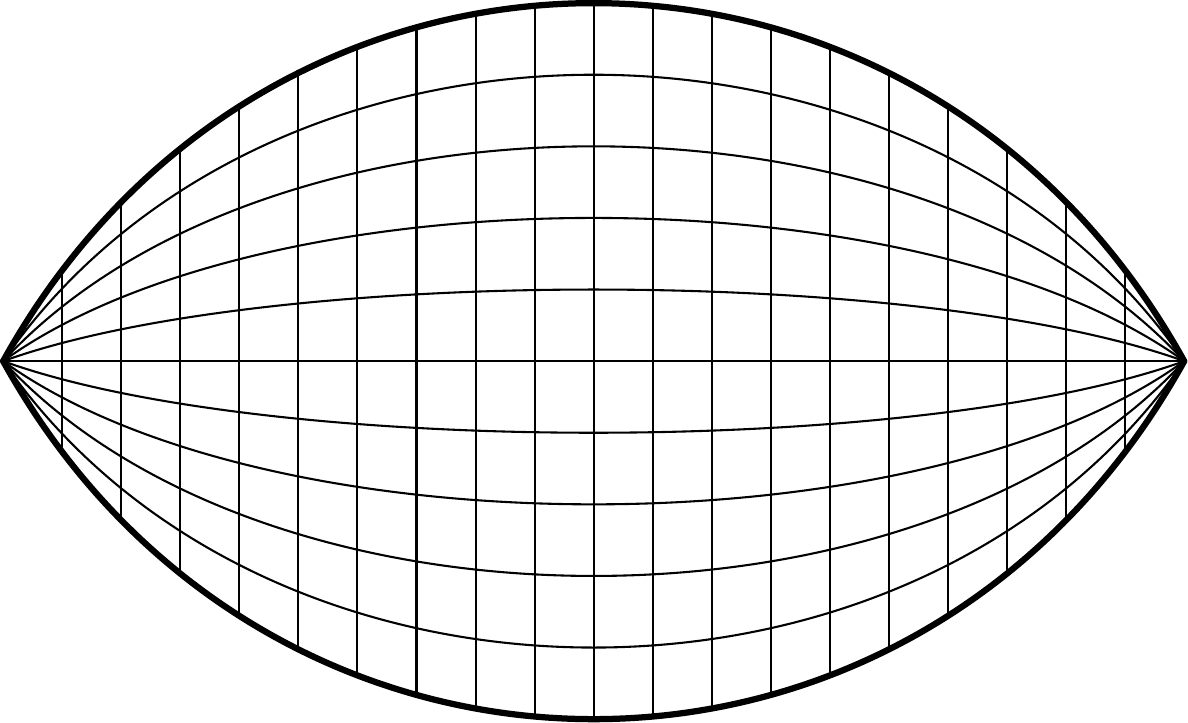}
\label{Fig:BiFoliatedBigon}
}
\quad
\subfloat[Bigon regions.]{
\labellist
\scriptsize\hair 2pt
\pinlabel {$\theta^B$} at 290 250
\pinlabel {$\theta_B$} at 290 100
\endlabellist
\includegraphics[width = 0.45\textwidth]{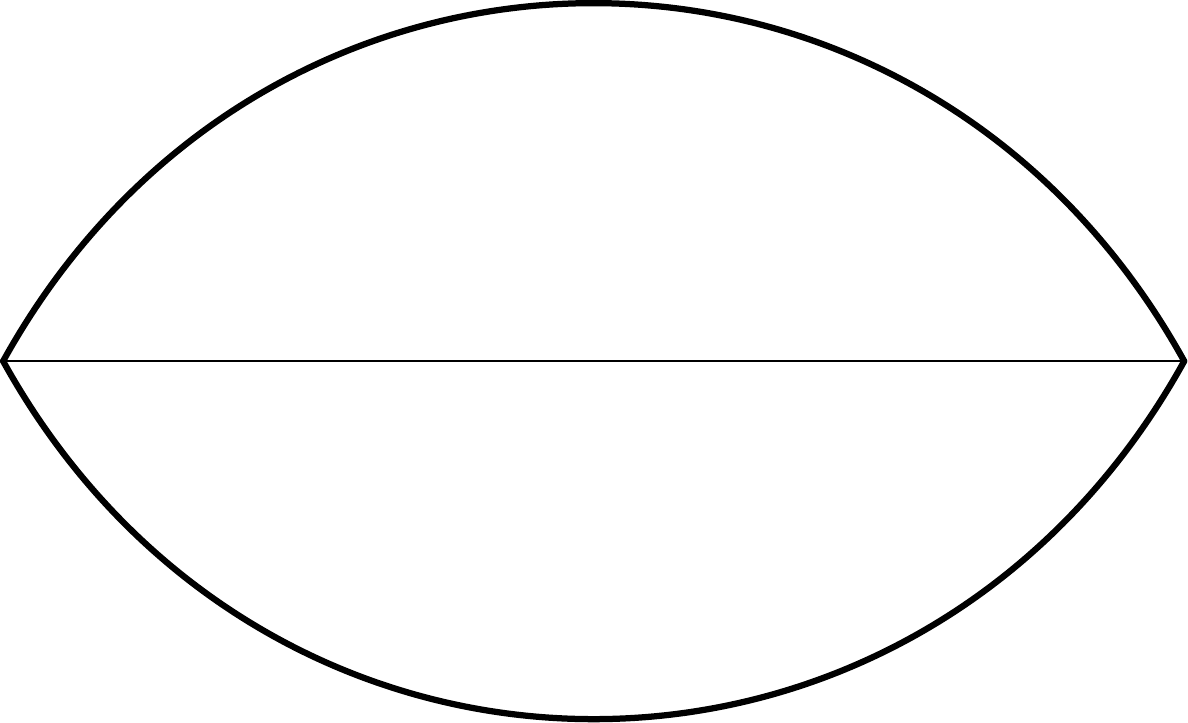}
\label{Fig:BigonRegions}
}

\caption{}
\label{Fig:Bigon}
\end{figure}

Recall that $M$ is oriented and $\calV$ is transverse veering.
Suppose that $U$ is a model crimped shearing region.
Thus $U$ inherits an orientation and, by \refrem{Alternate},
a notion of ``upwards''.
We now choose a homeomorphism $h$ between $U$ and $B \cross S^1$ or $B \cross \RR$, as $U$ is a solid torus or cylinder.
We require that $h$ preserve the various orientations.
In particular, the upper boundary of $B$ is sent to the upper boundary of $U$ by $h$.
We call $h$ the \emph{bigon coordinates} for $U$.

Let $\Theta^U$ be the image of $\theta^B \cross S^1$ (or $ \theta^B \cross \RR$) in $U$.
We define $\Theta_U$ similarly.
Note that the upper boundaries of $U$ and $\Theta^U$ agree, as do the lower boundaries of $U$ and $\Theta_U$.
That is, $\bdy^+ U = \bdy^+ \Theta^U$ and $\bdy^- U = \bdy^- \Theta_U$.
Also, we have $\bdy^- \Theta^U = \bdy^+ \Theta_U$.
We take $\Theta^\calV \subset M$ to be the union of the $\Theta^U$, taken over all model crimped shearing regions and then projected to $M$.
We define $\Theta_\calV$ similarly.
The interiors of $\Theta^\calV$ and $\Theta_\calV$ are disjoint and their union is $M$;
this is the \emph{$\Theta$--decomposition}.

\begin{remark}
\label{Rem:NiceBigonCoords}
Suppose that $U$ is a blue shearing region.
We arrange the metric in $U$ (coming from bigon coordinates) to ensure the following.
\begin{enumerate}
\item
In the induced coordinates on $\bdy^+ U$ the (pullbacks of the) blue edges of $\calV^{(1)}$ are straight and, when viewed from above, have slope $\sqrt{3}$.
Similarly, the blue edges in $\bdy^- U$ are straight and, when viewed from above, have slope $-\sqrt{3}$.
\item
For $p \in U$ we take $B(p, U)$ to be the coordinate bigon in $U$ containing $p$.
Then the two notions of vertical (coming from the coordinate bigons $B(p, U)$ and the transverse veering structure) agree.
Furthermore, the intersection of the mid-band $A(U)$ with any $B(p, U)$ is the central vertical arc of the latter.
\item
\label{Itm:Station}
As noted in \refdef{Station}, each longitudinal crimped edge intersects the stations in two short intervals.
These intervals appear slightly more than one-quarter of the length of the edge in from the ideal points of the three-manifold.
\end{enumerate}
See \reffig{CrimpedShearingRegion}.
We similarly give bigon coordinates to red model crimped shearing regions.
\end{remark}

We use the following notations for the various coordinate arcs and surfaces in bigon coordinates.

\begin{definition}
\label{Def:CrossSection}
Suppose that $U$ is a model crimped shearing region.
Fix $p \in U$.
%%% Note that we must assume that U is a model.  If p is an actual
%%% point, and lies in both \bdy^+ U and \bdy^- U, then B(p,U) is not
%%% well-defined.
\begin{itemize}
\item
As above, $B(p, U)$ is the coordinate bigon containing $p$.
\item
Let $x(p, U) = p \cross S^1$ ($p \cross \RR$) be the horizontal circle (line) in $U$ through $p$.
%%% This is the "x-axis", not an "x-coordinate".
\item
Let $y(p, U)$ be the leaf of the horizontal foliation of $B(p, U)$, through $p$.
\item
Let $z(p, U)$ be the leaf of the vertical foliation of $B(p, U)$, through $p$.
\item
Let $Y(p, U)$ be the union of the leaves $z(q, U)$ as $q$ ranges over $x(p, U)$.
We call $Y(p, U)$ the \emph{vertical band} in $U$ through $p$.
%%% Y for the xz-plane
\item
Let $Z(p, U)$ be the union of the leaves $x(q, U)$ as $q$ ranges over $y(p, U)$.
We call $Z(p, U)$ the \emph{(horizontal) cross-section} in $U$ through $p$.
\item
Finally, we define $X(p, U) = B(p, U)$.  \qedhere
\end{itemize}
\end{definition}

Note that the upper and lower boundaries of $\Theta^U$ and $\Theta_U$ are horizontal cross-sections.

\section{Straightening and shrinking}
\label{Sec:StraighteningShrinking}

From now on, instead of working in $M$, we work in the universal cover $\cover{M}$.
%%% Thus each draping route embeds in the track that carries it.  (Eg,
%%% consider the L^nR example.)  This also amkes all branch loci into
%%% branch lines, simplifying the discussion.
Thus we take care to ensure that our constructions are invariant under the action of the deck group.
In a slight abuse of notation we continue to write $B^\calV$ instead of the more correct $\cover{B}^\calV$.

Here we define the \emph{straightening} and \emph{shrinking isotopies}.
These are applied to the upper and lower branched surfaces $B^\calV$ and $B_\calV$.
The isotopies are \emph{local}:
in each tetrahedron they (and the resulting shrunken position) depend only on the combinatorics of that tetrahedron and its immediate neighbours.

The branched surfaces begin in dual position (shown in \reffig{DualUpperBranchedSurface}).
We \emph{straighten} the branched surfaces to move as much of each as possible into the mid-surface $\calS = \calS_R \cup \calS_B$.
%%% this is local to each tet - also this moves most of each branch
%%% line into the mid-surface.
We \emph{shrink} the branched surfaces to move vertices of $B^\calV$ down into $\Theta_\calV$ and those of $B_\calV$ up into $\Theta^\calV$.
%%% this is again local to each tet

We now describe in detail the upper straightening and shrinking isotopies of $B^\calV$.
The corresponding isotopies of $B_\calV$ are defined similarly.

\subsection{Straightening}
\label{Sec:Straightening}

First we \emph{straighten}.
We start with $B^t$ in dual position (shown in \reffig{DualUpperBranchedSurface}) and note that we have crimped.
For a half-tetrahedron $h$ we take the sectors of $B^h$ that do not intersect any longitudinal crimped edge;
we move those sectors to coincide with the (images after crimping of the) half-diamond of $h$.

The resulting position of the upper branched surface, in the various crimped half-tetrahedra, is shown in Figures~\ref{Fig:UpperHalfTet},~\ref{Fig:LowerHalfFan}, and~\ref{Fig:LowerHalfToggle}.
Each figure has a $180^\circ$ symmetry about its central vertical axis.
The resulting position of $B^\calV$, in a piece of a crimped shearing region, is shown in \reffig{StraightenedCrimpedShearingRegion}.

\begin{figure}[htbp]
\centering
\subfloat[Three-quarter view.]{
\includegraphics[height=4cm]{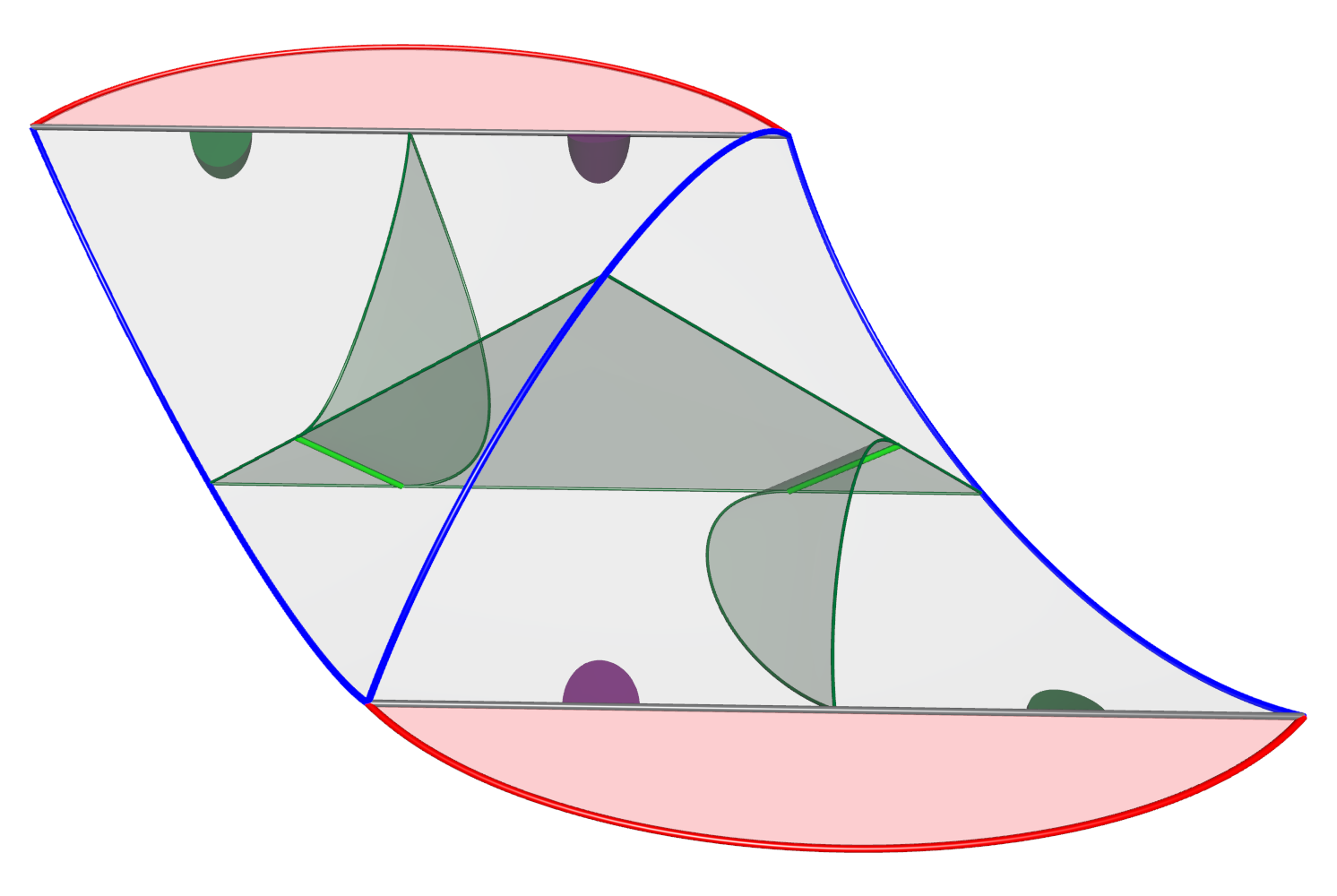}
}
\subfloat[Top view.]{
\includegraphics[height=4cm]{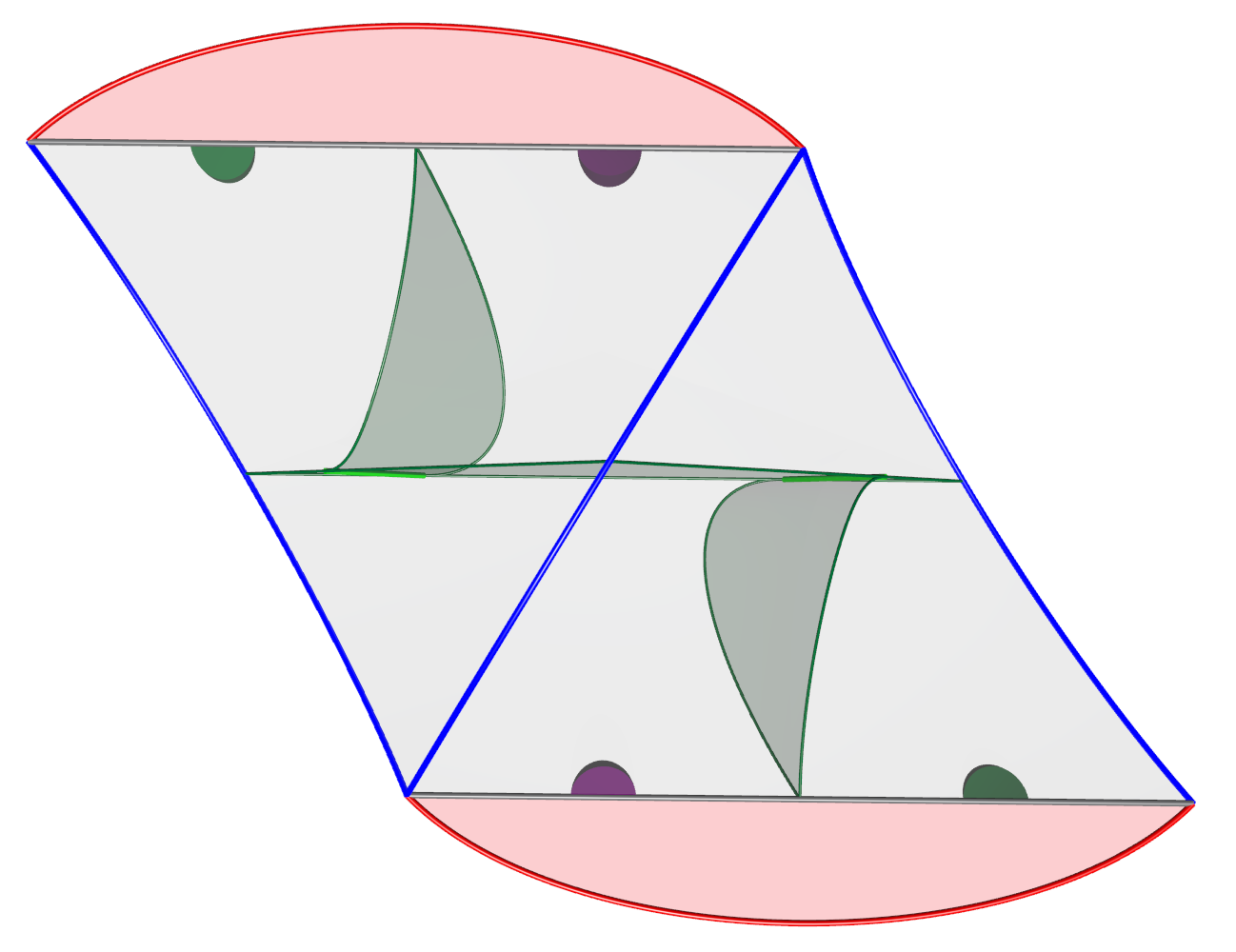}
}
\caption{Straightened $B^t$ in an upper half-tetrahedron (toggle or fan).}
\label{Fig:UpperHalfTet}
\end{figure}

\begin{figure}[htbp]
\centering
\subfloat[Three-quarter view.]{
\includegraphics[height=4cm]{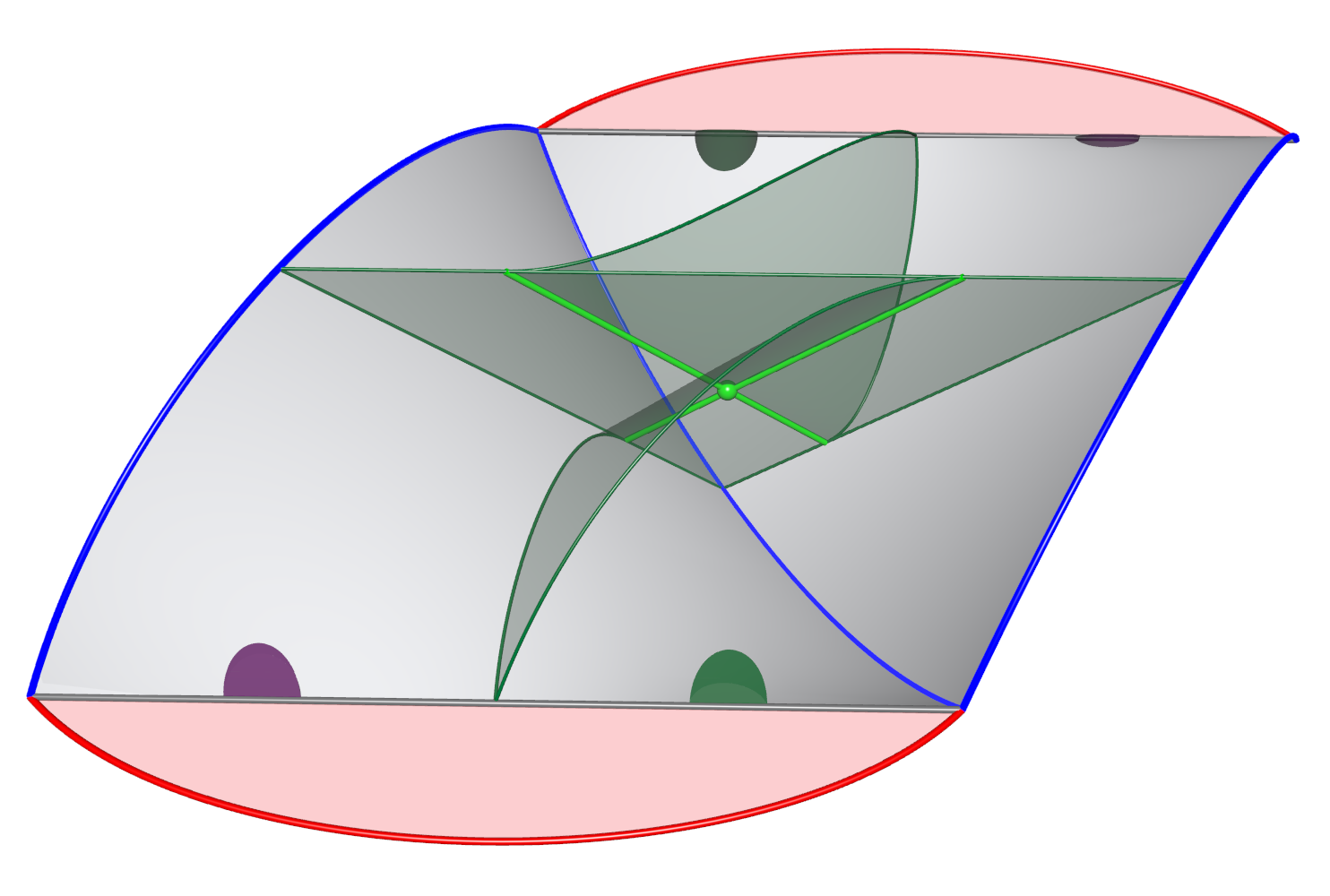}
}
\subfloat[Top view.]{
\includegraphics[height=4cm]{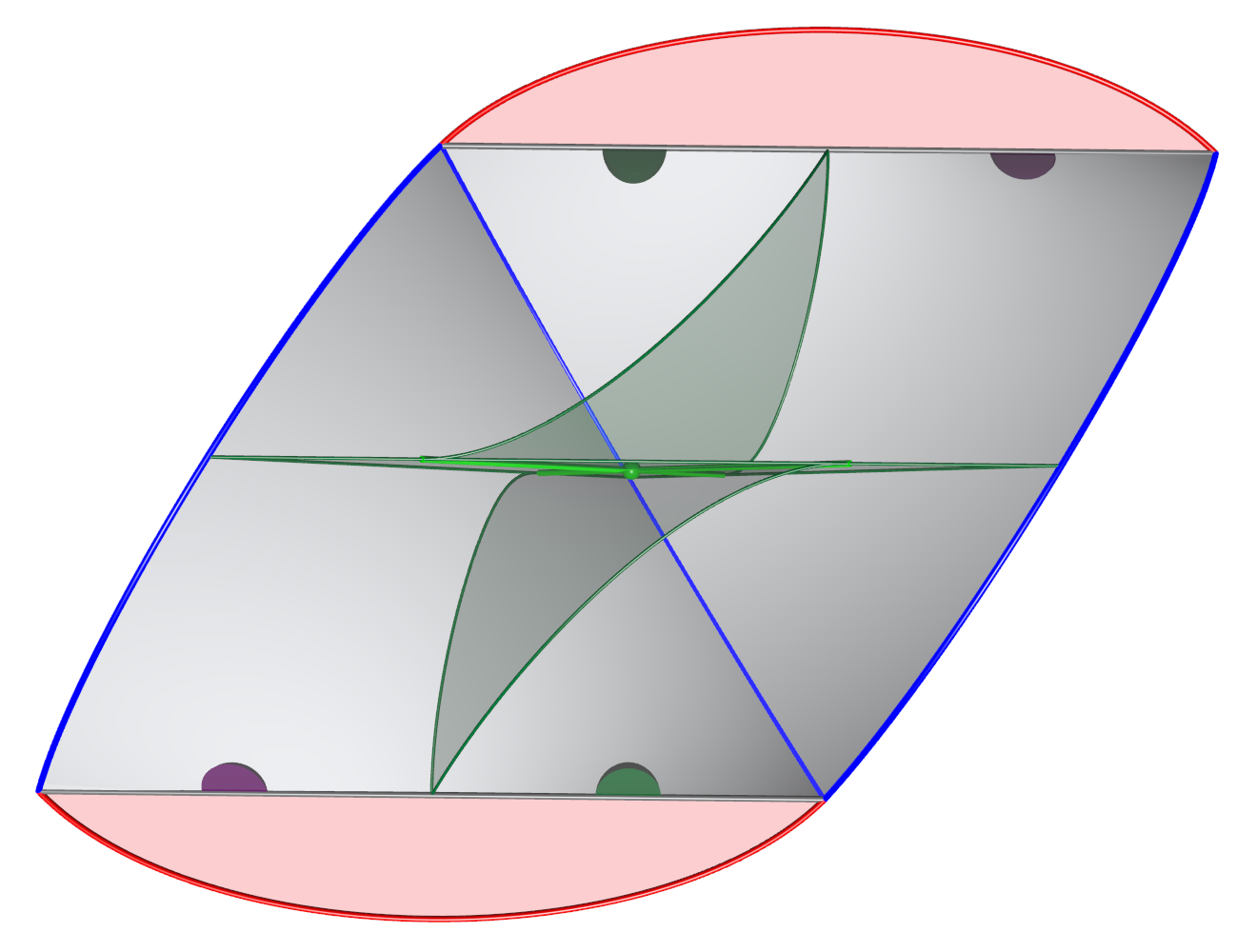}
}
\caption{Straightened $B^t$ in a lower half-tetrahedron (fan).}
\label{Fig:LowerHalfFan}
\end{figure}

\begin{figure}[htbp]
\centering
\subfloat[Three-quarter view.]{
\includegraphics[height=4cm]{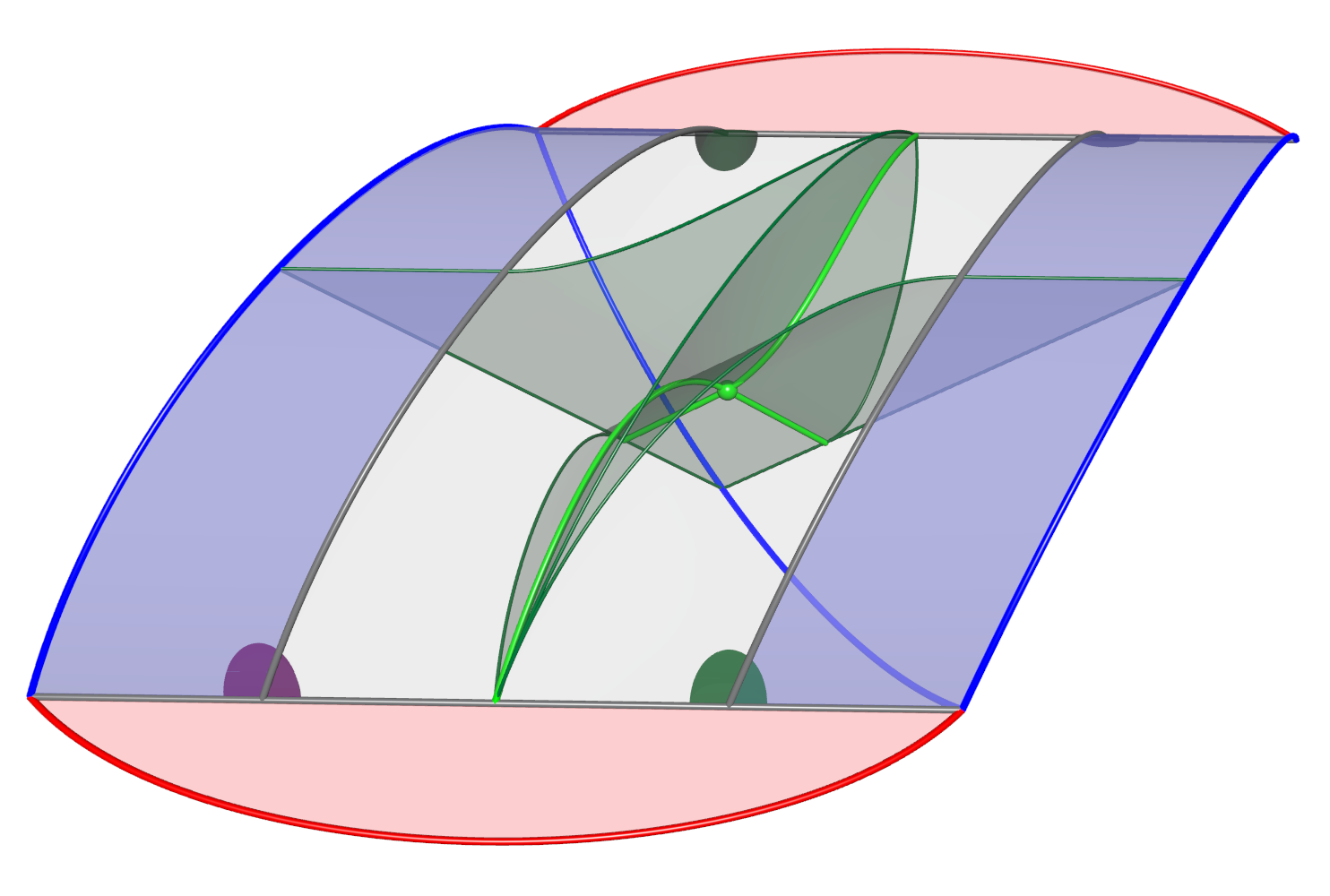}
}
\subfloat[Top view.]{
\includegraphics[height=4cm]{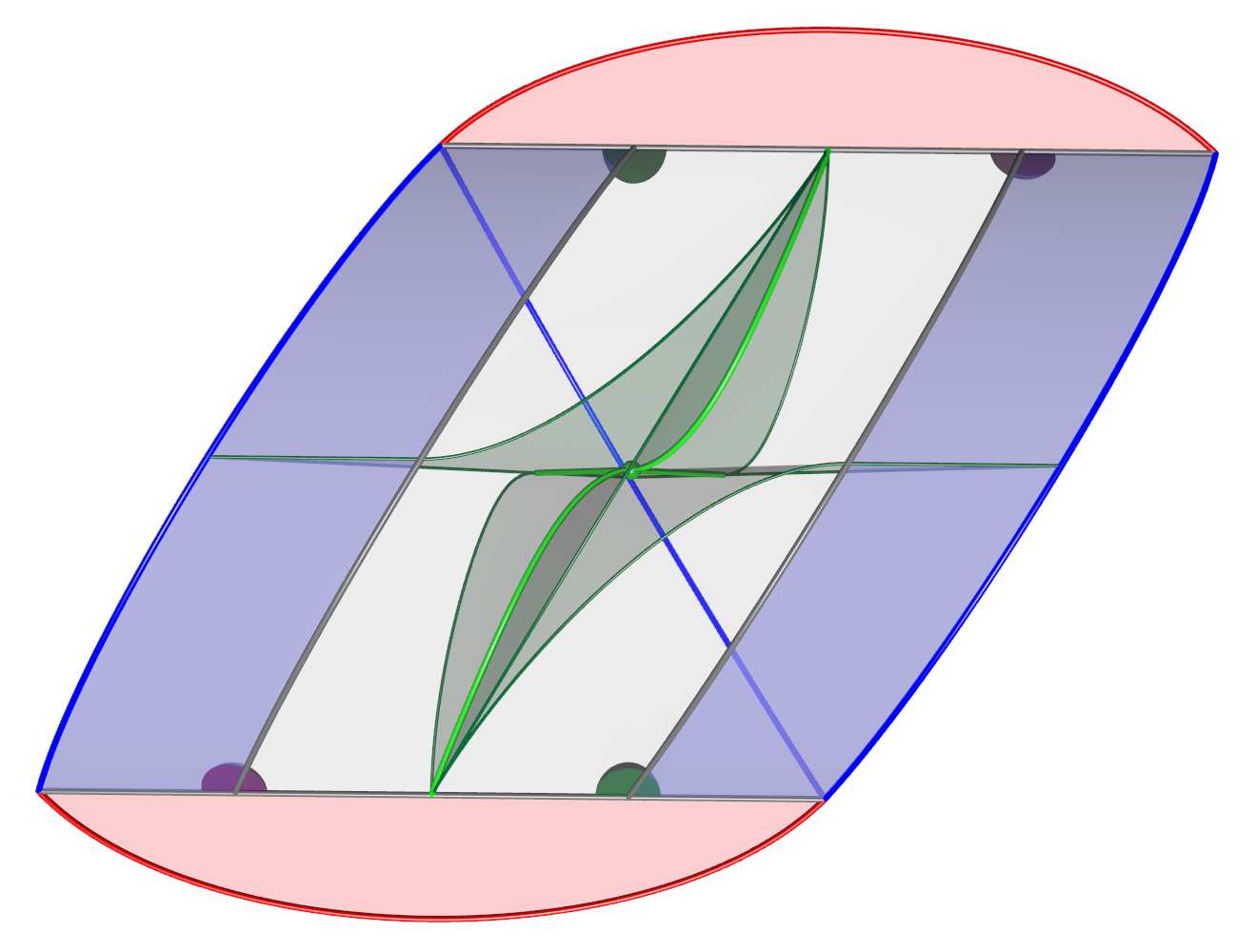}
}
\caption{Straightened $B^t$ in a lower half-tetrahedron (toggle).}
\label{Fig:LowerHalfToggle}
\end{figure}

\begin{figure}[htbp]
\centering
\subfloat[Three-quarter view.]{
\includegraphics[width=0.97\textwidth]{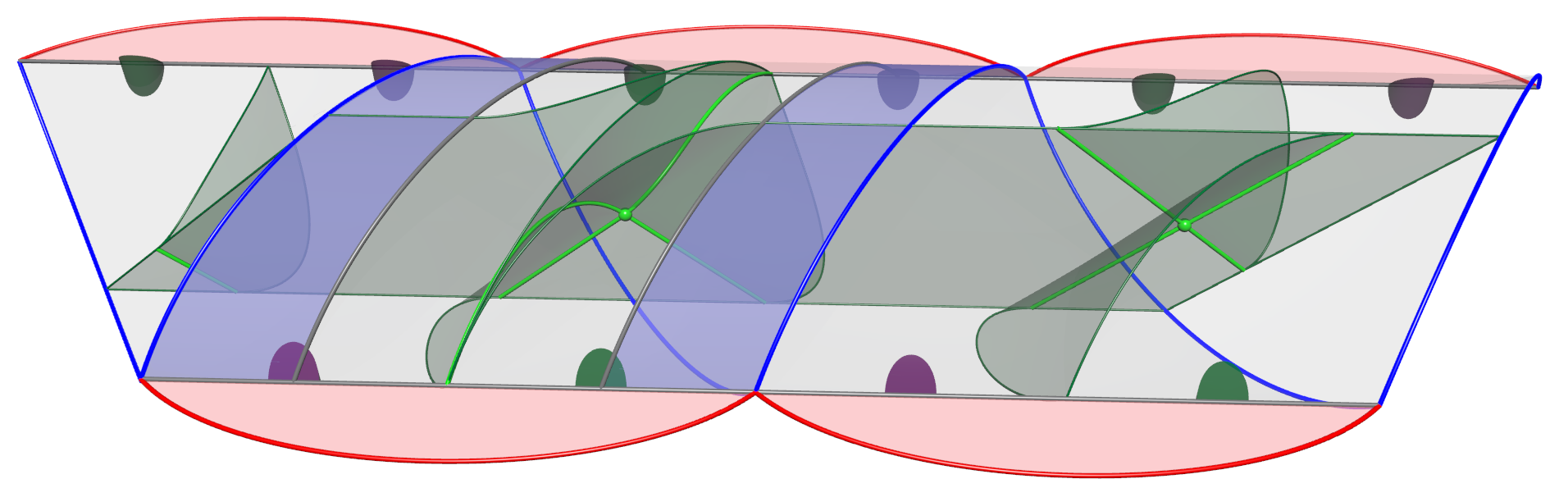}
}

\subfloat[Top view.]{
\includegraphics[width=0.97\textwidth]{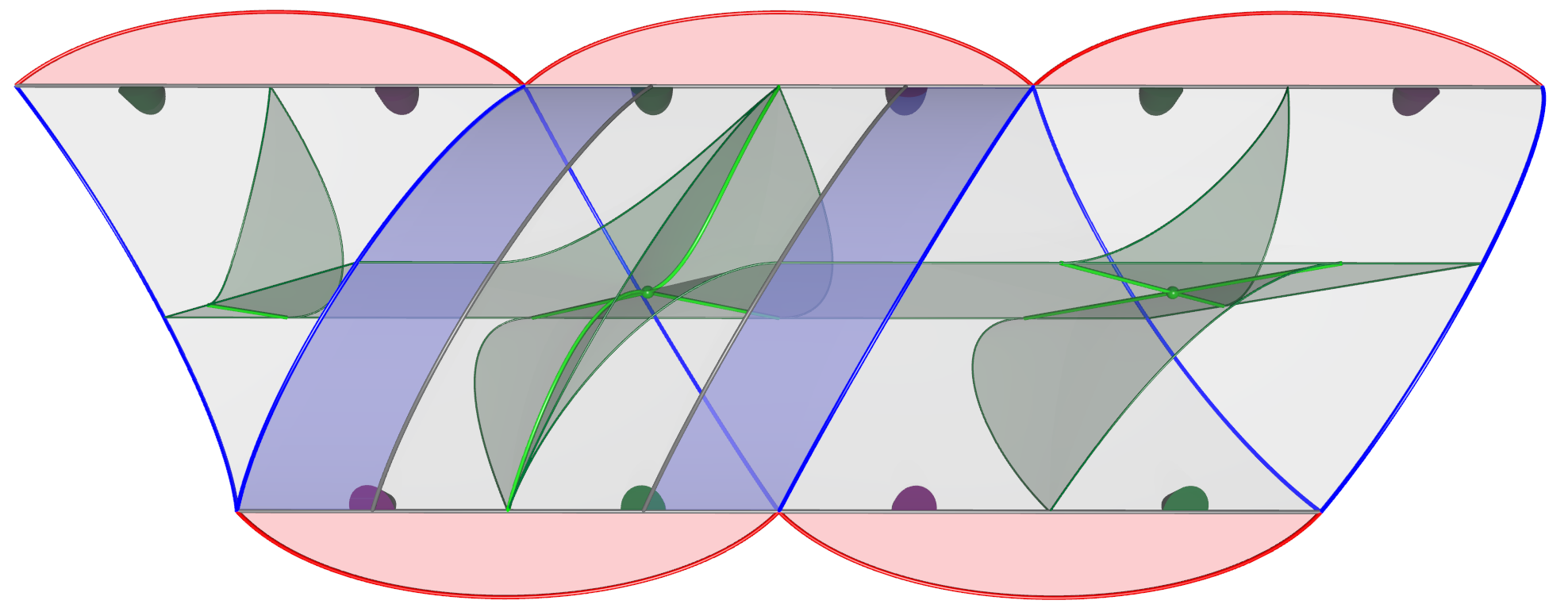}
}
\caption{Straightened $B^\calV$ in a crimped shearing region.}
\label{Fig:StraightenedCrimpedShearingRegion}
\end{figure}

We illustrate our construction with a running example.
The example is \usebox{\BigExVeer}, chosen from the veering census~\cite{GSS19}.
\reffig{m115_straight} shows the result of straightening in this example, in various cross-sections.

\begin{remark}
In our pictures of cross-sections we shade all toggle squares in grey and all crimped bigons the colour of their veering edge.
Along a branch interval of $B^\calV$ within a crimped solid torus, track-cusps are labelled with the same letter.
As we move from an upper boundary to a lower boundary the labels (on track-cusps of $B^\calV$) advance by one letter.
Track-cusps of $B_\calV$ are indicated with small triangles.
\end{remark}

\begin{figure}[htbp]
\subfloat[Blue crimped solid torus.]{
\centering
\labellist
\small\hair 2pt
\pinlabel {$\bdy^+ \Theta^\calV$} [r] at 10 1340
\pinlabel {$\bdy^- \Theta^\calV = \bdy^+ \Theta_\calV$} [r] at 10 760
\pinlabel {$\bdy^- \Theta_\calV$} [r] at 10 178
\endlabellist
\includegraphics[height=15cm]{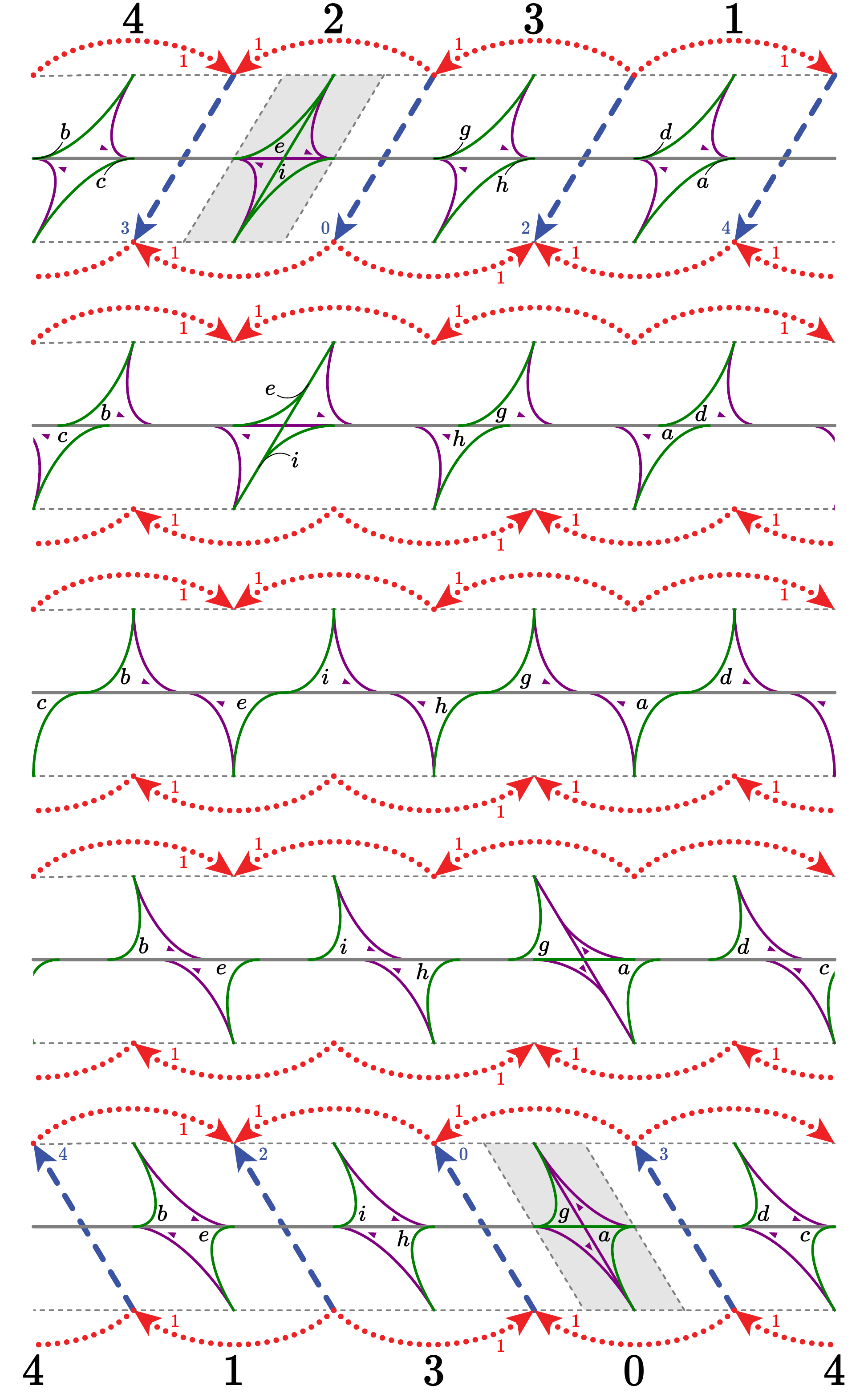}
}
\subfloat[Red.]{
\centering
\includegraphics[height=15cm]{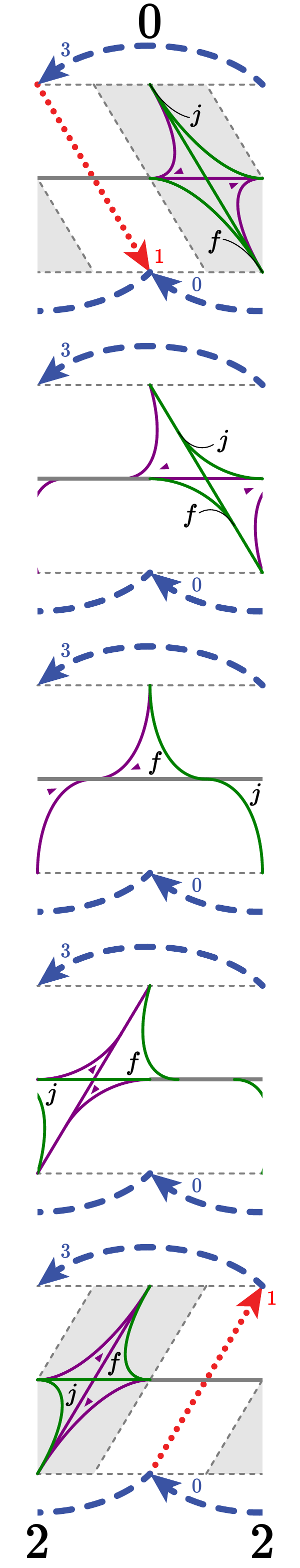}
}
\caption{The intersection of $B^\calV$ (and $B_\calV$), after straightening, with various horizontal cross-sections of the crimped shearing decomposition of \usebox{\BigExVeer}.
Compare with \reffig{fLLQccecddehqrwjj}.
We indicate the position of track-cusps with letters or small triangles; sometimes we use a ``whisker'' pointing from a letter or triangle to the track-cusp itself.
The stations are not drawn.}
\label{Fig:m115_straight}
\end{figure}

%%% \reffig{CrimpedShearingRegion} shows the handedness of the helical
%%% edges in red and blue crimped shearing regions

\begin{remark}
In \reffig{m115_straight} the upper boundary of the blue crimped solid torus $U$ is glued to the lower boundary of $U$ along the fan squares, by a $180^\circ$ rotation and a (left) shear.
As a result, the blue helical veering edges and the red longitudinal veering edges (adjacent to fan squares) match on the top and bottom of $U$.
The red longitudinal veering edges adjacent to the toggle squares do not match.
This is because they are glued to the red crimped solid torus $V$.
The upper and lower boundaries of $V$ are also glued, by a $180^\circ$ rotation and a (right) shear, along the red crimped bigons.
\end{remark}

\begin{remark}
\label{Rem:TangentsShear}
Suppose that $U$ is a crimped shearing region.
Let $H = \bdy^- U = \bdy^- \Theta_U$ and $K = \bdy^+ U = \bdy^+ \Theta^U$.
Let $\tau^H$ and $\tau^K$ be the intersections of $B^\calV$ with $H$ and $K$, respectively.
So $\tau^H$ and $\tau^K$ are train-tracks.
We arrange matters so that $\tau^H$ meets longitudinal crimped (helical veering) edges of $H$ with a tangent vector which is parallel to the helical veering (longitudinal crimped) edges of $H$; see \reffig{m115_straight}.
We do the same for $\tau^K$.
This ensures that tangent vectors match up when sheared by the gluing maps.

Suppose that $H_s$ parametrises the cross-sections of $U$, with $H_0 = \bdy^- \Theta_U$, with $H_{1/2} = \bdy^+ \Theta_U = \bdy^- \Theta^U$ and with $H_1 = \bdy^+ \Theta^U$.
As $s$ increases from $0$ to $1$, the tangent vectors of branches meeting longitudinal crimped edges shear.
Again, see \reffig{m115_straight}.
\end{remark}

\begin{remark}
Observe that all vertices of $B^\calV$ now lie along the central curves of the middle cross-sections of the crimped shearing regions.
%%% That is, the intersection of Y(p,U) and Z(p,U) for p any vertex.
That is, the vertices lie in the intersection of
\begin{itemize}
\item
the middle cross-section $\bdy^- \Theta^\calV = \bdy^+ \Theta_\calV$ and
\item
the mid-surface $\calS = \calS_R \cup \calS_B$. \qedhere
\end{itemize}
\end{remark}

\begin{definition}
\label{Def:Projection}
Suppose that $U$ is a crimped shearing region.
Let $A = A(U)$ be the mid-band in $U$.
Let $U'$ equal $U$ minus its longitudinal crimped edges.
We define the \emph{shearing projection} $\rho_U \from U' \to A$ as follows.
\begin{itemize}
\item
In every cross-section, $\rho_U$ projects along lines in bigon coordinates.
\item
In $\bdy^\pm U$ these lines are parallel to the helical veering edges.
\item
In the cross-sections between the upper and lower boundaries of $U$ the direction of projection interpolates linearly.
\end{itemize}
Suppose that $U$ and $V$ are crimped shearing regions of the same colour with $\bdy^+ U$ intersecting $\bdy^- V$.
Then, by construction, $\rho_U$ and $\rho_V$ agree on $\bdy^+ U' \cap \bdy^- V'$ (a union of fan squares and crimped bigons).
So, for any union $\calU$ of crimped shearing regions, all of the same colour, we may define $\rho_\calU = \bigcup \rho_U$ where the union ranges over $U$ in $\calU$.
\end{definition}

\begin{remark}
Suppose that $\calU$ is a union of crimped shearing regions, all of the same colour.
Suppose that $C$ is the intersection of the branch lines of $B^\calV$ (in straightened position) with $\calU$.
We draw the projection $\rho_\calU(C)$ on the mid-surface $\calS_\calU$ in a particular example in \reffig{m115_side_straight}.

In straightened position, a branch interval (that is, a component of $C$) lies in the mid-surface until slightly below the toggle square it exits through.
Thus that sub-interval and its projection under $\rho_\calU$ agree and are (almost) straight.
Just below the exiting toggle square, the track-cusp continues to move at constant speed (with respect to the $x$--coordinate).
However, the shearing of the projections exactly cancel that motion.
As a result, the projection of the remaining sub-interval is (almost) vertical.
Finally, we note that the branch intervals and their projections are smooth curves.
What we have drawn in \reffig{m115_side_straight} is thus a (highly accurate) approximation of the actual position.
\end{remark}

\begin{figure}[htbp]
\subfloat[Blue mid-surface.]{
\centering
\includegraphics[height=7cm]{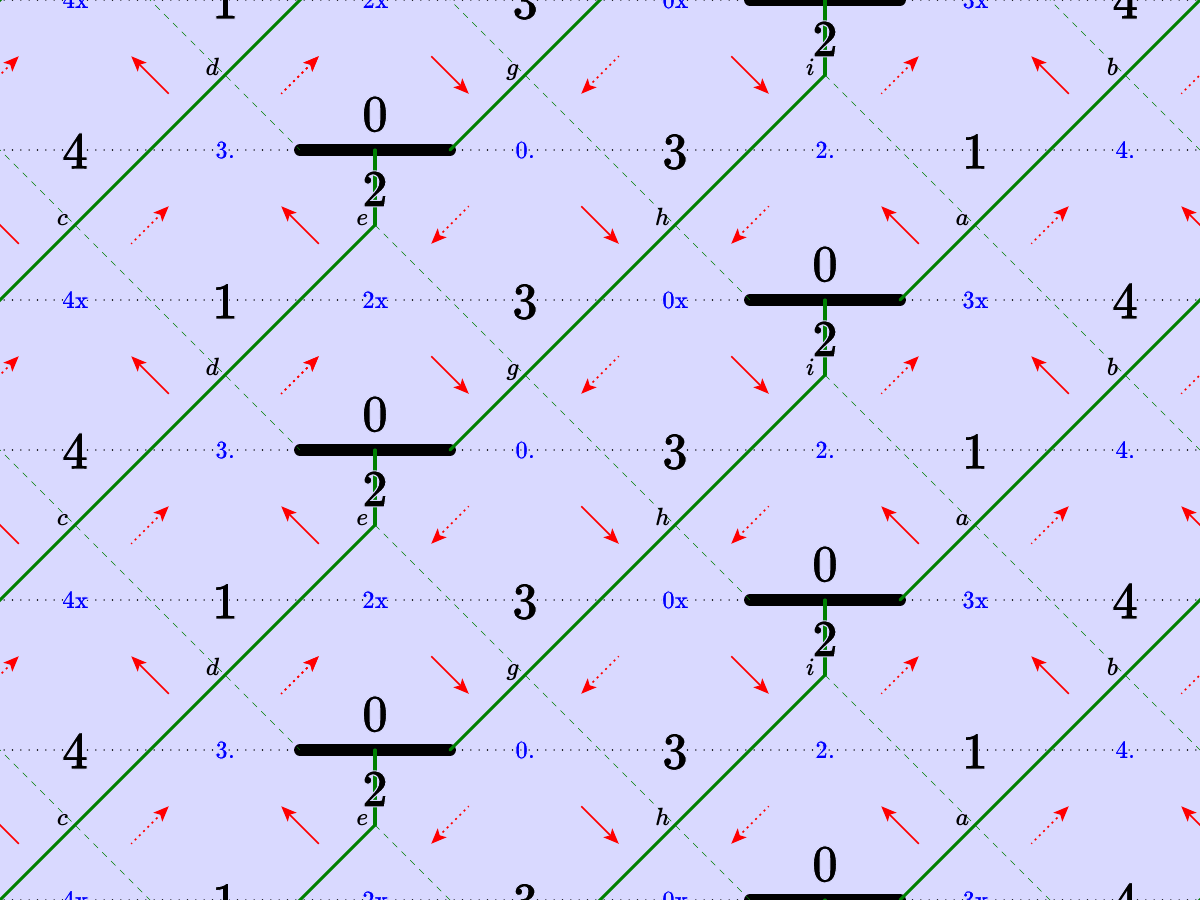}
}
\subfloat[Red.]{
\centering
\includegraphics[height=7cm]{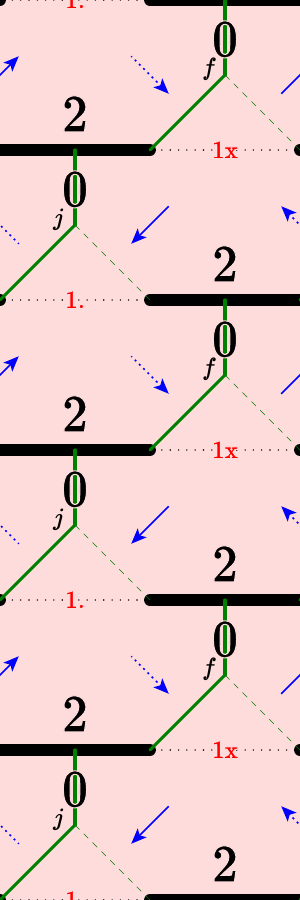}
}
\caption{The branch lines of $B^\calV$, after straightening, projected to (a $\ZZ^2$ cover of each component of) the mid-surface $\calS$.
Compare with \reffig{fLLQccecddehqrwjj}.
Note that we have drawn six fundamental domains, stacked vertically.
The projection is drawn with a solid (dotted) line exactly as the corresponding track-cusps is in front of (behind) the branch surface.}
\label{Fig:m115_side_straight}
\end{figure}

\begin{remark}
\label{Rem:StraightenedDynamic}
As noted in \refcor{DualDynamic} the branched surface $B^\calV$, when in dual position, is dynamic.
Straightening makes parts of $B^\calV$ vertical.
However, the branch locus remains transverse, and not orthogonal, to vertical.
Thus the straightened $B^\calV$ is again dynamic.
\end{remark}

\subsection{Shrinking}
\label{Sec:Shrinking}

Next we \emph{shrink}: in each crimped shearing region $U$,
we form a very small collar $\Gamma^U$ of $\bdy^+ U$, obtained as a union of horizontal cross-sections $Z(p, U)$.
Note that $\Gamma^U$ is disjoint from the vertices of $B^\calV$.
We now move $B^\calV$ by a proper isotopy of $U$ which preserves $x$ and $y$ coordinates (in bigon coordinates) and permutes the cross-sections $Z(p, U)$.
The isotopy carries the bottom of $\Gamma^U$ downwards to $\bdy^- \Theta^U$ and evenly redistributes the cross-sections below $\Gamma^U$ inside of $\Theta_U$.
%%% Previously branch lines mostly were 45 degrees above horizontal.
%%% After the isotopy they are 22.5 degrees above horizontal (in
%%% $\Theta_U$) or almost vertical in $\Theta^U$

Before the isotopy, $B^\calV$ was transverse to the equatorial squares.
After the isotopy, $B^\calV$ is almost vertical in all of $\Theta^U$.
The intersections of $B^\calV$ with $\bdy^+ U$ and $\bdy^- U$
are unchanged by the shrinking isotopy.
Note that the shrinking isotopy maintains the $180^\circ$ symmetry of the branched surfaces $B^t$.
In \reffig{m115_shrink} we show the intersection of the shrunken $B^\calV$ (and $B_\calV$) with various horizontal cross-sections.

\begin{figure}[htbp]
\subfloat[Blue crimped solid torus.]{
\centering
\labellist
\small\hair 2pt
\pinlabel {$\bdy^+ \Theta^\calV$} [r] at 10 1340
\pinlabel {$\bdy^- \Theta^\calV = \bdy^+ \Theta_\calV$} [r] at 10 760
\pinlabel {$\bdy^- \Theta_\calV$} [r] at 10 178
\endlabellist
\includegraphics[height=15cm]{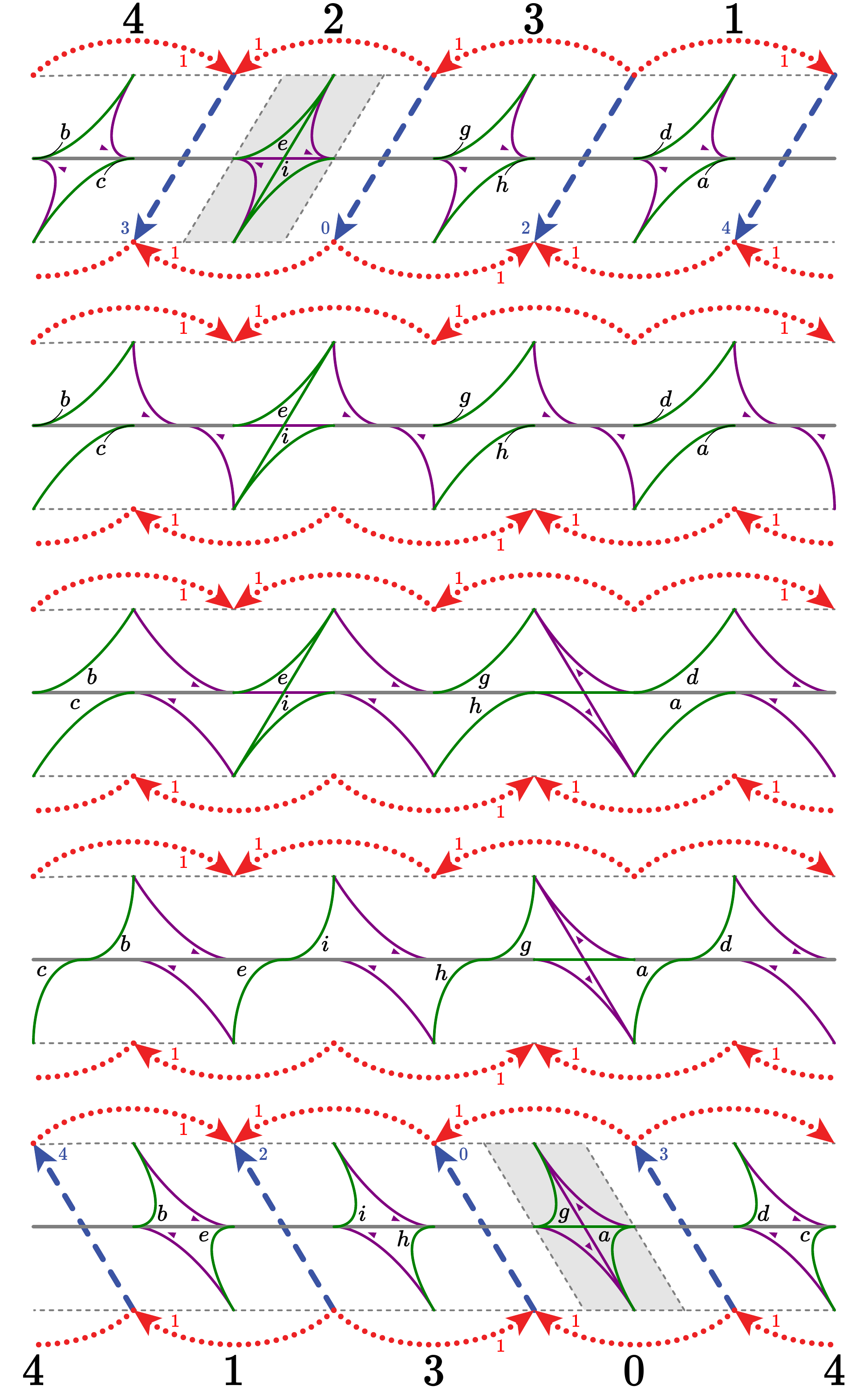}
}
\subfloat[Red.]{
\centering
\includegraphics[height=15cm]{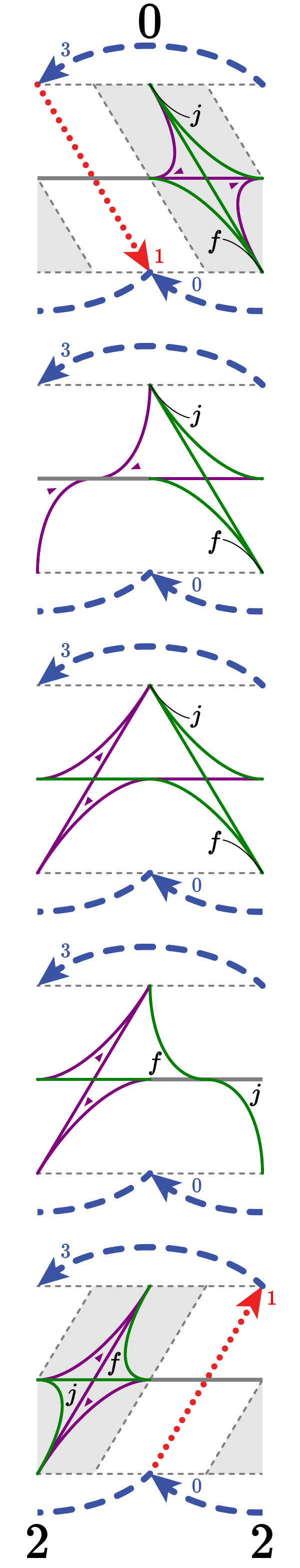}
}
\caption{The intersection of $B^\calV$ (and $B_\calV$), after shrinking, with various horizontal cross-sections of the crimped shearing decomposition of \usebox{\BigExVeer}.
Compare with \reffig{m115_straight}.
}
\label{Fig:m115_shrink}
\end{figure}

\begin{figure}[htbp]
\subfloat[Blue mid-surface.]{
\centering
\includegraphics[height=7cm]{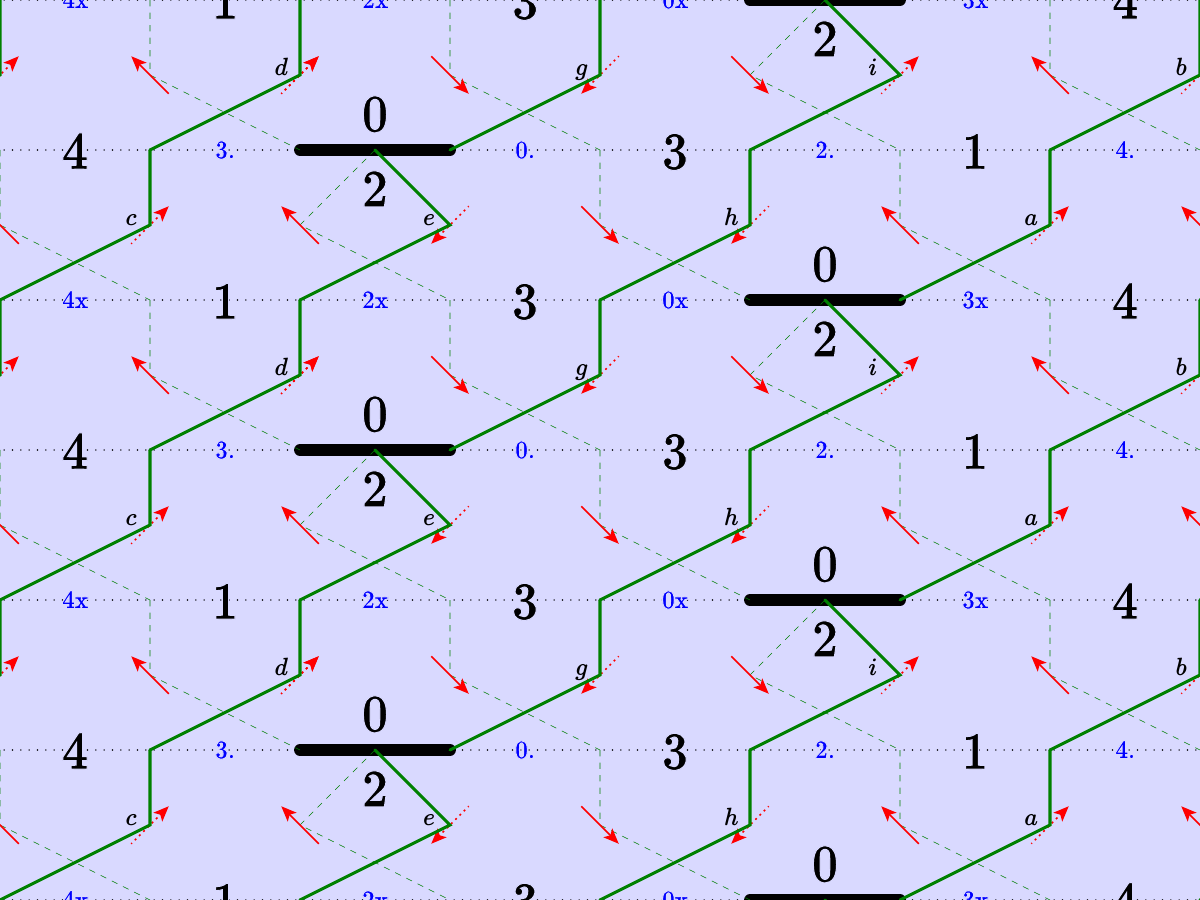}
}
\subfloat[Red.]{
\centering
\includegraphics[height=7cm]{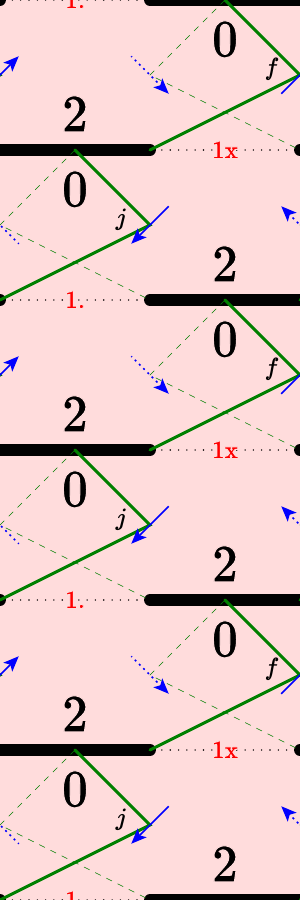}
}
\caption{The branch lines of $B^\calV$, after shrinking, projected to the mid-surfaces.
Compare with \reffig{m115_side_straight}.}
\label{Fig:m115_side_shrunk}
\end{figure}

\begin{remark}
\label{Rem:ShrunkenTangentsShear}
Note that the shearing of tangent vectors, as in \refrem{TangentsShear}, now occurs in $\Theta_\calV$ for $B^\calV$ (and in $\Theta^\calV$ for $B_\calV$).
\end{remark}

\begin{remark}
\label{Rem:ShrunkenDynamic}
Shrinking permutes cross-sections; thus by \refrem{StraightenedDynamic} the shrunken branched surface $B^\calV$ is again dynamic.
%%% In cross-sections, shrinking changes magnitudes of slopes but not
%%% their signs.
\end{remark}

\section{Parting}
\label{Sec:Parting}

Here we define the \emph{parting isotopies}.
These are applied to the upper and lower branched surfaces $B^\calV$ and $B_\calV$, placing them in \emph{parted position}.
These isotopies are almost local:
in each tetrahedron, outside of the stations, they depend only on the combinatorics of that tetrahedron and its immediate neighbours.
The way in which the branched surface intersects crimped edges inside of stations is more delicate and is dealt with in \refsec{Junctions}.

%%% If there are no fan tets then the junctions are not needed! In
%%% particular, $B^\calV$ and $B_\calV$ in prepared position are
%%% already a dynamic pair.  We will not dwell on this; it is too far
%%% from the general case.

Concentrating on $B^\calV$, we now sketch the construction before giving the details.
We start in shrunken position (shown in \reffig{m115_shrink}).
In each cross-section of $\Theta^\calV$, and near each crimped edge, we will move $B^\calV$ towards a chosen station (corner) of the relevant toggle square.
We will also isotope branches of $B^\calV$ in cross-sections of $\Theta^\calV$ to be (almost) line segments (in bigon coordinates).
As for shrunken position, the parted position of $B^\calV$ in $\Theta^\calV$ will be almost a product.

This done, we will move $B^\calV$ downward in $\Theta_\calV$.
This makes the intersection of $B^\calV$ with the cross-sections into a sequence of train-tracks as follows.
As they move up through $\Theta_\calV$ they first perform a \emph{splitting} of the  track-cusps along their \emph{parting routes}.
They next perform a \emph{graphical isotopy} where the track-cusps are (almost) motionless and the branches straighten to become (almost) line segments.

The branched surface $B_\calV$ moves in a similar way but swapping $\Theta^\calV$ and $\Theta_\calV$.
The combined procedure of splitting along routes and graphical isotopy will be used once (in space) in this section and three more times (twice in time and once in space) in \refsec{Draping}.
We use these to fill in the isotopy from parted position to their final \emph{draped position}.

\begin{figure}[htbp]
\subfloat[Blue crimped solid torus.]{
\label{Fig:m115_prepared_blue}
\centering
\labellist
\small\hair 2pt
\pinlabel {$\bdy^+ \Theta^\calV$} [r] at 10 1340
\pinlabel {$\bdy^- \Theta^\calV = \bdy^+ \Theta_\calV$} [r] at 10 760
\pinlabel {$\bdy^- \Theta_\calV$} [r] at 10 178
\endlabellist
\includegraphics[height=15cm]{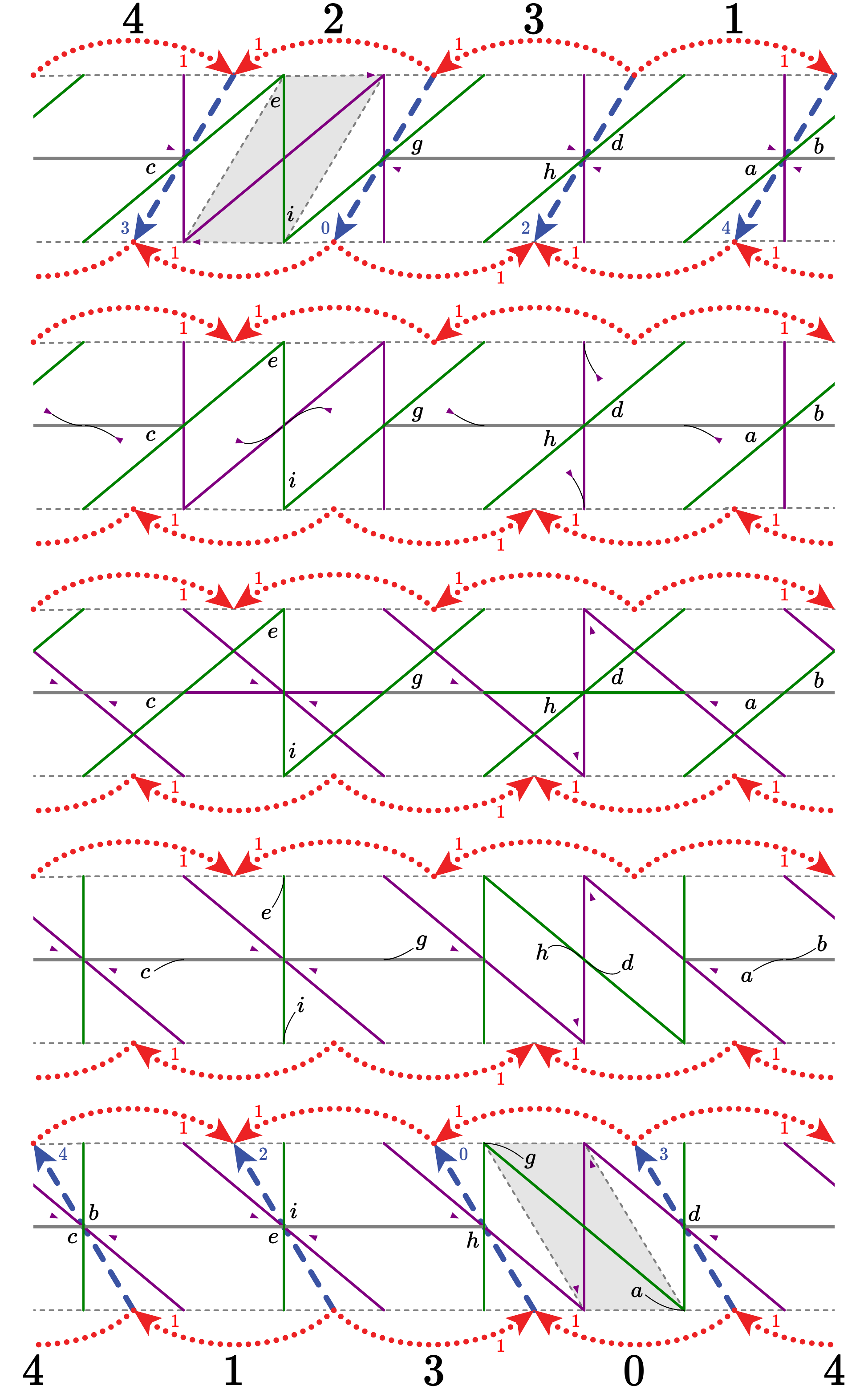}
}
\subfloat[Red.]{
\label{Fig:m115_prepared_red}
\centering
\includegraphics[height=15cm]{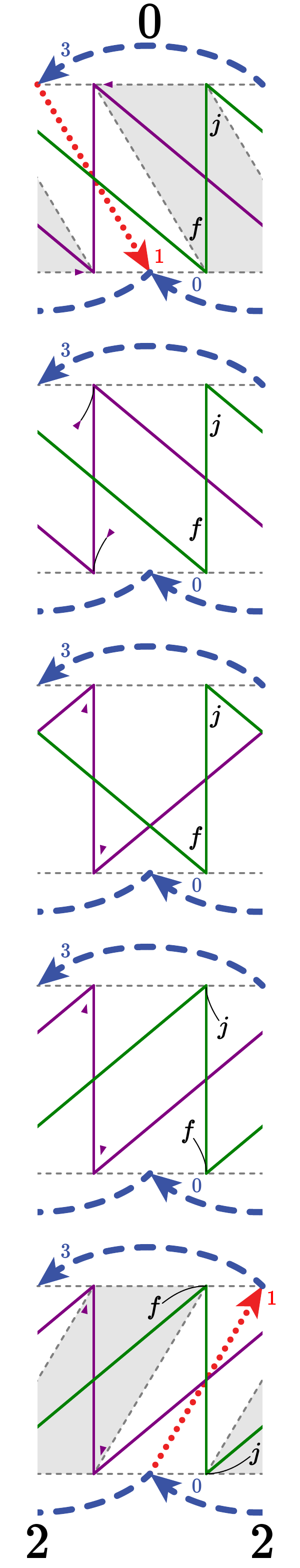}
}
\caption{The intersection of $B^\calV$ (and $B_\calV$), after parting, with various cross-sections of the crimped shearing decomposition of \usebox{\BigExVeer}.
The branched surfaces intersect the longitudinal crimped edges within stations.
As in \reffig{m115_shrink}, the branched surface $B^\calV$ is almost vertical in $\Theta^\calV$.
Likewise, $B_\calV$ is almost vertical in $\Theta_\calV$.
}
\label{Fig:m115_prepared}
\end{figure}

\begin{figure}[htbp]
\subfloat[Blue mid-surface.]{
\centering
\includegraphics[height=7cm]{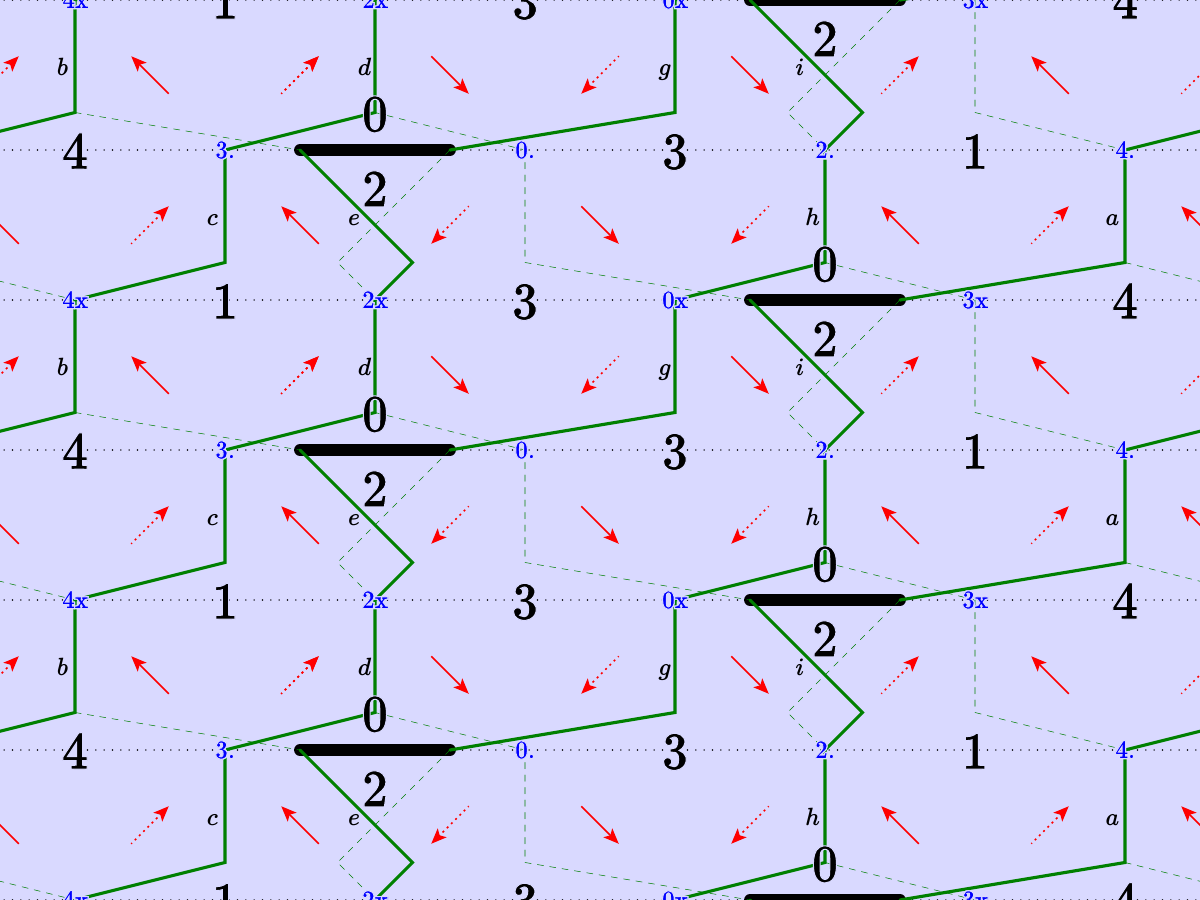}
}
\subfloat[Red.]{
\centering
\includegraphics[height=7cm]{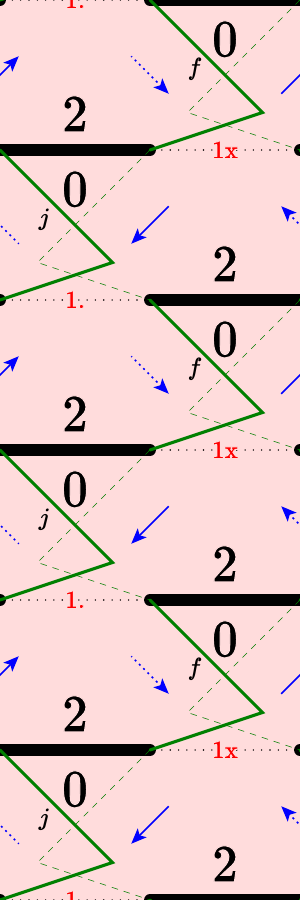}
}
\caption{The branch lines of $B^\calV$, after parting, projected to the mid-surfaces.
Compare with \reffig{m115_side_shrunk}.}
\label{Fig:m115_side_parted}
\end{figure}

\subsection{Parting in \texorpdfstring{$\Theta^\calV$}{Theta sup V}}
\label{Sec:PartingUp}

We now describe the parting isotopy in $\Theta^\calV$.

Suppose that $U$ is a crimped blue shearing region.
Suppose that $e'$ is a longitudinal crimped edge for $U$.
Suppose that $e$ is the associated red veering edge and let $C$ be the crimped bigon which $e$ and $e'$ cobound.
Suppose that $S$ is the upper toggle square meeting $e'$.
%%% NB: $S$ may lie in, or lie strictly above, $\bdy^+ U$.
We equip $C$ with the anti-clockwise orientation, as viewed from above.
This induces orientations on $e$ and $e'$.
Let $c = C \cap B^\calV$.
The parting isotopy in $\Theta^U$ fixes $c \cap e$ and moves $c \cap e'$ along $e'$, \emph{against} the orientation of $e'$ (given just above), until it arrives at the station cutting off the cusp of the toggle square $S$.
(If, instead, $U$ is red, then we move $c \cap e'$ along $e'$, \emph{following} the orientation of $e'$, again until it arrives at its station.)
To see this motion, compare top lines of Figures~\ref{Fig:m115_shrink} and~\ref{Fig:m115_prepared}.

In $\bdy^+ U$ we also move track-cusps outwards in fan squares until they arrive close to the midpoint of a helical edge.
In other cross-sections of $\Theta^U$ we do the same, but now moving track-cusps until they almost meet the projection (in bigon coordinates) of the midpoint of a helical edge.

\begin{remark}
\label{Rem:AlmostProduct}
Thus $B^\calV$ is almost a product in $\Theta^U$.
The track-cusps move very slowly forward in cross-sections to preserve dynamism.
Track-cusps outside of stations all move at the same speed.
The motion of track-cusps inside of stations is described in \refsec{Junctions}.
Where a train-track meets a longitudinal crimped edge, its tangent vector remains parallel to the (projection of the) helical veering edges in $\bdy^+U$.
See \refrem{ShrunkenTangentsShear}.
\end{remark}

\begin{remark}
\label{Rem:UpGivesDown}
Parting in $\Theta^\calV$ determines parted position in $\bdy^- \Theta_\calV$.
The general picture is shown in \reffig{MagnifyParted}.
The running example shows a particular case -- see the bottom row of \reffig{m115_prepared}.
\end{remark}

\begin{remark}
\label{Rem:JunctionSizes}
Once in parted position, in any cross-section the train-track intersects crimped edges only within stations.
Again, the exact location where a branch intersects a crimped edge within a station is set in \refsec{Junctions}.
\end{remark}

\begin{remark}
\label{Rem:AlmostLocal}
Outside of stations, the parting isotopy in $\Theta^U$ depends only on whether a branch is below a toggle square or a fan square in $\bdy^+U$.
\end{remark}

\begin{figure}[htbp]
\subfloat[Fan square in $\bdy^+ U$.]{
\centering
\includegraphics[width =
0.48\textwidth]{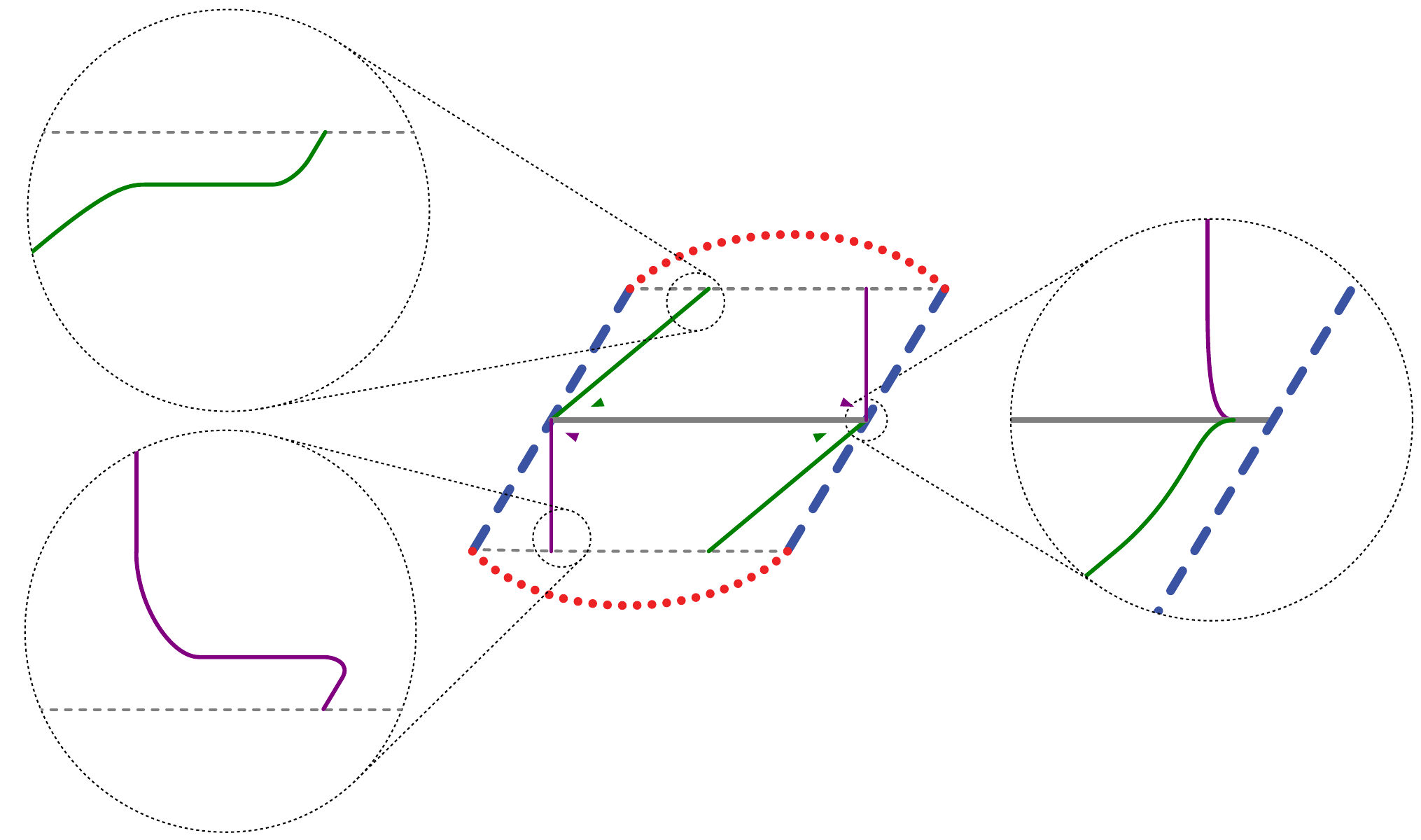}
\label{Fig:MagnifyPartedTopFan}
}
\subfloat[Fan square in $\bdy^- U$.]{
\centering
\includegraphics[width =
0.48\textwidth]{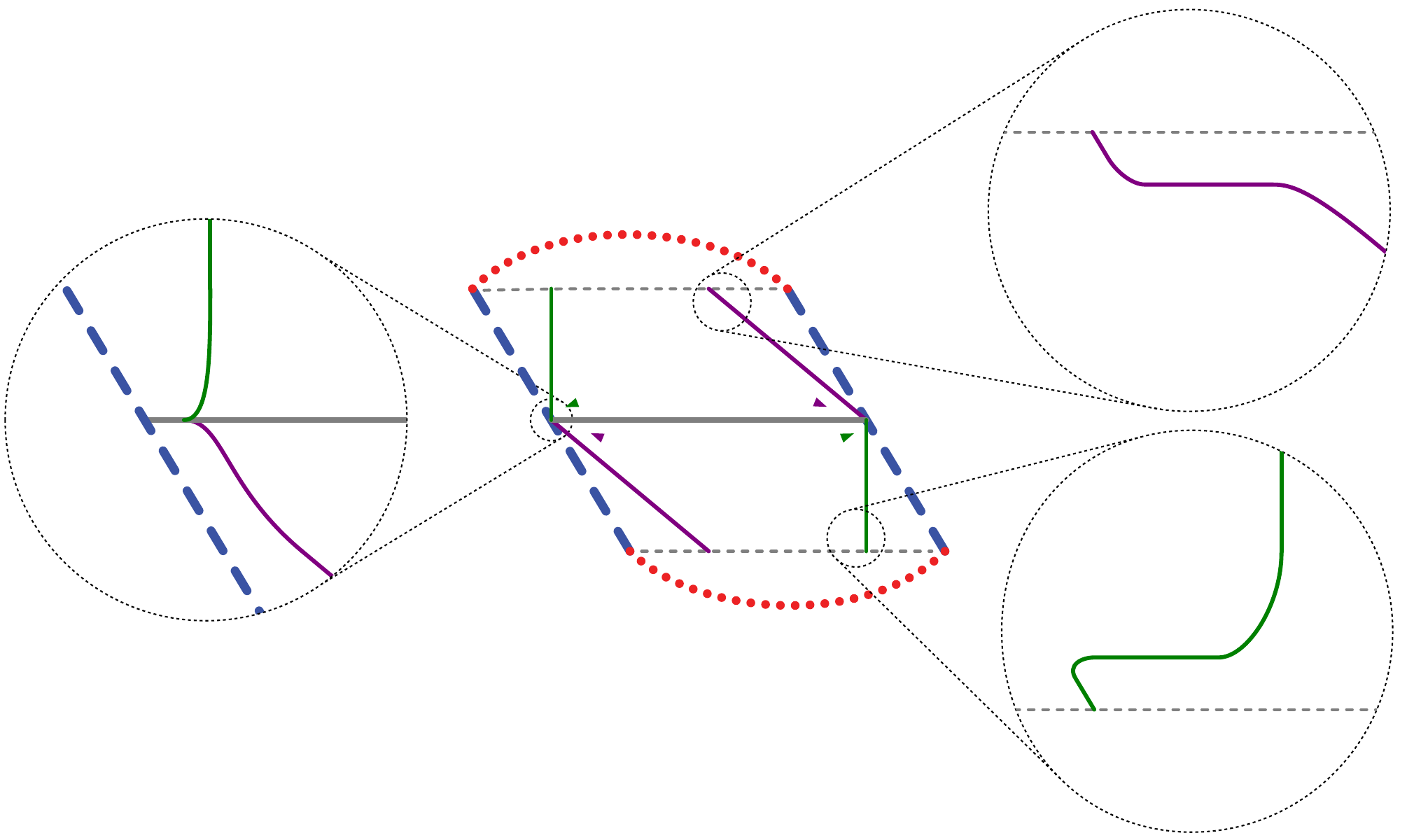}
\label{Fig:MagnifyPartedBottomFan}
}

\subfloat[Toggle square in $\bdy^+ U$.]{
\centering
\includegraphics[width = 0.48\textwidth]{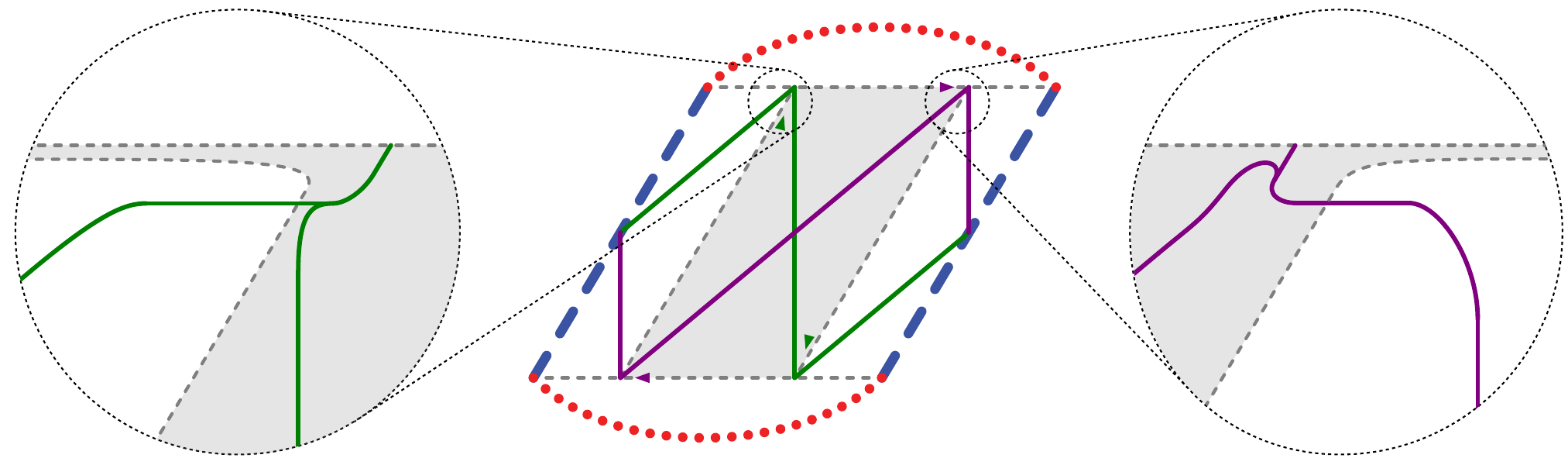}
\label{Fig:MagnifyPartedTopToggle}
}
\subfloat[Toggle square in $\bdy^- U$.]{
\centering
\includegraphics[width = 0.48\textwidth]{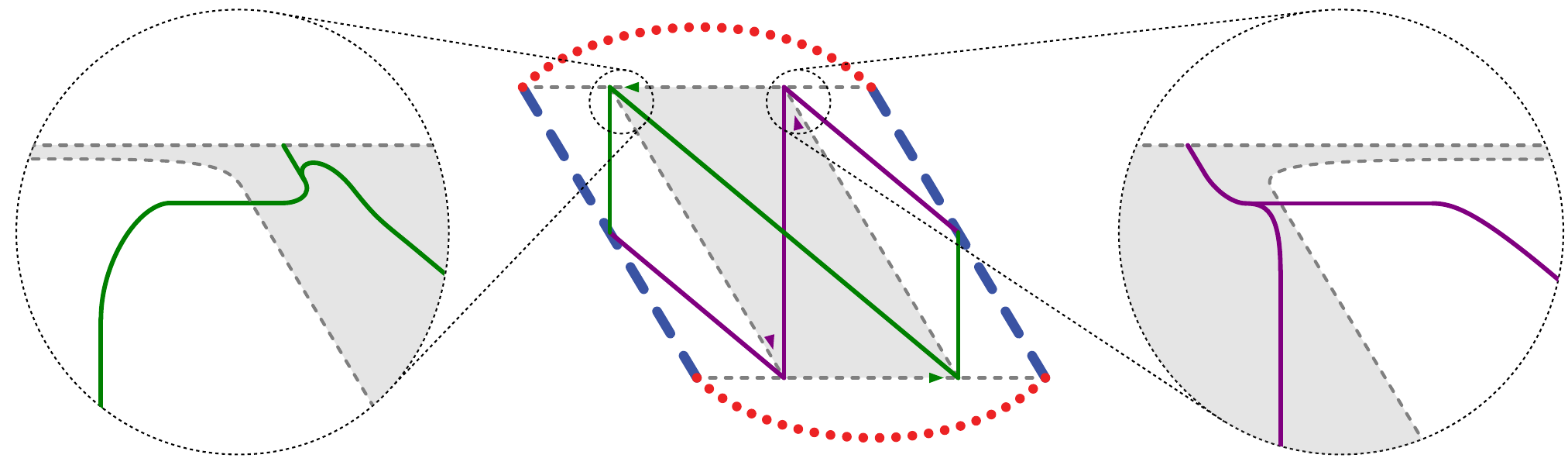}
\label{Fig:MagnifyPartedBottomToggle}
}
\caption{In parted position the intersections of $B^\calV$ and $B_\calV$ with cross-sections are line segments except for (a) inside of the stations and (b) very close to the midpoints of helical edges.
Note that here $U$, the containing crimped shearing region, is blue.
}
\label{Fig:MagnifyParted}
\end{figure}

\subsection{Graphical tracks and isotopies}

In order to organise isotopies in cross-sections in $\Theta_\calV$ we require the following.

\begin{definition}
\label{Def:GraphicalTrack}
Suppose that $U$ is a crimped shearing region.
Suppose that $H$ is a cross-section in $\Theta_U$.
We consider the foliation of $H$ by lines (in bigon coordinates) parallel to the helical veering edges in $\bdy^- U$.
We say that a smooth arc $\alpha$ in $H$ is \emph{lower graphical} if $\alpha$ is transverse to this foliation (except possibly at its endpoints).
Suppose that $\tau$ is a train-track in $H$.
If all branches of $\tau$ are lower graphical then we say that $\tau$ is \emph{lower graphical}.
%%% We permit stops and switches of \tau to have tangents lying in the
%%% foliation.

We define \emph{upper graphical} for cross-sections in $\Theta^U$ using the helical veering edges in $\bdy^+ U$.
If a track is upper or lower graphical we simply call it \emph{graphical}.
\end{definition}

The definition implies the following.

\begin{lemma}
\label{Lem:SplittingPreservesGraphical}
Suppose that $\tau$ is a graphical train-track.
Suppose that $\alpha$ is a train route in $\tau$.
Then the result of splitting $\tau$ along $\alpha$, in a small neighbourhood of $\alpha$, is again graphical. \qed
\end{lemma}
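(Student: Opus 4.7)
The plan relies on two observations: splitting along $\alpha$ is a local operation supported in an arbitrarily small neighbourhood of $\alpha$, and being graphical is an open condition, since transversality to a smooth line field is preserved under sufficiently small $C^1$ perturbations.

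First, I would fix a small tubular neighbourhood $N$ of $\alpha$ in $H$. Outside $N$ the split track coincides with $\tau$, hence remains graphical by hypothesis; so it suffices to verify graphical-ness inside $N$.

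Next, since $\alpha$ is a train route in $\tau$, each of its branches is graphical, so $\alpha$ itself is a smooth arc transverse to the foliation of $H$ by lines parallel to the helical veering edges. At each switch of $\tau$ lying on $\alpha$, the splitting removes one cusp (on one side of $\alpha$) and reroutes the incident branches across $\alpha$; the resulting local arcs can be drawn arbitrarily close, in the $C^1$ sense, to the union of $\alpha$ with the old branches exiting $N$. Both $\alpha$ and those old branches are transverse to the reference foliation by hypothesis. Since transversality to a smooth line field is preserved under small $C^1$ perturbations, by taking $N$ (and the amount of splitting) small enough the new arcs remain transverse to the foliation, i.e.\ graphical.

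I do not expect a substantive obstacle: the argument is purely local and transversality is open. The only mild subtlety is to shrink $N$ enough that the reference line field varies negligibly across $N$; this is immediate since the foliation by helical edge directions is linear in bigon coordinates, so the tangent directions of any $C^1$--small perturbation of $\alpha$ remain within a prescribed cone transverse to the foliation.
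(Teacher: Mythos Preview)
Your proposal is correct. In fact, the paper gives no proof at all: the lemma is stated with a \qed\ in line, preceded by the sentence ``The definition implies the following.'' Your argument is exactly the elaboration one would supply if asked to justify that claim --- splitting in a small neighbourhood replaces branches by $C^1$--close copies of (pieces of) $\alpha$ and the adjacent branches, all of which are transverse to the reference foliation, and transversality is open. So there is no substantive difference in approach; you have simply written out what the authors left implicit.
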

%%% splitting in a small neighbourhood does not change the tangent
%%% vectors (of branches) very much.

\begin{lemma}
\label{Lem:PartedGraphical}
Suppose that $B^\calV$ is in parted position in $\Theta^\calV$ (and thus in $\bdy^- \Theta_\calV$).
Suppose that $H$ is either a cross-section in $\Theta^\calV$ or in $\bdy^- \Theta_\calV$.
Then the track $\tau^H = B^\calV \cap H$ is lower graphical.
\end{lemma}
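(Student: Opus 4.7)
The plan is to verify \refdef{GraphicalTrack} by tracking tangent directions of branches of $\tau^H = B^\calV \cap H$ in bigon coordinates. Since $H$ lies in a single crimped shearing region $U$, I would work locally in $U$, assuming $U$ is blue (the red case is symmetric). By \refrem{NiceBigonCoords}(1), the helical veering edges of $\bdy^+ U$ have slope $\sqrt{3}$ in bigon coordinates, while those of $\bdy^- U$ have slope $-\sqrt{3}$. Hence the lower graphical foliation of $H$ consists of lines of slope $-\sqrt{3}$, and ``lower graphical'' amounts to showing that no branch of $\tau^H$ has tangent of slope $-\sqrt{3}$, except possibly at its endpoints.

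For $H \subset \Theta^\calV$, I would invoke \refrem{AlmostProduct}: in parted position $B^\calV$ is almost a product in $\Theta^U$, so $\tau^H$ looks essentially the same in every such cross-section. From the parting procedure of \refsec{PartingUp}, each branch of $\tau^H$ is either (i) a line segment of slope close to $\sqrt{3}$ running from a station toward a small neighborhood of the projection of a helical-edge midpoint, or (ii) a short smooth arc rounding such a midpoint. Where a branch meets a longitudinal crimped edge, its tangent is parallel to the helical edges of $\bdy^+ U$ (slope $\sqrt{3}$) by \refrem{AlmostProduct}, and along type-(i) segments this persists; the type-(ii) arcs are arranged (see \reffig{MagnifyParted}) with tangent slopes bounded away from $-\sqrt{3}$. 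Branch endpoints lie inside stations, where the definition permits tangency to the foliation.

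For the remaining case $H = \bdy^- \Theta_\calV = \bdy^- U$, \refrem{UpGivesDown} tells us that parted position on this cross-section is determined by parted position in $\Theta^\calV$; the tangent-at-crimped-edges condition of \refrem{AlmostProduct} still forces slope $\sqrt{3}$, so the same analysis applies. The hard part will be rigorously ruling out a slope-$(-\sqrt{3})$ tangent in the interior of any branch, especially near the rounded midpoints of helical edges and at the transitions into stations (whose detailed geometry is fixed in \refsec{Junctions}). This uniform control relies on the metric conventions of \refrem{NiceBigonCoords} together with the smoothing choices displayed in \reffig{MagnifyParted}, which together keep branch tangent directions uniformly transverse to the lower graphical foliation.
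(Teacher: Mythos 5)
Your proposal is correct and follows essentially the same route as the paper: both arguments reduce to checking that, in the explicit parted position (the straight segments, the rounding near helical-edge midpoints, and the layout inside stations shown in \reffig{MagnifyParted}), every branch is transverse to the lower foliation, and both handle interior cross-sections of $\Theta^\calV$ via the almost-product structure of \refrem{AlmostProduct}. Your extra bookkeeping with the explicit slopes $\pm\sqrt{3}$ from \refrem{NiceBigonCoords} just makes quantitative the paper's statement that branches are ``straight and not parallel to the lower helical slope.''
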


\begin{proof}
Suppose that $H$ is a cross-section in $\bdy^+ \Theta^\calV$.
As discussed in \refrem{AlmostLocal}, parted position of $\tau^H$ (outside of stations) is defined locally.
Also, as shown in \reffig{MagnifyParted} all branches of $\tau^H$ outside of stations are straight and not parallel to the lower helical slope.
Thus, outside of stations, all branches of are lower graphical.
Inside of stations branches of $\tau^H$ are laid out according to Figures~\ref{Fig:MagnifyPartedTopFan} and~\ref{Fig:MagnifyPartedTopToggle};
these are also lower graphical.

A similar argument, referring instead to Figures~\ref{Fig:MagnifyPartedBottomFan} and~\ref{Fig:MagnifyPartedBottomToggle}, deals with the case where $H = \bdy^- \Theta_\calV$.

Finally suppose that $H$ is a cross-section inside of $\Theta^\calV$.
By \refrem{AlmostProduct}, the train-track $\tau^H$ is a projection (in bigon coordinates) of a (very slight) folding of the train-track in $\bdy^+ \Theta^\calV$.
Thus $\tau^H$ is again lower graphical.
\end{proof}

\begin{definition}
\label{Def:GraphicalIsotopy}
Suppose that $H$ is a cross-section.
Suppose that $b$ and $b'$ are arcs in $H$ transverse to the lower foliation.
Suppose that $b$ and $b'$ have the same endpoints.
The \emph{graphical isotopy} of $b$ to $b'$ is as follows.
For every leaf $\ell$ of the lower foliation intersecting $b$, we move the point $b \cap \ell$ to the point $b' \cap \ell$, along $\ell$, at constant speed so that its journey takes the entire time of the isotopy.
\end{definition}

\subsection{Junctions}
\label{Sec:Junctions}

We introduce \emph{junctions} as well as their \emph{heights} and \emph{widths}.
We use these dimensions to determine the position of the \emph{siding} inside of a junction, as well as its decomposition into \emph{blocks}.
This allows us to describe the intersection of the branched surface with a (small neighbourhood of a) longitudinal edge.
This will resolve the issues raised in Remarks~\ref{Rem:AlmostProduct} and~\ref{Rem:JunctionSizes}.

\begin{remark}
The exact geometry of junctions is only used to make sure that track-cusps always move forward and never ``overtake'' each other, and to ensure that the branched surfaces do not have ``accidental'' intersections. The material on junctions, sidings, and blocks can safely be ignored on a first reading.
\end{remark}

\begin{definition}
\label{Def:Junction}
Suppose that $\Sigma$ is an upper station.
Suppose that $S$ is a toggle square intersecting $\Sigma$.
%%% Recall that we are in the universal cover, so the intersection is
%%% connected.
The \emph{upper junction} $J = J(\Sigma,S)$ is obtained as follows.
Suppose that $S'$ is the next toggle square that $\Sigma$ intersects, travelling downwards from $S$.
Let $U$ and $U'$ be the crimped shearing regions directly below $S$ and directly above $S'$.
Then $J = J(\Sigma, S)$ is the (closure in the path metric of the) component of $\Sigma - (\bdy^+ U \cup \bdy^- U')$ intersecting $U$ and $U'$.
The \emph{height} of $J$, denoted $h_J$, is the number of crimped shearing regions whose interior it intersects.
Note that the height is finite by \reflem{DualMeetsToggles}.
We define lower junctions, and their heights, similarly.
\end{definition}

For the rest of this section, we fix the following.
Suppose that $J = J(\Sigma, S)$ is the upper junction associated to the station $\Sigma$ and the toggle square $S$.
Let $U$ be the crimped shearing region directly below $S$.
Let $e$ be the crimped longitudinal edge of $U$ that intersects $J$.

\begin{definition}
Let $c$ be the cusp of $e$ closest to $J$.
Let $e'$ be the other crimped longitudinal edge of $U$ meeting $c$.
See \reffig{CrimpedShearingRegion}.
Let $J'$ be the upper junction intersecting $U$ and $e'$.
The \emph{width} of $J$, denoted $w_J$, is defined to be the height of $J'$.
We make a similar definition for lower junctions.
\end{definition}

In the following definition it will be useful to consult \reffig{CrimpedShearingRegionAboveMidAnnulus}.
\begin{definition}
\label{Def:JunctionRadius}
Let $c$ be the cusp of $e$ closest to $J$.
Let $e'$ be the other crimped longitudinal edge of $U$ meeting $c$.
Let $J'$ be the lower junction intersecting $U$ and $e'$.
Let $c'$ be the other cusp of $e$.
Let $e''$ be the other longitudinal crimped edge of $U$ meeting $c'$.
Let $J''$ be the upper junction intersecting $U$ and $e''$.
We take $\epsilon$ to be a small universal constant (smaller than the ``slightly more'' used in \refrem{NiceBigonCoords}\refitm{Station}).
The \emph{radius} of $J$, denoted $r_J$, is defined to be $\epsilon$ divided by the larger of
\[
h' + w' \quad \mbox{and} \quad h'' + w''
\]
where these are the heights and widths of $J'$ and $J''$ respectively.
We make a similar definition for lower junctions.
\end{definition}

We use this to control the cross-sectional radius of stations (\refdef{Station}).
Suppose that $\Sigma$ is a station.
Suppose that $J$ is a junction contained in $\Sigma$.
Let $S'$ be the toggle square meeting the lower boundary of $J$.
Let $U'$ be the crimped shearing region immediately above $S'$.
For any cross-section $H$ meeting $J$ other than those in the lower $1/8^\thsup$ of $U'$, the radius of $J \cap H$ is $r_J$.
The junction $J$ meets two other junctions along $\bdy^- U'$, say $J'$ through $S'$ and $J''$ not.
In the lower $1/16^\thsup$ of $U'$ the radius of $J \cap H$ is the larger of $r_{J'}$ and $r_{J''}$.
In the second $1/16^\thsup$ of $U'$ the radius interpolates linearly between its values at its top and bottom.

%
%Suppose that $S$ is a toggle square and $c$ is a cusp of $S$.
%Recall that $\alpha_c$ is the short arc cutting $c$ off of $S$.
%Let $\Sigma$ be the station containing $\alpha_c$.
%Let $J$ be the junction contained in $\Sigma$ that is immediately below $S$.
%Let $S'$ be the toggle square meeting the lower boundary of $J$.
%
%For each cross-section $H$ intersecting $J$, we take the radius of $J \cap H$ to be at most $r_J$.
%Note that the upper boundary of $J$ also lies in the junction lie in two junctions -- thus we must interpolate between the two radii near the endpoints.

\begin{definition}
\label{Def:Blocks}
Let $e'$ be the helical crimped edge in $\bdy^+ U$ which intersects $J$.
Let $b$ be the branch of $B^\calV \cap \bdy^+ U$ (in parted position) that intersects $e'$.
Let $p$ be the point of intersection between $b$ and $e'$.
Let $h_J$ and $w_J$ be the height and width of $J$ respectively.
We set the $y$-coordinate of $p$ to be $r_J/(2h_J + w_J)$, as measured in bigon coordinates from $e$.
See \reffig{Blocks}.

Outside of a small neighbourhood of the longitudinal crimped edges, the helical crimped edge $e'$ is a line segment in $\bdy^+U$.
Extend this line (in bigon coordinates) until it intersects $e$ at a point $q$.
We take $p'$ to be the point of intersection between $x(p, U)$ and $y(q, U)$.
Twice the distance between $p$ and $p'$, along $x(p, U)$, is the \emph{block length}, denoted $b_J$.
The point $p'$ is the \emph{last block boundary}.
The \emph{siding} in $J$ is the component of $\big(x(p', U) \cap J\big) - p'$ containing $p$.
The other \emph{block boundaries} are the points of the siding which are an integer multiple of the block length away from $p'$.
The segments of the siding between block boundaries are called \emph{blocks}.
See \reffig{Blocks}.
\end{definition}

\begin{figure}[htbp]
\labellist
\small\hair 2pt
\pinlabel $e$ [l] at 35 207
\pinlabel $e'$ [l] at 35 180
\pinlabel $b$ [l] at 35 147
\pinlabel $q$ [b] at 207 202
\pinlabel $p'$ [t] at 207 160
\pinlabel $p$ [tr] at 151 145
\endlabellist
\includegraphics[width = 0.50\textwidth]{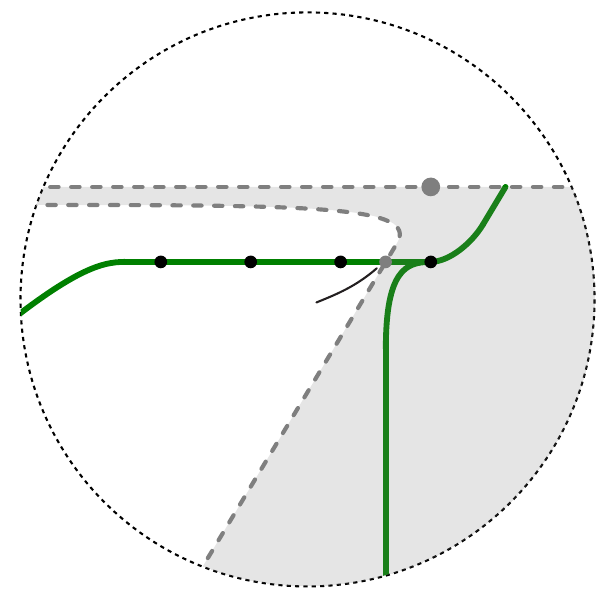}
\caption{The block boundaries are marked by black dots, with the last block boundary inside of the toggle square $S$.  The labels are used in \refdef{Blocks}.
}
\label{Fig:Blocks}
\end{figure}
%%% The position of the intersection of the train-track with the longitudinal crimped edge is determined by the siding of the junction which meets us through the toggle square.

\begin{remark}
Note that each crimped edge appears in the various crimped shearing regions as a helical crimped edge exactly twice, and otherwise as a longitudinal crimped edge.
When it appears as a helical crimped edge, it is once on an upper boundary of a crimped shearing region (see \reffig{MagnifyPartedTopToggle}), and once on a lower boundary (see \reffig{MagnifyPartedBottomToggle}).
Thus \refdef{Blocks} is well-defined.
\end{remark}

With conventions (on bigon coordinates) as given in \refrem{NiceBigonCoords} the block length for $J$ is
\[
b_J = \frac{2}{\sqrt{3}} \cdot \frac{r_J}{2h_J + w_J}
\]

Let $(U_i)_{i = 1}^N$ be the crimped shearing regions meeting the interior of $J$.
Thus $U_1 = U'$ and $U_N = U$.
Let $H$ be a cross-section of any one of the $U_i$ other than in the lowest $1/8^\thsup$ of $U_1$.
We project the siding and its blocks downward (using bigon coordinates) to obtain a siding, and its blocks, in $H$.
The projection from $U_{i+1}$ down to $U_i$ has the effect of shearing the blocks back by a single block length.
The distance between the siding and the longitudinal edge does not change.
By our choice of radius the siding in $J \cap H$ contains $2h_J + w_J$ blocks.
%%% $J$ will contain at most w_J track-cusps.  At the toggle square,
%%% these are contained in the last w_J blocks.  However, as we move
%%% down we (*) fold track-cusps back and (**) shear by one block.  So
%%% we will move back by at most 2h_J blocks.  This explains the
%%% number 2h_J + w_J.
The train track in $J \cap H$, in parted position, is required to contain all of these blocks.
%%% This places a condition both on things that have been defined and things that will be defined.
(If $H$ lies in the lowest $1/16^\thsup$ of $U_1$ then the siding of $J\cap H$ is inherited from the junction immediately below $J \cap \bdy^- U_1$ not meeting the toggle square.
In the second $1/16^\thsup$ we interpolate.)

Recall that the toggle square $S$ lies in $\bdy^+ U$.
Let $H_s$, for $s \in [1/2, 1]$ be the cross-section of $\Theta^U$ at height $s$ in bigon coordinates.
Let $k$ be the track-cusp of $B^\calV \cap H_1$ (in parted position) contained in $J$.
Let $K$ be the branch line running through $k$.
Let $k_s = K \cap H_s$.
We require that $k_s$ lies in the last block of the siding in $J \cap H_s$.
Furthermore we require that $k_{1/2}$ lies at the middle of the block while $k_1$ meets the last block boundary.
Finally, we require that the $k_s$ move at constant speed.

\subsection{Parting routes}
\label{Sec:PartingRoutes}

Fix $U$ a crimped shearing region.
Let $H_s$, for $s \in [0, 1/2]$ be the cross-section of $\Theta_U$ at height $s$ in bigon coordinates.
We describe the \emph{parted position} of $B^\calV$ in $\Theta_U$ as an isotopy of the train-tracks $\tau^s = B^\calV \cap H_s$.
As $s$ ranges over $[0, 1/4]$ we will \emph{split} the tracks along \emph{parting routes}.
As $s$ ranges over $[1/4, 1/2]$ we will perform a \emph{graphical isotopy}.
We now turn to the details.

\begin{definition}
\label{Def:PartingRoutes}
Suppose that $k$ is a track-cusp of $B^\calV \cap \bdy^- U$, whose position is determined by \refrem{UpGivesDown}.
Let $K$ be the branch line of $B^\calV$ (in shrunken position) determined by $k$.
Let $\ell$ be the track-cusp of $K$ lying in $\bdy^+ U$, in prepared position.
Let $\ell'$ be the projection of $\ell$ (via bigon coordinates) to $\bdy^- U$.
Then the \emph{parting route} $\alpha(k)$ is the unique route from $k$ to (just before) $\ell'$ carried by the parted track in $\bdy^- U$.

When $\ell'$ lies in a junction (equivalently, if $\ell$ lies in a toggle square), the route $\alpha(k)$ ends in the same block as $\ell'$, but three-quarters of a block length before $\ell'$.
\end{definition}

%%% 3/4 because at the end of \alpha (in \reffig{IsotopyDomain}) we
%%% still have 3/4 of the height (in space) left to go.

\begin{figure}[htbp]
\centering
\subfloat[Blue crimped solid torus.]{
\includegraphics[height=3.2cm]{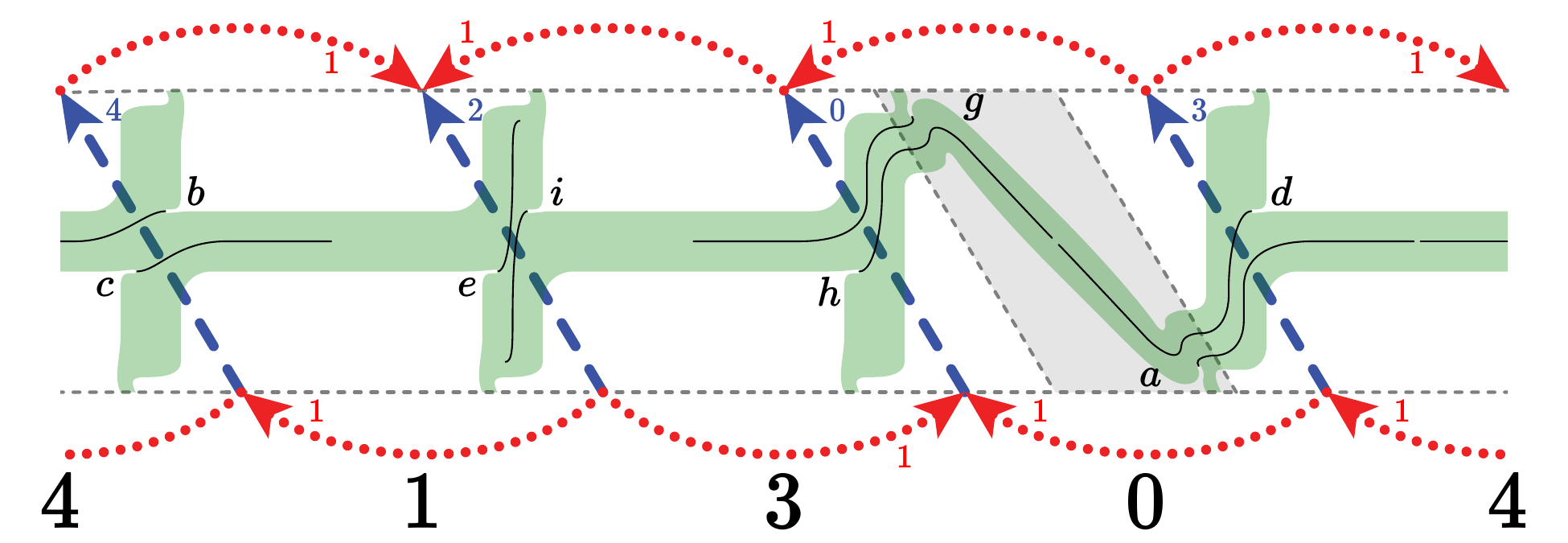}
\label{Fig:PartingRoutesBlue}
}
\subfloat[Red.]{
\includegraphics[height=3.2cm]{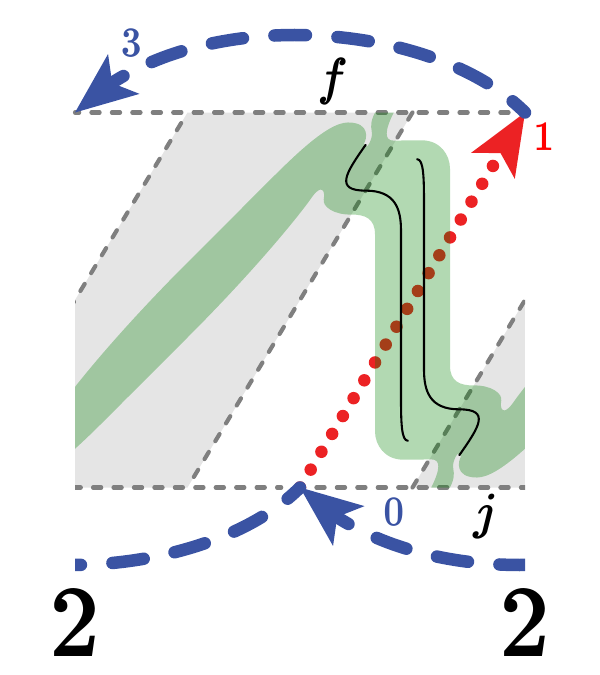}
\label{Fig:PartingRoutesRed}
}

\subfloat[The sixth case.]{
\includegraphics[height=2.7cm]{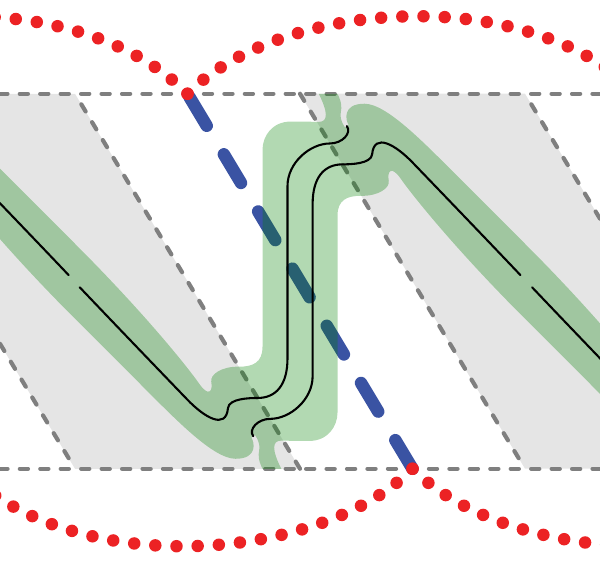}
\label{Fig:PartingRoutesSixth}
}
\caption{Parting routes for the track-cusps in $\bdy^-\Theta_\calV$, where $\calV$ is \usebox{\BigExVeer}.
Here we draw a regular neighbourhood of the train-track in green.
Compare with the last line of \reffig{m115_prepared}.
}
\label{Fig:PartingRoutes}
\end{figure}

By \refrem{AlmostLocal}, the parting isotopy in $\Theta^\calV$ is local (outside of stations).
Thus there are only finitely many (in fact six) combinatorial possibilities for $\alpha(k)$.
These are all shown in \reffig{PartingRoutes}.
\begin{itemize}
\item
Suppose that $\ell$ lies in a toggle square in $\bdy^+ U$.
\begin{itemize}[label=$\bullet$]
\item
If $k$ also lies in a toggle square (in $\bdy^- U$) then we obtain the examples $f$ and $j$ in \reffig{PartingRoutesRed}.
\item
Otherwise $k$ does not lie in a toggle square and we obtain the examples $e$ and $i$ in \reffig{PartingRoutesBlue}.
\end{itemize}
\item
Suppose that $\ell$ does not lie in a toggle square.
\begin{itemize}[label=$\bullet$]
\item
Suppose that $\ell'$ lies in a toggle square.
\begin{itemize}[label=$\bullet$]
\item
If $k$ lies in a toggle square then we obtain the examples shown in \reffig{PartingRoutesSixth}.
\item
Otherwise $k$ does not lie in a toggle square and we obtain the examples $d$ and $h$ in \reffig{PartingRoutesBlue}.
\end{itemize}
\item
Suppose that $\ell'$ does not lie in a toggle square.
\begin{itemize}[label=$\bullet$]
\item
If $k$ lies in a toggle square then we obtain the examples $a$ and $g$ in \reffig{PartingRoutesBlue}.
\item
Otherwise $k$ does not lie in a toggle square and we obtain the examples $b$ and $c$ in \reffig{PartingRoutesBlue}.
\end{itemize}
\end{itemize}
\end{itemize}

\subsection{Splitting along parting routes}
\label{Sec:SplittingInParting}

Suppose that $U$ is a crimped shearing region.
Let $H_s$ be the family of cross-sections of $\Theta_U$, with $H_0 = \bdy^- \Theta_U$ and $H_{1/2} = \bdy^+ \Theta_U$.
Recall that $B^\calV$ in parted position is already specified in $H_0$ and $H_{1/2}$.
Instead of parametrising the parting isotopy explicitly, we specify parted position in
$B^\calV \cap H_s$ by giving a family of train-tracks.
%%% The isotopy can then be deduced because there are only finitely many start and end positions (because local).

\begin{figure}[htbp]
\subfloat[Blue crimped solid torus.]{
\centering
\labellist
\small\hair 2pt
\pinlabel {$\bdy^+ \Theta_\calV$} [r] at 10 1327
\pinlabel {\vbox{Graphical\\
isotopy}} at -120 1040
\pinlabel {\vbox{Splitting along\\
parting routes }} at -120 470
\pinlabel {$\bdy^- \Theta_\calV$} [r] at 10 179
\endlabellist
\includegraphics[height=15cm]{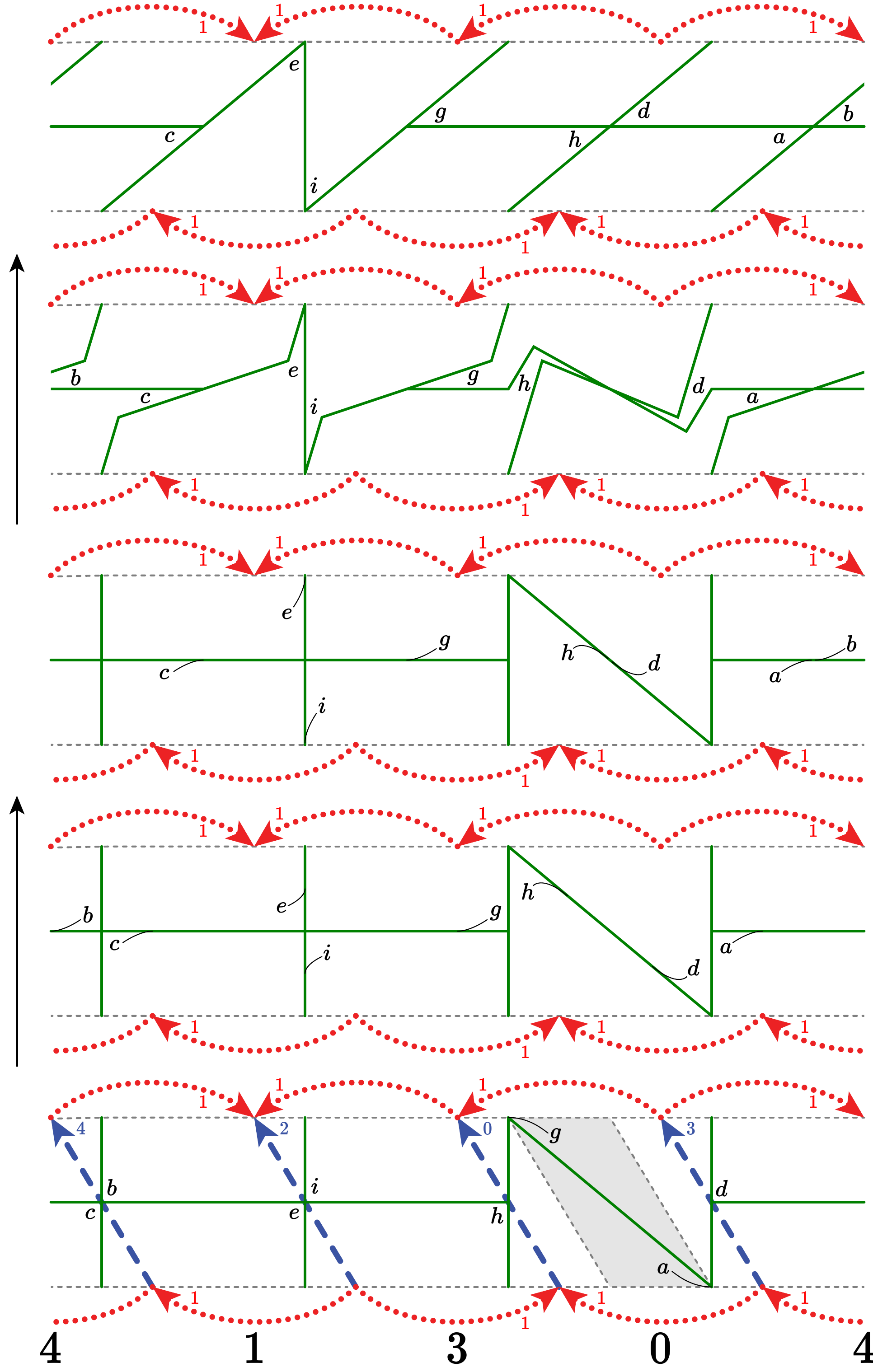}
\label{Fig:m115_parting_in_theta_U_blue}
}
\subfloat[Red.]{
\includegraphics[height=15cm]{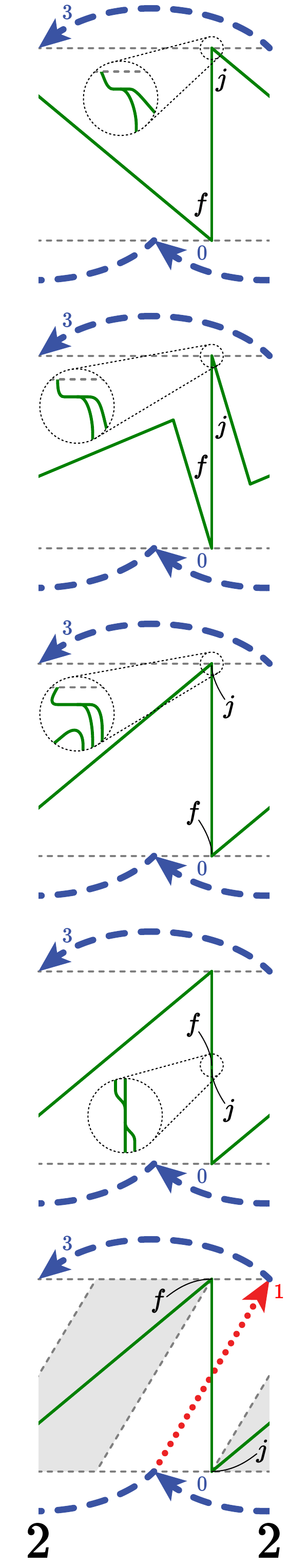}
\label{Fig:m115_parting_in_theta_V_red}
}
\caption{The result $B_1$ of the parting isotopy in $\Theta_\calV$ where $\calV$ is  \usebox{\BigExVeer}.
The five diagrams show (from the bottom moving up) $B_1 \cap C_s$ for $s \in (0,1/8,1/4,3/8,1/2)$.
Thus this figure interpolates the lower three lines of \reffig{m115_prepared}.
The bottom cross-section contains blue helical edges.
}
\label{Fig:m115_parting_in_theta_U}
\end{figure}

As $s$ ranges over $[0, 1/4]$ the intersections of $B^\calV$ (in parted position) with the cross-sections $H_s$ show a movie of a splitting along all of the parting routes.
In detail: if $k$ is a track-cusp in $H_0$ we split $k$ forward in a small neighbourhood of its parting route $\alpha(k)$.
The result in an example is shown in the lower three rows of \reffig{m115_parting_in_theta_U}.
When two track-cusps $k$ and $\ell$ meet, travelling in opposite directions, they split past each other.
(If $U$ is blue and there is (not) a toggle square above, this is a left (right) split.
If $U$ is red the directions swap.)
Outside of stations each track-cusp moves so that
\begin{itemize}
\item
its $x$--coordinate moves at constant speed and
\item
its journey takes all of $[0, 1/4]$.
\end{itemize}
Inside of stations we follow the same rule with one exception;
track-cusps inside of toggle squares split to the boundary of their square and then move as above.
See \reffig{MagnifyPartedTopToggle}.

The construction of the parting routes (\refdef{PartingRoutes}) implies that track-cusps in $H_{1/4}$ either lie very near to the centre line of the cross-section or lie in stations.
See the middle row of \reffig{m115_parting_in_theta_U}.
This describes \emph{splitting along the parting routes}.

\subsection{The graphical isotopy for parted position}
\label{Sec:GraphicalIsotopyInParting}

\begin{remark}
\label{Rem:QuarterPartedGraphical}
Let $\tau^{1/4}$ be the train-track in $H_{1/4}$ given by splitting along parting routes.
Lemmas~\ref{Lem:PartedGraphical} and~\ref{Lem:SplittingPreservesGraphical} ensure that all branches of $\tau^{1/4}$ are lower graphical.
Where $\tau^{1/4}$ meets a longitudinal crimped edge, its tangent vector is parallel to the (projection of the) helical veering edges in $\bdy^-U$.

Let $\tau^{1/2}$ be the train-track in $H_{1/2}$ given by parted position in $\Theta^U$.
\reflem{PartedGraphical} implies that $\tau^{1/2}$ is lower graphical.
Where $\tau^{1/2}$ meets a longitudinal crimped edge, its tangent vector is parallel to the (projection of the) helical veering edges in $\bdy^+U$.
\end{remark}

For $s \in [1/4, 1/2]$, we perform a \emph{lower graphical isotopy} from $\tau^{1/4}$ to $\tau^{1/2}$, as follows.
By \refrem{QuarterPartedGraphical} both train-tracks are lower graphical.
Also they are combinatorially isomorphic and their track-cusps are in (almost) the same places in bigon coordinates.

We now isotope $\tau^{1/4}$ to $\tau^{1/2}$ as follows.
\begin{itemize}
\item
We move each track-cusp slightly forward from its position in $\tau^{1/4}$ to its position in $\tau^{1/2}$.
\item
At the same time we perform a lower graphical isotopy (as in \refdef{GraphicalIsotopy}), moving every branch from its position in $\tau^{1/4}$ to its position in $\tau^{1/2}$.
This isotopy also changes the tangent vector where $\tau^s$ meets a longitudinal crimped edge.
See \reffig{m115_parting_in_theta_V_red}.
\end{itemize}
Points move at constant speed so that their journey takes all of $[1/4, 1/2]$.
This describes the \emph{lower graphical isotopy} from $\tau^{1/4}$ to $\tau^{1/2}$.
%%% Our description of the motion of the track-cusps is a bit of a
%%% punt.  Basically, move track-cusps ``forward'' at constant speed
%%% and they approach the correct location tangentially.  Points of
%%% the interor move as described above.  However, as the track-cusps
%%% move they ``produce'' new interior points (in their wake).  These
%%% new points also move at constant speed - they have less time to
%%% move, but that is fine because they have less distance to cover.

An example is given in the upper three rows of \reffig{m115_parting_in_theta_U}.

\begin{lemma}
\label{Lem:PartedIsotopicAndDynamic}
The branched surface $B^\calV$ in parted position is dynamic and is isotopic to shrunken position.
\end{lemma}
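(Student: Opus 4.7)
The plan is to treat the two conclusions in sequence, leveraging the explicit stratified construction from Sections~\ref{Sec:PartingUp}--\ref{Sec:GraphicalIsotopyInParting}.

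For the isotopy, I would time-parameterize the parting motion inside each crimped shearing region $U$ separately and verify continuity across the seams. Inside $\Theta^U$ the parting motion is almost a product (\refrem{AlmostProduct}), essentially a translation of track-cusps at constant speed in cross-sections, so linear interpolation from shrunken position gives an ambient isotopy. Inside $\Theta_U$ the family $\tau^s$ for $s \in [0,1/4]$ is a splitting along the parting routes, which extends to a well-defined ambient isotopy in a thickened neighbourhood provided cusps do not collide; the constant $x$--speed prescription together with the opposite-direction splitting rule of \refsec{SplittingInParting} prevents collisions. For $s \in [1/4, 1/2]$ the graphical isotopy is leafwise at constant speed (\refdef{GraphicalIsotopy}), hence an ambient isotopy. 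Matching at the seams $\bdy^\pm U$ follows from \refrem{UpGivesDown}; matching at $s = 1/2$ (the interface $\bdy^- \Theta^U = \bdy^+ \Theta_U$) is built in, since $\tau^{1/2}$ is defined as the parted track there; the piecewise-defined isotopy at $s = 1/4$ can be smoothed by a reparametrization local to a neighbourhood of that time.

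For dynamism, I would verify the three conditions of \refdef{DynamicBranchedSurface} with respect to the vector field $X_\calV$ from \refsec{Dynamics}, possibly adjusted slightly so that its vertical direction matches that of bigon coordinates inside each crimped shearing region. The key structural observation is that all parting motion happens inside horizontal cross-sections, so the vertical height of each point is preserved. In $\Theta^\calV$ the parted surface is almost vertical, so its sectors admit tangent vectors with large vertical component, giving condition~(1). In $\Theta_\calV$ the branched surface is swept out by graphical train-tracks (\refrem{QuarterPartedGraphical}), and a graphical isotopy of graphical tracks produces sectors whose tangent planes have uniformly positive vertical component, again giving~(1). The branch locus is precisely the union of the traces of the track-cusps through time; since each cusp moves strictly forward in $x$--coordinate at positive speed, these traces are smooth curves transverse to the cross-sections. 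The fewer-to-more sheet orientation is inherited from the corresponding feature in shrunken position (\refrem{ShrunkenDynamic}), yielding~(2), and the cusps having a definite horizontal velocity component yields non-orthogonality~(3).

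I expect the main technical obstacle to be verifying conditions~(2) and~(3) inside the junctions, where multiple branches and cusps live in a narrow neighbourhood of a longitudinal crimped edge and one must track several competing motions simultaneously. The siding construction of \refsec{Junctions} -- in particular the block-length bound $b_J = \frac{2}{\sqrt{3}} \cdot \frac{r_J}{2 h_J + w_J}$ -- is designed precisely to give each cusp a definite transverse velocity relative to adjacent branches, and the per-junction choice of radius $r_J$ prevents block lengths from collapsing. I would finish by a careful enumeration of the six combinatorial cases of parting routes catalogued in \refsec{PartingRoutes}, using the speed prescription of \refsec{Junctions} to bound the horizontal velocities of cusps from below and thereby prevent any orthogonality or tangency of the cusp trajectories with the branch-locus intervals they encounter.
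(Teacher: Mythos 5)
Your proposal is correct in substance and, for the dynamism claim, follows the same logic as the paper: the paper's entire argument there is that the intersection with every cross-section is a train-track and that, by construction, track-cusps only move forwards as one ascends through cross-sections. Your longer verification of the three conditions of \refdef{DynamicBranchedSurface}, including the worry about junctions, is sound but is not carried out in the paper at this point; the block and siding machinery of \refsec{Junctions} was set up precisely so that the forward-motion statement holds inside junctions as well. For the isotopy claim the paper takes a shortcut that you do not: in $\Theta^\calV$ it points to the explicit isotopy of \refsec{PartingUp}, exactly as you do, but in $\Theta_\calV$ -- where parted position is \emph{defined} by prescribing the spatial family of tracks $\tau^s = B^\calV \cap H_s$ rather than by moving the shrunken surface -- it simply observes that the shrunken and parted surfaces meet $\bdy^-\Theta_\calV$ and $\bdy^+\Theta_\calV$ with the same combinatorics and concludes that they are isotopic. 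Be careful that your plan to ``time-parameterize the parting motion'' in $\Theta_U$ conflates the spatial parameter $s$ (which indexes cross-sections and describes the parted surface itself) with the time parameter of the desired isotopy: showing that the family $\tau^s$ is a movie of splittings establishes that the parted object is a branched surface transverse to the cross-sections, not that it is the endpoint of an ambient isotopy starting at shrunken position. This is repairable -- interpolate, cross-section by cross-section, from the shrunken track to the parted track, continuously in $s$ -- but the paper's boundary-combinatorics argument sidesteps the issue entirely, at the cost of being terser.
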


\begin{proof}
The intersection with each cross-section is a train-track.
Moreover, by construction the track-cusps always move forwards as we move up through cross-sections.
Therefore the branched surface is dynamic.

In \refsec{PartingUp} we explicitly describe the isotopy between the shrunken branched surface and the parted branched surface in $\Theta^\calV$.
Thus in $\Theta_\calV$ the shrunken branched surface and the constructed branched surface meet
$\bdy^-\Theta_\calV$ and $\bdy^+\Theta_\calV$ with the same combinatorics.
Thus the constructed branched surface is isotopic to the shrunken branched surface.
%%% There is only one way to connect the bottom to the top, up to
%%% isotopy.  If the central line is not carried by the train-tracks
%%% then there is a barrier to splitting forward.  If it is carried
%%% then you still cannot split too far forward because it would
%%% introduce "Dehn twists" in the wakes of the track-cusps.
\end{proof}

We call the result \emph{parted position} for $B^\calV$.
We define parted position for the lower branched surface $B_\calV$ analogously.

\section{Draping}
\label{Sec:Draping}

Here we define the \emph{draping isotopies}.
These are applied to the upper and lower branched surfaces $B^\calV$ and $B_\calV$ starting from parted position and ending in \emph{draped position}.

As usual we concentrate on the upper case.
The name \emph{draped} comes from the final position of the branch lines.
Suppose that $C$ is a branch line.
Suppose that $\calU$ is a component of the monochromatic decomposition, as in \refdef{Monochromatic}.
Suppose that $C' = C \cap \calU$ is a branch interval.
Suppose that the initial point of $C'$ lies in $\bdy^- \Theta_U$:
here $U$ is a crimped shearing region in $\calU$.
In draped position $C'$ runs just above $\bdy^- \Theta_U$, until it encounters the downwards projection (in bigon coordinates) of a toggle square.
At that point $C'$ moves sharply upwards to get just above that toggle square.
The process then repeats until $C'$ exits $\calU$ through some toggle square.
Thus the image of $C'$, under the shearing projection into the midsurface (\refdef{Projection}), is ``draped'' over the images of the toggle squares.
See \reffig{m115_side_draped} for the images of the draped branch intervals in our running example.

We begin with an outline of the construction.
The branched surface $B^\calV$ begins in parted position, as provided in \refsec{Parting}.
Justifying \refrem{SemiLocal}, the draping isotopy is fixed on the union of the toggle squares.
That is, it is fixed on the boundaries of the components of the monochromatic decomposition.

We use $B^\calV_t$ to denote the image of $B^\calV$ at time $t \in [0, 1]$.
In $\Theta^\calV$, where $B^\calV_0$ is almost a product, on each cross-section we will perform (in time)
\begin{itemize}
\item an almost identical splitting along the \emph{draping routes}, and
\item an almost identical lower graphical isotopy.
\end{itemize}
This will determine $B^\calV_t \cap \Theta^\calV$ for all $t$, and thus will fix $B^\calV_t \cap \bdy^\pm \Theta_\calV$.

As with the parting isotopy, the motion in the interior of $\Theta_\calV$ is significantly more complicated.
Suppose that $U$ is a crimped shearing region.
To build $B^\calV_1 \cap \Theta_U$ we start from $B^\calV_1 \cap \bdy^- \Theta_U$.
Let $H_{1/4}$ be the central cross-section of $\Theta_U$. We will perform (in space)
\begin{itemize}
\item
a splitting along \emph{suffix routes} to produce $B^\calV_1 \cap H_{1/4}$ followed by
\item
a lower graphical isotopy to $B^\calV_1 \cap \bdy^+ \Theta_U$.
\end{itemize}
Finally, suppose that $H_s$ is any cross-section of $\Theta_U$.
To build $B^\calV_t \cap H_s$ we start from $B^\calV_0 \cap H_s$.
We will then perform (in time)
\begin{itemize}
\item
a splitting along \emph{prefix routes} to produce $B^\calV_{1/2} \cap H_s$ followed by
\item
a lower graphical isotopy to $B^\calV_1 \cap H_s$.
\end{itemize}
In \reffig{IsotopyDomain} we indicate in the domain of the isotopy which splitting routes are used where and when, and also the supports of the lower graphical isotopies.

\begin{figure}[htbp]
\labellist
\small\hair 2pt
\pinlabel $\alpha$ [r] at 10 95
\pinlabel $\beta$ [t] at 170 10
\pinlabel $\beta$ [t] at 170 388
\pinlabel $\beta$ [t] at 170 460
\pinlabel $\beta$ [t] at 170 532
\pinlabel $\gamma$ [l] at 625 95
\pinlabel $\delta$ [t] at 170 98
\pinlabel $\delta$ [t] at 170 170
\pinlabel $\delta$ [t] at 170 242
\endlabellist
\includegraphics[height=5cm]{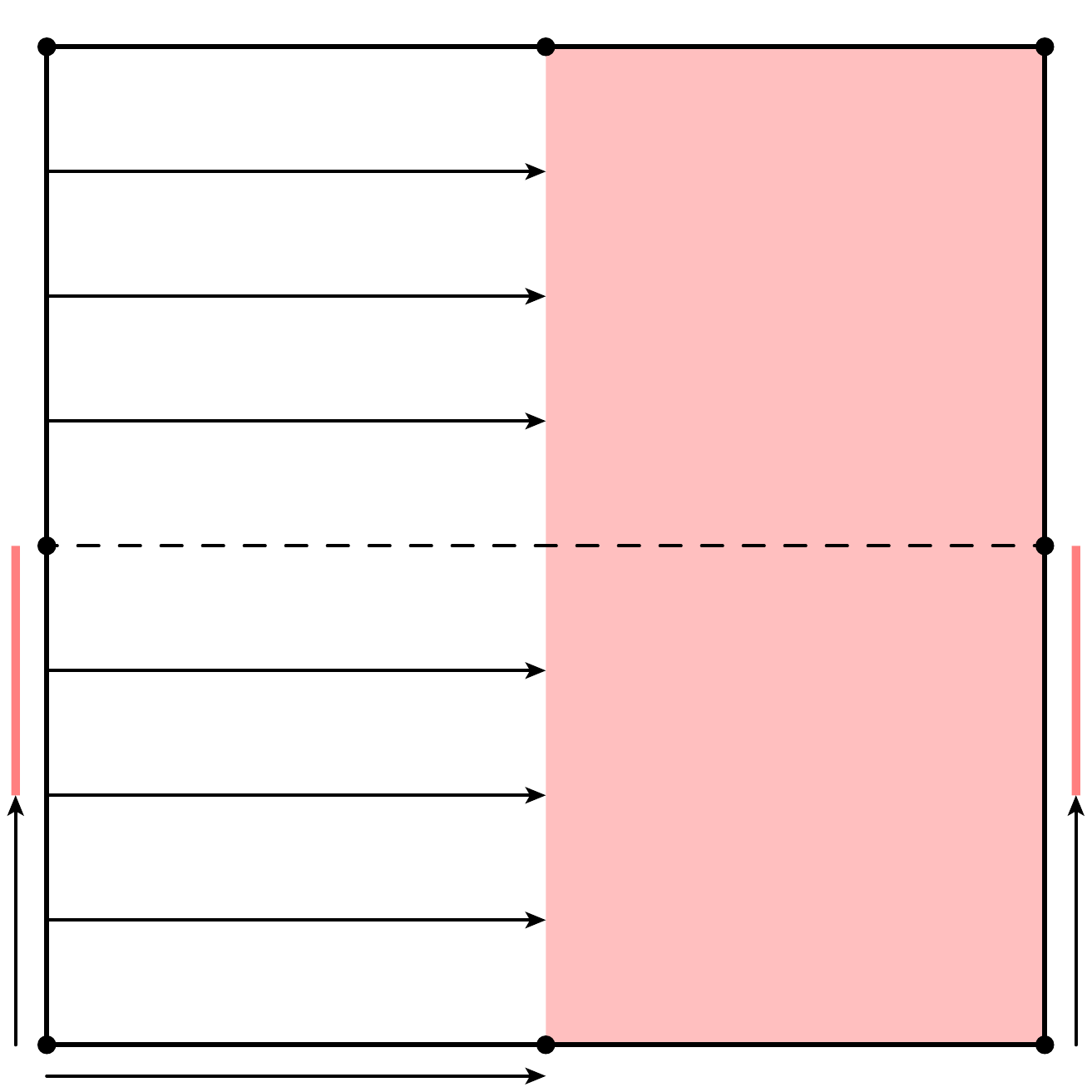}
\caption{The domain of the draping isotopy into $U$.
The time coordinate $t$ runs from left to right in the interval $[0, 1]$;
the space coordinate $s$ runs from bottom to top in the interval $[0, 1]$.
The dashed line separates the preimage of $\Theta^U$ from that of $\Theta_U$.
The labels $\alpha$, $\beta$, $\gamma$, and $\delta$ indicate the parting, draping, suffix, and prefix routes used for the four splittings.
The shaded intervals and region are the supports of the lower graphical isotopies.}
\label{Fig:IsotopyDomain}
\end{figure}

\subsection{Draping routes}

\begin{definition}
\label{Def:DrapingRoutes}
Suppose that $B^\calV$ is in parted position.
Suppose that $K$ is a branch line.
Suppose that $U$ is a crimped shearing region.
Suppose that $K$ intersects $\bdy^-U$ at a point $k_0$.
Starting at $k_0$, we follow $K$ upwards until it hits, for the first time, a toggle square $S = S(k_0)$ not containing $k_0$.
(This exists by \reflem{DualMeetsToggles}.)
Let $N$ be the number of crimped shearing regions meeting $K$ strictly between $k_0$ and $S$.
Let $k_N$ be the intersection of $K$ and $S$.
Note that $N$ is bounded from above by the height of the junction containing $k_N$.

Denote these crimped shearing regions as $U_i$ with increasing index as we ascend $K$,
starting with $U = U_1$.
Let $k_i$ be the intersection of $K$ with $\bdy^+U_i$.
Thus $S$ lies in $\bdy^+U_N$.
We parametrise the subinterval $[k_0, k_N]$ of $K$ by $[0,N]$ so that $[i,i+1]$ maps to $[k_i, k_{i+1}]$.

%%% Our plan is as follows: the \beta routes halt at precisely the
%%% right point of the block.  This allows the \gammas to have space
%%% to move.

We now define the \emph{draping routes} $\beta(k_r)$ for $r \in [0,N]$ by a (downwards) recursion.
As our base case, we take $\beta(k_N)$ to be the train route with length zero carried by $B^\calV_0 \cap \bdy^+ U_N$ which starts and ends at $k_N$.
Since $\beta(k_N)$ has length zero, it consists only of a tangent vector based at $k_N$ and pointing away from the track-cusp.
See the station shown on the left-hand side of \reffig{MagnifyPartedTopToggle}.

Fix $r$ and $s$ in $[0,N]$ with $r < s$.
Suppose that $[k_r, k_s]$ is contained inside of $U_i$, one of the crimped shearing regions.
Thus $i \leq r < s \leq i+1$.
Let $H_r$ ($H_s$) be the cross-section of $U_i$ through $k_r$ ($k_s$).
Suppose now that we are given the draping route $\beta(k_s)$, carried by $B^\calV \cap H_s$.
The recursive hypothesis tells us that $\beta(k_s)$ runs from $k_s$ to a point inside of a junction $J$.
We now form a train route $\eta(k_r)$, carried by $B^\calV \cap H_r$, as follows.
\begin{itemize}
\item The start of $\eta(k_r)$ is $k_r$.
\item The $x$--coordinate of the end of $\eta(k_r)$ is $(s - r) b_J$
behind the end of $\beta(k_s)$.
(Here $b_J$ is the block length of $J$, as given in \refdef{Blocks}.)
Thus the end of $\eta(k_r)$ lies in $J$ and is in the same block as the end of $\beta(k_s)$.
\end{itemize}

To define $\beta(k_r)$ (and so complete the recursion) we consider cases.
\begin{enumerate}
\item Suppose that $k_r$ is in the interior of $U_i$.
In this case we take $\beta(k_r) = \eta(k_r)$.

\item Suppose instead that $k_r$ lies in the lower boundary of $U_i$.
Suppose, in addition, that $k_r$ lies in a toggle square.
In this case $r = 0$ and we take $\beta(k_r)$ to have length zero.
%%% This is forced by our construction of \beta(k_N) -- so the
%%% sentence is just for exposition.

\item Suppose instead that $k_r$ does not lie in a toggle square.
\label{Itm:LowerNotInToggleSquare}
\begin{enumerate}
\item
\label{Itm:DoesNotMeetToggleSquare}
Suppose, in addition, that $\eta(k_r)$ does not meet any toggle squares.
In this case we again take $\beta(k_r) = \eta(k_r)$.
Note that the end point of $\beta(k_r)$ lies inside of the same junction (and same block) as the end point of $\beta(k_s)$.

\item
\label{Itm:DoesMeetToggleSquare}
Suppose instead that $\eta(k_r)$ does meet a toggle square;
in this case we \emph{truncate}.
We delete from $\eta(k_r)$ all intersections with toggle squares and keep only the segment meeting $k_r$, to obtain $\eta'(k_r)$.
%%% This is well-defined.
Note that $\eta'(k_r)$ ends in a junction $J'$, on a helical crimped edge.
Let $n$ be the number of helical veering edges crossed by $\eta'(k_r)$.
We finally obtain $\beta(k_r)$ by removing a segment of length $(n - 1/2)b_{J'}$ from the end of $\eta'(k_r)$.  \qedhere
\end{enumerate}
\end{enumerate}
\end{definition}

For examples see Figures~\ref{Fig:DrapingRoutes} and~\ref{Fig:TruncateExample}.

\begin{figure}[htbp]
\centering
\labellist
\small\hair 2pt
\pinlabel {\vbox{Graphical\\
isotopy}} at -110 1040
\pinlabel {\vbox{Splitting along\\
parting routes }} at -110 470
\pinlabel {$\bdy^+ \Theta_U$} [r] at 10 1337
\pinlabel {$\bdy^- \Theta_U$} [r] at 10 179
\endlabellist
\includegraphics[width=0.9\textwidth]{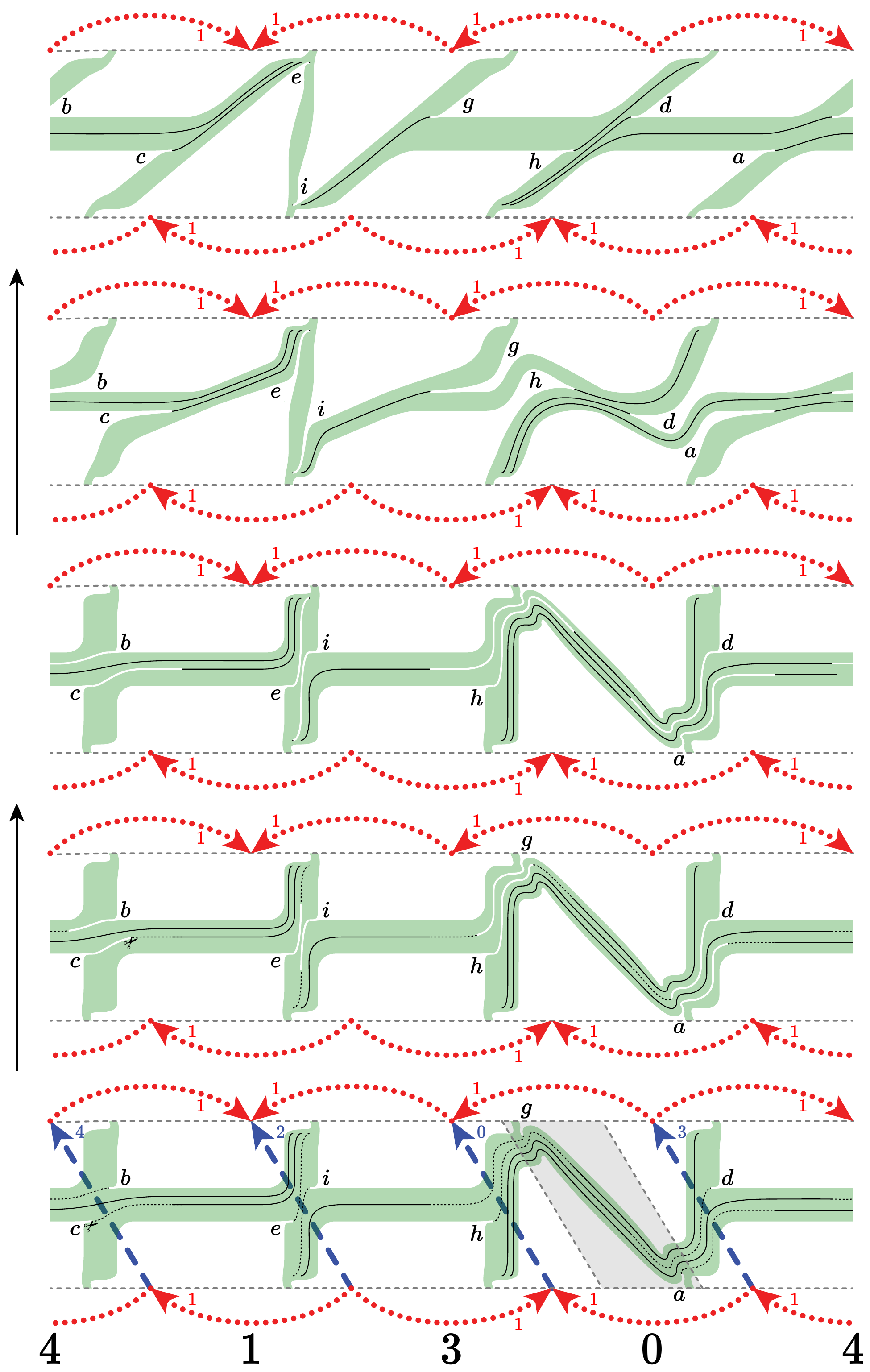}
\caption{Draping routes $\beta(a)$ through $\beta(i)$ for the track-cusps in cross-sections of $\Theta_U$.
Here $U$ is the blue crimped solid torus of \usebox{\BigExVeer};
compare with \reffig{m115_parting_in_theta_U_blue}.
The lowest line shows the $\eta$ routes in $\bdy^-U$.
Note that this figure cannot be drawn to scale;
because we have thickened the tracks we cannot represent the exact positions of ends of routes.
Likewise, we do not see the difference between the $\eta$ and the $\eta^*$ routes (\refdef{PartingInDraping}).
This, and \reflem{PartingInDraping}, allows us to draw the parting $\alpha$ routes (dotted) as subsets of the $\eta$ routes.
}
\label{Fig:DrapingRoutes}
\end{figure}

\begin{figure}[htbp]
\centering
\subfloat[Before truncation.]{
\includegraphics[width=0.3\textwidth]{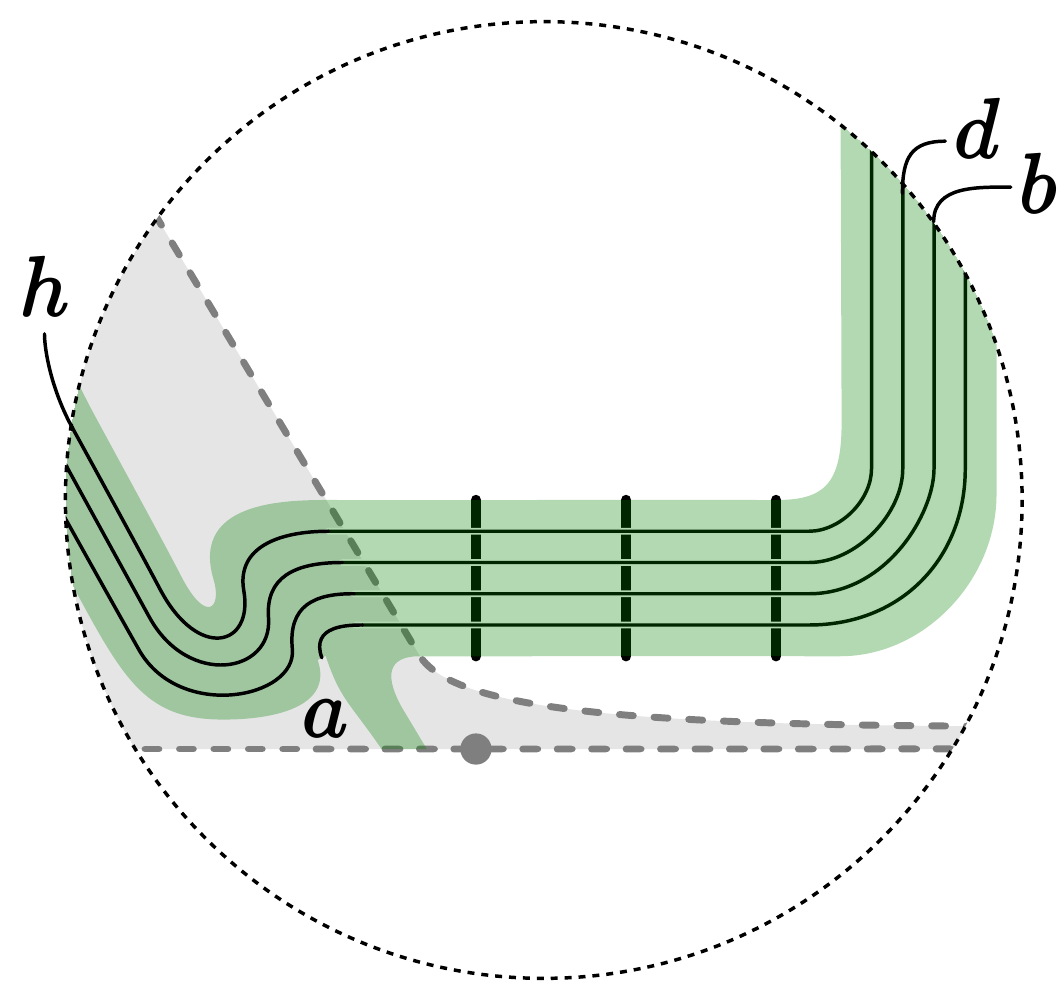}
\label{Fig:BeforeTruncation}
}
\thinspace
\subfloat[First truncation.]{
\includegraphics[width=0.3\textwidth]{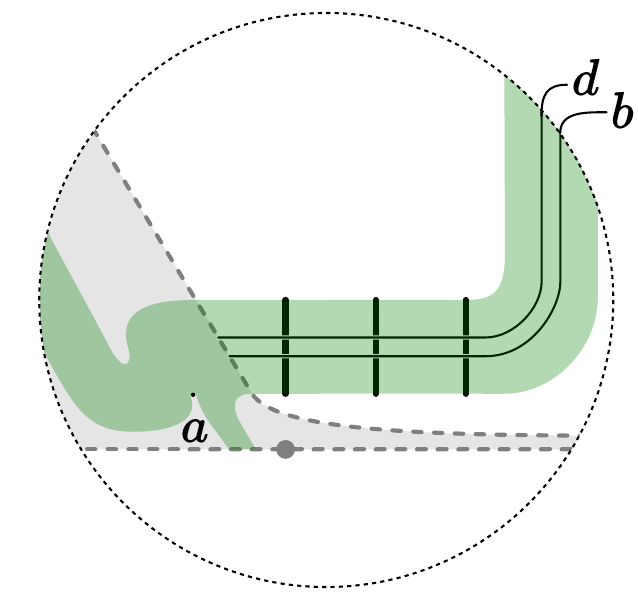}
\label{Fig:MidTruncation}
}
\thinspace
\subfloat[Second truncation.]{
\includegraphics[width=0.3\textwidth]{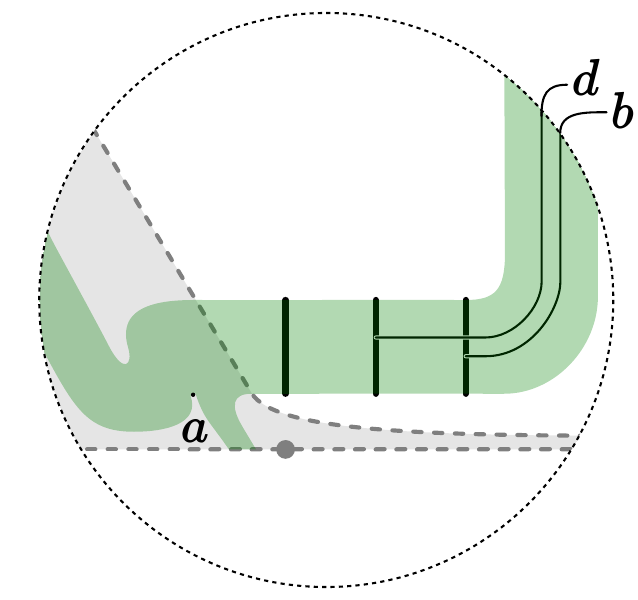}
\label{Fig:AfterTruncation}
}
\caption{
A magnification of the lower right of the toggle square in \reffig{DrapingRoutes}.
The routes $\eta$ (before truncation) are on the left.
The routes $\eta'$ (after first truncation) are in the centre.
(The $\eta'$ routes appear to end at points with different $x$--coordinate;
this is an artefact of drawing the train-track with positive width.)
The draping routes $\beta$ (after second truncation) are on the right.
}
\label{Fig:TruncateExample}
\end{figure}

\begin{lemma}
The draping routes given in \refdef{DrapingRoutes} are well-defined.
\end{lemma}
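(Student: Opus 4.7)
The plan is to verify well-definedness in three parts: the base case, the inductive step, and independence of the auxiliary parameter $s$.

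First, I would handle the base case. At $k_N$, the route $\beta(k_N)$ of length zero lies inside the toggle square $S$ on $\bdy^+ U_N$. By construction (\refdef{Station} together with \refsec{Junctions}), $k_N$ sits inside the junction cutting off the relevant cusp of $S$, so the recursive invariant that the endpoint lies in a junction holds trivially.

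Next, I would verify the recursive step case by case, showing in each that $\eta(k_r)$ (or its truncation) is a valid train route carried by $\tau^r = B^\calV \cap H_r$ whose endpoint lies inside a junction. The central technical fact is that $\tau^r$ in parted position is lower graphical (by \reflem{PartedGraphical} on $\bdy^{\pm} \Theta_U$, and by \refrem{QuarterPartedGraphical} through the graphical isotopy in the interior of $\Theta_U$), and its block structure in each cross-section has been arranged (\refdef{Blocks}, \refrem{NiceBigonCoords}\refitm{Station}) so that a backwards $x$--shift of $(s - r) b_J$ from any valid endpoint in $J$ lands within the same block. In Cases~1 and~\refitm{DoesNotMeetToggleSquare} this is immediate and $\beta(k_r) = \eta(k_r)$; in Case~2 we have $r = 0$ at a toggle square and the route has length zero. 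Case~\refitm{DoesMeetToggleSquare} is the most delicate: $\eta(k_r)$ is truncated at its first toggle-square intersection to give $\eta'(k_r)$, and a further $(n - \tfrac{1}{2}) b_{J'}$ is then removed. I would check that $\eta'(k_r)$ ends on a helical crimped edge of a new junction $J'$ by examining the local picture of \reffig{MagnifyParted} together with the shear of blocks between consecutive cross-sections, that the number $n$ of helical veering edges crossed is at most the $2 h_{J'} + w_{J'}$ blocks available in the siding of $J'$, and that $(n - \tfrac{1}{2}) b_{J'}$ therefore removes strictly less than the available siding length, leaving an endpoint centred in a genuine block of $J'$.

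The final step is independence of the choice of $s$. If $r < s_1 < s_2 \leq i + 1$ with $[k_r, k_{s_2}]$ contained in a single crimped shearing region, then computing $\beta(k_r)$ directly from $\beta(k_{s_2})$ uses the backward shift $(s_2 - r) b_J$, while computing $\beta(k_{s_1})$ first (shift $(s_2 - s_1) b_J$) and then $\beta(k_r)$ (shift $(s_1 - r) b_J$) composes to the same total shift. The truncation step in Case~\refitm{DoesMeetToggleSquare} depends only on the set of toggle squares crossed and on $n$, neither of which is affected by the choice of intermediate $s$, so the two constructions agree and the recursion descends consistently to the boundaries between crimped shearing regions.

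The main obstacle will be Case~\refitm{DoesMeetToggleSquare}: verifying that the double truncation produces a legal train route ending in a valid block of $J'$ requires careful bookkeeping between the block length $b_J$ of the original junction and $b_{J'}$ of the new one, and showing that toggle-square crossings interact correctly with the shear of blocks going down through $U_i$. The control on $r_{J'}$ supplied by \refdef{JunctionRadius}, together with the invariant that each siding carries at least $2 h_J + w_J$ blocks, is precisely what makes the specific formula $(n - \tfrac{1}{2}) b_{J'}$ both meaningful and consistent with the recursive invariant.
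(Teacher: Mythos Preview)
Your overall structure (base case, recursive cases, consistency in $s$) is reasonable, but the heart of the argument is missing. The real issue is quantitative: you must show that the endpoint of each $\beta(k_r)$ not only lies in a junction but lies in one of its $2h_J + w_J$ blocks \emph{with room to spare} for the descent still to come. Each time the recursion crosses from the upper to the lower boundary of a crimped shearing region the endpoint shears back by one block; so if $N$ further boundary crossings remain inside $J$, the current block number $m$ must satisfy $m \le w_J + 2N$. You never set up or track this invariant.

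In Case~\refitm{DoesNotMeetToggleSquare} you call the conclusion ``immediate'', but it is not: without the inductive bound $m \le w_J + 2N$ one cannot conclude that after many boundary crossings the endpoint is still in the siding. In Case~\refitm{DoesMeetToggleSquare} the bound you state, $n \le 2h_{J'} + w_{J'}$, is both unexplained and too weak. After truncation the endpoint sits at roughly the $n^\thsup$ block of $J'$, and the recursion must then descend through up to $h_{J'}$ further regions, costing up to $2h_{J'}$ more blocks; so one actually needs $n \le w_{J'}$. Establishing this requires a geometric observation you do not supply: the $n$ helical veering edges crossed by $\eta'(k_r)$ correspond to track-cusps $k_0,\dots,k_n$ along the branch line $K$, none of whose $\eta$--routes can meet a toggle square between themselves and the station $\Sigma'$ containing $J'$; this forces $n$ to be bounded by the height of the neighbouring junction, which is exactly $w_{J'}$. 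That argument, together with the inductive block count, is the content of the lemma; the points you do address (the base case, additivity of the shift in $s$) are routine by comparison.
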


\begin{proof}
We must check in case \refitm{LowerNotInToggleSquare} of \refdef{DrapingRoutes} that the end points of the draping routes lie inside of junctions.
Moreover, we must also check that there is at least one more block free behind the endpoint.
This is necessary because routes in the lower boundary of a crimped shearing region (but not in a toggle square) also appear in the upper boundary of the crimped shearing region immediately below, sheared back by one block.

We first fix notation.
Suppose that $U$ is a crimped shearing region.
Suppose that $H = \bdy^-U$.
Suppose that $k$ is a track-cusp of $\tau^H$.
Consider the draping route $\beta(k)$.

Now suppose that we are in case \refitm{DoesNotMeetToggleSquare}.
Suppose that the endpoint of $\beta(k)$ lies in junction $J$ in the $m^\thsup$ block (as measured from the last block boundary as in \refdef{Blocks}).
By induction, $m$ is at most $w_J + 2N$ where $N$ is the number of crimped shearing regions between $H$ and the toggle square meeting the upper boundary of $J$.
Thus $N \leq h_J$.
Since the number of blocks in $J$ is $w_J + 2h_J$ we are done.

Instead, suppose that we are in case \refitm{DoesMeetToggleSquare}.
When we truncate $\eta(k)$, the end of $\eta'(k)$ lands in a junction $J'$, say.
Suppose that to obtain $\beta(k)$ we truncate a segment of length $(n - 1/2)b_{J'}$ from the end of $\eta'(k)$.
The definition of $n$ ensures that there are no toggle squares in $H$ between the beginning and the end of $\beta(k)$.
Let $K$ be the branch line containing $k$.
Set $k_0 = k$ and in general let $k_i$ be the intersection between $K$ and the lower boundary of the $i^\thsup$ crimped shearing region above $k$ as we travel up $K$.
Let $\Sigma'$ be the station containing $J'$.
The recursive construction of $\beta(k_i)$ ensures that for $i \leq n$ there are no toggle squares intersecting $\eta(k_i)$ between $k_i$ and $\Sigma'$.
Thus $n \leq w_{J'}$.
Since the number of blocks in $J'$ is $w_{J'} + 2h_{J'}$ we are done.
See \reffig{dwell_time_block_number}.
\end{proof}

\begin{figure}[htbp]
\labellist
\small\hair 2pt
\pinlabel {$H$} [r] at 0 220
\pinlabel {$k_0$} [r] at 170 220
\pinlabel {$k_1$} [r] at 242 292
\pinlabel {$k_2$} [r] at 314 364
\pinlabel {$k_3$} [r] at 386 436
\pinlabel {$J'$} [l] at 764 148
\pinlabel {$J''$} [l] at 620 148
\pinlabel {$\Sigma'$} [b] at 333 577
\endlabellist
\includegraphics[width = 0.9\textwidth]{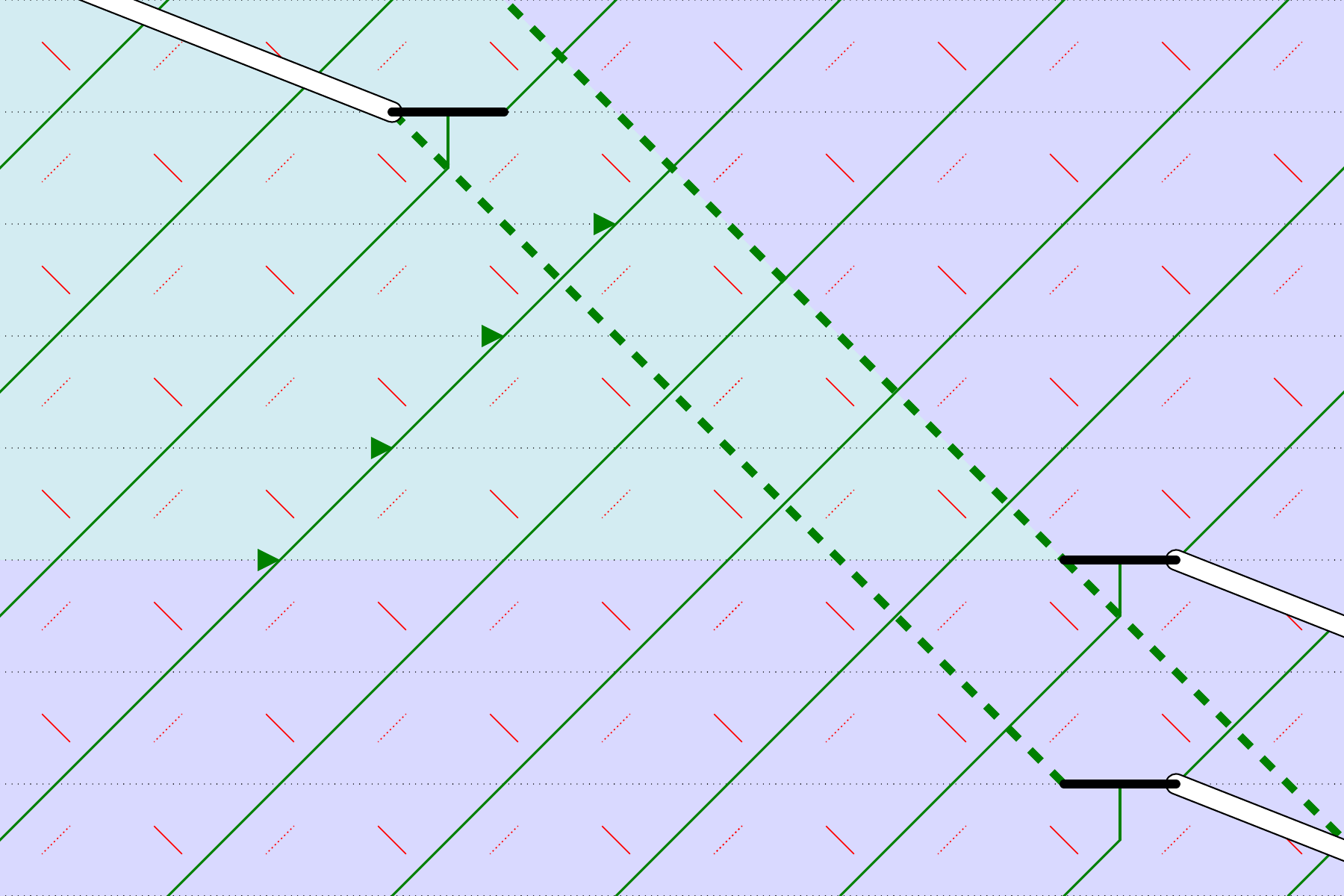}
\caption{
Straightened branch lines of $B^\calV$, projected to the mid-surface $\calS$.
The station $\Sigma'$ and the junctions $J'$ and $J''$ are marked with dashed lines.
In this example the height of $J''$ (and thus the width of $J'$) is six.
The cross-section $H$ and the cross-sections $k_i$ are also indicated.
The draping route $\beta(k_0)$ lies in $H$ and runs from $k_0$ to a point in the upper boundary of $J'$.
Note that $w_{J'}$ only gives an upper bound for the number of track-cusps we must accommodate in $J' \cap H$. The actual count is the number of branch lines in the green wedge that separate $J'$ from any other toggle square in the wedge.
}
\label{Fig:dwell_time_block_number}
\end{figure}

\begin{definition}
\label{Def:PartingInDraping}
Suppose that $k$ is a track-cusp in $B^\calV \cap \bdy^- \Theta_\calV$.
Let $\eta^*(k)$ be the route $\eta(k)$ (as in \refdef{DrapingRoutes}) extended by $1/4$ of a block.
\end{definition}

\begin{lemma}
\label{Lem:PartingInDraping}
The parting route $\alpha(k)$ is a subset of $\eta^*(k)$.
\end{lemma}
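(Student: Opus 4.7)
The plan is to establish $\alpha(k) \subseteq \eta^*(k)$ by direct endpoint comparison. By construction, both $\alpha(k)$ and $\eta(k)$ (and hence $\eta^*(k)$) are train routes on the parted track $B^\calV \cap \bdy^-U$ starting at the common track-cusp $k = k_0$, where $U$ is the crimped shearing region with $k \in \bdy^-U$. The parted track is embedded, so the forward route from a cusp is uniquely determined once the endpoint is fixed; at each switch both routes make the same choice, since both aim for the destination region containing (or near) $\ell'$. Hence the inclusion reduces to showing that the endpoint of $\alpha(k_0)$ lies no later along the forward direction from $k_0$ than the endpoint of $\eta^*(k_0)$.

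The central case is the base case of the recursion defining $\beta$ in \refdef{DrapingRoutes}, namely when $\ell = k_1$ lies in a toggle square $S$ (equivalently, $N = 1$, equivalently $\ell'$ lies in a junction $J$). Then $\beta(k_1)$ has length zero with endpoint $k_1 \in J = J(\Sigma, S)$. By \refdef{DrapingRoutes}, the endpoint of $\eta(k_0)$ has $x$-coordinate $b_J$ behind $k_1$, and since vertical bigon-coordinate projection from $\bdy^+U$ to $\bdy^-U$ preserves the $x$-coordinate, this equals $\ell'$'s $x$-coordinate minus $b_J$. Extending by $b_J/4$ places the endpoint of $\eta^*(k_0)$ exactly $3b_J/4$ behind $\ell'$ in $J$, which by the final clause of \refdef{PartingRoutes} is precisely where $\alpha(k_0)$ ends in the subcase $\ell' \in J$. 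So $\alpha(k_0) = \eta^*(k_0)$.

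In the non-base case $\ell \notin$ any toggle square, so $\ell' \notin$ any junction and $\alpha(k_0)$ ends merely ``just before'' $\ell'$; here $N \geq 2$ and $\beta(k_1)$ is a nontrivial recursive construction. I would show by induction on $N$ that $\beta(k_1)$'s endpoint remains within $O(b_J)$ of $k_1$'s $x$-coordinate --- the truncation step of case~\refitm{DoesMeetToggleSquare} in \refdef{DrapingRoutes} removes a carefully controlled segment that compensates against the accumulated block-length shifts, and the widths controlling the ambient junctions are bounded by \refdef{JunctionRadius}. The $b_J/4$ extension in $\eta^*$ then suffices to push the endpoint of $\eta^*(k_0)$ past the ``just before $\ell'$'' endpoint of $\alpha(k_0)$, giving $\alpha(k_0) \subseteq \eta^*(k_0)$. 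The main technical obstacle is precisely this non-base bookkeeping: the constants $3/4$ in \refdef{PartingRoutes} and $1/4$ in \refdef{PartingInDraping} are calibrated to sum to the unit block-length offset of $\eta$, which is what makes the endpoints coincide in the base case and leaves just enough slack in the non-base case.
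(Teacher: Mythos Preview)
Your base case ($N = 1$) is correct and is essentially the paper's argument: the fold-down of the length-zero route at $k_1 = \ell$ gives $\alpha(k)$ (truncated by $3/4$ of a block), the fold-down of $\beta(k_1)$ gives $\eta(k)$ (truncated by a full block), and the $1/4$-block extension of $\eta^*$ reconciles the two.

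Your non-base case, however, contains a genuine error. You propose to show that the endpoint of $\beta(k_1)$ stays within $O(b_J)$ of $k_1$'s $x$--coordinate. This is false: when $N \geq 2$ the draping route $\beta(k_1)$ runs from $k_1$ (near a helical edge midpoint) to a point inside a junction, crossing at least one helical edge; its length is comparable to the cross-section, not to the tiny block length $b_J$. Worse, the inequality you need goes the other direction. You want the endpoint of $\eta^*(k_0)$ to lie \emph{beyond} ``just before $\ell'$'', and since that endpoint is $(\text{end of }\beta(k_1)) - \tfrac{3}{4}b_J$, you need the endpoint of $\beta(k_1)$ to be \emph{at least} $x(k_1) + \tfrac{3}{4}b_J$, not close to $x(k_1)$.

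The paper's argument handles both cases at once with a single observation: both $\alpha(k)$ and $\eta(k)$ arise by folding routes in $\bdy^+ U$ down to $\bdy^- U$. The route producing $\alpha(k)$ is the length-zero route $\{k_1\}$, the route producing $\eta(k)$ is $\beta(k_1)$, and $k_1$ is the \emph{initial point} of $\beta(k_1)$. Folding preserves the initial-segment relation, so (before truncation) the fold-down for $\alpha$ is an initial segment of the fold-down for $\eta$. The truncation bookkeeping --- $\alpha$ loses at most $3/4$ of a block, $\eta$ loses exactly one block, $\eta^*$ restores $1/4$ --- then finishes the job uniformly. This ``folding'' viewpoint also sidesteps your implicit assumption that forward train routes from $k$ are uniquely determined by their endpoints, which would otherwise require its own justification.
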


\begin{proof}
Suppose that $\alpha(k)$ and $\eta^*(k)$ are contained in $\bdy^- U$ where $U$ is a crimped shearing region.
Both $\alpha(k)$ and $\eta(k)$ are created by folding routes in $\bdy^+ U$ downwards.
The route creating $\alpha(k)$ has length zero and is the initial point of the route creating $\eta(k)$.
The extra $1/4$ of a block is needed because we truncated a $3/4$ block to form $\alpha(k)$ but a full block to form $\eta(k)$.
\end{proof}

See Figures~\ref{Fig:IsotopyDomain} and~\ref{Fig:DrapingRoutes}.

\begin{lemma}
\label{Lem:DrapingRoutesNoCross}
Suppose that $H$ is any cross-section.
Suppose that $k$ and $\ell$ are track-cusps of $B^\calV \cap H$.
Then $\beta(k)$ and $\beta(\ell)$ do not cross
(that is, there is a small motion of $\beta(k)$ making the two routes disjoint).

Furthermore, if $H$ is the lower boundary of a crimped shearing region $U$ then $\eta^*(k)$ and $\beta(\ell)$ do not cross.
\end{lemma}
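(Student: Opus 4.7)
The plan is to proceed by downward induction on the integer $N$ from \refdef{DrapingRoutes}, that is, the number of crimped shearing regions a branch line traverses before hitting a toggle square. Concretely, given track-cusps $k$ and $\ell$ in a cross-section $H$, let $K$ and $L$ be the branch lines through them. Each reaches a first toggle square by \reflem{DualMeetsToggles}, and the recursion of \refdef{DrapingRoutes} builds $\beta$ downwards from those toggle squares. In the base case, both routes have length zero and live at distinct points of distinct stations, so non-crossing is vacuous.

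For the inductive step, I would first dispose of the generic situation: if $\beta(k)$ and $\beta(\ell)$ end in distinct junctions, their endpoints lie in disjoint stations, and outside of junctions the parted train-track $\tau^H$ consists of straight branches laid out as in \reffig{MagnifyParted}; a route carried by such a track and starting at a distinct cusp can share branches with another route, but only diverges at switches, so no genuine crossing can occur. The substantive case is when $\beta(k)$ and $\beta(\ell)$ end in the same junction $J$. Here one invokes the block structure of \refdef{Blocks}: the recursion of \refdef{DrapingRoutes} shifts the endpoint of $\beta$ backward by exactly one block length $b_J$ per crimped shearing region descended, which matches precisely the shear between $\bdy^+ U_i$ and $\bdy^- U_i$ induced by bigon coordinates. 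Consequently, two branch lines exiting the same toggle square at different heights, or exiting through distinct toggle squares above $J$, end up in distinct blocks of $J$. The capacity count $w_J + 2h_J$ engineered into \refdef{JunctionRadius} is exactly what is needed for all of these endpoints to fit without collision.

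The truncation case \refitm{DoesMeetToggleSquare} of \refdef{DrapingRoutes} is the delicate point and is almost certainly the main obstacle. When $\eta(k)$ would otherwise run through a toggle square, $\beta(k)$ is obtained by first deleting the offending intersections to form $\eta'(k)$ and then shortening by $(n-1/2) b_{J'}$. The integer $n$ counts helical veering edges crossed by $\eta'(k)$, and the shortening is calibrated so that the endpoint of $\beta(k)$ falls into a block of $J'$ that is safely separated from the blocks claimed by the $n-1$ other branch lines deflected into $J'$ by the same truncation mechanism. I would verify this by the block-counting picture of \reffig{dwell_time_block_number}, using the upper bound $n \leq w_{J'}$ and the recursive hypothesis to enumerate which blocks of $J'$ are occupied by which $\beta(\ell)$.

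For the second claim, $\eta^*(k)$ differs from $\eta(k)$ only by a quarter-block extension appended at the end, by \refdef{PartingInDraping}. Since the inductive argument lands the endpoint of $\eta(k)$ (or of its truncation in the truncation case) in a block that is distinct from, and well-separated from, the block containing the endpoint of $\beta(\ell)$, a quarter-block extension cannot cross the neighbouring block; thus $\eta^*(k)$ and $\beta(\ell)$ do not cross either. The clearance provided by the half-block in the $(n - 1/2)b_{J'}$ truncation is precisely what guarantees this.
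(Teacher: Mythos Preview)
Your approach diverges from the paper's, and there is a genuine gap.

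The paper does not induct on $N$ and does no block counting for this lemma. Instead it follows the branch lines $K$ and $L$ upward from $k$ and $\ell$ to the \emph{last} common cross-section $H'$ (necessarily the upper boundary of some crimped shearing region), and sets $k_r = K \cap H'$, $\ell_s = L \cap H'$. If one of these, say $k_r$, lies in its target toggle square then $\beta(k_r)$ has length zero inside a station, and the truncation step in \refdef{DrapingRoutes} forces any $\beta(\ell_s)$ meeting that station to coincide with it. Otherwise $k_r$ and $\ell_s$ lie in $H' - T'$ (with $T'$ the union of toggle squares in $H'$), and since $H'$ is the last common cross-section they lie in \emph{different} components of $H' - T'$. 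Each draping route is, by construction, contained in its own such component, so $\beta(k_r)$ and $\beta(\ell_s)$ are disjoint. Now descend from $H'$ to $H$: the parted tracks \emph{fold}, and folding preserves non-crossing of carried routes. The $\eta^*$ statement follows because $\eta(k)$ and $\eta(\ell)$ are folds of $\beta$ routes one level up, hence do not cross, while $\beta(\ell)\subset\eta(\ell)$ and $\eta^*(k)$ extends $\eta(k)$ by only a quarter block.

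Your sketch has two real problems. First, $N$ is attached to a single branch line, so there is no single integer on which to induct when $k$ and $\ell$ have different values of $N$; your base case (both routes of length zero) only covers the situation where both cusps already sit in toggle squares, which is not a base for any well-defined downward induction on pairs. Second, and more seriously, arranging that the \emph{endpoints} of $\beta(k)$ and $\beta(\ell)$ occupy distinct blocks of a junction does not prevent the \emph{bodies} of the routes from crossing: two train routes carried by the same track can share a branch and then diverge on opposite sides at a switch, which is exactly a crossing. The paper's component argument in $H'-T'$ rules this out globally; your endpoint bookkeeping does not address it. The folding step, which is what actually transports non-crossing from $H'$ down to $H$, is entirely absent from your plan.
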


\begin{proof}
We use the notation of \refdef{DrapingRoutes}.
Let $[k_0, k_1]$ and $[\ell_0, \ell_1]$ be the resulting branch intervals in the branch lines $K$ and $L$ containing $k$ and $\ell$ respectively.
Let $k_r$ and $\ell_s$ be the last points in these branch intervals for which there is a horizontal cross-section $H'$ containing both.
We deduce that $H'$ is the upper boundary of some crimped shearing region $U$.

\begin{claim}
$\beta(k_r)$ and $\beta(\ell_s)$ are disjoint, thus they do not cross.
\end{claim}

\begin{proof}
If $r = 1$ then $\beta(k_r)$ is contained in a station.
In this case, if $\beta(\ell_s)$ meets $\beta(k_r)$ then (due to the truncation step of the construction) we find that
$\beta(k_r) = \beta(\ell_s)$.
Thus $k_0 = \ell_0$ and we are done.

A similar proof deals with the case that $s = 1$.
We may now suppose that $r < 1$ and $s < 1$.
Let $T'$ be the union of the toggle squares of $H'$.
Define $H'' = H' - T'$.
Note that each component of $H''$ also appears as a subsurface of the lower boundary of some crimped shearing region.
Since $k_r$ and $\ell_s$ are the last points of $[k_0, k_1]$ and $[\ell_0, \ell_1]$ in a common cross-section, we find that $k_r$ and $\ell_s$ are necessarily in different components of $H''$.
By construction $\beta(k_r)$ and $\beta(\ell_s)$ are also contained in these components, so are disjoint.

Now suppose that $\eta^*(k)$ and $\beta(\ell)$ lie in the lower boundary of a crimped shearing region.
By continuity $\eta(k)$ and $\eta(\ell)$ do not cross.
Thus $\eta^*(k)$ and $\beta(\ell)$ do not cross.
\end{proof}

We now reparametrise $[k_0, k_r]$ and $[\ell_0, \ell_s]$ by the unit interval and re-choose our notation so that,
for all $q \in [0, 1]$, the track-cusps $k_q$ and $\ell_q$ lie in the same cross-section $H_q$.
By the claim, when $q = 1$ the routes $\beta(k_q)$ and $\beta(\ell_q)$ are disjoint in $H_q$.
Let $\tau^q = B^\calV \cap H_q$.
The tracks $\tau^q$ \emph{fold} as $q$ decreases.
Folding preserves the property of not crossing, and we are done.
\end{proof}

\subsection{The upper draping isotopy in \texorpdfstring{$\Theta^\calV$}{Theta sup V}}
\label{Sec:DrapingIsotopyMiddleUpper}

In $\Theta^\calV$, where $B^\calV_0$ is almost a product, we perform (in time) an almost product splitting along the draping routes and then a lower graphical isotopy.

\subsubsection{Splitting along draping routes}
\label{Sec:SplittingInDraping}

Suppose that $H$ is a horizontal cross-section in $\Theta^\calV$ and suppose that $k \in B^\calV \cap H$ is a track-cusp.
Let $D$ be the difference between the $x$--coordinates of the beginning and end of $\beta(k)$.
For $t \in [0, 1/2]$ we split $k$ forward in a small neighbourhood of its draping route $\beta(k)$ at speed $2D$ (as measured in the $x$--coordinate).
%%% Note that if $k$ lies in a junction then D is zero and $k$ does not move.

Applying \reflem{DrapingRoutesNoCross}, when two track-cusps $k$ and $\ell$ meet, travelling in opposite directions, they split past each other.
As they pass, they split to the left or right as determined by the combinatorics of their draping routes.
Note that each track-cusp moves at the constant speed required for its journey to take all of $[0, 1/2]$.
Thus, by \reflem{DrapingRoutesNoCross}, track-cusps travelling in the same direction never meet.

This describes \emph{splitting along draping routes}.
The motion of the track-cusps, in an example, is shown in the lower three rows of
\reffig{m115_green_splitting_sequence_top_of_Omega_V}.

\subsubsection{The graphical isotopy}
\label{Sec:GraphicalIsotopyInDraping}

All track-cusps of $B^\calV_{1/2} \cap H$ lie in junctions.
Also, by Lemmas~\ref{Lem:PartedGraphical} and~\ref{Lem:SplittingPreservesGraphical}, all branches of $B^\calV_{1/2} \cap H$ are graphical with respect to the lower foliation.

So, for $t \in [1/2, 1]$, we do the following.
Outside of junctions we perform a lower graphical isotopy to make all branches straight in bigon coordinates.
Due to our choice of radius of junctions (\refdef{JunctionRadius}) the straightened branches do not intersect junctions (other than the ones at their endpoints).
%%% A real proof: The graphical isotopy effects all branches
%%% at the same time, so they remain disjoint.  So if we are to have a
%%% crash, the interior of some branch mush cross the boundary of some
%%% junction J.  The shallowest branches are the ``youngest'' ones -
%%% the ones entering the junction J' last.  These have slope
%%% 1/(h_{J'} + w_{J'}).  But we made the radius of J smaller than
%%% \epsilon/(h_{J'} + w_{J'}), so there is no crash.
Inside of junctions, the sidings remain fixed and other branches move so that their slopes (at the boundary) match the slopes outside of stations.
See the upper three rows of \reffig{m115_green_splitting_sequence_top_of_Omega_V}.

This describes the \emph{lower graphical isotopy}.

\begin{figure}[htbp]
\centering
\labellist
\small\hair 2pt
\pinlabel {\vbox{Graphical\\
                 isotopy}} at 120 1000
\pinlabel {\vbox{Splitting along\\
                 draping routes }} at 120 430
\endlabellist
\includegraphics[height=15cm]{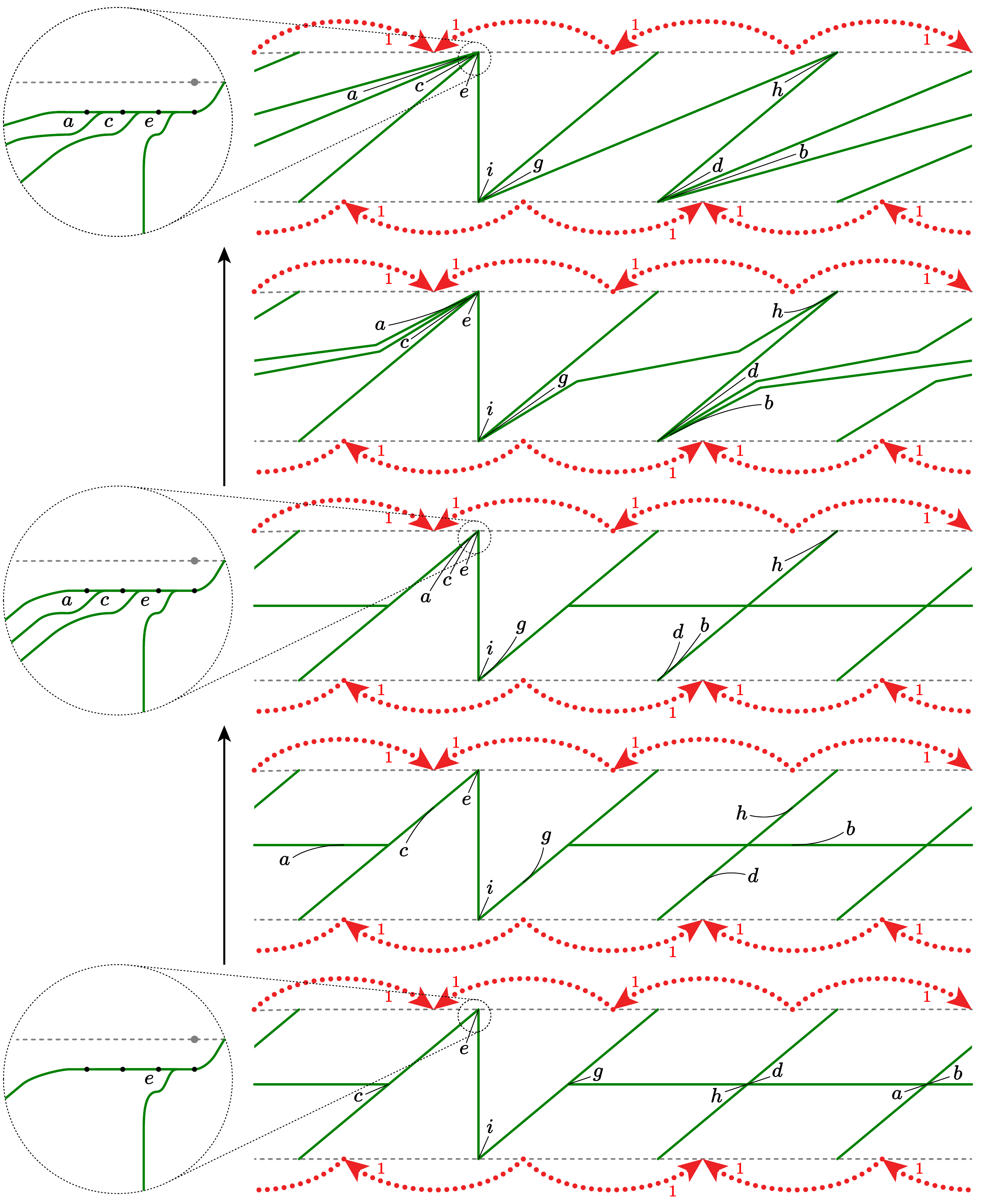}
\caption{The draping isotopy for the cross-section $\bdy^- \Theta^\calV = \bdy^+ \Theta_\calV$ at the middle of $U$.
The five diagrams show (from the bottom moving up) $B^\calV_t \cap H_{1/2}$ for $t \in (0,1/4,1/2,3/4,1)$.
Here $U$ is the blue crimped solid torus for \usebox{\BigExVeer}.
Following \refdef{Blocks} the number of blocks in the junction is eleven;
however in each figure we only draw the blocks needed for the track-cusps in that figure.}
%%% Here h_J = 3 and w_J = 5 (see \reffig{m115_side_straight}) so 2h +
%%% w = 11. Also the green arc meeting the longitudinal crimped edge
%%% should be much further from the grey dot since there are _fewer_
%%% blocks in stations in the red crimped shearing region.
\label{Fig:m115_green_splitting_sequence_top_of_Omega_V}
\end{figure}

\begin{remark}
\label{Rem:TrackCuspNoGo}
Suppose that $H$ is a cross-section in $\Theta^\calV$.
By construction (\refdef{DrapingRoutes}) after splitting along draping routes and performing the lower graphical isotopy all track-cusps of $B^\calV \cap H$ are in blocks in junctions.

As $H$ interpolates from $\bdy^- \Theta^U$ to $\bdy^+ \Theta^U$, the only change in the train-tracks $B^\calV \cap H$ is that track-cusps move forwards in their blocks. (Here we compare positions by projection in bigon coordinates.)
\end{remark}

\begin{remark}
\label{Rem:UpGivesDownAgain}
As in \refrem{UpGivesDown}, the intersection of the image of the upper draping isotopy with cross-sections in $\Theta^\calV$ determines the intersection of the image of the upper draping isotopy with $\bdy^-\Theta_\calV$.
For the result in our running example see \reffig{m115_green_splitting_sequence_bottom_of_Omega_V}.
\end{remark}

\begin{figure}[htbp]
\centering
\labellist
\small\hair 2pt
\pinlabel {\vbox{Graphical\\
isotopy}} at -140 1030
\pinlabel {\vbox{Splitting along\\
draping routes }} at -140 470
\endlabellist
\includegraphics[height=15cm]{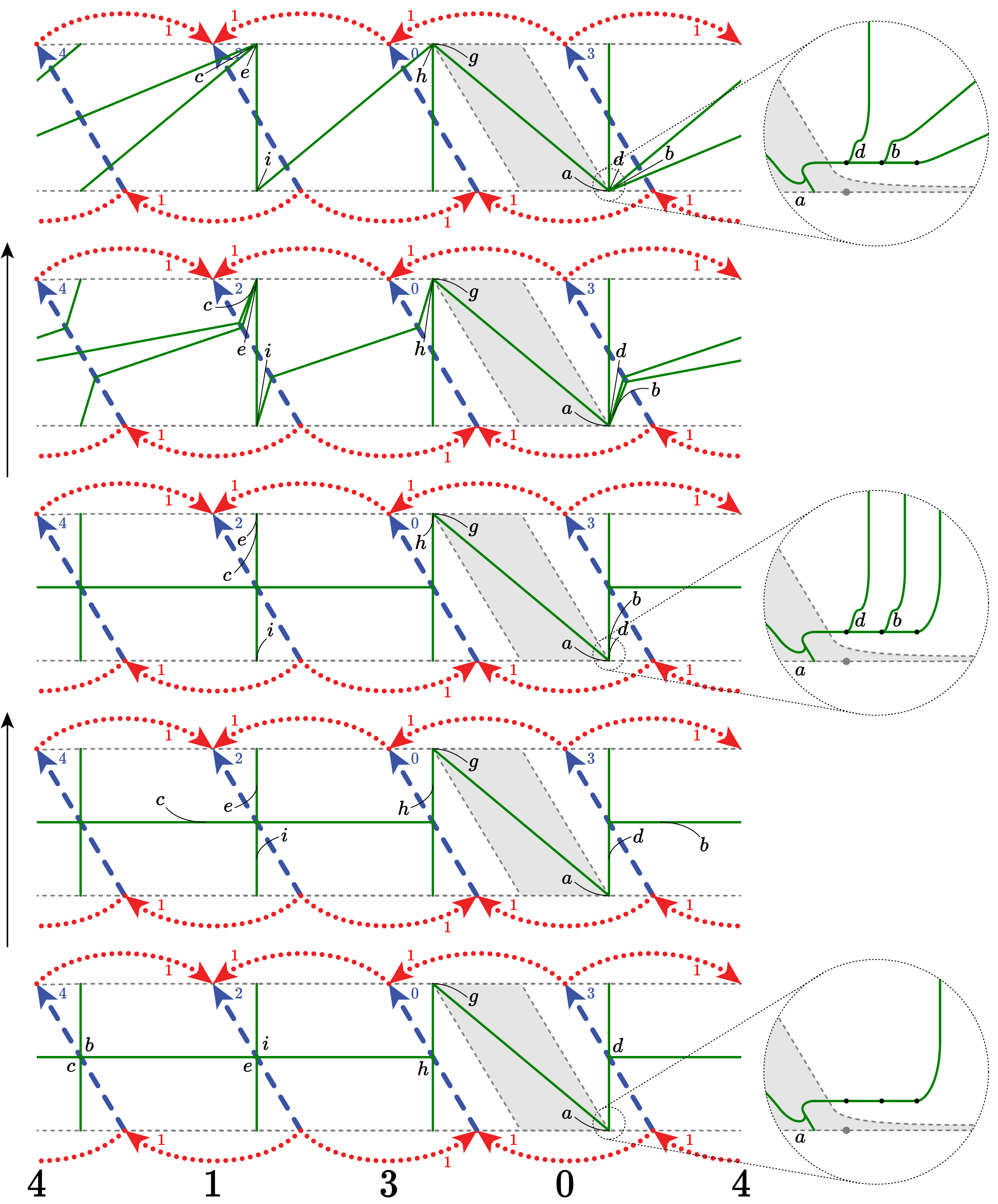}
\caption{The draping isotopy for the cross-section $\bdy^-\Theta_U$.
The five diagrams show (from the bottom moving up) $B^\calV_t \cap H_{0}$ for $t \in (0,1/4,1/2,3/4,1)$.
Here $U$ is the blue crimped solid torus for \usebox{\BigExVeer}.}
\label{Fig:m115_green_splitting_sequence_bottom_of_Omega_V}
\end{figure}

\subsection{The upper draping isotopy in \texorpdfstring{$\Theta_\calV$}{Theta sub V}}
\label{Sec:DrapingIsotopyLower}

Fix $U$, a blue crimped shearing region.
We use $H_s$ to denote the cross-section of $\Theta_U$ at height $s \in [0, 1/2]$.
It remains to describe the intersections $B^\calV_t \cap H_s$.
The intersections $B^\calV_0 \cap H_s$ are given by parted position.
Also, $B^\calV_t \cap H_{1/2}$ and (by \refrem{UpGivesDownAgain}) $B^\calV_t \cap H_0$ are determined by the splitting and lower graphical isotopy given in \refsec{DrapingIsotopyMiddleUpper}.
This gives three sides of the boundary of the isotopy.

\subsubsection{Suffix routes}
\label{Sec:SuffixRoutes}

We now describe the fourth side;
that is, we describe $B^\calV_1 \cap H_s$ for $s \in [0, 1/2]$.
To do this we start from the given $B^\calV_1 \cap H_0$ and perform (in space):
\begin{itemize}
\item
a splitting along \emph{suffix routes} to produce $B^\calV_1 \cap H_{1/4}$ followed by
\item
a lower graphical isotopy to $B^\calV_1 \cap H_{1/2}$.
\end{itemize}

\begin{definition}
Suppose that $k_{0, 0}$ is a track-cusp of $B^\calV_0 \cap H_0$.
We take $\beta(k_{0, 0})$ as given by \refdef{DrapingRoutes}.
We take $\eta^*(k_{0, 0})$ as given by \refdef{PartingInDraping}.
We define $\gamma^* = \eta^*(k_{0, 0}) - \beta(k_{0, 0})$.
%%% This is the reason for the name ``suffix'' route.  Note that we do
%%% not write \gamma^*(something) because there is no track-cusp that
%%% it starts from.
This is well-defined because $\beta(k_{0, 0}) \subset \eta(k_{0, 0})$.

Let $k_{1, 0}$ be the track-cusp of $B^\calV_1 \cap H_0$ which is the endpoint of $\beta(k_{0, 0})$.
By \reflem{DrapingRoutesNoCross} none of the draping routes in $B^\calV_0 \cap H_0$ cross $\gamma^*$.
Therefore we may define $\gamma(k_{1,0})$, the \emph{suffix route} for $k_{1,0}$, by taking the image of $\gamma^*$ under the splitting and lower graphical isotopy defined in \refsec{DrapingIsotopyMiddleUpper}.
%%% Neither moves the x--coordinates of the endpoints of the \gamma^*
%%% routes.
Note that $\gamma(k_{1,0})$ starts at $k_{1,0}$ and is carried by $B^\calV_1 \cap H_0$.
\end{definition}

%%% There is another definition of the suffix route \gamma(k_{1,0})
%%% (because we are in a universal cover): namely, it is the unique
%%% route that starts at k_{1,0}, travels away from k_{1,0}, and ends
%%% in the same block as the projection of k_{1,1}.  But this
%%% definition is not that much simpler: we still have to go the long
%%% way around to define k_{1,1}.

As in Sections~\ref{Sec:SplittingInParting} and~\ref{Sec:SplittingInDraping}, we now perform a splitting (in space) along the suffix routes $\gamma$.
As $s$ progresses through $[0, 1/2]$ we split each track-cusp $k_{1,0}$ forward along its suffix route $\gamma(k_{1,0})$.

With that done, we perform (in space) a lower graphical isotopy, analogous to the ones described in Sections~\ref{Sec:GraphicalIsotopyInParting} and~\ref{Sec:GraphicalIsotopyInDraping}.
As $s$ progresses through $[1/4, 1/2]$, outside of junctions we straighten the train-track by a graphical isotopy along the lower foliation.
Since this is an isotopy in space, just as in \refsec{GraphicalIsotopyInParting}, inside of junctions track-cusps move forward to maintain dynamism (at constant speed within their blocks) and branches move via graphical isotopy along the lower foliation as needed.
See \reffig{m115_space_splitting_sequence_blue}.

\begin{figure}[htbp]
\centering
\labellist
\small\hair 2pt
\pinlabel {\vbox{Graphical\\
isotopy}} at 110 1040
\pinlabel {\vbox{Splitting along\\
suffix routes }} at 110 470
\pinlabel {$\bdy^+ \Theta_U$} [r] at -20 1337
\pinlabel {$\bdy^- \Theta_U$} [r] at -20 179
\endlabellist
\includegraphics[width =0.97\textwidth]{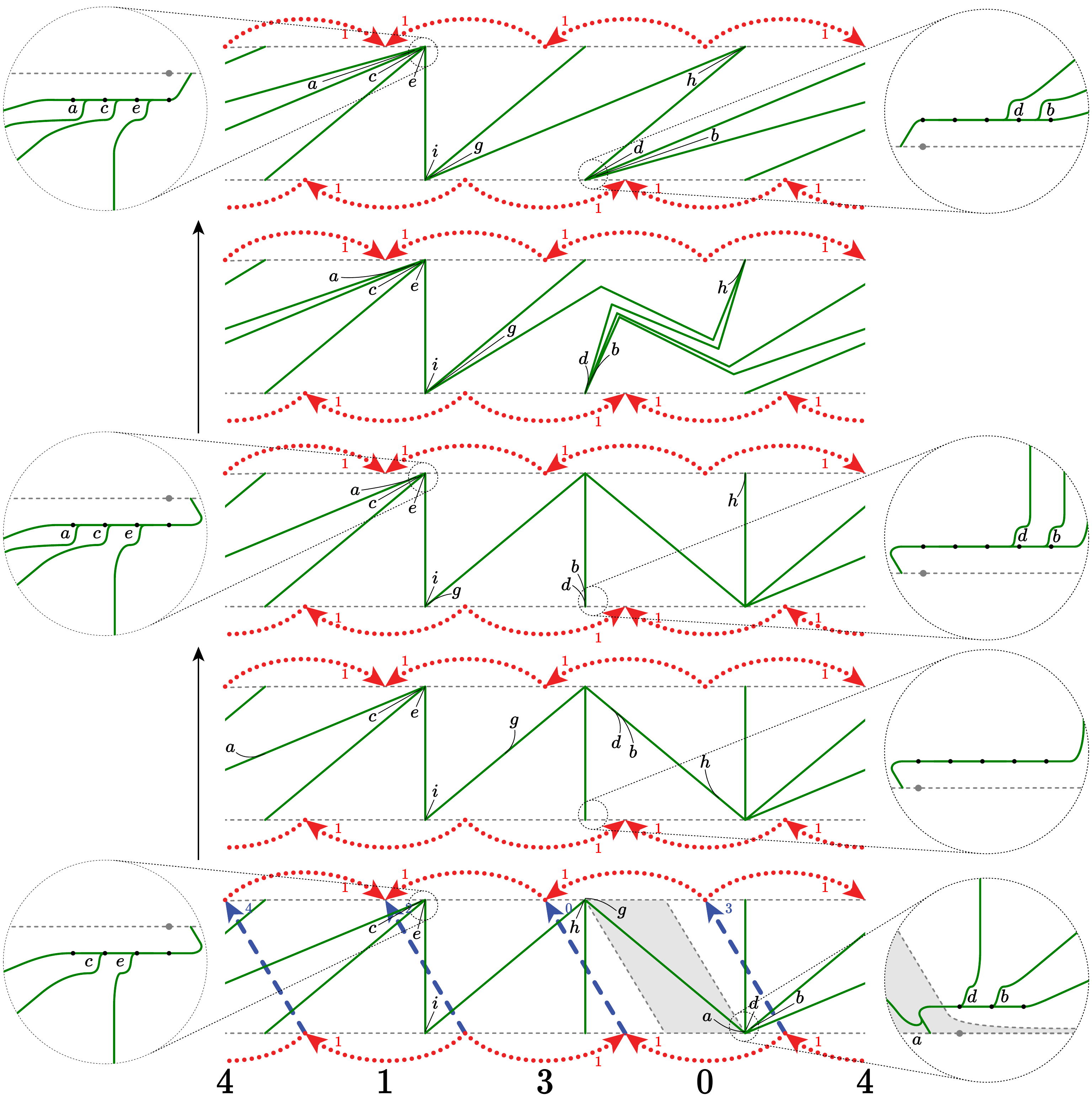}
\caption{The result $B^\calV_1$ of the draping isotopy in $\Theta_U$ where $U$ is the blue crimped solid torus for \usebox{\BigExVeer}.
The five diagrams show (from the bottom moving up) $B^\calV_1 \cap H_s$ for $s \in (0,1/8,1/4,3/8,1/2)$.
}
\label{Fig:m115_space_splitting_sequence_blue}
\end{figure}

\subsubsection{Prefix routes}

Now that we have constructed the four sides of the isotopy we fill in the interior.
That is, we describe the train-tracks $B^\calV_t \cap H_s$ for $s \in (0,1/2)$ and $t \in (0,1)$.
To build $B^\calV_t \cap H_s$ we start from $B^\calV_0 \cap H_s$ and perform (in time)
\begin{itemize}
\item
a splitting along \emph{prefix routes} to produce $B^\calV_{1/2} \cap H_s$ followed by
\item
a lower graphical isotopy to $B^\calV_1 \cap H_s$.
\end{itemize}

\begin{notation}
Where defined, we take $k_{t,s}$ to be the track-cusp of $B^\calV_t \cap H_s$ corresponding to $k_{0,0}$.
Let $x_{t, s}$ be the $x$--coordinate of $k_{t, s}$ (in bigon coordinates).
\end{notation}

\begin{remark}
Suppose that $s$ lies in $(0, 1/2)$.
Suppose that $J$ is the junction containing the endpoint of $\beta(k_{0,s})$.
Breaking symmetry, suppose that the orientation of $\beta(k_{0, 0})$ points in the direction of increasing $x$--coordinate.
Thus the same holds for the orientation of $\beta(k_{0,s})$.
With the above notation, the endpoints of $\beta(k_{0,s})$ have $x$--coordinates
\[
x_{0,s} \quad \mbox{and} \quad
x_{1,1/2} - (1/2 - s)b_J
\]
where $b_J$ is the block length defined in \refdef{Blocks}.
\end{remark}

\begin{definition}
Suppose that $s$ lies in $(0,1/2)$.
We define the \emph{prefix route} $\delta(k_{0,s})$ to be the prefix of $\beta(k_{0,s})$ which ends at the point with $x$--coordinate equal to $x_{1,s}$.
\end{definition}

%%% Instead of the x-coordinate in bigon coordinates we could use some
%%% version of the x-coordinate relative to the lower foliation - the
%%% result is the same.

\begin{lemma}
With the above notation, for all $s \in (0, 1/2)$ we have
\[
x_{0,s} \leq x_{1,s} \leq x_{1,1/2} - (1/2 - s)b_J
\]
Thus the coordinate $x_{1,s}$ lies in the interval of $x$--coordinates of the draping route $\beta(k_{0,s})$.
Thus the prefix route $\delta(k_{0,s})$ is well-defined.
\end{lemma}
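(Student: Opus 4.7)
The plan is to verify both inequalities by tracking $x$-coordinates through the explicit splitting and graphical isotopy constructions of Sections~\ref{Sec:DrapingIsotopyMiddleUpper} and~\ref{Sec:SuffixRoutes}. First note that by the ``$(s-r)b_J$ behind'' clause in \refdef{DrapingRoutes}, the right-hand side $x_{1,1/2} - (1/2-s)b_J$ is precisely the $x$-coordinate of the endpoint of $\beta(k_{0,s})$; for $s \in (0, 1/2)$ the track-cusp $k_{0,s}$ lies strictly in the interior of the crimped shearing region $U$, so we are in case (1) of \refdef{DrapingRoutes} and $\beta(k_{0,s}) = \eta(k_{0,s})$. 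Hence the conjunction of the two inequalities is exactly the statement that $x_{1,s}$ lies in the $x$-interval spanned by $\beta(k_{0,s})$, which is what is needed for the definition of $\delta(k_{0,s})$ to make sense.

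For the upper bound $x_{1,s} \leq x_{1,1/2} - (1/2-s)b_J$, I would analyse the two-phase construction of \refsec{SuffixRoutes}. In the splitting phase $s \in [0, 1/4]$, the cusp $k_{1,s}$ advances along $\gamma(k_{1,0})$ at constant speed, covering a total $x$-displacement equal to the $x$-extent of $\gamma$. In the graphical phase $s \in [1/4, 1/2]$, it advances at constant speed within its block in the junction $J$. Using the defining relation $\gamma^* = \eta^*(k_{0,0}) - \beta(k_{0,0})$ together with the recursion in \refdef{DrapingRoutes} applied at $r = 0$ and $s = 1/2$ (which gives $x_{1,0} = x_{1,1/2} - (1/2)b_J$), one computes that the cumulative $x$-advance of $k_{1,s}$ from $x_{1,0}$ is at most $sb_J$. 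This rearranges to the stated bound.

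For the lower bound $x_{0,s} \leq x_{1,s}$, at $s = 0$ the upper draping of \refsec{DrapingIsotopyMiddleUpper} (inherited via \refrem{UpGivesDownAgain}) places $k_{1,0}$ at the endpoint of the forward route $\beta(k_{0,0})$, so $x_{1,0} \geq x_{0,0}$. For interior $s$, I would compare the two forward-monotone trajectories $s \mapsto x_{0,s}$ (parted position, advancing along a prefix of $\alpha(k_{0,0})$ followed by the lower graphical isotopy of \refsec{GraphicalIsotopyInParting}) and $s \mapsto x_{1,s}$ (at time one, advancing along a prefix of $\gamma(k_{1,0})$ starting from $x_{1,0}$). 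By \reflem{PartingInDraping} we have $\alpha(k_{0,0}) \subset \eta^*(k_{0,0}) = \beta(k_{0,0}) \cup \gamma^*$, with $\alpha$ ending three-quarters of a block before the endpoint of $\beta(k_{0,0})$; combining this containment with $x_{1,0} \geq x_{0,0}$ and the matched constant-speed advances yields the desired pointwise inequality $x_{0,s} \leq x_{1,s}$.

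The main obstacle will be the truncation case (3)(b) of \refdef{DrapingRoutes} in the definition of $\beta(k_{0,0})$: there $\gamma$ is longer than in the untruncated cases (because $\beta(k_{0,0})$ was shortened by $(n-1/2)b_{J'}$), and one must carefully distinguish the block length $b_J$ of the junction $J$ containing the endpoint of $\beta(k_{0,s})$ from the block length $b_{J'}$ of the junction $J'$ involved in the truncation. The truncation is designed so that the endpoint of $\beta(k_{0,0})$ sits at a designated block boundary in $J'$, and one must verify that the resulting advancement speeds through both junctions still respect the bound $sb_J$ across the full interval $[0, 1/2]$.
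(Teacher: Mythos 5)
Your overall strategy is the paper's: establish the inequalities at the corner values of $s$ and propagate to interior $s$ using the fact that the track-cusp moves at constant speed (so $x_{t,s}$ is a barycentric combination of its values at $s=0,1/4$ and at $s=1/4,1/2$ respectively), with the corner inequalities coming from \reflem{PartingInDraping} and the lengths of the $\alpha$, $\beta$, $\gamma$ routes. The lower bound $x_{0,s}\leq x_{1,s}$ is handled essentially as in the paper, though you should state explicitly that you also need the corner inequalities $x_{0,1/4}\leq x_{1,1/4}$ and $x_{0,1/2}\leq x_{1,1/2}$ (the containment $\alpha(k_{0,0})\subset\eta^*(k_{0,0})$ gives the first; the paper records all six corner values in a table).

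The upper bound, however, is set up backwards, and the two intermediate claims you make are both false precisely in the truncation case \refitm{DoesMeetToggleSquare} that you flag as the main obstacle. There, $\gamma(k_{1,0})$ has $x$--extent $L$ \emph{strictly greater} than $\tfrac14 b_J$ (it is $\gamma^* = \eta^*(k_{0,0})-\beta(k_{0,0})$, which grows by exactly the amount truncated from $\eta$). Consequently $x_{1,0} < x_{1,1/2}-\tfrac12 b_J$ rather than equality, and the cumulative advance of $k_{1,s}$ over $[0,s]$ is $4Ls \geq s\,b_J$, i.e.\ \emph{at least} $s\,b_J$ — the reverse of your claimed bound. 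Your two errors cancel to give the right conclusion, but the verification you propose (``check that the speeds still respect the bound $s b_J$'') would fail. The correct anchoring is at the top: on $[1/4,1/2]$ the cusp moves in its block at speed exactly $b_J$, matching the comparison line $s\mapsto x_{1,1/2}-(1/2-s)b_J$, so $x_{1,1/4} = x_{1,1/2}-\tfrac14 b_J$ exactly; then on $[0,1/4]$ the speed is at least $b_J$ (since $L\geq\tfrac14 b_J$), so working \emph{downwards} from $s=1/4$ gives $x_{1,s} \leq x_{1,1/4} - (1/4-s)b_J = x_{1,1/2}-(1/2-s)b_J$. With that repair your argument coincides with the paper's.
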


\begin{proof}
Again breaking symmetry, suppose that the orientation of $\beta(k_{0, 0})$ points in the direction of increasing $x$--coordinate.
It follows that the same holds for $\alpha(k_{0,0})$ and $\gamma(k_{1,0})$.
The lengths of the various routes, the motion of track-cusps during graphical isotopies, and \reflem{PartingInDraping} give the inequalities shown in \reftab{Forward}.
(The ordering of $x_{1,0}$ and $x_{0,1/2}$ depends on the combinatorial type of the track-cusp $k_{0,0}$.)
%%% Can't fix the placement using a wraptable because they don't work in proofs.
\begin{table}[htbp]
\begin{tabular}{ccc}
\vspace{5pt}
$x_{0, 1/2}$ & $\leq$ & $x_{1, 1/2}$ \\ %%% \beta
\rlt         &        & \rlt         \\ %%% creeping forward (both)
\vspace{5pt}
$x_{0, 1/4}$ & $\leq$ & $x_{1, 1/4}$ \\ %%% because end of \gamma = end of \eta* and by PartingInDraping
\rlt         &        & \rlt         \\ %%% alpha and gamma, respectively
$x_{0, 0}$   & $\leq$ & $x_{1, 0}$      %%% beta
\end{tabular}
\caption{}
\label{Tab:Forward}
\end{table}

%%% From the bottom row of \reffig{DrapingRoutes}:
%%% k     = k_{0,0} is the position of the track-cusp.
%%% \ell  = k_{0,1/2} is the end of the parting route (dotted) plus a quarter block
%%% k'    = k_{1,0} is the end of beta(k_{0,0}).
%%% \ell' = k_{1,1/2} is the end of beta(k_{0,1/2}).
%%% Examples:
%%% ``a'' has k = k' < \ell < \ell'.
%%% ``c'' has k < \ell < k' < \ell'.
%%% ``e'' has k < k' < \ell = \ell'.
%%% ``h'' has k < k' < \ell < \ell'.

We now show that $x_{0,s} \leq x_{1,s}$.
Suppose that $s$ lies in $(0, 1/4)$.
Note that $x_{0,s}$ is a barycentric combination of $x_{0, 0}$ and $x_{0, 1/4}$.
Likewise $x_{1,s}$ is a barycentric combination of $x_{1, 0}$ and $x_{1, 1/4}$, with the same coefficients.
Thus the first inequality follows from the middle and lower lines of \reftab{Forward}.
The same argument, applied to the upper and middle lines of \reftab{Forward}, deals with $s$ lying in $(1/4, 1/2)$.

We now show that $x_{1,s} \leq x_{1,1/2} - (1/2 - s)b_J$.
Suppose that $s$ lies in $(1/4, 1/2)$.
The track-cusp $k_{1, s}$ is in its block and moving at speed exactly $b_J$.
Note that $x_{1,1/2} - (1/2 - s)b_J$ also moves at speed exactly $b_J$.
This establishes the inequality (in fact equality) for  $s$ in $(1/4, 1/2)$.
Now suppose that $s$ lies in $(0, 1/4)$.
By construction $\gamma(k_{1,0})$ has length at least $\frac{1}{4}b_J$,
%%% looks better than $(1/4)b_J$,
so $x_{1, s}$ moves at speed at least $b_J$.
Thus the inequality also holds for $s$ in $(0, 1/4)$.
\end{proof}

For each fixed $s$ in $(0,1/2)$, we split along the prefix routes (for $t \in [0, 1/2]$).
While splitting, each track-cusp moves from its position in $B^\calV_0 \cap H_s$ to its position in $B^\calV_1 \cap H_s$.
We then perform the lower graphical isotopy (for $t \in [1/2, 1]$); since this is an isotopy in time rather than space, the track-cusps do not move.

This completes the definition of the upper draping isotopy; we call the result \emph{draped position}.
Note that the upper draping isotopy is continuous by construction.

The lower draping isotopy of $B_\calV$ is defined analogously, with the roles of $\Theta^U$ and $\Theta_U$ reversed.
For examples, see Figures~\ref{Fig:m115_split}, \ref{Fig:m115_side_draped}, and~\ref{Fig:FinalPositionFig8Sibling}.

\subsection{Draped position}

With draped position in hand we make a sequence of observations.

\begin{figure}[htbp]
\subfloat[Blue crimped solid torus.]{
\centering
\labellist
\small\hair 2pt
\pinlabel {$\bdy^+ \Theta^\calV$} [r] at 10 835
\pinlabel {$\bdy^- \Theta^\calV = \bdy^+ \Theta_\calV$} [r] at 10 505
\pinlabel {$\bdy^- \Theta_\calV$} [r] at 10 185
\endlabellist
\includegraphics[height=9.5cm]{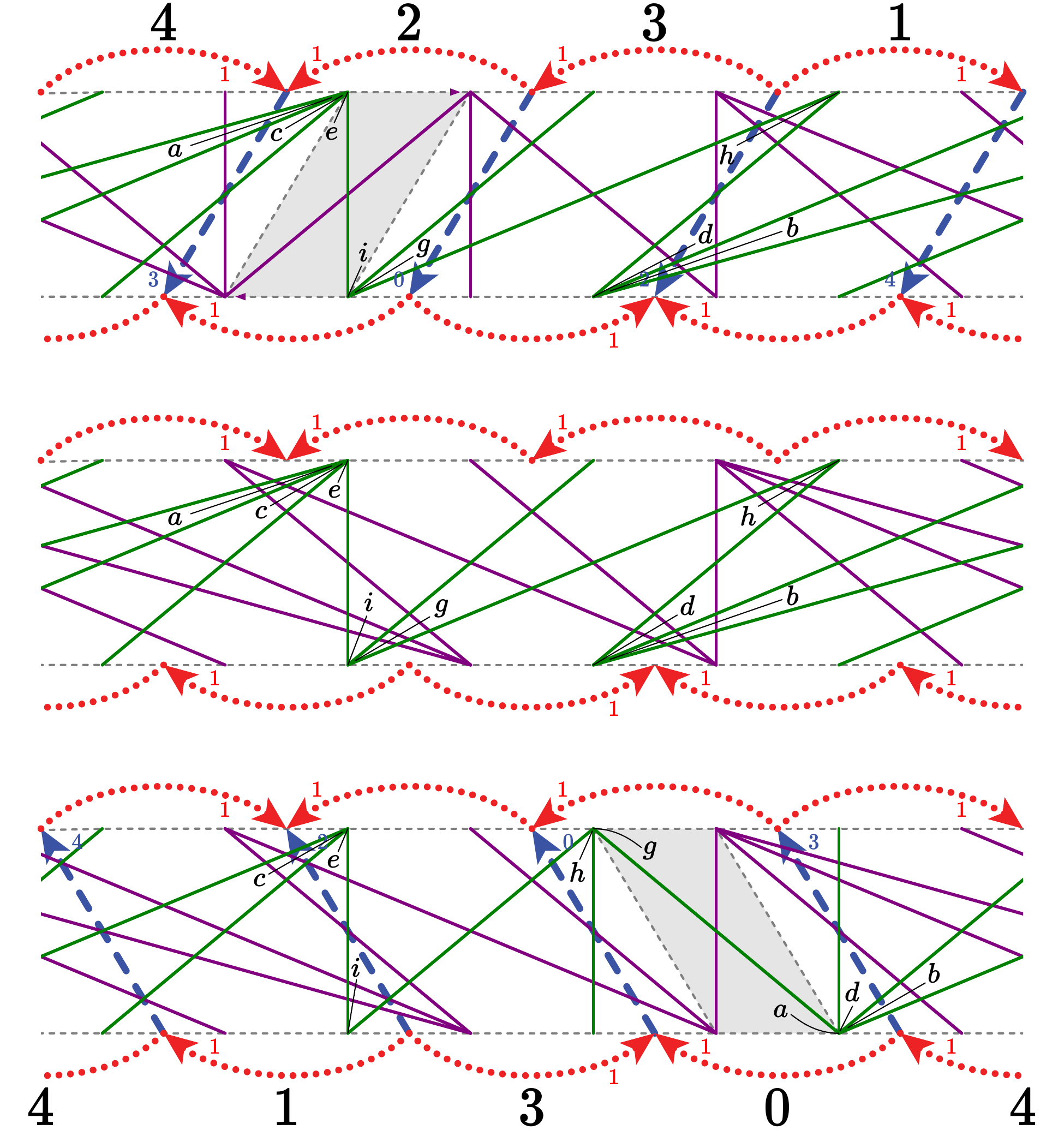}
%%% We decided against adding more purple triangles to indicate
%%% track-cusps because it would get busy.
}
\subfloat[Red.]{
\centering
\includegraphics[height=9.5cm]{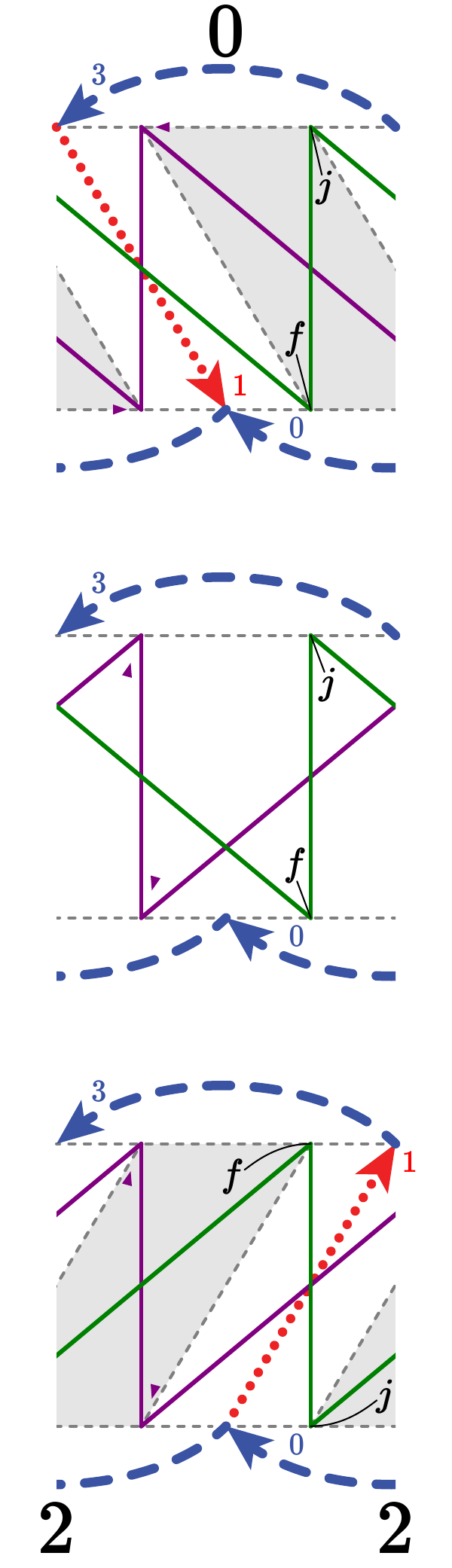}
%%% split == prepared (other than the track-cusp labels) in this case
}
\caption{The intersection of $B^\calV$ (and $B_\calV$), in draped position, with various cross-sections ($s=0, 1/2, 1$) of the crimped shearing decomposition of \usebox{\BigExVeer}.
Compare with \reffig{m115_prepared}.
}
\label{Fig:m115_split}
\end{figure}

\begin{figure}[htbp]
\subfloat[Blue mid-surface.]{
\centering
\includegraphics[height=7cm]{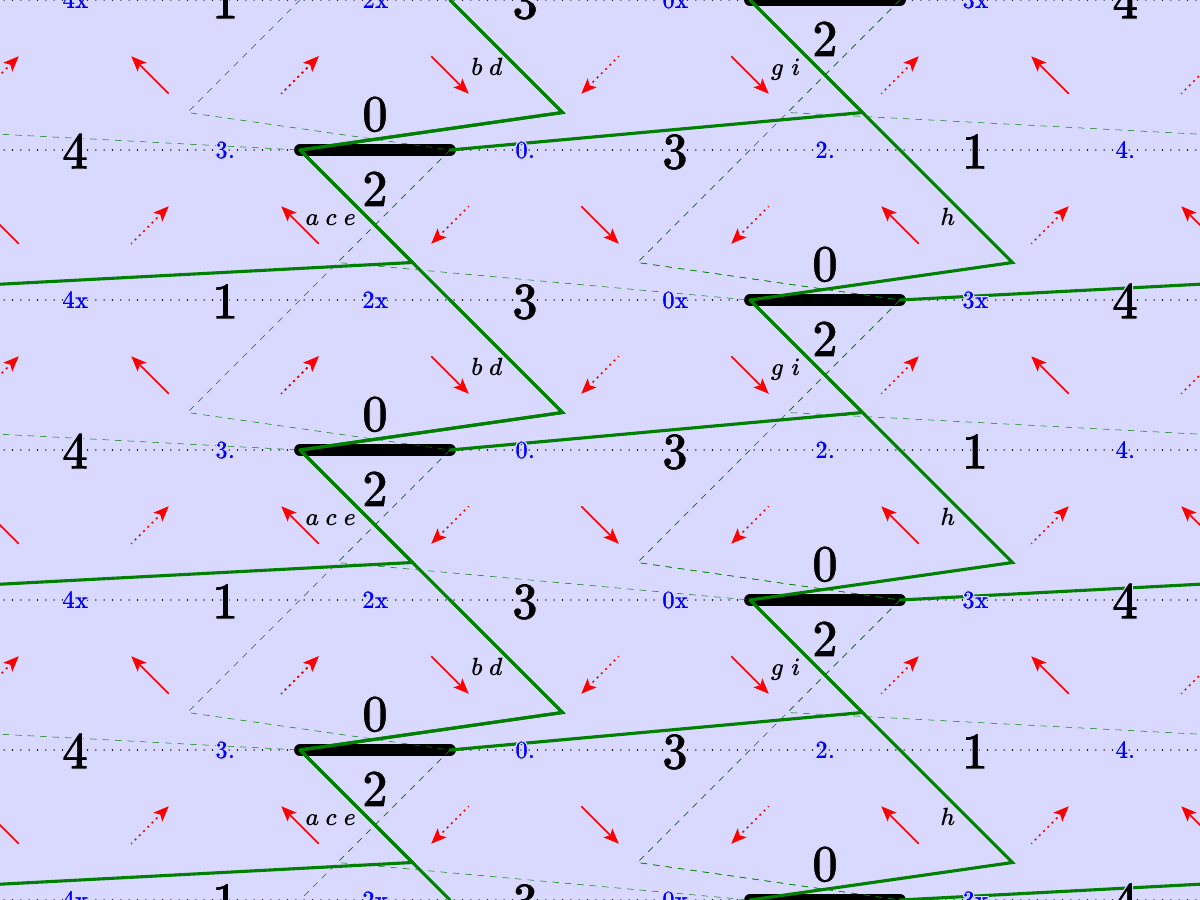}
}
\subfloat[Red.]{
\centering
\includegraphics[height=7cm]{Figures/fLLQccecddehqrwjj_20102_side_red_parted}
}
\caption{The branch lines of $B^\calV$, in draped position, projected to the mid-surfaces. Compare with  \reffig{m115_side_parted}.}
\label{Fig:m115_side_draped}
\end{figure}

\begin{figure}[htbp]
\centering
\subfloat[Blue solid torus.]{
\labellist
\hair 2pt
\pinlabel {$\bdy^+ \Theta^\calV$} [r] at 10 780
\pinlabel {$\bdy^- \Theta^\calV = \bdy^+ \Theta_\calV$} [r] at 10 450
\pinlabel {$\bdy^- \Theta_\calV$} [r] at 10 130
\tiny
\pinlabel \textsc{a} at 120 845
\pinlabel \textsc{c} at 120 780
\pinlabel \textsc{d} at 65 790
\pinlabel \textsc{b} at 65 735

\pinlabel \textsc{a} at 145 490
\pinlabel \textsc{d} at 70 500
\pinlabel \textsc{d} at 220 500
\pinlabel \textsc{b} at 245 425
\pinlabel \textsc{b} at 45 425
\pinlabel \textsc{c} at 110 415
\pinlabel \textsc{c} at 180 415

\pinlabel \textsc{a} at 170 195
\pinlabel \textsc{c} at 170 130
\pinlabel \textsc{d} at 225 140
\pinlabel \textsc{b} at 225 75
\endlabellist
\includegraphics[height=9.5cm]{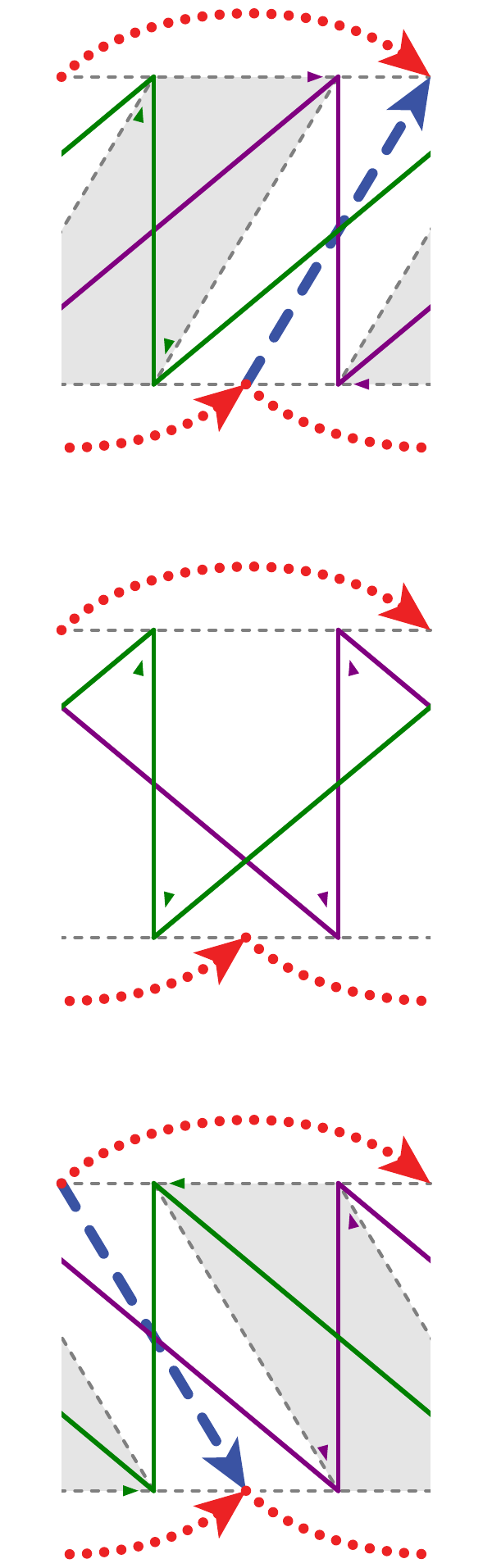}
\label{Fig:m003_final_cross-sections_blue}
}
\quad
\subfloat[Red solid torus.]{
\labellist
\small\hair 2pt
\tiny
\pinlabel \textsc{c} at 170 845
\pinlabel \textsc{b} at 170 780
\pinlabel \textsc{a} at 225 790
\pinlabel \textsc{d} at 225 735

\pinlabel \textsc{c} at 145 490
\pinlabel \textsc{a} at 70 500
\pinlabel \textsc{a} at 220 500
\pinlabel \textsc{d} at 245 425
\pinlabel \textsc{d} at 45 425
\pinlabel \textsc{b} at 110 415
\pinlabel \textsc{b} at 180 415

\pinlabel \textsc{c} at 120 195
\pinlabel \textsc{b} at 120 130
\pinlabel \textsc{a} at 65 140
\pinlabel \textsc{d} at 65 75
\endlabellist
\includegraphics[height=9.5cm]{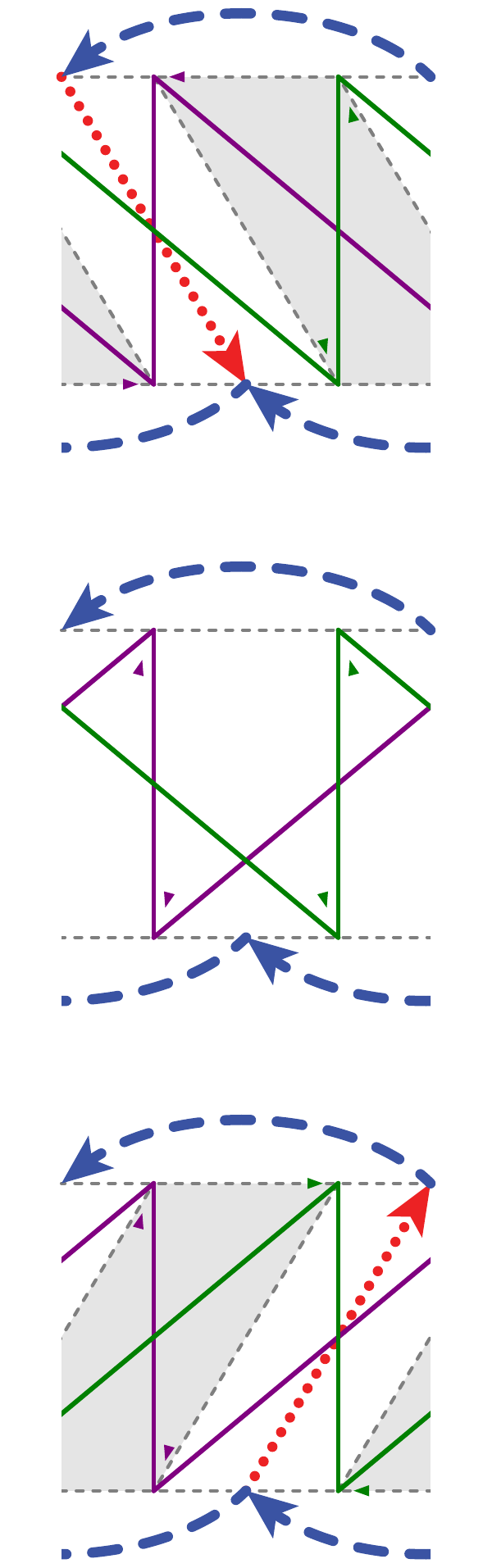}
\label{Fig:m003_final_cross-sections_red}
}
\caption{Draped position for the figure-eight knot sibling with veering triangulation \usebox{\FigEightSisVeer}. The four pinched tetrahedra are labelled \textsc{a} through \textsc{d}. To obtain the pictures for the figure-eight knot complement with veering triangulation \usebox{\FigEightVeer}, alter these figures by requiring that the orientation on every helical edge points upwards. (To relabel the pinched tetrahedra, start with those given at the top of \reffig{m003_final_cross-sections_blue} and propagate outwards.)}
\label{Fig:FinalPositionFig8Sibling}
\end{figure}

\begin{remark}
\label{Rem:KeepYourHeadDown}
Suppose that $H$ is any cross-section.
Suppose that $J$ is a junction which intersects $H$.
Suppose that $B^\calV$ is in draped position.
Let $b$ be a branch of either $\tau^H$ or $\tau_H$.
By our choice of radius $r_J$ (\refdef{JunctionRadius}), the branch $b$ intersects $J$ if and only if (at least) one end of $b$ lies in $J$.
%%% The thing to worry about is that the graphical isotopy straightens
%%% branches (both green and purple) and they become very shallow.
%%% The radius r_J is chosen to be small enough to prevent
%%% ``crashing'' between these shallow branches and the junctions.
\end{remark}

\begin{lemma}
\label{Lem:DrapedDynamic}
In draped position, the branched surfaces $B^\calV$ and $B_\calV$ are dynamic.
\end{lemma}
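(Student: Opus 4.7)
The plan is to mimic the proof of \reflem{PartedIsotopicAndDynamic}, verifying the three conditions of \refdef{DynamicBranchedSurface} cross-section by cross-section. By the symmetry of the construction it suffices to treat $B^\calV$; the argument for $B_\calV$ will be identical with the roles of $\Theta^\calV$ and $\Theta_\calV$ swapped. The key assertion I will establish is that, in draped position, every horizontal cross-section $H$ (as defined in \refdef{CrossSection}) meets $B^\calV$ in a train-track $\tau^H$, and as $H$ sweeps upwards the track-cusps of $\tau^H$ move strictly forward in the $x$--coordinate of bigon coordinates.

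Granting this assertion, the three conditions follow almost formally. For the first condition, at any $p$ in any sector of $B^\calV$, the vertical direction of $X_\calV$ (in bigon coordinates) is tangent to the sector, since the sector meets each cross-section in a branch or switch of $\tau^H$. For the second, the branch locus of $B^\calV$ is traced out precisely by the track-cusps of $\tau^H$ as $H$ rises, and by the forward-motion assertion the upwards semi-flow $X^\calV$ crosses this locus from the side with fewer sheets to the side with more. For the third, forward motion at strictly positive $x$--speed means the branch locus is never parallel to $X_\calV$, so $X^\calV$ is never orthogonal to it.

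The verification of forward motion splits into two regimes. In $\Theta^\calV$ the content is already recorded: \refrem{TrackCuspNoGo} tells us that after the draping isotopy the only change in $\tau^H$, as $H$ interpolates through $\Theta^U$, is that track-cusps advance in their blocks at constant block-speed $b_J$. In $\Theta_\calV$ I will trace through the construction of \refsec{DrapingIsotopyLower}. The splitting (in space) along suffix routes $\gamma$, by definition, pushes each track-cusp forwards along $\gamma(k_{1,0})$; the subsequent lower graphical isotopy is a space isotopy outside junctions that does not move cusps, and inside junctions it advances track-cusps at the constant speed $b_J$ dictated by \refdef{Blocks}. Continuity of the full draping isotopy at the boundaries of the four phases in \reffig{IsotopyDomain} then gives forward motion on every cross-section $H_s$.

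The main obstacle I expect is book-keeping at the corners of \reffig{IsotopyDomain}, where a space-splitting, a time-splitting, and a graphical isotopy all meet: there I need to confirm that no track-cusp instantaneously stalls or reverses, in particular where the prefix routes $\delta$ terminate and where the suffix routes $\gamma$ begin. The inequalities compiled in \reftab{Forward}, together with the fact that $\gamma(k_{1,0})$ has length at least $b_J/4$ and that all phases are parametrised at positive speed, give exactly the strict monotonicity in $x$ that the dynamic condition demands; the last technical step will be to check that the matching slopes recorded in \refrem{ShrunkenTangentsShear} and \refrem{QuarterPartedGraphical} are preserved through draping, so that $\tau^H$ is genuinely a smooth train-track in each $H$.
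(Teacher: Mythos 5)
Your proposal is correct and takes essentially the same route as the paper: the paper's proof consists precisely of the two observations that in draped position $B^\calV$ is transverse to the cross-sections of the crimped shearing regions and that track-cusps always move forwards as one moves up through cross-sections, exactly as in \reflem{PartedIsotopicAndDynamic}. Your additional book-keeping about the phases of the draping isotopy is a reasonable elaboration of "we have arranged that track-cusps always move forwards," but it is not a different argument.
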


\begin{proof}
In draped position the branched surface $B^\calV$ is transverse to the cross-sections of all crimped shearing regions.
Furthermore, we have arranged that track-cusps always move forwards as we move up through cross-sections.
The same argument applies to $B_\calV$.
\end{proof}

Suppose that $H$ is a cross-section in a crimped shearing region $U$.
Suppose that $B^\calV$ and $B_\calV$ are in draped position.
We again define $\tau^H = B^\calV \cap H$ and $\tau_H = B_\calV \cap H$.

\begin{remark}
\label{Rem:BoundaryCrossSectionDraped}
Suppose that $B^\calV$ and $B_\calV$ are in draped position.
Suppose that $U$ is a crimped shearing region.
Suppose that $H$ is either $\bdy^+ U$, the upper boundary of $U$, or $\bdy^- U$, the lower boundary.
\begin{enumerate}
\item
\label{Itm:TrackCusp}
Each track-cusp of $\tau^H$ and $\tau_H$ is in a junction.
\item
\label{Itm:Straight}
Outside of the junctions, the branches of $\tau^H$ and $\tau_H$ are line segments (in bigon coordinates).
%%% Because we straightened them.
\item
\label{Itm:NextToToggleSquare}
Suppose that $e$ is a helical edge in $H$.
Suppose that, of the two equatorial squares adjacent to $e$, at least one contains a toggle square.
Then the upper junctions immediately adjacent to $e$ are connected by a branch of $\tau^H$.
Similarly, the lower junctions are connected by a branch of $\tau_H$.
%%% Cannot split past the toggle square because of the construction of the curves \beta(c).
\item
\label{Itm:OneCusp}
Every component of $H - \tau^H$ contains exactly one track-cusp, and exactly one ideal vertex of $U$.
The same holds for $H - \tau_H$.
%%% True for parted position because it is locally true.  The later
%%% isotopies do not change this.
\end{enumerate}
Moreover, if $U$ is a blue shearing region then we have the following.
\begin{enumerate}[resume]
\item
\label{Itm:Slopes}
Outside of toggle squares and junctions, the branches of $\tau^H$ have strictly positive slope (in bigon coordinates on $H$) and the branches of $\tau_H$ have strictly negative slope.
%%% All track-cusps split forward and end up on the other side of the
%%% strip from their cusp.
\item
\label{Itm:ToggleSquareSlopes}
Inside of toggle squares, outside of junctions, there is exactly one branch of $\tau^H$ and exactly one branch of $\tau_H$.
If $H = \bdy^+ U$ then these branches have strictly positive slope with the slope of $\tau^H$ more positive than that of $\tau_H$.
If instead $H = \bdy^- U$ then these branches have strictly negative slope with the slope of $\tau_H$ more negative than that of $\tau^H$.
%%% We didn't change anything from parted position.
\end{enumerate}
When $U$ is a red shearing region, similar statements hold, swapping the signs of slopes.
\end{remark}

In our figures, some branches of the draped train-tracks appear to be vertical, contradicting \refrem{BoundaryCrossSectionDraped}\refitm{Slopes} and \refitm{ToggleSquareSlopes}.
However, due to the perturbation of \refrem{NiceBigonCoords}\refitm{Station} and our choice of junction radius (\refdef{JunctionRadius}), these branches in fact have finite slopes.

We generalise \refrem{BoundaryCrossSectionDraped}\refitm{OneCusp} to other cross-sections as follows.

\begin{proposition}
\label{Prop:OneCusp}
Suppose that $U$ is a crimped shearing region.
Let $H$ be a cross-section of $U$.
Then every component of $H - \tau^H$ contains exactly one track-cusp and exactly one ideal vertex of $U$.
The same holds for $H - \tau_H$.
\end{proposition}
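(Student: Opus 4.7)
The plan is to propagate the property from the boundary cross-sections $\bdy^+ U$ and $\bdy^- U$, where it is established by \refrem{BoundaryCrossSectionDraped}\refitm{OneCusp}, to all cross-sections of $U$. As the cross-section $H_s$ varies with $s \in [0, 1]$, the train-track $\tau^{H_s} = B^\calV \cap H_s$ evolves via the draping isotopy of \refsec{Draping}; I will show that the ``one track-cusp and one ideal vertex per complementary region'' invariant is preserved throughout.

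For $s \in [1/2, 1]$ (cross-sections inside $\Theta^U$), \refrem{TrackCuspNoGo} tells us that $B^\calV$ is almost a product: the only change to $\tau^{H_s}$ as $s$ varies is that track-cusps slide forward within their blocks inside junctions. No splits occur, and no branch crosses an ideal vertex of $U$ (which sit on the longitudinal crimped edges, in particular inside stations by \refrem{NiceBigonCoords}\refitm{Station}). Hence the combinatorial type of $\tau^{H_s}$, and with it the invariant, are preserved over this range of $s$.

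For $s \in [0, 1/2]$ (cross-sections inside $\Theta_U$), \refsec{DrapingIsotopyLower} prescribes a splitting along suffix routes for $s \in [0, 1/4]$ followed by a lower graphical isotopy for $s \in [1/4, 1/2]$. The graphical isotopy is ambient in $H_s$ and so preserves the combinatorial structure, and with it the invariant. The splitting phase decomposes into elementary split events, each being either a track-cusp crossing a branch or two track-cusps meeting and splitting past each other. By \reflem{DrapingRoutesNoCross} the relevant routes do not cross, so the splits take place in a controlled sequence.

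At each elementary split, I would check by a local case analysis that the two complementary regions adjacent to the site of the split still contain exactly one track-cusp and one ideal vertex each afterwards. The two key local inputs are: (i) by \refrem{KeepYourHeadDown}, branches terminate in junctions rather than traverse them, so ideal vertices (which lie on longitudinal crimped edges within stations) are never crossed by a moving branch; and (ii) by the recursive, non-overlapping construction of routes in \refdef{DrapingRoutes}, together with the block structure of \refdef{Blocks}, splitting events merely rearrange cusps among the affected regions without collapsing or duplicating them. The main obstacle is the bookkeeping at the elementary splits --- specifically ruling out the possibility that a split leaves one region with two cusps (or two ideal vertices) and an adjacent region with zero --- which I expect to handle by enumerating the finitely many local combinatorial types of routes (in the spirit of the classification in \reffig{PartingRoutes}) and checking each directly against the invariant.
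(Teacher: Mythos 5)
Your proposal is correct and follows essentially the same route as the paper: establish the statement on $\bdy^- U$ via \refrem{BoundaryCrossSectionDraped}\refitm{OneCusp} and propagate it through the cross-sections, observing that the graphical isotopies and the forward motion of track-cusps in blocks change nothing. The one step you defer --- the local bookkeeping at the splits --- is exactly where the paper simply invokes the fact that a (left or right) splitting of a train-track induces a bijection on complementary regions preserving their cusps; since the proposition concerns $H - \tau^H$ and $H - \tau_H$ separately, the interactions between the two tracks and the enumeration of route types you anticipate are not needed, and the ideal vertices are untouched because splits are supported in a neighbourhood of the track.
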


\begin{proof}
The result holds for $H' = \bdy^- U$ by \refrem{BoundaryCrossSectionDraped}\refitm{OneCusp}.
Moving upwards from $H'$ to $H$ we perform splittings and graphical isotopies.
Neither of these changes the combinatorics of a region of $H - \tau^H$.
\end{proof}

From Remarks~\ref{Rem:TrackCuspNoGo} and~\ref{Rem:BoundaryCrossSectionDraped} we deduce the following.

\begin{corollary}
\label{Cor:MidCrossSectionDraped}
Suppose that $B^\calV$ and $B_\calV$ are in draped position.
Suppose that $U$ is a blue shearing region.
Suppose that $H$ is the lower boundary of $\Theta^U$ (which equals the upper boundary of $\Theta_U$).
\begin{enumerate}
\item
\label{Itm:MidTrackCusp}
Each track-cusp of $\tau^H$ and $\tau_H$ is in a junction.
\item
\label{Itm:MidStraight}
Outside of the junctions, the branches of $\tau^H$ and $\tau_H$ are line segments (in bigon coordinates).
%%% Because we straightened them.
\item
\label{Itm:MidSlopes}
Outside of the junctions, the branches of $\tau^H$ have strictly positive slope and the branches of $\tau_H$ have strictly negative slope.
\end{enumerate}
Finally, all of the above again holds, swapping slopes appropriately, when $U$ is a red shearing region. \qed
\end{corollary}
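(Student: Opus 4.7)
The plan is to derive this corollary directly from the two remarks cited, treating the upper and lower train-tracks symmetrically. The key observation is that the hypothesised cross-section $H = \bdy^- \Theta^U = \bdy^+ \Theta_U$ sits at the interface: for $\tau^H$ it is the \emph{lower} face of $\Theta^U$, while for $\tau_H$ it is the \emph{upper} face of $\Theta_U$. Thus in each case $H$ lies in the product-like region for the corresponding branched surface, and the properties transfer from a boundary cross-section of $U$ where they are already known.

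First I would handle $\tau^H = B^\calV \cap H$. By \refrem{BoundaryCrossSectionDraped} applied to the cross-section $\bdy^+ U = \bdy^+ \Theta^U$, every track-cusp of $B^\calV \cap \bdy^+ U$ is in a junction, every branch outside junctions is a line segment, and (when $U$ is blue) every such branch has strictly positive slope. Now apply \refrem{TrackCuspNoGo}: as the cross-section interpolates from $\bdy^+ \Theta^U$ down to $\bdy^- \Theta^U = H$, the only change in the intersection with $B^\calV$ is that track-cusps slide forward within their blocks in junctions. In particular, track-cusps stay in junctions (giving \refitm{MidTrackCusp}), and the portion of the train-track outside junctions is unchanged, which gives both \refitm{MidStraight} and the positive-slope part of \refitm{MidSlopes}.

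Next I would treat $\tau_H = B_\calV \cap H$. The lower draping isotopy is defined analogously to the upper one with the roles of $\Theta^\calV$ and $\Theta_\calV$ exchanged, so there is a direct analogue of \refrem{TrackCuspNoGo} asserting that within $\Theta_U$ the branched surface $B_\calV$ is almost a product: as the cross-section interpolates from $\bdy^- \Theta_U = \bdy^- U$ up to $\bdy^+ \Theta_U = H$, the only change in the intersection with $B_\calV$ is that its track-cusps slide within blocks in junctions. Combined with the $\tau_H$ portion of \refrem{BoundaryCrossSectionDraped} applied at $\bdy^- U$ (which supplies track-cusps in junctions, straight branches outside, and strictly negative slope when $U$ is blue), this yields \refitm{MidTrackCusp}, \refitm{MidStraight}, and the negative-slope half of \refitm{MidSlopes}. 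The red case follows identically after swapping the signs of the slopes, exactly as in \refrem{BoundaryCrossSectionDraped}.

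There is essentially no obstacle here: the corollary is a packaging result. The only point worth being careful about is invoking the correct analogue for the lower branched surface, since neither \refrem{TrackCuspNoGo} nor \refrem{BoundaryCrossSectionDraped} is literally stated in the form needed for $\tau_H$; both statements are written for $\tau^H$, but their proofs go through verbatim for $\tau_H$ under the swap $\Theta^\calV \leftrightarrow \Theta_\calV$ built into the definition of the lower draping isotopy. Once this symmetry is acknowledged, the three conclusions combine cross-section by cross-section.
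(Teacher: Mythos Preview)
Your proposal is correct and matches the paper's approach exactly: the paper simply states ``From Remarks~\ref{Rem:TrackCuspNoGo} and~\ref{Rem:BoundaryCrossSectionDraped} we deduce the following'' and appends a \qed. You have faithfully unpacked this, correctly identifying that $H$ is the product-like side for each branched surface and that the symmetry $\Theta^\calV \leftrightarrow \Theta_\calV$ supplies the $B_\calV$ analogue of \refrem{TrackCuspNoGo}; one small correction is that \refrem{BoundaryCrossSectionDraped} \emph{is} already stated for both $\tau^H$ and $\tau_H$, so only \refrem{TrackCuspNoGo} requires the symmetry argument.
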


%%% Proof: {Rem:TrackCuspNoGo} transmits properties down for green and up for purple.

\begin{lemma}
\label{Lem:SlopeNearCusp}
Suppose that $B^\calV$ and $B_\calV$ are in draped position.
Suppose that $U$ is a blue shearing region.
Suppose that $H$ is any cross-section of $U$.
Let $c$ be a cusp of $U$.
Let $E$ be the component of $H - (\tau^H \cup \tau_H)$ meeting $c$.
Then (outside of junctions) the branches of $\tau^H$ appearing in the boundary of $E$ have positive slope;
the branches of $\tau_H$ appearing in the boundary of $E$ have negative slope.
There is a similar statement for a red shearing region.
\end{lemma}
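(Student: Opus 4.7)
I work in bigon coordinates on $U$, writing $H = H_s$ for $s \in [0,1]$, with $H_0 = \bdy^- U$, $H_1 = \bdy^+ U$, and $H_{1/2}$ the middle cross-section. The cusp $c$ of $U$ appears in each $H_s$ as an ideal puncture $c_s$; by \refprop{OneCusp} applied to both $\tau^{H_s}$ and $\tau_{H_s}$, the component $E = E_s$ of $H_s - (\tau^{H_s} \cup \tau_{H_s})$ meeting $c_s$ has, outside of junctions, a bounded number of straight branches of $\tau^{H_s}$ and $\tau_{H_s}$ on its boundary, joined at the two track-cusps (one of each type) that sit inside junctions by \refrem{BoundaryCrossSectionDraped}\refitm{TrackCusp} and \refcor{MidCrossSectionDraped}\refitm{MidTrackCusp}. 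One checks that the combinatorial type of $\bdy E_s$ is constant in $s$, so these branches depend continuously on $s$.

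The plan is to anchor the slope signs at $s \in \{0, 1/2, 1\}$ and then interpolate. At the three anchor cross-sections, \refrem{BoundaryCrossSectionDraped}\refitm{Slopes}, \refitm{ToggleSquareSlopes} and \refcor{MidCrossSectionDraped}\refitm{MidSlopes} give positivity for branches of $\tau^H$ and negativity for branches of $\tau_H$ outside of junctions, provided that the exceptional toggle-square branches with the wrong sign (those appearing in \refitm{ToggleSquareSlopes}) do not appear on $\bdy E$. This reduces to the local combinatorial observation that $c_s$ is a vertex of the squaring of $\bdy U$, so $E_s$ meets adjacent toggle squares only at their cusps (which lie in junctions), never in their interiors outside of junctions.

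For $s \in [1/2, 1]$, \refrem{TrackCuspNoGo} says that inside $\Theta^U$ the only change in $\tau^{H_s}$ as $s$ varies is that track-cusps move forward within their blocks; each branch of $\tau^{H_s}$ outside of junctions therefore keeps the same endpoints, and hence the same slope. The analog of \refrem{TrackCuspNoGo} for the lower branched surface inside $\Theta^U$ handles $\tau_H$. For $s \in [0, 1/2]$ the upper draping isotopy is richer, consisting of a splitting along suffix routes followed by a lower graphical isotopy (\refsec{DrapingIsotopyLower} and~\refsec{SuffixRoutes}). The splitting is supported in small neighborhoods of routes and leaves slopes outside of junctions unchanged. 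The graphical isotopy does alter slopes, but by \reflem{PartedGraphical} and \reflem{SplittingPreservesGraphical} every intermediate $\tau^{H_s}$ is lower graphical (transverse to slope $-\sqrt{3}$); combined with positivity at $s = 0, 1/2$ and continuity in $s$, an endpoint-tracking argument rules out rotation through a non-positive slope. The analog of \refrem{TrackCuspNoGo} for $B_\calV$ inside $\Theta_U$ keeps slopes of $\tau_H$-branches negative throughout. The red case follows by swapping the signs of all slopes.

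The hard part is the combinatorial identification of the branches of $\tau^{H_s}$ and $\tau_{H_s}$ on $\bdy E_s$: this underpins both the anchor check (ruling out the exceptional toggle-square branches of the wrong sign) and the interpolation (ensuring that branches vary continuously and that their endpoints in junctions do not cross). Lower graphicality of $\tau^{H_s}$ alone does not rule out rotation through slope $0$ or vertical; one must additionally track the endpoints of each branch in its junctions through the draping isotopy and invoke continuity from the anchor values $s \in \{0, 1/2, 1\}$.
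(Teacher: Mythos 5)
Your overall strategy --- anchor the slope signs at $\bdy^- U$, $\bdy^+\Theta_U$, and $\bdy^+ U$ using \refrem{BoundaryCrossSectionDraped} and \refcor{MidCrossSectionDraped}, then follow the draping isotopy through the intermediate cross-sections --- is the same as the paper's. However, the two steps you yourself flag as ``the hard part'' are exactly where the content of the lemma lives, and neither is carried out. First, at the anchor cross-sections you must exclude from $\bdy E$ the wrong-sign branches permitted by \refrem{BoundaryCrossSectionDraped}\refitm{ToggleSquareSlopes}. Your justification --- that $E$ meets adjacent toggle squares only at their cusps because $c$ is a vertex of the squaring of $\bdy U$ --- is an assertion of the needed conclusion, not a proof of it: whether $E$ reaches into the interior of a toggle square is a property of the train-tracks, not of where the ideal point sits. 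The actual mechanism is \refrem{BoundaryCrossSectionDraped}\refitm{NextToToggleSquare}, which provides a branch of $\tau^H$ joining the two junctions adjacent to the relevant helical edge and thereby \emph{separating} the toggle-square branches from the cusp $c$; you never invoke this item.

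Second, for the interpolation you correctly observe that lower graphicality does not prevent a branch from rotating through slope $0$ or through vertical, and that positivity at $s \in \{0,1/2,1\}$ plus continuity cannot by itself exclude a sign change at an intermediate $s$; but the ``endpoint-tracking argument'' you appeal to is never given. What is actually needed is: (a) the splittings do not change the slopes of branches on $\bdy E$; (b) under a lower graphical isotopy a branch whose initial and final positions both have positive slope has positive slope throughout, since the interpolation is leafwise affine along the lower foliation and the set of positive slopes is an interval in the corresponding affine slope coordinate; and (c) the branches that \emph{do} pass through negative slopes (those above toggle squares) never appear on $\bdy E$, because the isotopy moves points only along leaves of the lower foliation and so preserves the separation established at $\bdy^- U$. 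Finally, your claim that the combinatorial type of $\bdy E_s$ is constant in $s$ is false in general: regions adjacent to $E$ are born and die as track-cusps of $\tau^{H_s}$ split forwards across branches of $\tau_{H_s}$ (this is precisely the life cycle of \refdef{LifeAndDeath}), so the frontier route of $E$ changes combinatorially; any argument that tracks individual branches of $\bdy E_s$ must account for this.
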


\begin{proof}
We begin by considering branches of the upper train-track in $\bdy^- U$.
By \refrem{BoundaryCrossSectionDraped}\refitm{Slopes} and \refitm{ToggleSquareSlopes},
the only branches of the incorrect slope are in toggle squares.
Appealing to \refrem{BoundaryCrossSectionDraped}\refitm{NextToToggleSquare},
such branches are separated from the cusp $c$ by other branches.

We now move upwards from $\bdy^- U = \bdy^- \Theta_U$ to $ \bdy^+ \Theta_U$.
Splitting along draping routes does not change the slopes of branches appearing in the boundary of $E$.
The graphical isotopy does change slopes.
Positive slopes remain positive, while any negative slopes (above toggle squares) become positive.
Moreover, branches with negative slope never make up part of the boundary of $E$ because the graphical isotopies follow the lower foliation.
\end{proof}

\begin{lemma}
\label{Lem:SplitMeetsToggles}
Each subray of each branch line of $B^\calV$ and of $B_\calV$, in draped position, meets crimped shearing regions of both colours.
\end{lemma}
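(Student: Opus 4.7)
The plan is to argue by contradiction. Suppose that some subray $K$ of a branch line of $B^\calV$ in draped position lies entirely within a single monochromatic component of the crimped shearing decomposition, say the red submanifold. The first step is to observe that each of the straightening (\refsec{Straightening}), shrinking (\refsec{Shrinking}), parting (\refsec{Parting}), and draping (\refsec{Draping}) isotopies is defined locally with respect to the crimped shearing decomposition: for every crimped shearing region $U$, the intersection $B^\calV \cap U$ is moved only within $U$. Reversing the composition of these isotopies, $K$ corresponds to a subray $K_0$ of a branch line of $B^\calV$ in dual position which also lies entirely within the red submanifold.

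The second step is to apply \reflem{DualMeetsToggles} to $K_0$: it must meet some toggle tetrahedron $t$. Since the two non-equatorial edges of $t$ have different colours, the two half-tetrahedra of $t$ lie in crimped shearing regions of different colours, and our assumption forces $K_0 \cap t$ to lie in the red half of $t$. To derive a contradiction I verify the following local claim: every branch line of $B^\calV$ entering a toggle tetrahedron in dual position crosses the equatorial square.

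For this claim, \refrem{Dual} identifies $B^t$ in dual position with (a portion of) the dual two-skeleton, so the branch locus inside $t$ is a four-star whose central vertex sits strictly below the equatorial square, with four branches extending to the midpoints of the four faces of $t$; two of these branches go to upper faces (and so cross the equatorial square) and two go to lower faces. Any branch line entering $t$ must reach this vertex and continue through it along the pair of branches matched by the smooth structure of $B^t$. A local case-check over the two toggle types, using the description of $B^t$ as a normal quadrilateral glued to two normal triangles (as in \reffig{GluingAutomaton}), shows that the smooth pairing at the vertex always matches an upper branch with a lower branch. Hence both branch lines through the vertex of $B^t$ cross the equatorial square, $K_0$ meets the blue half of $t$, and we obtain the required contradiction. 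The same argument, with upper and lower exchanged, handles $B_\calV$.

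The main obstacle is the final combinatorial pairing check at the vertex of $B^t$ (resp.\ $B_t$) in a toggle tetrahedron; however, this is a bounded local analysis that depends only on the two toggle combinatorial types, so it reduces to a finite case-check using the data in \reffig{GluingAutomaton}.
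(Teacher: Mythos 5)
Your proposal is correct and follows essentially the same route as the paper, whose entire proof is the one-line observation that the claim follows from \reflem{DualMeetsToggles} together with the fact that the isotopies do not change the combinatorics in (and are confined relative to) the toggle squares, which are exactly the interfaces between the two colours of shearing regions. Your contradiction framing and the explicit local check that a branch line traversing a toggle tetrahedron in dual position must connect a lower face to an upper face (hence cross the equatorial square) simply make explicit what the paper leaves implicit.
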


\begin{proof}
This follows from \reflem{DualMeetsToggles} and the fact that our isotopies do not change combinatorics in toggle squares.
\end{proof}

\section{The dynamic pair}
\label{Sec:DynamicPair}

\begin{theorem}
\label{Thm:DynamicPair}
Suppose that $\calV$ is a transverse veering triangulation.
In draped position, the upper and lower branched surfaces $B^\calV$ and $B_\calV$ form a dynamic pair;
this position is canonical.
Furthermore, if $\calV$ is finite then draped position is produced algorithmically in polynomial time.
Finally, the dynamic train-track $B^\calV \cap B_\calV$ has at most a quadratic number of edges.
\end{theorem}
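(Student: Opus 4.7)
The plan is to verify each of the four conditions of \refdef{DynamicPair} in turn, then address canonicity, the polynomial-time algorithm, and the quadratic bound. Throughout, the key tool is the horizontal cross-section analysis developed in \refsec{Draping}: once we know the combinatorics of $\tau^H = B^\calV \cap H$ and $\tau_H = B_\calV \cap H$ in every cross-section $H$, together with how these change as $H$ varies, we can read off the structure of $M - (B^\calV \cup B_\calV)$.

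First I would verify Transversality and Components. Transversality follows cross-section by cross-section: \refcor{MidCrossSectionDraped} and \reflem{SlopeNearCusp} control the slopes of the branches of $\tau^H$ and $\tau_H$ in bigon coordinates, and these slopes are opposite (up to the small corrections inside junctions, handled by \refdef{JunctionRadius}). Together with \reflem{DrapedDynamic}, which shows $X_\calV$ is never tangent to either branched surface, this gives transverse intersection. For Components, I would use \refprop{OneCusp} to show that each complementary region of $\tau^H \cup \tau_H$ in $H$ contains exactly one track-cusp of each colour and a controlled number of ideal vertices. Tracking how such regions evolve as we raise $H$, combined with \reflem{SplitMeetsToggles} (which guarantees branch lines re-encounter both colours), I expect that every compact complementary region traces out exactly the birth-life-death sequence of \refdef{LifeAndDeath} and so is a pinched tetrahedron, while the non-compact ones must be dynamic shells since their frontiers alternate between pieces of $B^\calV - B_\calV$ and $B_\calV - B^\calV$ near the cusps.

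The hard part will be Transience. Given a component $F$ of $B_\calV - B^\calV$, I must follow upwards rays and show they all funnel into a single unstable face $F' \subset F$. The draped position is designed for exactly this: within each monochromatic component $\calU$, each branch interval of $B^\calV$ (and dually of $B_\calV$) runs just above a cross-section of $\Theta_\calV$ until it catches on a toggle square, then drapes upwards. Projecting via the shearing projection $\rho_\calU$ (\refdef{Projection}) onto the mid-surface $\calS$ and using the pictures in Figures such as \reffig{m115_side_draped}, the flow lines are pushed inexorably toward the \emph{last} toggle square on their path, which gives the sink face $F'$; the proof must quantify this and rule out ``orbits'' that escape to infinity without ever being absorbed, using the fact that junctions have finite height and width bounded by triangulation data. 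Separation follows from the semi-local description (\refrem{SemiLocal}) together with the observation that distinct subfaces of dynamic shells live in distinct monochromatic components or are separated by branches of the opposite colour.

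Canonicity is inherited step by step: \refthm{ShearingDecomposition} gives the shearing decomposition canonically; bigon coordinates (\refrem{NiceBigonCoords}) are canonical up to normalisation; and the straightening, shrinking, parting, and draping isotopies were each defined without any choices depending on global orientations or orderings, only on the local combinatorics of veering tetrahedra and the universally-chosen constants setting junction radii. For the algorithmic statement, when $\calV$ is finite with $n$ tetrahedra the shearing decomposition, crimping, junctions, and all splitting routes can be computed in time polynomial in $n$, since every combinatorial datum required is read off from bounded local neighbourhoods and the heights/widths of junctions are $O(n)$. Finally, for the quadratic edge bound on $B^\calV \cap B_\calV$: within each cross-section the number of branches of $\tau^H$ and $\tau_H$ is linear in $n$, each junction contributes a number of blocks equal to $2h_J + w_J$ which is $O(n)$, and there are $O(n)$ junctions, so the total number of edges in the dynamic train-track is $O(n^2)$. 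The dominant difficulty in the whole argument is showing Transience rigorously, because unlike the other conditions it is not a purely local cross-sectional statement but requires tracking the long-term behaviour of the semi-flow $X_\calV^*$ across many crimped shearing regions.
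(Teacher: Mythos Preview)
Your overall strategy---verify the four conditions of \refdef{DynamicPair} via cross-sectional analysis, then handle canonicity and complexity---matches the paper's, and your treatment of Transversality and Components is close to what the paper does. However, there are genuine problems in three places.

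First, you have Transience backwards in terms of difficulty. In the paper it is the \emph{easiest} of the four conditions, precisely because it is a corollary of the Components analysis you have already done. Given an upwards ray $\alpha$ starting at $x \in H$, \refprop{OneCusp} immediately picks out the unique cusp $c$ in the component of $H - \tau^H$ containing $x$, and hence the target shell $C$. The finitely many pinched-tetrahedron regions separating $x$ from $C \cap H$ in $H - \tau^H$ each die in finite time (by what was just proved for Components), and \refrem{TrackCuspGo} ensures no new regions appear between $\alpha$ and $C$ as we ascend. Your proposed approach via the shearing projection $\rho_\calU$ to the mid-surface is not needed and does not obviously terminate; the paper never invokes $\rho_\calU$ here.

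Second, your Separation argument does not work. Two distinct dynamic shells can certainly have subfaces lying in the same monochromatic component, so ``distinct monochromatic components'' does not separate them, and the semi-local remark says nothing about this. The paper instead uses \refrem{Dual}: both $B^\calV$ and $B_\calV$ are (ignoring branching) isotopic to the dual two-skeleton of $\calV$, so any proper arc connecting two distinct cusps must cross each of them. If two shells $C$ and $D$ shared a subface $F \subset B^\calV - B_\calV$, say, then a short arc dual to $F$ would connect the cusp of $C$ to the cusp of $D$ while missing $B_\calV$ entirely, a contradiction.

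Third, your quadratic bound on the dynamic train-track is off target. By \refrem{KeepYourHeadDown} the dynamic train-track $B^\calV \cap B_\calV$ is disjoint from all junctions, so counting blocks inside junctions contributes nothing. The paper's argument is a slope analysis: in a stack of $m$ blue crimped regions each of length $n$ (with toggle squares only at the ends), branches of $\tau_H$ have slope below $-1/n$ while branches of $\tau^H$ have slope above $1/(m+n)$, so each pair of branches meets $O((m+n)/n)$ times. With $(2n)^2$ pairs this gives $O(n(m+n))$ intersections in the stack, and summing over stacks yields the global $O(|\calV|^2)$ bound.
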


The branched surfaces $B^\calV$ and $B_\calV$ are individually dynamic by \reflem{DrapedDynamic}.
We now verify the hypotheses of \refdef{DynamicPair}.
Again, it will be convenient to work equivariantly in the universal cover.

\subsection{Transversality}

Let $U$ be a crimped shearing region.

\begin{lemma}
Suppose that $H$ is a cross-section of $U$.
Then the train-tracks $\tau^H$ and $\tau_H$ are transverse.
\end{lemma}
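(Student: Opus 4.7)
The plan is to verify transversality by a case analysis, separating intersections that lie outside of junctions from those that lie inside.

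Outside of junctions, the strategy is to propagate sign-of-slope information. On the three reference cross-sections $\bdy^- U$, $\bdy^+ U$, and $\bdy^- \Theta^U = \bdy^+ \Theta_U$, items \refitm{Slopes} and \refitm{ToggleSquareSlopes} of \refrem{BoundaryCrossSectionDraped} together with \refcor{MidCrossSectionDraped} assert that, outside of junctions, the branches of $\tau^H$ and $\tau_H$ have slopes of opposite signs, the signs being determined by the colour of $U$. For any other cross-section, this sign information is inherited from the nearest reference cross-section via the isotopies of \refsec{Parting} and \refsec{Draping}. Splittings do not alter the tangent direction of a smooth branch, and the graphical isotopies (\refdef{GraphicalIsotopy}) move points along leaves of the horizontal foliation, hence preserve the sign of the slope of any branch transverse to that foliation; this is the content of \reflem{SplittingPreservesGraphical}. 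The upshot is that, outside of junctions, the tangent vectors of $\tau^H$ and $\tau_H$ at any intersection point lie in opposite pairs of quadrants in bigon coordinates on $H$, so the intersection is transverse.

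For intersections inside a junction I will use that upper and lower junctions are spatially disjoint, as they are neighbourhoods of the disjoint upper and lower station arcs of \refdef{Station}. Within an upper junction $J$, the branches of $\tau^H$ consist of sidings, which are tangent to the $x$-direction in bigon coordinates by \refdef{Blocks}, together with short non-horizontal segments joining the blocks of the siding to $\bdy J$. By \refrem{KeepYourHeadDown}, the branches of $\tau_H$ meeting $J$ are not cusped inside $J$: they enter and exit with the (opposite) sign of slope established in the outside-junctions case, and continue through $J$ without acquiring a horizontal tangent. Hence any intersection of a $\tau_H$-branch with a horizontal siding of $\tau^H$ is transverse, and any intersection of a $\tau_H$-branch with a non-horizontal $\tau^H$-segment has opposite-signed slopes, so is also transverse. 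The symmetric argument applies in lower junctions.

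The main obstacle is bookkeeping: confirming that each of the four splitting-and-graphical-isotopy stages schematised in \reffig{IsotopyDomain} faithfully preserves the sign of slope outside junctions, including the transitional cross-sections near junction boundaries where a graphical isotopy adjusts tangent vectors to interpolate between slopes coming from inside and outside the junction. No new geometric phenomena appear; the check is a routine case-by-case verification once one has the configurations shown in \reffig{MagnifyParted} and the running-example pictures of \reffig{m115_split} in hand.
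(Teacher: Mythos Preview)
Your overall architecture is reasonable, but there is a genuine gap in the ``outside junctions'' case. You assert that \refrem{BoundaryCrossSectionDraped}\refitm{Slopes} and~\refitm{ToggleSquareSlopes} give opposite signs of slope for $\tau^H$ and $\tau_H$ outside junctions. They do not. Item~\refitm{ToggleSquareSlopes} says that inside a toggle square (and outside junctions) the two branches have the \emph{same} sign of slope; what distinguishes them is only a strict inequality of magnitudes (for blue $U$ and $H=\bdy^- U$, both slopes are negative with $\tau_H$ strictly more negative). So your ``opposite quadrants'' conclusion fails precisely over toggle squares. Worse, as $H$ rises through $\Theta_U$ the lower graphical isotopy takes the $\tau^H$--branch over a toggle square from negative slope (at $\bdy^- U$) to positive slope (at the mid cross-section, by \refcor{MidCrossSectionDraped}); at some intermediate height the two slopes are close and your sign argument gives nothing. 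The paper's proof does not attempt a global sign statement; instead it fixes the almost-product side (here $\tau_s$, essentially constant in $\Theta_U$ by \refrem{TrackCuspNoGo}) and, during the graphical isotopy, uses that the slope of the moving branch $a^s$ lies \emph{between} its values at the endpoints $s=1/4$ and $s=1/2$. It then compares each endpoint slope to the fixed slope of $c_s$, splitting into the two subcases ``$c_0$ inside a toggle square'' (where \refitm{ToggleSquareSlopes} supplies the needed strict inequality) and ``$c_0$ outside toggle squares'' (where your sign argument would work). You need this finer comparison; the sign of slope alone is not enough.

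Two smaller remarks. First, your claim that graphical isotopies ``preserve the sign of the slope'' conflates two things: \reflem{SplittingPreservesGraphical} and \refdef{GraphicalIsotopy} preserve transversality to the lower foliation, not the sign of slope in bigon coordinates; the latter requires the interpolation argument just described. Second, your junction case is in fact vacuous: by \refrem{KeepYourHeadDown} a branch meets a junction only if it has an endpoint there, and since endpoints of $\tau^H$ lie in upper junctions while endpoints of $\tau_H$ lie in lower junctions, the intersection $\tau^H\cap\tau_H$ avoids all junctions (this is exactly how the paper later observes that the dynamic train-track misses the junctions).
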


\begin{proof}
Let $H_s$, for $s \in [0,1/2]$, be the cross-sections of $\Theta_U$.
Let $\tau^s = B^\calV \cap H_s$ and let  $\tau_s = B_\calV \cap H_s$.
The train-tracks $\tau^0$ and $\tau_0$ are transverse by \refrem{BoundaryCrossSectionDraped}\refitm{TrackCusp},~\refitm{Slopes}, and~\refitm{ToggleSquareSlopes}, as well as \refrem{KeepYourHeadDown}.
%%% the first one to make sure that green and purple track-cusps are
%%% far from each other.
As $s$ increases, the train-tracks $\tau^s$ perform the neighbourhood and then graphical isotopies as described in \refsec{DrapingIsotopyLower}.
Also, by \refrem{TrackCuspNoGo}, the train-tracks $\tau_s$ are all essentially the same in bigon coordinates.

During the splitting (that is, for $s \in [0,1/4]$), the track-cusps of $\tau^s$ split forward in a small neighbourhood of (the projection of) $\tau^0$.
Thus the train-tracks $\tau^s$ and $\tau_s$ are transverse for $s \in [0,1/4]$.

By \refcor{MidCrossSectionDraped}, the train-tracks $\tau^{1/2}$ and $\tau_{1/2}$ are transverse.
We now consider $s \in [1/4,1/2]$.
The graphical isotopy interpolates between $\tau^{1/4}$ and $\tau^{1/2}$.
Let $b^s$ and $c_s$ be branches of $\tau^s$ and $\tau_s$ respectively.

\begin{claim*}
The branches $b^s$ and $c_s$ are transverse.
%%% and intersect in at most one point, because we are working in the
%%% universal cover
\end{claim*}

\begin{proof}
By \refrem{TrackCuspNoGo}, in $\Theta_U$ the track-cusps of $\tau_H$ lie within junctions.
The lower graphical isotopy leaves upper track-cusps in junctions, and the upper and lower junctions are disjoint.
Thus the endpoints of $b^s$ and $c_s$ are disjoint.

Let $c_0$ be the projection of $c_s$ down to $\bdy^- U$.
Suppose that $c_0$ lies completely within a toggle square.
If the projection of $b^{1/2}$ misses this toggle square then we are done.
Otherwise let $a^s$ be the linear segment of $b^s$ whose projection meets the toggle square.
%%% _the_ linear segment, not _a_ linear segment, as we are working
%%% in the universal cover.
Since the isotopy is lower graphical, the slope of $a^s$ is between that of $a^{1/4}$ and $a^{1/2}$.
Applying \refrem{BoundaryCrossSectionDraped}\refitm{ToggleSquareSlopes} and \refcor{MidCrossSectionDraped}\refitm{MidSlopes} we find that the slope of $c_s$ is more negative than that of $a^s$.
We deduce that $c_s$ is transverse to $a^s$ and thus to $b^s$.

Suppose instead that $c_0$ is disjoint from the toggle squares.
In this case the proof is similar, but easier.
Now the slope of $c_s$ is always negative by \refcor{MidCrossSectionDraped}\refitm{MidSlopes}.
Also, the slope of $a^s$ is always positive by \refrem{BoundaryCrossSectionDraped}\refitm{Slopes}, by \refcor{MidCrossSectionDraped}\refitm{MidSlopes}, and by appealing to the lower graphical isotopy.
\end{proof}

Swapping the roles of upper and lower and repeating the argument shows that $\tau^H$ and $\tau_H$ are transverse for every cross-section $H$ in $\Theta^U$.
\end{proof}

This lemma proves that $B^\calV$ and $B_\calV$ are transverse.
The proof above also implies the following.

\begin{remark}
\label{Rem:TrackCuspGo}
As $H$ moves upwards, if a track-cusp of $\tau^H$ moves through $\tau_H$, it does so going forwards.
Similarly, whenever a track-cusp of $\tau_H$ moves through $\tau^H$, it does so going backwards.
\end{remark}

\subsection{Components}
\label{Sec:Components}

We must show that every component $C$ of $M - (B^\calV \cup B_\calV)$ is either a dynamic shell (\refdef{DynamicShell}) or a pinched tetrahedron (\refdef{PinchedTetrahedron}).

\subsubsection{Dynamic shell}

Suppose that $C$ contains one (thus by \refprop{OneCusp}, exactly one) cusp $c$ of $M$.
Let $v$ be a model of $c$ where $v$ is an ideal vertex of a red crimped shearing region $U$.
Let $E = E(v, U)$ be the component of $U - (B^\calV \cup B_\calV)$ incident to $v$.  Our goal now is to prove the following.
\begin{itemize}
\item $E$ is a three-ball,
\item the frontier of $E$ in $U$ consists of two vertical ``half-bigons'' (one from each of $B^\calV$ and $B_\calV$),
\item the boundary of $E$ in $\bdy^+ {U}$ consists of a triangle, intersecting a single helical edge of $\bdy^+ U$, and
\item the boundary of $E$ in $\bdy^- {U}$ consists of a triangle, intersecting a single helical edge of $\bdy^- U$.
\end{itemize}

Fix a cross-section $H$ of $U$.
Looking into  $H$ from the vertex $v$, we see a branch of $\tau_H$ ending on the boundary of $H$ to our left and a branch of $\tau^H$ ending on the boundary of $H$ to our right.
Appealing to \reflem{SlopeNearCusp}, the frontier of $H \cap E$ lies in a train route of $\tau^H$ and a train route of $\tau_H$.
For each $H$, these routes intersect precisely once.
Stacking the cross-sections together, the routes form the desired half-bigons.
This shows that $E$ is a three-ball with the desired properties.

A similar argument applies for a blue shearing region $U$.
Here the half-bigon of $B^\calV$ is to the left, and the half-bigon of $B_\calV$ is to the right.

Taking the union of the three-balls $E(v,U)$, as $v$ ranges over the models of $c$, gives $C$.
The half-bigons glue to give the stable and unstable faces of $C$.
Note that any one half-bigon meets only finitely many others because the edges of $\calV$ have finite degrees.
Therefore, the $E(v,U)$ glue together to form a dynamic shell.

\subsubsection{Pinched tetrahedron}

Before obtaining our pinched tetrahedra we need the following definition and lemma.

\begin{definition}
Suppose that $H \subset U$ is a cross-section.
We define the \emph{bigon extension} $\bigonx{H}$ as follows.
The boundary of $H$ consists of longitudinal crimped edges.
Each such edge $e$ cobounds a crimped bigon $B$ with a veering edge $e'$.
We obtain $\bigonx{H}$ by gluing each such crimped bigon $B$ to $H$ along its edge $e$.
\end{definition}

We now suppose that $C$ is a component of $M - (B^\calV \cup B_\calV)$ which does not contain any cusp $c$ of $M$.

\begin{lemma}
\label{Lem:PinchedTetrahedraCrossSections}
Suppose that $H$ is a cross-section of a crimped shearing region $U$, meeting $C$.
Then the intersection $C \cap \bigonx{H}$ is either
a trigon or a quadragon, as defined in \refdef{LifeAndDeath}.
Moreover, as $H$ moves up through $U$, components change according to the sequence given in \refdef{LifeAndDeath}.
\end{lemma}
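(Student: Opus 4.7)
The plan is to split the lemma into a structural analysis of a single cross-section $C \cap \bigonx{H}$ and a dynamical analysis of how such cross-sections evolve as $H$ moves upward through $U$.

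For the structural part, I will first argue that $D = C \cap \bigonx{H}$ is a compact topological disk whenever $C$ meets $H$. Since $C$ contains no cusp of $M$, by \refprop{OneCusp} the closure of $C$ is disjoint from all ideal vertices of all crimped shearing regions; thus $D$ is bounded away from the ideal points of $\bigonx{H}$. Its boundary alternates arcs of $\tau^H$ and $\tau_H$, with arcs meeting either at a transverse crossing or at a track-cusp of one of the two tracks. Using the slope constraints of \refrem{BoundaryCrossSectionDraped}, \refcor{MidCrossSectionDraped}, and \reflem{SlopeNearCusp}, together with the almost-straight structure of branches outside junctions and the block structure inside junctions (\refdef{Blocks}), a combinatorial check then shows that the only admissible boundary patterns for $D$ are the green trigon (one $\tau^H$-cusp, two crossings), the quadragon (four crossings, alternating sides), and the purple trigon (one $\tau_H$-cusp, two crossings) of \refdef{LifeAndDeath}.

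For the dynamical part, I will track the evolution of $D$ as $s$ increases. By the construction of draped position in Sections~\ref{Sec:Parting} and~\ref{Sec:Draping}, the pair $(\tau^H,\tau_H)$ changes combinatorially only at finitely many special values of $s$; in between, all motions are graphical or almost-identity isotopies that preserve the combinatorial type of $D$. At each special value a track-cusp of one track crosses an arc of the other. By \refrem{TrackCuspGo}, these events are constrained: cusps of $\tau^H$ must move forward across $\tau_H$, while cusps of $\tau_H$ must move backward across $\tau^H$. I then match the four events to the birth, two transitions, and death of \refdef{LifeAndDeath}: at $s = a$ a $\tau^H$-cusp moves forward across $\tau_H$, producing a green trigon; at $s = b$ the same $\tau^H$-branch line's cusp crosses a second arc of $\tau_H$, yielding a quadragon; at $s = c$ a $\tau_H$-cusp moves backward across $\tau^H$, producing a purple trigon; at $s = d$ a second $\tau_H$-cusp (on the same $\tau_H$-branch line) passes backward across $\tau^H$, killing $D$. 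Finiteness of this sequence follows from \reflem{SplitMeetsToggles}, which guarantees that each branch line eventually leaves the shearing region through a toggle square.

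The main obstacle is the combinatorial classification in the structural part: ruling out complementary regions with five or more boundary arcs, or with surplus track-cusps beyond those listed. This reduces to a finite verification throughout each isotopy phase (splitting along parting, draping, suffix, and prefix routes, together with the associated graphical isotopies), checking that no phase ever creates a region with extra cusps. The semi-local nature of the construction (\refrem{SemiLocal}) and the carefully controlled local models of Figures~\ref{Fig:MagnifyParted}, \ref{Fig:m115_split}, and \ref{Fig:FinalPositionFig8Sibling} make this tractable: outside junctions the relevant branches are essentially straight line segments of prescribed slope, while inside junctions the block structure of \refdef{Blocks} strictly bounds the number of crossings that can appear on the boundary of a single complementary disk.
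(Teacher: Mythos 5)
Your outline contains the same two ingredients as the paper's proof -- a base-case classification of the complementary regions plus an evolution argument via \refrem{TrackCuspGo} -- but you have organised them so that the hard part of your plan is unnecessary. The paper performs the combinatorial classification \emph{only} at $H_0 = \bdy^- U$, where \refrem{BoundaryCrossSectionDraped} gives clean control: outside toggle squares the two tracks consist of straight branches of opposite slope, so a complementary region can have at most four corners, and inside toggle squares the combinatorics is standardised. The classification at every higher cross-section $H_s$ then follows purely from the induction, since \refrem{TrackCuspGo} shows the only combinatorial changes moving upward are upper cusps crossing $\tau_s$ forwards and lower cusps crossing $\tau^s$ backwards, and each such move sends a trigon/quadragon to a trigon/quadragon in the pattern of \refdef{LifeAndDeath}. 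Your ``main obstacle'' -- directly classifying $C \cap \bigonx{H}$ at an arbitrary interior cross-section -- is therefore not needed, and as stated it is also not really available: the slope statements you cite (\refrem{BoundaryCrossSectionDraped}, \refcor{MidCrossSectionDraped}) apply only at the boundary and middle cross-sections, not in the interior of a splitting or graphical isotopy, so the ``finite verification throughout each isotopy phase'' you defer is exactly the evolution argument you already have in your dynamical part. Two smaller points: (i) the base case must explicitly treat regions of $\bigonx{H}_0$ that are not contained in $H_0$ but spill into a crimped bigon across the midpoint of a crimped edge (the paper handles this via \refrem{BoundaryCrossSectionDraped}\refitm{NextToToggleSquare}); and (ii) the assertions that each trigon is eventually born, becomes a quadragon, and dies (your use of \reflem{SplitMeetsToggles}) belong to the subsequent argument that $C$ is a pinched tetrahedron, not to this lemma, which only claims that the changes which do occur follow the sequence of \refdef{LifeAndDeath}.
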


%%% Since we are in the universal cover the intersection is connected.
%%% Note that if we just asked for an interection between C and
%%% \bigonx{H} then there would be more possibilities...

\begin{proof}
Suppose that $U$ is a red crimped shearing region.
Let $H_s$ for $s \in [0,1]$ be the cross-sections of $U$.
Thus $H_0 = \bdy^- U$.

\begin{claim*}
Let $\textsc{r} = C \cap \bigonx{H}_0$.
Then $\textsc{r}$ is either a trigon or a quadragon.
\end{claim*}

\begin{proof}
First suppose that $\textsc{r}$ is entirely contained within $H = H_0$.
Appealing to \refrem{BoundaryCrossSectionDraped}, the boundary of $\textsc{r}$ is a piecewise loop in $H$ with either three or four corners.
(Outside of toggle squares, the two train-tracks have opposite signs so there cannot be more than four corners. Inside of toggle squares the combinatorics is standardised.)
These corners are either at transverse intersections between $\tau^H$ and $\tau_H$ or at track-cusps.
If there are four corners then they are all transverse intersections between $\tau^H$ and $\tau_H$ and $\textsc{r}$ is a quadragon.
If there are three corners then two are transverse intersections and one is a track-cusp.
Thus $\textsc{r}$ is a trigon.

Now suppose that $\textsc{r}$ is not entirely contained within $H$.
By \refrem{BoundaryCrossSectionDraped}\refitm{NextToToggleSquare}, the component $\textsc{r}$ meets a crimped bigon $B$ and contains the midpoint of the crimped edge.
The frontier of $\textsc{r}$ in $B$ consists of exactly one arc from each of $\tau^B$ and $\tau_B$, meeting at a point.
The claim now follows in a manner similar to the previous paragraph.
\end{proof}

More generally, suppose that the claim holds with $H_s$ replacing $H_0$.
Let $\tau^s = H_s \cap B^\calV$ (green) and $\tau_s = H_s \cap B_\calV$ (purple).
\refrem{TrackCuspGo} tells us that as $s$ increases, there are only two combinatorial changes:
\begin{enumerate}
\item
Track-cusps of $\tau^s$ move forwards through branches of $\tau_s$.
\item
Track-cusps of $\tau_s$ move backwards through branches of $\tau^s$.
\end{enumerate}
The first move simultaneously creates a new green trigon and converts a green trigon into a quadragon. The second move simultaneously deletes a purple trigon, and converts a quadragon into a purple trigon.  These are both moves between stages in the life of a pinched tetrahedron, as given in \refdef{LifeAndDeath}, as required.  This proves \reflem{PinchedTetrahedraCrossSections}.
\end{proof}

Equipped with this lemma we now prove that $C$ is a pinched tetrahedron.
Let $H$ be a cross-section through a crimped shearing region $U$.
Let \textsc{r} be a region of $\bigonx{H} - (B^\calV \cup B_\calV)$.
Let $S^\textsc{r}$ be the component of $\bigonx{H}-\tau^H$ containing $\textsc{r}$.
Let $S_\textsc{r}$ be the component of $\bigonx{H}-\tau_H$ containing $\textsc{r}$.
%%% the extended bigon doesn't change
Thus $\textsc{r} = S_\textsc{r} \cap S_\textsc{r}$.
Using \refprop{OneCusp} twice, gives a pair of track-cusps $s^\textsc{r} \subset S^\textsc{r}$ and $s_\textsc{r} \subset S_\textsc{r}$.

First suppose that \textsc{r} is a green trigon.
Thus $\textsc{r}$ contains $s^\textsc{r}$.
We must show that this track-cusp eventually crosses a purple arc, turning $\textsc{r}$ into a quadragon.
By \reflem{SplitMeetsToggles}, moving up, (the branch line containing) $s^\textsc{r}$ eventually enters the bottom of a crimped shearing region $V$ through a toggle square.
If the region \textsc{r} persists into $\bdy^- V$, and is still a green trigon, then
moving up through $\Theta_V$, the track-cusp $s^\textsc{r}$ splits forwards and hits the purple arc given by \refrem{BoundaryCrossSectionDraped}\refitm{NextToToggleSquare}.
This turns \textsc{r} into a quadragon.

Moving down instead of up, a similar argument shows that every green trigon is born at some point.
%%% Going down through (at most) two toggle squares finds the birth of a given green trigon.
Similar arguments also show that as we move up purple trigons eventually die, and that as we move down, purple trigons eventually turn into quadragons.

Lastly we must show that no quadragon can remain a quadragon forever.
Suppose now that \textsc{r} is a quadragon in a cross-section $H$.
As we move down, (the branch line containing) $s^\textsc{r}$ is eventually inside a toggle square within a cross-section $K = \bdy^- U$.
Using \refrem{BoundaryCrossSectionDraped}\refitm{NextToToggleSquare}, we observe that the component of $\bigonx{K} - B^\calV$ containing $s^\textsc{r}$ has no quadragons.
%%% a green trigon, some number of purple trigons, possibly some regions meeting the boundary of \bigonx{K} that contain purple track-cusps, and part of a cross-section through a dynamic shell. But no quadragons.
Therefore for some cross-section below $H$ and above $K$, the region $\textsc{r}$ became a trigon.
A similar argument shows that quadragons must eventually become trigons as we move upwards.

This completes the proof that components of $M - (B^\calV\cup B_\calV)$ are either dynamic shells or pinched tetrahedra.

\subsection{Transience}

Suppose that $F$ is a component of $B_\calV - B^\calV$.
Let $\alpha$ be an upwards ray in $F$, in the sense of \refdef{Upwards}.
Let $x$ be the initial point of $\alpha$.
Let $U$ be a crimped shearing region containing $x$, and let $H$ be the cross-section of $U$ containing $x$.
\refprop{OneCusp} implies that there is one ideal vertex $v$ of $U$ in the component of $H - \tau^H$ containing $x$.
Let $c$ be the cusp of $M$ containing $v$.
By \refsec{Components}, there is a unique dynamic shell $C$ containing $c$.

Let $(\textsc{r}_i)$ be the (necessarily finite) collection of components of $H - (\tau^H \cup \tau_H)$ which separate, in $H - \tau^H$, the region $C \cap H$ from the point $x$.
Let $f \subset F \cap H$ be the component containing $x$.
As we flow upwards through cross-sections $H_s$, even when we move from one crimped shearing region to the next, each of the $\textsc{r}_i$ evolves according to \refdef{LifeAndDeath}.
Thus they all eventually collapse.
Moreover, by \refrem{TrackCuspGo}, no new regions are created between $\alpha \cap H_s$ and $C \cap H_s$.
%%% The distance to the track-cusp for this component of $H_s -
%%% \tau^s$ can go up as the track-cusp goes forwards, but the
%%% distance between \alpha \cap H_s and C \cap H_s does not increase.
So $\alpha$ eventually flows into an unstable face of $C$.
The same argument applies to components of $B^\calV - B_\calV$, flowing downwards.

\subsection{Separation}

%%% Can do this upstairs or down - it is the same...

%%% Note that we have the stronger property of ``dynamic shell
%%% disjointness'' _if_and_only_if_ no shearing region has a pair of
%%% neighbouring toggle squares.  (If E and E' are adjacent such, then
%%% the cusps at either end of the separating edge have shells that
%%% meet along an interval.)

Recall from \refrem{Dual} that both $B^\calV$ and $B_\calV$ are isotopic (ignoring the branching structure) to the dual two-skeleton of $\calV$.
Suppose that $C$ and $D$ are components of $M - (B^\calV \cup B_\calV)$, each containing a cusp of $M$.
Thus, by \refprop{OneCusp}, each of $C$ and $D$ contains exactly one cusp of $M$.
Suppose that $F$ is a two-cell of the natural cell structure on $B^\calV \cup B_\calV$.
Suppose that $F$ meets $C$ on one side and $D$ on the other.
Then we can find a proper arc dual to $F$, and thus disjoint from one of $B^\calV$ or $B_\calV$.
This is a contradiction.

\subsection{Canonicity and complexity}

%%% Canonicity
The construction does not make any arbitrary choices so draped position is canonical.
In particular, if we reverse the orientation of the manifold or the coorientation of the faces of the triangulation (or both) then only labels change;
the underlying combinatorics of the dynamic pair remains the same.

%%% Complexity
Now suppose that $\calV$ is a finite transverse veering triangulation.
Let $|\calV|$ denote the number of veering tetrahedra.
In building the shearing decomposition (\refthm{ShearingDecomposition}), we produce $2|\calV|$ half-tetrahedra and perform $2|\calV|$ gluings.
This requires linear time.
%%% If pointer arithmetic is free... :)
In producing the crimped shearing decomposition (\refsec{Crimping}), the work is now proportional to the sum of the edge degrees, which is $6|\calV|$.
This again requires linear time.

To specify the draped positions of $B^\calV$ and $B_\calV$, it suffices to determine the position of every track-cusp $c$ in each horizontal cross-section $H$ appearing in the $\Theta$--decomposition of every crimped shearing region $U$.
The branch intervals of $B^U$ lie in junctions except, possibly, in the lower half of $\Theta_U$.
Taking $H = \bdy^- U$, and supposing that the track-cusp $c$ lies in a toggle square, we find that $c$ splits forward in the (space) splitting described in \refsec{DrapingIsotopyLower}.
The path of $c$ is exactly the train route $\beta(c)$ described in \refsec{DrapingIsotopyMiddleUpper}.
The naive algorithm to produce the route takes time at most quadratic in the heights and widths of various junctions.
Thus the total time required to compute the $\beta$ routes is polynomial.
%%% Since the longitudinal edges partition the sum of the edge
%%% degrees, the total complexity of computing the train routes
%%% $\beta(c)$ is at most quadratic.  \sum a_i^2 \leq (\sum a_i)^2 for
%%% non-negative real a_i.  Note that you can find many other
%%% ``dynamic pair positions'' by varying the speed of the
%%% track-cusps.

We now bound the number of edges in the dynamic train-track $B^\calV \cap B_\calV$.
Note that the dynamic train-track is disjoint from the junctions (\refrem{KeepYourHeadDown}).
Suppose that $(U_i)_{i = 1}^{m}$ is a collection of blue crimped shearing regions with the following properties.
%%% The proof works because we can ``restrict'' to a solid torus
%%% inside a monochromatic component.
\begin{enumerate}
\item
$U = U_1$ has at least one toggle square in $\bdy^- U$.
\item
$V= U_{m}$ has at least one toggle square in $\bdy^+ V$.
\item
For $i = 1, 2, \ldots, m - 1$,
%%% $\bdy^+ {U_i} = \bdy^- {U_{i+1}}$.
the upper boundary of $U_i$ equals the lower boundary of $U_{i+1}$.
\item There are no toggle squares in this shared cross-section.
\item The length of $U$, and thus of all of the $U_i$, is $n$.
\end{enumerate}
We allow $m$ to be one (and thus $U = V$).
We also allow $n$ to be one.

Let $H = \bdy^+ \Theta_U$ be the middle cross-section of $U$.
The track $\tau_H$ has $2n$ branches (outside of the junctions).
Each of these branches is a line segment in $H$.
Since the draping isotopy is fixed on toggle squares, consulting \reffig{MagnifyPartedBottomToggle} and recalling \refrem{TrackCuspNoGo}, each branch of $\tau_H$ above a toggle square has projection to $\bdy^- U$ contained within that toggle square.
%%% In fact the projection is almost vertical.
The remaining branches of $\tau_H$ have projections that avoid the toggle squares.
Thus no branch of $\tau_H$ wraps all the way around $H$.
Thus their slopes are more negative than $-1/n$ (times a universal factor of $\sqrt{3}/2$, which we ignore).
%%% Putting the factor in makes the typesetting ugly.

Let $K = \bdy^- \Theta^V$ be the middle cross-section of $V$.
By a similar argument, $\tau^K$ has $2n$ branches (outside of the junctions).
Again, each is a line segment in $K$.
Furthermore, all of these are either below toggle squares or have slope greater than $1/n$.
%%% suppressing a factor of \sqrt(3)/2
Since there are no toggle squares between $U$ and $V$ the track $\tau^H$ is obtained from $\tau^K$ by shearing one unit, $m$ times.
Thus the branches of $\tau^H$ have slope greater than $1/(m + n)$.
%%% suppressing a factor of \sqrt(3)/2
Therefore any branch of $\tau^H$ wraps at most $(m+n)/n$ times around $H$.
It follows that each branch of $\tau^H$ meets each branch of $\tau_H$ at most $((m + n)/n) + 1$ times.
There are $(2n)^2$ such pairs, for a total of at most $4n(m+2n)$ intersections.
This counts all edges of the dynamic train-track above $H$ and below $K$.
Edges of the dynamic train-track either continue or merge in pairs as we descend from
$H$ to $\bdy^- U$.
Thus there are at most an additional $4n(m+2n)$ edges in $\Theta_U$.
Likewise there are at most an additional $4n(m+2n)$ edges in $\Theta^V$.

There are now two cases.
If $m \geq n$ then the size of the dynamic train-track in $\cup_i U_i$ is $O(nm)$;
this is proportional to the number of half-tetrahedra in $\cup_i U_i$.
If $m \leq n$ then the size is instead $O(n^2)$;
this is bounded above by the square of the number of half-tetrahedra in $\cup_i U_i$.
Summing, we deduce that the size of the dynamic train-track is at most quadratic in $|\calV|$.

This completes the proof of \refthm{DynamicPair}. \qed

\subsection{Dual train tracks}

We now give a consequence of \refthm{DynamicPair}.

\begin{corollary}
\label{Cor:Dual}
There is an algorithm that, given a surface $S$ and a pseudo-Anosov homeomorphism $f \from S \to S$, produces a (canonical) splitting/folding sequence of dual train tracks in $S$ that realise $f$.
\end{corollary}

\begin{proof}
Suppose that $f \from S \to S$ is the given pseudo-Anosov homeomorphism.
Using Flipper~\cite{flipper} we obtain Agol's splitting sequence.
We puncture each complementary region exactly once to obtain $f^\circ \from S^\circ \to S^\circ$.
Let $\calV$ be the resulting layered veering triangulation of $M^\circ$, the mapping torus of $f^\circ$.
Applying \refthm{DynamicPair} we obtain the (canonical) dynamic pair of branched surfaces $B^\calV$ and $B_\calV$.
Note that these are also transverse to the \emph{crimped shearing decomposition} (\refdef{CrimpedShearingDecomposition}) and thus are transverse to the faces of $\calV$.

We now give a concrete realisation of the surface bundle structure on $M^\circ$, as follows.
Agol's splitting sequence gives us integer weights on the two-skeleton of $\calV$.
After crimping, the horizontal branched surface is a union of \emph{crimped bigons} and \emph{crimped triangles} -- see Definitions~\ref{Def:CrimpedBigon} and~\ref{Def:CrimpedTriangle}.
The integer weights give a finite collection of copies of $S^\circ$;
we extend these over the veering tetrahedra to obtained the desired foliation $\calF^\circ$.

Finally, the branch lines of $B^\calV$ and $B_\calV$ may have local maxima and minima with respect to $\calF^\circ$.
We remove the local maxima of the branch lines of $B^\calV$ by splitting them downwards;
we remove the local minima of the branch lines of $B_\calV$ by splitting them upwards.
(Note that there are no combinatorial choices to make here.)
We now fill $M^\circ$ along the longitudes of $\calF^\circ$.
The new dynamic pair in $M$ gives the desired pair of splitting sequences in $S$.
\end{proof}

\subsection{A final question}

The construction of \refthm{DynamicPair} is canonical in that we make no choices along the way.  
However, the dynamic pairs produced may have quadratic size.
That is, there is a sequence $(\calV_k)_{k = 2}^\infty$ of veering triangulations with the following properties.
%%% namely m = 1, n = k - 1, and the toggle squares ``line up''.
%%% Start with any Mobius face of the figure-eight sibling (that is,
%%% any face), and Dehn fill repeatedly.  We obtain:
%%% cPcbbbdxm_10
%%% dLQacccjsnk_200
%%% eLAkbbcdddhwqj_2102
%%% fLAMcbbcdeedhwqqs_21020
%%% gLAMPbbcdeffdhwqqqj_210202
%%% hLAMLkbcbdeggghhwhhjsr_2112122
%%% iLAMzMcbbcdefghhdhwqqqqqj_21020202
%%% jLAMzMPbbcdefghiidhwqqqqqqs_210202020
%%% kLAMzMLkbcbdefghjjjhhwhhhhhnkw_2112121211
%%% lLAMzMzMcbbcdefghijkkdhwqqqqqqqqs_21020202020
%%% mLAMzMzMPbbcdefghijklldhwqqqqqqqqqj_210202020202
%%% nLAMzMzMLkbcbdefghijkmmmhhwhhhhhhhhjsr_2112121212122
%%% oLAMzMzMzMcbbcdefghijklmnndhwqqqqqqqqqqqj_21020202020202
%%% pLAMzMzMzMPbbcdefghijklmnoodhwqqqqqqqqqqqqs_210202020202020
%%% qLAMzMzMzMLkbcbdefghijklmnppphhwhhhhhhhhhhhnkw_2112121212121211
%%% See also the file gLAMPbbcdeffdhwqqqj_210202_dynamic_pair.pdf
\begin{itemize}
\item
$\calV_k$ has $k$ tetrahedra.
\item
$\calV_{k+1}$ is obtained from $\calV_k$ by \emph{horizontal veering Dehn surgery} (along a M\"obius band)~\cite{veering_dehn_surgery}.
\item
The size of the dynamic train-track of $\calV_k$ grows quadratically with $k$.
\end{itemize}

\begin{question}
Is there some other canonical (independent of orientation and coorientations) construction of a dynamic pair which yields a dynamic train-track of linear size?
%%% One hard case: all of the fan squares are in a single crimped shearing region and the toggle squares are as far apart as possible.
\end{question}

\appendix
\section{From equatorial squares to maximal rectangles}
\label{App:Rectangles}

For our future work, we require an analysis of maximal rectangles in the leaf space for the ``flow'' associated to a given veering triangulation.
We proceed as follows.

Suppose that $M$ is a three-manifold.
Suppose that $\calV$ is a veering triangulation of $M$.
Let $\calU$ be the associated crimped shearing decomposition of $M$, as defined in \refsec{Crimping}.
As usual, we now work in the universal cover.

\begin{definition}
\label{Def:Cross}
Suppose that $t$ is a veering tetrahedron of $\calV$.
Let $E = E(t)$ be its equatorial square.
Let $e_0$, $e_1$, $e_2$, and $e_3$ be the veering edges of $E$.
%%% The equatorial edges of $t$.
Recall that $E_\crimp(\calV)$ is the crimped equatorial branched surface (\refdef{CrimpedEquatorial}).
Let $n_i$ be a small regular neighbourhood of $e_i$ taken in $E_\crimp(\calV)$.
Let $s_i = n_i - E$.

Let $U$ and $V$ be the crimped shearing regions above and below $s_i$ respectively.
Let $H_i$ be the component of $\bdy^- U \cap \bdy^+ V$ containing $s_i$.
We define $X = X(t) = E \cup (\cup_i H_i)$ to be the \emph{cross} associated to the tetrahedron $t$.
%%% Since we are in the universal cover the cross is a disk.
\end{definition}

\begin{figure}[htbp]
\centering
\includegraphics[width =0.97\textwidth]{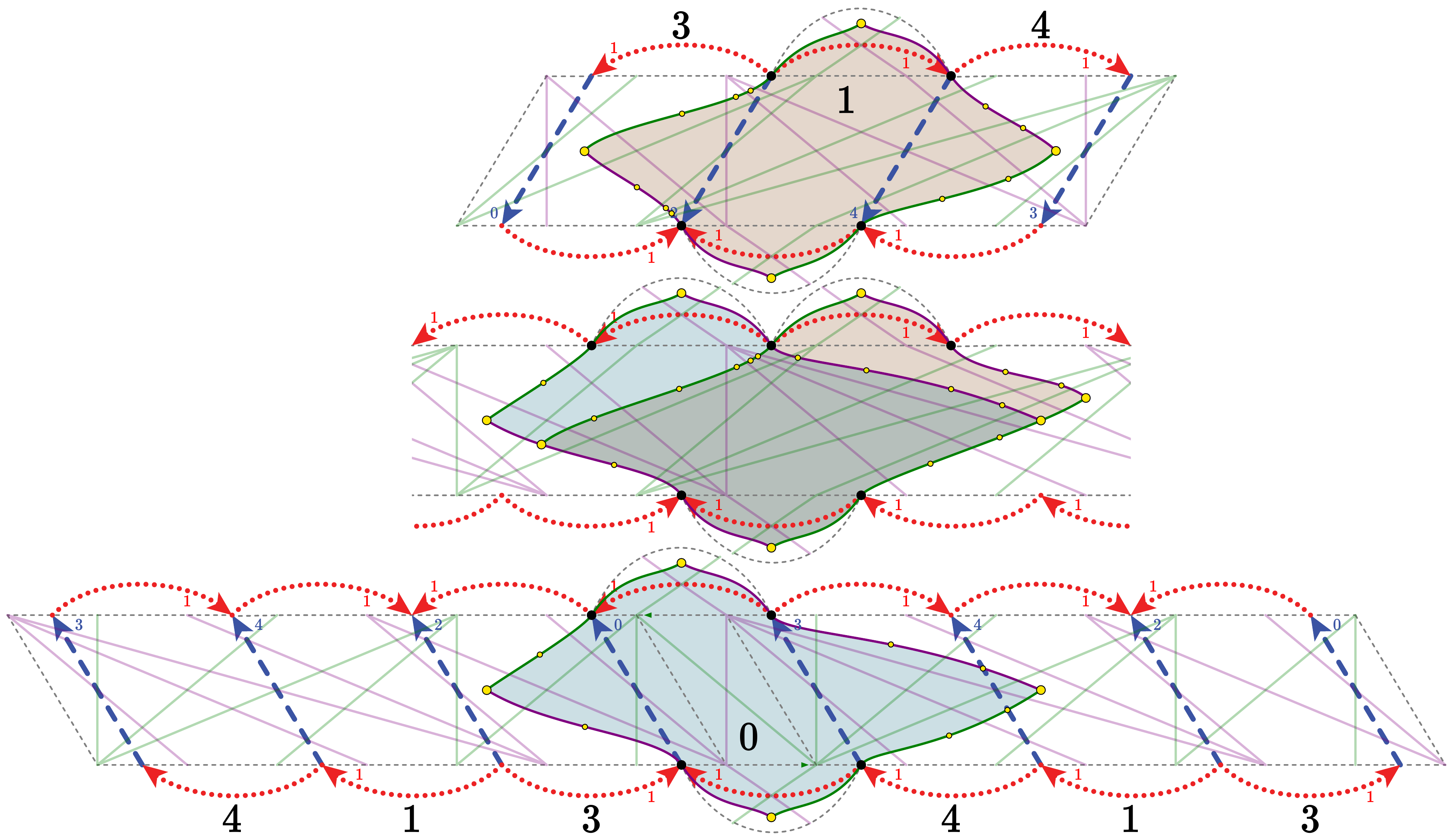}
\caption{
The first row shows the cross for the equatorial square for tetrahedron 1 in \usebox{\BigExVeer}.
The second row shows the T-shape for the unique (in the universal cover) face shared by tetrahedra 1 and 0.
The third row shows the cross for the equatorial square for tetrahedron 0.
The corresponding tetrahedron and face rectangles are shaded.
The vertices and edges of the dual graph are shown only on the boundary of the rectangles.
The cusps are shown with black dots while other regions are indicated with yellow dots.
Corners of the rectangles are drawn with larger yellow dots.
}
\label{Fig:Crosses}
\end{figure}

As usual, we define $\tau^X = X \cap B^\calV$, and similarly define $\tau_X$.
These are train-tracks properly embedded in $X$.
Let $\tau(X) \subset X$ be the graph dual to the union $\tau^X \cup \tau_X$.
In a small abuse, we place vertices of $\tau(X)$, if dual to a cusp region, at the associated cusp.
We call an edge $e'$ of $\tau(X)$ \emph{upper} or \emph{lower} as its dual edge $e$ lies in $\tau_X$ or $\tau^X$ respectively.
A \emph{rectangle} in $X$ is an embedded disk in $X$ whose sides in $\tau(X)$ alternate between upper and lower exactly four times.

\begin{lemma}
\label{Lem:TetRectangle}
There is a unique rectangle $R = R(t)$ in $X = X(t)$ so that $\bdy R$ meets the vertices of $t$ and is disjoint from all junctions.
\end{lemma}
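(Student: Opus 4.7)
I would construct the rectangle $R$ explicitly and then read uniqueness off the local combinatorics near each corner. The four vertices of $t$ all lie on $\bdy E \subset X$; by \refprop{OneCusp} each is a cusp vertex of the dual graph $\tau(X)$. My plan is to walk around these four cusps in the cyclic order induced by the equatorial edges of $E$ and to identify a single dual edge connecting each consecutive pair.

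First, I would establish the local picture at each cusp $v$ of $t$. Using \refrem{BoundaryCrossSectionDraped} parts \refitm{TrackCusp}, \refitm{Straight}, \refitm{NextToToggleSquare}, and \refitm{OneCusp} (applied to the cross-sections $H_i$) together with the straightened picture of $\tau^E$ and $\tau_E$ on the equatorial square (Figures~\ref{Fig:ToggleSquare} and~\ref{Fig:FanSquare}), I would exhibit exactly two short dual edges at $v$ in $\tau(X)$ that lie on $\bdy E$ or just inside an adjacent wing $H_i$: one upper and one lower. The pair are naturally ordered by the red/blue alternation along the equatorial edges at $v$. This will both produce the candidate edges for $\bdy R$ and, by exhausting the local options, be the source of uniqueness.

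Next, I would construct $R$ by joining these local edges across the interiors of the four equatorial-edge neighborhoods. There are two cases. For a toggle tetrahedron, the four cusps are already the four cusps of the central toggle square, and the four arcs of $\bdy R$ are carried by the boundary arcs of the four diagonal half-diamonds lying in $E$; the rectangle $R$ is then (essentially) the toggle square itself. For a fan tetrahedron, two of the cusps are reached through the toggle-square cusps of the arms $H_i$ that sit above (or below) the two $\pi$-coloured veering edges of $t$; one uses the branch provided by \refrem{BoundaryCrossSectionDraped}\refitm{NextToToggleSquare} on each such wing to connect the fan-square cusp to the adjacent cusp of $t$. Either way I would check that the four boundary arcs alternate upper/lower and bound an embedded disk in $X$.

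For disjointness from junctions, I would appeal to \refrem{KeepYourHeadDown}: each boundary arc is a dual edge whose primal branch is a segment of $\tau^X$ or $\tau_X$ lying outside every junction, because in the straightened/parted/draped cross-sections these branches are the ones immediately adjacent to cusps, which by \refrem{NiceBigonCoords}\refitm{Station} are separated from the longitudinal crimped edges where junctions live. Finally, uniqueness follows from Step~1: any rectangle whose boundary meets all four cusps must use, at each cusp, the two locally available dual edges; these force the boundary combinatorics, and the bounded disk in $X$ is then determined.

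The main obstacle is the fan case. For a toggle tetrahedron the rectangle is essentially visible on $E$ itself, but for a fan tetrahedron the boundary of $R$ must travel through two of the wings $H_i$, and one must verify both that the two ``missing'' cusps of $t$ are reached by the correct branch in each wing and that the resulting four arcs close up to bound an embedded disk. This is where the veering hypothesis (through the gluing automaton of \reffig{GluingAutomaton} and the matching between $\tau^E$ and $\tau^{H_i}$ across the pieces $s_i$) does the real work, and this is the step I expect to require the most care.
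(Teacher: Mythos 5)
Your overall strategy --- build $\bdy R$ as a closed dual path through the four cusps of $t$, using the draped-position facts of \refrem{BoundaryCrossSectionDraped}, and extract uniqueness from forced combinatorics --- is in the right spirit, but the proposal is missing the one step that does the real work: producing the four \emph{corners} of the rectangle. In the paper's argument, for each equatorial edge $e$ of $E$ with endpoint cusps $c$ and $d$, one works entirely in the wing $Y$ of $X - e$ not containing $E$; the component $F$ of $Y - \tau_Y$ containing $c$ and the component $G$ of $Y - \tau^Y$ containing $d$ meet in a quadragon (by \refrem{BoundaryCrossSectionDraped}\refitm{Straight} and \refitm{Slopes}), and that quadragon is the unique junction-avoiding corner, giving the unique dual path from $c$ to $d$ that changes type exactly once. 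This works uniformly for all four edges of $E$, whether $t$ is toggle or fan, and in every case the four corners lie out in the wings $H_i$, not in $E$. Your proposal never identifies this quadragon; instead you split into toggle versus fan cases and explicitly defer the fan case (``the step I expect to require the most care'') --- but that deferred step is precisely the content of the lemma.

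Two of your concrete claims are also off. For a toggle tetrahedron you assert that $R$ is ``essentially the toggle square itself'' with $\bdy R$ carried by ``the boundary arcs of the four diagonal half-diamonds lying in $E$'': a tetrahedron has only two half-diamonds (so a toggle square contains two boundary arcs, not four); $\bdy R$ is an edge-path in the dual graph $\tau(X)$ rather than a union of mid-surface boundary arcs; and since each corner of $R$ sits in a wing $H_i$, the disk $R$ properly contains $E$ rather than sitting inside it. Second, your uniqueness argument (``at each cusp there are exactly two locally available dual edges, and these force the boundary'') is under-justified: the cusp region of $X - (\tau^X \cup \tau_X)$ can border many branches, so many dual edges are incident to the cusp vertex, and even after the first edge is chosen the dual path can branch. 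What actually pins the path down is the slope dichotomy (upper branches of one sign, lower branches of the other, outside toggle squares and junctions) together with the requirement that $\bdy R$ avoid junctions --- the same facts that force each corner to be the quadragon $F \cap G$.
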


\begin{proof}
Fix an edge $e$ of the equatorial square $E = E(t)$.
Let $c$ and $d$ be the cusps at the two ends of $e$.
Let $Y$ be the component of $X - e$ not containing $E$.
Suppose that the junction immediately adjacent to $c$, in $Y$, intersects $\tau^Y$.
Thus the junction immediately adjacent to $d$, in $Y$, intersects $\tau_Y$.
Let $F$ be the connected component of $Y - \tau_Y$ containing $c$.
Similarly, let $G$ be the connected component of $Y - \tau^Y$ containing $d$.
By \refrem{BoundaryCrossSectionDraped}\refitm{Straight} and~\refitm{Slopes}, the regions $F$ and $G$ intersect in a quadragon.
We deduce that there is a unique path in the dual graph (to $\tau^Y \cup \tau_Y$) from $c$ to $d$ that changes from lower to upper, exactly once, and which avoids junctions.
%%% Unique because we avoid junctions.
See \reffig{Crosses}.

Suppose that the junction immediately adjacent to $c$, in $Y$, instead intersects $\tau_Y$.
Then a similar argument finds a path in the dual graph from $c$ to $d$ that changes from upper to lower, exactly once, and which avoids junctions.

Doing the above for all four edges of $E$ gives the boundary of a rectangle $R = R(t)$.
As required, $\bdy R$ meets the vertices of $t$ and avoids junctions.
The uniqueness of $R$ follows from the slope conditions on the branches of $\tau^X$ and $\tau_X$ and the requirement that $\bdy R$ avoids junctions.
\end{proof}

Note that $R(t)$ receives a cellulation from its intersection with $\tau^X$ and $\tau_X$.
We use $R^{(1)}(t)$ to denote the edges of $R(t)$ belonging to $\tau^X$.
Similarly, $R_{(1)}(t)$ denotes the edges of $R(t)$ belonging to $\tau_X$.
We now turn to constructing rectangles for the faces of $\calV$.

\begin{definition}
\label{Def:TShape}
Suppose that $f$ is a veering face of $\calV$.
Let $e_0$, $e_1$, and $e_2$ be its veering edges.
Two of these, say $e_1$ and $e_2$ are the same colour.
Let $c_i$ be the vertex of $f$ opposite $e_i$.
Let $W'$ be the shearing region (in the shearing decomposition), containing $f$.
Let $W$ be the corresponding crimped shearing region.
The edges $e_1$ and $e_2$ are helical in $\bdy W$;
also there is a longitudinal crimped edge $e'_0$ in $\bdy W$ that cobounds a crimped bigon $B$ with $e_0$.
Let $n_0$ be a small regular neighbourhood of $e_0$ taken in $E_\crimp(\calV)$.
Let $s_0 = n_0 - B$.

Let $U$ and $V$ be the crimped shearing regions above and below $s_0$ respectively.
Let $H_0$ be the component of $\bdy^- U \cap \bdy^+ V$ containing $s_0$.
We take $H$ to be the central cross-section of $W$.
We define $T = T(f) = H \cup H_0$ to be the \emph{T-shape} associated to $f$.
\end{definition}

The proof of the following is similar to that of \reflem{TetRectangle}, replacing
\refrem{BoundaryCrossSectionDraped} by \refcor{MidCrossSectionDraped}.

\begin{lemma}
\label{Lem:FaceRectangle}
There is a unique rectangle $R = R(f)$ in $T = T(f)$ so that $\bdy R$ meets the vertices of $f$ and is disjoint from all junctions.  \qed
\end{lemma}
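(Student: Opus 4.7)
The plan is to mimic the proof of \reflem{TetRectangle}, with \refcor{MidCrossSectionDraped} playing the role that \refrem{BoundaryCrossSectionDraped} played there. Recall that $T(f) = H \cup H_0$, where $H$ is the middle cross-section of the crimped shearing region $W$ containing $f$ (and which contains the two same-colored veering edges $e_1$, $e_2$) and $H_0$ is a boundary cross-section adjacent to the crimped bigon $B$ bounded by $e_0$. The three vertices of $f$ will appear on $\bdy R(f)$: the vertex $c_0$ at the intersection of $e_1$ and $e_2$ inside $H$, and the vertices $c_1$, $c_2$ at the ends of $e_0$ on the shared boundary of $H$ and $H_0$.

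For each candidate side of $R(f)$, I will apply the local construction used in \reflem{TetRectangle}. Cut the relevant cross-section along the appropriate edge, choose the complementary component on the correct side, and identify the intersection of a region of the upper-complement with a region of the lower-complement as a quadragon. The boundary of that quadragon supplies a unique path in the dual graph joining the prescribed endpoints, switching between upper and lower exactly once, and avoiding all junctions. For paths lying in $H_0$, this uses the slope and straightness statements \refrem{BoundaryCrossSectionDraped}\refitm{Straight} and \refitm{Slopes}; for paths lying in $H$, it uses the analogous statements of \refcor{MidCrossSectionDraped}. Concatenating the resulting paths produces $\bdy R(f)$, and uniqueness of $R(f)$ follows from the slope conditions together with the requirement that $\bdy R$ avoid all junctions.

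The main subtlety will be in the bookkeeping of how the sides fit together: verifying that the constructed paths close up into a simple closed curve with exactly four alternations between upper and lower edges (rather than some degenerate configuration), and verifying that the curve bounds an embedded disk in $T$. Because the edges $e_1$ and $e_2$ of $f$ share a color while $e_0$ differs, the color conventions dictated by \refcor{MidCrossSectionDraped} pair these sides symmetrically across $H$, and the analysis cleanly splits into two cases according to the color of $W$. These cases are identical up to swapping the roles of $\tau^H$ and $\tau_H$, after which the needed embeddedness and alternation properties follow from the same quadragon arguments used in the proof of \reflem{TetRectangle}.
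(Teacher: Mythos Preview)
Your proposal is correct and follows essentially the same approach as the paper: the paper simply states that the proof is ``similar to that of \reflem{TetRectangle}, replacing \refrem{BoundaryCrossSectionDraped} by \refcor{MidCrossSectionDraped}.'' You are in fact slightly more precise than the paper's terse hint, correctly observing that both ingredients are needed---\refcor{MidCrossSectionDraped} for the middle cross-section $H$ and \refrem{BoundaryCrossSectionDraped} for the boundary cross-section $H_0$. One small imprecision: $c_0$ is not literally ``at the intersection of $e_1$ and $e_2$ inside $H$'' (those edges lie in $\bdy W$, not in the middle cross-section), but rather corresponds to a cusp of $H$; this does not affect the argument.
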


Again, $R(f)$ receives a cellulation from the tracks $\tau^T$ and $\tau_T$.

\begin{proposition}
\label{Prop:Flow}
Suppose that $f$ is a face of $\calV$.
Suppose that $t$ and $t'$ are the tetrahedra of $\calV$ below and above $f$, respectively.
Let $T = T(f)$, let $X = X(t)$, and let $X' = X(t')$.
The upwards combinatorial flow from $R(t) \subset X$ to $T$ takes
\begin{itemize}
\item
distinct cusps to distinct cusps;
\item
vertices to vertices,
\item
edges of $R^{(1)}(t)$ to edges of $T^{(1)}$,
\item
edges of $R_{(1)}(t)$ to edge-paths of $T_{(1)}$, and
\item
two-cells of $R(t)$ to unions of two-cells of $T$.
\end{itemize}
There is a similar statement for the downwards combinatorial flow from $R(t') \subset X'$ to $T$.
The images of $R(t)$ and $R(t')$ in $T$ have intersection exactly $R(f)$.
\qed
\end{proposition}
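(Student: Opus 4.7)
The plan is to construct the upwards combinatorial flow explicitly using the vector field $X_\calV$, track its action on the cell structure induced by $\tau^X \cup \tau_X$, and then identify the intersection of the two images as $R(f)$ via the uniqueness clause of \reflem{FaceRectangle}. I would define the flow $\Phi \colon R(t) \to T$ by following $X_\calV$--orbits from points of $R(t) \subset X(t)$ until they strike $T(f)$. Since $X_\calV$ is transverse to horizontal cross-sections by \refdef{DynamicVectorField}, and since the upper half of $t$ lies in $W$ (the crimped shearing region containing $f$) with $E(t)$ sitting below the central cross-section $H$ and with each arm $H_i$ of $X(t)$ flowing into $H$ or into $H_0$ according to adjacency, the map $\Phi$ is well-defined on the portion of $R(t)$ that reaches $T(f)$.

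Next, I would verify the action on the cellular structure. The region of $\cover{M}$ swept out by $\Phi$ lies inside $\Theta_\calV$; by \refsec{Draping}, there $B_\calV$ is almost a product while $B^\calV$ undergoes forward splittings, as governed by \refrem{TrackCuspGo}. Consequently, pieces of the track coming from $B_\calV$ flow to pieces of $\tau_T$ of the same combinatorial type, while pieces of the track from $B^\calV$ may be subdivided by the track-cusps that have moved forward through them, yielding paths. This yields the single-edge vs.\ edge-path dichotomy (matched to the $R^{(1)}$ / $R_{(1)}$ notation as appropriate). Cusps of $\cover{M}$ on $\bdy R(t)$ flow to distinct cusps in $T$, since $X_\calV$ is injective along its orbits and since distinct cusps of $\cover{M}$ project to distinct cusps of $T$; transverse intersections of $\tau^X$ and $\tau_X$ flow to transverse intersections of $\tau^T$ and $\tau_T$, giving vertices to vertices. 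Finally, two-cells of $R(t)$ flow to unions of two-cells of $T$ obtained by the same subdivision. An exactly analogous argument for the downwards flow from $R(t')$ swaps the roles of $\Theta_\calV$ and $\Theta^\calV$, and hence the roles of $\tau^X$ and $\tau_X$.

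To identify the intersection as $R(f)$, I would observe that both images $\Phi(R(t))$ and $\Phi'(R(t'))$ are rectangles in $T$ with four corners at cusps of $\cover{M}$ (three being the vertices of $f$, and one being the opposite cusp reached through $H_0$), and whose boundaries avoid all junctions. This last property is inherited from the fact that $\bdy R(t)$ and $\bdy R(t')$ avoid junctions in their own crosses together with the fact that the draping isotopies keep branches away from junctions (\refrem{KeepYourHeadDown}). By \reflem{FaceRectangle} the unique such rectangle in $T$ is $R(f)$, so both images contain $R(f)$. To rule out a strictly larger intersection, I would use the slope conditions of \refrem{BoundaryCrossSectionDraped} and \refcor{MidCrossSectionDraped} to show that $\Phi(R(t))$ extends upward through $H$ into $\Theta^W$ while $\Phi'(R(t'))$ extends downward through $H$ into $\Theta_W$, so neither image contains the other and their intersection is pinned down to $R(f)$ exactly.

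The main technical obstacle will be the bookkeeping in the middle step: precisely accounting, branch by branch, for which edges on $\bdy R(t)$ survive intact and which are subdivided into paths. This requires a careful local invocation of the splitting structure of Sections~\ref{Sec:PartingRoutes} and~\ref{Sec:SplittingInDraping}, and a matching of the upper/lower labelling conventions. A secondary complication is verifying that the $X_\calV$--orbits from $R(t)$ do not escape through side faces of the upper half of $t$ before reaching $T(f)$; I expect this to follow from the transversality of $X_\calV$ with the branched surfaces, but it must be explicitly checked using the local model of \refcor{DualDynamic}.
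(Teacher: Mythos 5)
The paper offers no written proof of \refprop{Flow} (it is stated with a \qed and illustrated only by \reffig{Crosses}), so there is no argument of the authors' to compare yours against; I can only assess your outline on its own terms. Your overall strategy --- flow along $X_\calV$--orbits, track the effect on the cellulation via \refrem{TrackCuspGo}, and pin down the intersection using the uniqueness clause of \reflem{FaceRectangle} --- is the right shape of argument. However, there are two genuine errors. First, you have the subdivision direction backwards. By \refrem{TrackCuspGo}, as the cross-section moves upwards it is the track-cusps of $\tau^H = B^\calV \cap H$ that split \emph{forwards through branches of} $\tau_H = B_\calV \cap H$; hence it is the \emph{lower} branches that acquire new intersection points, so the edges of $R_{(1)}(t)$ (those lying in $\tau_X$, coming from $B_\calV$) become edge-paths, while the edges of $R^{(1)}(t)$ survive as single edges. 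You assert the opposite attribution and then hedge with ``matched \ldots as appropriate''; since this dichotomy is precisely the content of the third and fourth bullet points, it cannot be left to a hedge. (You should also address why the backwards motion of lower track-cusps through upper branches does not \emph{merge} edges of $R^{(1)}(t)$, which is the other half of that bullet.)

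Second, the identification of the intersection with $R(f)$ does not work as written. The corners of $R(t)$ are the four ideal vertices of $t$, and the upward flow carries cusps to cusps, so the corners of the image of $R(t)$ in $T$ are the four cusps of $t$: three are vertices of $f$ and the fourth is the vertex of $t$ opposite $f$. Likewise the image of $R(t')$ has its fourth corner at the vertex of $t'$ opposite $f$. These two fourth corners are different cusps in general, so \emph{neither} image is ``a rectangle whose boundary meets the vertices of $f$'' in the sense of \reflem{FaceRectangle}, and your appeal to that lemma's uniqueness to conclude that ``both images contain $R(f)$'' is a non sequitur --- indeed, if both images were that unique rectangle they would be equal, contradicting your own closing observation that neither contains the other. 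What is needed instead is a direct argument that the overlap of the two image rectangles is itself a rectangle whose boundary meets the three vertices of $f$ and avoids junctions (using the slope conditions of \refrem{BoundaryCrossSectionDraped} and \refcor{MidCrossSectionDraped}, as in the proof of \reflem{TetRectangle}), after which \reflem{FaceRectangle} identifies the overlap as $R(f)$.
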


%%% Proof: The upper half of t and the lower half of t' are in W, the
%%% crimped shearing region containing f.  So the rectangles R(t) and
%%% R(t') are on the lower and upper boundaries of W.  The upwards
%%% (downward) combinatorial flows in \Theta_W (\Theta^W) is
%%% particularly simple because TrackCuspGo and TrackCuspNoGo. //

One example of \refprop{Flow} is shown in \reffig{Crosses}.

%%% Edge rectangles do not have a ``canonical choice'' of cross-section.
%%% Instead they have an ``interval'' of cross-sections.

%%% We will later use the above to produce the ``easy'' round trip.
%%% Namely: this dynamic pair gives a dynamic train-track which gives
%%% a flow box decomposition which gives a leaf space which gives a
%%% veering triangulation which is isomorphic to the original veering
%%% triangulation.

\renewcommand{\UrlFont}{\tiny\ttfamily}
\renewcommand\hrefdefaultfont{\tiny\ttfamily}
\bibliographystyle{../../plainurl}
\bibliography{../../bibfile.bib}
\end{document}